  \newcommand{\ml}{\mathcal}
  \newcommand{\mb}{\mathbb}
  \DeclareMathOperator{\non}{non}
  \DeclareMathOperator{\lin}{lin}
  \DeclareMathOperator{\intt}{int}
  \DeclareMathOperator{\extt}{ext}
  \DeclareMathOperator{\bdd}{bdd}
  \newtheorem{theorem}{Theorem}[section]
  \newtheorem{prop}{Proposition}[section]
  \newtheorem{lemma}{Lemma}[section]
  \newtheorem{coro}{Corollary}[section]
  \newtheorem{remark}{Remark}[section]
\begin{document}                                                 
    \title[Asymptotic behaviors for Blackstock's model]{Asymptotic behaviors for Blackstock's model of thermoviscous flow}                                 
    \author[W. Chen]{Wenhui Chen}                                
    \address[W. Chen]{School of Mathematical Sciences, Shanghai Jiao Tong University, 200240 Shanghai, China}
     \email{wenhui.chen.math@gmail.com}       
        
            \author[R. Ikehata]{Ryo Ikehata}                                
        \address[R. Ikehata]{Department of Mathematics, Division of Educational Sciences, Graduate School of Humanities and Social Sciences, Hiroshima University, 739-8524 Higashi-Hiroshima, Japan}                                    
%        \curraddr{...}                                   
        \email{ikehatar@hiroshima-u.ac.jp}    
        
         \author[A. Palmieri]{Alessandro Palmieri}                                
        \address[A. Palmieri]{Department of Mathematics, University of Pisa, 56127 Pisa, Italy}                                    
%        \curraddr{...}                                   
        \email{alessandro.palmieri.math@gmail.com}                                 
%    \urladdr{...}                                    
%    \dedicatory{...}                                 
    \date{July 27, 2021}                                       
    \thanks{The first author was supported by the China Postdoctoral Science Foundation (Grant No. 2021T140450 and No. 2021M692084). The second author was supported in part by Grant-in-Aid for scientific Research (C) 20K03682 of JSPS.}                                     
%    \translator{...}                                 
    \keywords{Nonlinear acoustics, Cauchy problem, decay estimates, asymptotic profiles, singular limits, global existence of solution.}                                   
    \subjclass{35G25, 35G10, 35B40, 35B25, 35A01}                                  
    \begin{abstract}
We study a fundamental model in nonlinear acoustics, precisely, the general Blackstock's model (that is, without Becker's assumption) in the whole space $\mathbb{R}^n$. This model describes nonlinear acoustics in perfect gases under the irrotational flow. By means of the Fourier analysis we will derive $L^2$ estimates for the solution of the linear homogeneous problem and its derivatives. Then, we will apply these estimates to study three different topics: the optimality of the decay estimates in the case $n\geqslant 5$ and the optimal growth rate for the $L^2$-norm of the solution for $n=3,4$; the singular limit problem in determining the first- and second-order profiles for the solution of the linear Blackstock's model with respect to the small thermal diffusivity; the proof of the  existence of global (in time) small data Sobolev solutions with suitable regularity for a nonlinear Blackstock's model.    
\end{abstract}                
    \maketitle  
    
    \section{Introduction} 
    \subsection{Background of Blackstock's model} \label{Subsect intro}
    It is well-known that in order to characterize the propagation of sound in thermoviscous fluids, mathematical models in the form of wave equations with viscoelastic damping (or the so-called \emph{Kuznetsov's equation}) arise in the study of nonlinear acoustics, which are widely applied in medical  and industrial uses of high-intensity ultra sound, for instance, lithotripsy, thermotherapy, sonochemistry or ultrasound cleaning. There are several mathematical models to describe nonlinear acoustics phenomena (see \cite{Kaltenbacher-Thalhammer-2018,Nikoli-Said-Houari} and references therein). In particular, one of the fundamental models in nonlinear acoustics that is the so-called \emph{Blackstock's model}, has been established in the pioneering work \cite{Blackstock-1963}, and the detailed introduction with its applications has been provided in \cite{Hamilton-Blackstock-1998}. 
    \medskip
    
    \noindent\textbf{Description of Blackstock's model:} First of all, let us introduce some notations for physical quantities in nonlinear acoustics and auxiliary abbreviations to be used throughout this paper by Table \ref{Table_1}.
    \begin{table}[http]
    	\centering	
    	\caption{Notations for physical quantities and auxiliary abbreviations}
    	\begin{tabular}{ll}
    		\toprule
    		Quantity & Notation   \\
    		\midrule
    		Bulk viscosity  & $\mu_{\mathrm{B}}\geqslant0$\\
    		Shear viscosity  & $\mu_{\mathrm{V}}\geqslant0$ \\
    		Thermal conductivity & $\mathrm{K}>0$\\
    		Specific heat & $c_{\mathrm{V}}>0$ \\
    		Specific gas constant & $R>0$\\
    		Speed of sound & $c_0>0$\\
    		Viscosity number & $b=4/3+\mu_{\mathrm{B}}/\mu_{\mathrm{V}}$\\
    		Kinematic viscosity & $\bar{\nu}>0$\\
    		Thermal diffusivity & $\bar{\kappa}>0$\\
    		Prandtl number & $\mathrm{Pr}=\bar{\nu}/\bar{\kappa}$\\
    		Ratio of specific heats & $\gamma\in(1,5/3]$\\
    		Diffusivity of sound & $\bar{\delta}=\bar{\nu}(b+(\gamma-1)/\mathrm{Pr})$\\
    		Modified kinematic viscosity & $\nu=\bar{\nu}/c_0^2$\\
    		Modified diffusivity of sound & $\delta=\bar{\delta}/c_0^2$\\
    		Modified thermal diffusivity & $\kappa=\bar{\kappa}/c_0^2$\\
    		\bottomrule
    		\multicolumn{2}{l}{\emph{$*$The notations with ``modified'' are related to speed of sound.}}
    	\end{tabular}
    	\label{Table_1}
    \end{table}
    
    \noindent In thermoviscous fluids, by combining \emph{the conservation of mass}
    \begin{align*}
    	\frac{\mathrm{D}\rho}{\mathrm{D}t}+\nabla\cdot(\rho\mathbf{u})=0,
    \end{align*}
    with the mass density $\rho=\rho(t,x)$ and the acoustic velocity $\mathbf{u}=\mathbf{u}(t,x)$, where $\frac{\mathrm{D}}{\mathrm{D}t}:=\partial_t+\mathbf{u}\cdot\nabla$ denotes the material derivative;  \emph{the conservation of momentum}
    \begin{align*}
    	\rho\frac{\mathrm{D}\mathbf{u}}{\mathrm{D}t}+\nabla p=\mu_{\mathrm{V}}\Delta\mathbf{u}+\left(\mu_{\mathrm{B}}+\frac{\mu_{\mathrm{V}}}{3}\right)\nabla(\nabla\cdot\mathbf{u})
    \end{align*}
    with the thermodynamic pressure $p=p(t,x)$;  \emph{the conservation of energy with Fourier's law of heat conduction}
    \begin{align*}
    	\rho c_{\mathrm{V}}\left(\frac{\mathrm{D}T}{\mathrm{D}t}+(\gamma-1)\nabla\cdot\mathbf{u}\right)=\mathrm{K}\Delta T+\mu_{\mathrm{B}}(\nabla\cdot\mathbf{u})^2+\frac{1}{2}\mu_{\mathrm{V}}\left(\frac{\partial u_i}{\partial x_j}\frac{\partial u_j}{\partial x_i}-\frac{2}{3}\delta_{ij}\frac{\partial u_k}{\partial x_k}\right)^2
    \end{align*}
    with the temperature $T=T(t,x)$, where $\delta_{ij}$ is the Kronecker delta; \emph{the state equation for a perfect gas}
    \begin{align*}
    	p=R\rho T;
    \end{align*}
    and using Lighthill scheme of approximation procedures (only the terms of first- and second-order can be retained with small perturbation around the equilibrium state), one can describe nonlinear acoustics in perfect gases under the irrotational flow
    \begin{align*}
    	\nabla\times\mathbf{u}=0\ \ \mbox{so that}\ \ \mathbf{u}=-\nabla \bar{\psi},
    \end{align*}
    where the scalar unknown $\bar{\psi}=\bar{\psi}(t,x)\in\mb{R}$ denotes the acoustic velocity potential. Applying the approximation scheme set down by D.T. Blackstock \cite{Blackstock-1963} in 1963, a general form of Blackstock's model (for example, equations (4) or (6) in the recent paper \cite{Brunnhuber-Jordan-2016}) can be described by
    \begin{align}\label{Non-linear_Blackstock}
    	&\left(\partial_t-\frac{\bar{\nu}}{\mathrm{Pr}}\Delta\right)(\bar{\psi}_{tt}-c_0^2\Delta\bar{\psi}-\bar{\delta}\Delta\bar{\psi}_t)-\frac{\bar{\nu}}{\mathrm{Pr}}(\bar{\delta}-b\gamma\bar{\nu})\Delta^2\bar{\psi}_t\notag\\
    	&\qquad=\partial_{t}\left(\partial_t|\nabla\bar{\psi}|^2+(\gamma-1)\bar{\psi}_t\Delta\bar{\psi}\right).
    \end{align} For more detailed derivation of the model \eqref{Non-linear_Blackstock}, we refer to Section 1 of \cite{Brunnhuber-2015}.
    
    For the sake of simplicity, we apply the transform $\psi(t,x):=\bar{\psi}(t,c_0x)$. It immediately follows the equivalent form of \eqref{Non-linear_Blackstock} that is
    \begin{align}\label{Non-linear_Blackstock_2}
    	&\left(\partial_t-\frac{\nu}{\mathrm{Pr}}\Delta\right)(\psi_{tt}-\Delta\psi-\delta\Delta\psi_t)-\frac{\nu}{\mathrm{Pr}}(\delta-b\gamma\nu)\Delta^2\psi_t\notag\\
    	&\qquad=c_0^{-2}\partial_{t}\left(\partial_t|\nabla\psi|^2+(\gamma-1)\psi_t\Delta\psi\right),
    \end{align}
    where the parameters were introduced  in Table \ref{Table_1}.
    \medskip
    
    \noindent\textbf{Nonlinear effects in acoustics:} Basing on the assumption that the shape of the acoustic field is ``close'' to that of a plane wave \cite{Coulouvrat-1992}, one encounters the approximation $\Delta\psi\approx\psi_{tt}$ from the so-called \emph{substitution corollary}, as allowed under the weakly nonlinear scheme. The term $\Delta\psi$ can be replaced by $\psi_{tt}$ so that the nonlinearity of \eqref{Non-linear_Blackstock_2} can be approximated by
    \begin{align}\label{Nonlinear_01}
    	\partial_{t}\left(\partial_t|\nabla\psi|^2+(\gamma-1)\psi_t\Delta\psi\right)\approx\partial_t^2\left(|\nabla\psi|^2+\frac{\gamma-1}{2}(\psi_t)^2\right),
    \end{align}
    which leads to the so-called \emph{Blackstock-Crighton equation}, i.e. 
    \begin{align}\label{Blackstock-Crighton}
    	&\left(\partial_t-\frac{\nu}{\mathrm{Pr}}\Delta\right)(\psi_{tt}-\Delta\psi-\delta\Delta\psi_t)-\frac{\nu}{\mathrm{Pr}}(\delta-b\gamma\nu)\Delta^2\psi_t\notag\\
    	&\qquad=c_0^{-2}\partial_t^2\left(|\nabla\psi|^2+\frac{\gamma-1}{2}(\psi_t)^2\right),
    \end{align}
    for example, \cite{Brunnhuber-2015,Brunnhuber-2016,Brunnhuber-Kaltenbacher-2014,Brunnhuber-Meyer-2016,Kaltenbacher-Thalhammer-2018}. Actually, a natural idea is reversing the procedure to replace $\psi_{tt}$ by $\Delta\psi$, and this procedure has been employed in Kuznetsov's equation \cite{Fritz-Nikolic-Wohlmuth-2018} already. Strongly motivated by those reasons, using $\psi_{tt}\approx\Delta\psi$ we reformulate the nonlinear term \eqref{Nonlinear_01} in the following way:
    \begin{align}\label{Nonlinear_02}
    	&\partial_{t}\left(\partial_t|\nabla\psi|^2+(\gamma-1)\psi_t\Delta\psi\right)\notag\\
    	&\qquad=2|\nabla\psi_t|^2+2\nabla\psi\cdot\nabla\psi_{tt}+(\gamma-1)\psi_{tt}\Delta\psi+(\gamma-1)\psi_t\Delta\psi_t\notag\\
    	&\qquad\approx2|\nabla\psi_t|^2+2\nabla\psi\cdot\nabla\Delta\psi+(\gamma-1)(\Delta\psi)^2+(\gamma-1)\psi_t\Delta\psi_t.
    \end{align}
    %which is related	 to \emph{Boussinesq-type nonlinearity} $\Delta(|\nabla\psi|^2+(\psi_t)^2)$ \textcolor{orange}{by formally taking $\gamma=3$ in \eqref{Nonlinear_02}}.
    Then, combining \eqref{Non-linear_Blackstock_2} and \eqref{Nonlinear_02}, the nonlinear equation, whose investigation is an aim of this paper, becomes
    \begin{align}\label{Pre-Pre-Nonlinear}
    	&\left(\partial_t-\frac{\nu}{\mathrm{Pr}}\Delta\right)(\psi_{tt}-\Delta\psi-\delta\Delta\psi_t)-\frac{\nu}{\mathrm{Pr}}(\delta-b\gamma\nu)\Delta^2\psi_t\notag\\
    	&\qquad=c_0^{-2}\left(2|\nabla\psi_t|^2+2\nabla\psi\cdot\nabla\Delta\psi+(\gamma-1)(\Delta\psi)^2+(\gamma-1)\psi_t\Delta\psi_t\right),
    \end{align}
    that has never been considered in the literature in the initial value problems' framework (to the best of the authors' knowledge), even for the linearized Cauchy problem.  Concerning some studies on linear or nonlinear Blackstock's model under Becker’s assumption (cf. next subsection), we refer the interested readers to \cite{Brunnhuber-Kaltenbacher-2014,Brunnhuber-2015,Brunnhuber-2016,Brunnhuber-Meyer-2016,Kaltenbacher-Thalhammer-2018,Celik-Kyed-2019,Gambera-Lizama-Prokopczyk-2021} for Dirichlet or Neumann boundary value problems.
    %where the nonlinear term is
    %\begin{align*}
    %	F(\nabla\psi,\psi_t,\nabla\psi_t,\psi_{tt},\Delta\psi_t,\nabla\Delta\psi;c_0^2):=2|\nabla\psi_t|^2+2c_0^2\nabla\psi\cdot\nabla\Delta\psi+\frac{\gamma-1}{c_0^2}(\psi_{tt})^2+(\gamma-1)\psi_t\Delta\psi_t.
    %\end{align*}
    \subsection{Becker's assumption}
    However, most of the above quoted recent papers consider a special case for Blackstock's model, i.e. the third-order (with respect to $t$) nonlinear partial differential equation \eqref{Blackstock-Crighton} under the so-called \emph{Becker's assumption} \cite{Becker-1922,Morduchow-Libby-1949,Hayes-1960}. Becker's assumption means that the fluid under the consideration is a monatomic perfect gas. From the mathematical viewpoint, it says $\mu_{\mathrm{B}}=0$ and
    \begin{align}\label{Becker_Assum}
    	b+\frac{\gamma-1}{\mathrm{Pr}}=b\gamma\ \ \mbox{if and only if}\ \ \delta-b\gamma\nu=0.
    \end{align}
    In other words, the condition \eqref{Becker_Assum} permits us to do further factorization of the linear part of Blackstock's model (the last term on the left-hand side of \eqref{Non-linear_Blackstock_2} is vanishing) such that the heat operator $\partial_t-\frac{\nu}{\mathrm{Pr}}\Delta$ acts on the viscoelastic damped wave operator $\partial_t^2-\Delta-\delta\Delta\partial_t$. With the help of Becker's assumption, the consideration of Blackstock's model becomes easier, especially, estimates of solutions to the linear problem (see the explanation in Subsection \ref{Subsec_Inhomo} later). But the special case of a monatomic perfect gas is still not satisfactory from a physicist's point of view since its bulk viscosity coefficient is zero \cite{Pierce}, and the monatomic perfect gas assumption is restricted for the consideration of nonlinear acoustic, which do not include polyatomic gases, etc. Therefore, a natural question is to understand Blackstock's model without Becker's assumption. We will partly answer this question in the whole space $\mb{R}^n$.
    %
    %\textcolor{red}{UP TO HERE}
    %Clearly, the study of linearized problems is not only important for understanding the underlying physical phenomena in a certain condition but also the crucial role for investigating some qualitative properties of solutions to some corresponding nonlinear problems. For this reason, we will consider linearized Blackstock's model
    %\begin{align}\label{Pre_Linear_Blackstock_Eq}
    %	\left(\partial_t-\frac{\nu}{\mathrm{Pr}}\Delta\right)(\psi_{tt}-c_0^2\Delta\psi-\delta\Delta\psi_t)-\frac{\nu}{\mathrm{Pr}}(\delta-b\gamma\nu)\Delta^2\psi_t=0
    %\end{align}
    % without Becker's assumption \eqref{Becker_Assum} in the present paper.
    \subsection{Main purposes of the paper}
    Let us turn to the main aims of this paper. Recalling from Subsection \ref{Subsect intro} the definitions of the modified diffusivity of sound $\delta$ and of the Prandtl number $\mathrm{Pr}$, we can rewrite \eqref{Pre-Pre-Nonlinear}. So, in the present paper we will consider the higher-order model in nonlinear acoustics
    \begin{align}\label{Blackstock_Eq-New-Nonlinear}
    	\begin{cases}
    		(\partial_t-\kappa\Delta)(\psi_{tt}-\Delta\psi-\delta\Delta\psi_t)+\kappa(\gamma-1)(b\nu-\kappa)\Delta^2\psi_t=F(\psi;\partial_t,\nabla),\\
    		\psi(0,x)=\psi_0(x),\ \psi_t(0,x)=\psi_1(x),\ \psi_{tt}(0,x)=\psi_2(x),
    	\end{cases}
    \end{align}
    for $x\in\mb{R}^n$, $t>0$, where $F(\psi;\partial_t,\nabla)$ denotes the nonlinear term such that
    \begin{align*}
    	F(\psi;\partial_t,\nabla):=c_0^{-2}\left(2|\nabla\psi_t|^2+2\nabla\psi\cdot\nabla\Delta\psi+(\gamma-1)(\Delta\psi)^2+(\gamma-1)\psi_t\Delta\psi_t\right),
    \end{align*}
    and its corresponding linearized problem with vanishing right-hand side
    \begin{align}\label{Linear_Blackstock_Eq}
    	\begin{cases}
    		(\partial_t-\kappa\Delta)(\psi_{tt}-\Delta\psi-\delta\Delta\psi_t)+\kappa(\gamma-1)(b\nu-\kappa)\Delta^2\psi_t=0,\\
    		\psi(0,x)=\psi_0(x),\ \psi_t(0,x)=\psi_1(x),\ \psi_{tt}(0,x)=\psi_2(x),
    	\end{cases}
    \end{align}
    for $x\in\mb{R}^n$, $t>0$. For the sake of readability, we recall the relation between these parameters
    \begin{align*}
    	\delta=\nu\left(b+\frac{\gamma-1}{\mathrm{Pr}}\right)=b\nu+(\gamma-1)\kappa
    \end{align*}
    and their ranges $0<\kappa\ll 1$, $\gamma\in(1,5/3]$, $b>0$ as well as $\nu>0$. In reality, the modified thermal diffusivity $\kappa=\bar{\kappa}/c_0^2$ is always a small number in perfect gases.
    %, for example, see Table \ref{Table_1}.
    %
    % oxygen  with $\bar{\kappa}=23.2$ mm/s and $c_0=326$ m/s; xenon with $\bar{\kappa}=6.27$ mm/s and $c_0=178$ m/s; nitrogen with $\bar{\kappa}=19.6$ mm/s and $c_0=349$ m/s.
    %, for example, oxygen  with $\bar{\kappa}=23.2$ mm/s and $c_0=326$ m/s; xenon with $\bar{\kappa}=6.27$ mm/s and $c_0=178$ m/s; nitrogen with $\bar{\kappa}=19.6$ mm/s and $c_0=349$ m/s. 
    As a consequence, it motivates our research into the behavior of solutions as the modified thermal diffusivity is small, i.e. $0<\kappa\ll 1$, throughout this paper. Moreover, we remark that the thermal diffusivity $\bar{\kappa}\downarrow0$ is equivalent to the modified thermal diffusivity $\kappa\downarrow0$ since the speed of sound $c_0$ is finite. Furthermore, we assume nontrivial initial data $\psi_2$, since the case with $\psi_2\equiv0$ is easy to be treated (see, for example, Remark \ref{Rem_Trivial_psi_2}).  We will not repeat these settings again in the subsequent statements. 
    
    	\begin{remark}
    	Differently from the classical works in the full compressible Navier-Stokes equations for perfect gases and their related models (see \cite{Hoff-Zumbrun, Kobayashi-Shibata,  Danchin, Kobayashi-Shibata-02, Xu-Kawashima, Dachin-Xu} and references therein), the general Blackstock's model (8) is an approximated scalar equation for some fluid-mechanics model under the irrotational assumption. For this reason, it provides a possible way in deriving the explicit characteristic equation for the linearized Cauchy problem (9) and studying optimal estimates as well as asymptotic profiles without using energy methods. It would be interesting (but this goes beyond the purposes of the present paper) to investigate some qualitative properties (e.g. well-posedness with less regular assumption on initial data, optimal time-decay estimates) of solutions to the general Blackstock's model by applying Littlewood-Paley decomposition, Fourier multiplier theorem and energy method in Fourier spaces as aforementioned references.
    \end{remark}
    
    Our first purpose in the present paper is to understand the linear Blackstock's model \eqref{Linear_Blackstock_Eq} by employing the phase space analysis, which helps us to understand the underlying physical phenomena in a certain condition and plays the crucial role for investigating the corresponding nonlinear problem \eqref{Blackstock_Eq-New-Nonlinear}. In Section \ref{Sec_Decay_Asym}, by using asymptotic expansions of characteristic roots and WKB analysis, we derive optimal decay estimates with weighted $L^1$ data for high dimensions $n\geqslant 5$ as well as three asymptotic profiles of solution including heat equations and diffusion-wave equations. Nonetheless, the decay structures are lost in lower dimensions, e.g. the $L^2$ norm of the solution blows up as $t\to\infty$ for $n=3,4$, particularly, we will show the sharp logarithmic  growth for $n=4$ and the polynomial growth of order $1/2$ for $n=3$. The crucial point is to get optimal estimates for the Fourier multiplier
    \begin{align*}
    	|\xi|^{-2}\left(\mathrm{e}^{-\kappa|\xi|^2t}-\cos(|\xi|t)\mathrm{e}^{-\frac{\delta}{2}|\xi|^2t}\right)\ \ \mbox{for}\ \ |\xi|\ll 1
    \end{align*}
    in some norms, since the combined influence from the dissipative structure, dispersive effect and singularities are not clear.
    Let us formally take into account the limit case with vanishing thermal diffusivity in the Cauchy problem \eqref{Linear_Blackstock_Eq}, which turns out to be the viscoelastic damped wave equation (see \cite{Shibata2000,IkehataTodorovaYordanov2013,DabbiccoReissig2014,Ikehata2014,IkehataNatsume,IkehataOnodera2017,BarreraVolkmer2019,BarreraVolkmer2020} and references therein) whose initial data are inherited from \eqref{Linear_Blackstock_Eq} as follows:
    \begin{align}\label{Linear_Kuznet_Eq}
    	\begin{cases}
    		\psi^{(0)}_{tt}-\Delta\psi^{(0)}-b\nu\Delta\psi^{(0)}_t=0,&x\in\mb{R}^n,\ t>0,\\
    		\psi^{(0)}(0,x)=\psi_0(x),\ \psi^{(0)}_t(0,x)=\psi_1(x),&x\in\mb{R}^n.
    	\end{cases}
    \end{align}
    It means that no heat is exchanged during the compression and decompression phases of sound propagation.
    Formally, \eqref{Linear_Kuznet_Eq} seems to be the limit equation for \eqref{Linear_Blackstock_Eq} with vanishing thermal diffusivity $\kappa=0$.
    In Section \ref{Sect_singular}, under the consistency assumption $\psi_2=b\nu\Delta\psi_1+\Delta\psi_0$, we rigorously justify singular limits (limiting processes) from the linear Blackstock's model \eqref{Linear_Blackstock_Eq} to the viscoselastic damped wave equation \eqref{Linear_Kuznet_Eq} as the thermal diffusivity tends to zero, for instance, we conclude in Subsection \ref{Sub_Singular_Solution} that $\psi\to\psi^{(0)}$ in $L^{\infty}([0,\infty)\times\mb{R}^n)$ as $\kappa\downarrow0$ for $n\geqslant 2$ with the rate of convergence $\sqrt{\kappa}$. Furthermore, by applying multi-scale analysis, we derive a formal expansion of solution with respect to $\kappa$, and rigorously justify the correction for the second-order asymptotic expansion with the rate of convergence $\kappa\sqrt{\kappa}$. We believe this approach can be widely used in the acoustic waves model, e.g. singular limits with higher-order profiles for  the Moore-Gibson-Thompson equation with the small thermal relaxation or the small sound diffusivity \cite{Chen-Ikehata-2021,Kaltenbacher-Nikolic}.

    Our second purpose is to study global (in time) well-posedness of the nonlinear Blackstock's model \eqref{Blackstock_Eq-New-Nonlinear} with small data. We firstly construct suitable weighted evolution spaces with  time-dependent weights inherited from the linear problem. By applying some fractional tools from harmonic analysis (see Appendix \ref{Appendix_A}), we estimate the nonlinear term $F(\psi;\partial_t,\nabla)$ in the Riesz potential spaces $\dot{H}^s$ for some $s>0$. Utilizing Banach's fixed-point theorem, we demonstrate global (in time) existence of small data Sobolev solutions
    %\begin{align*}
    %	\psi\in\ml{C}([0,\infty),H^{s+4})\cap\ml{C}^1([0,\infty),H^{s+2})\cap\ml{C}^2([0,\infty),H^s)
    %\end{align*}
    %with $s\geqslant n/2-2$ for $n\geqslant 5$ 
    to the nonlinear Cauchy problem \eqref{Blackstock_Eq-New-Nonlinear}  throughout Section \ref{Sec_GESDS}. 
    %As byproduct of our approach, we obtain decay estimates for solutions to the nonlinear problem with initial data belonging to energy spaces with additional $L^1$ regularity.

    Our main contributions in the present paper consists in deriving optimal estimates and asymptotic profiles of solutions for large time (see Theorems \ref{Thm_Asym} and \ref{Thm_Optimal}), for the small thermal diffusivity (see Theorems \ref{Thm_Potential_2} and \ref{Thm_Higher}) to linear Blackstock's model \eqref{Linear_Blackstock_Eq}, and in proving global well-posedness (see Theorem \ref{Thm_GESDS}) for nonlinear Blackstock's model \eqref{Blackstock_Eq-New-Nonlinear}. In particular, we emphasize that all of our results still hold for the special case with Becker's assumption ($\delta=b\gamma\nu$).

    %\textcolor{red}{I think this is the only reasonable place for this remark. However, if we are not convinced by its content, we can delete it.}
    \begin{remark} 
    	Let us introduce the partial differential operator appearing on the linear Blackstock's model \eqref{Linear_Blackstock_Eq}, namely,
    	\begin{align*}
    		\mathcal{L}:= \partial_t^3-(b\nu+\gamma\kappa)\Delta\partial_t^2-\Delta\partial_t+\gamma b\nu\kappa\Delta^2\partial_t+\kappa\Delta^2.
    	\end{align*}
    	This operator $\mathcal{L}$ is not of Kovalevskian type (cf. \cite[Section 3.1]{Ebert-Reissig-book}). Moreover, the principal part of $\mathcal{L}$ in the sense of Petrovsky is 
    	\begin{align*}
    		\partial_t^3-(b\nu+\gamma\kappa)\Delta\partial_t^2+\gamma b\nu\kappa\Delta^2\partial_t = \partial_t (\partial_t-b\nu \Delta)  (\partial_t-\gamma\kappa \Delta),
    	\end{align*} so, the principal symbol in the sense of Petrovsky admits the roots $\tau=0$, $\tau=ib\nu |\zeta|$ and $\tau=i\gamma\kappa |\zeta|$ that are not all real for $\zeta\in\mathbb{R}^n\backslash\{0\} $. Consequently, $\mathcal{L}$ is not even a $2$-evolution operator  (see \cite[Chapter 3, Definition 3.2]{Ebert-Reissig-book}), and we may not apply the general theory for such class of operators to derive a well-posedness result in $L^2$. 
    	%\color{orange}In the following, we will somehow consider the higher-order terms $-(b\nu+\gamma\kappa)\Delta\partial_t^2 \psi$, $\gamma b\nu\kappa\Delta^2\partial_t \psi$ and $\kappa\Delta^2\psi$ in \eqref{Linear_Blackstock_Eq} as perturbations of the third-order hyperbolic operator $\partial_t^3-\Delta\partial_t$, and this will allow us to study the well-posedness of the Cauchy problem \eqref{Linear_Blackstock_Eq} with three Cauchy data. % \color{red} To Alessandro: (higher-order terms) as perturbations of (third-order hyperbolic operator) seems a little bit strange, and do they really some perturbations?
    	For this reason the well-posedness results that we are going to show in the energy space requiring different kinds of additional regularity for the Cauchy data (namely, $L^{1,1}$-regularity in Section \ref{Section additional L1,1 reg} and $L^1$-regularity in Section \ref{Section additional L1 reg}) are relevant.
    \end{remark}
    
    \medskip
    \noindent\textbf{Notation:} To end this section, we give some notations to be used in this paper. Later, $c$ and $C$ denote some positive constants, which may be changed from line to line. We denote that $f\lesssim g$ if there exists a positive constant $C$ such that $f\leqslant Cg$ and, analogously, for $f\gtrsim g$.  Moreover, $\dot{H}^s_p$ with $s\geqslant0$ and $1\leqslant p<\infty$ denote Riesz potential spaces based on $L^p$. The operators $\langle D\rangle^s$ and $|D|^s$ stand for the pseudo-differential operators with symbols $\langle\xi\rangle^s$ and $|\xi|^s$, respectively, where $\langle\xi\rangle:=\sqrt{1+|\xi|^2}$.
    \section{Decay properties and asymptotic profiles}\label{Sec_Decay_Asym}
    Firstly, by denoting $\tilde{\gamma}:=\kappa\delta+\kappa(\gamma-1)(b\nu-\kappa)=\gamma b\nu\kappa$, we directly employ the partial Fourier transform with respect to spatial variables to the Cauchy problem \eqref{Linear_Blackstock_Eq} and derive
    \begin{align}\label{Linear_Fourier_Blackstock_Eq}
    	\begin{cases}
    		\widehat{\psi}_{ttt}+(\delta+\kappa)|\xi|^2\widehat{\psi}_{tt}+\left(1+\tilde{\gamma}|\xi|^2\right)|\xi|^2\widehat{\psi}_t+\kappa|\xi|^4\widehat{\psi}=0,\\
    		\widehat{\psi}(0,\xi)=\widehat{\psi}_0(\xi),\ \widehat{\psi}_t(0,\xi)=\widehat{\psi}_1(\xi),\ \widehat{\psi}_{tt}(0,\xi)=\widehat{\psi}_2(\xi),
    	\end{cases}
    \end{align}
    for $\xi\in\mb{R}^n$, $t>0$. Then, the corresponding characteristic roots $\lambda_j=\lambda_j(|\xi|)$ to the equation of \eqref{Linear_Fourier_Blackstock_Eq} satisfy the following cubic equation:
    \begin{align}\label{Character_Eq}
    	\Lambda(\lambda_j):=\lambda_j^3+(\delta+\kappa)|\xi|^2\lambda_j^2+(1+\tilde{\gamma}|\xi|^2)|\xi|^2\lambda_j+\kappa|\xi|^4=0
    \end{align}
    for $j=1,2,3$. We immediately notice that
    \begin{align}\label{Derivative_lambda}
    	\frac{\mathrm{d}}{\mathrm{d}\lambda_j}\Lambda(\lambda_j)=3\lambda_j^2+2(\delta+\kappa)|\xi|^2\lambda_j+(1+\tilde{\gamma}|\xi|^2)|\xi|^2.
    \end{align}
    Comparing the polynomials $\Lambda(\lambda_j)$ and \eqref{Derivative_lambda}, we find that there are nontrivial common division at most for value of frequencies $|\xi|$ in a zero measure set. Hence, we have only simple roots of $\Lambda(\lambda_j)=0$ outside of this zero measure set. Since there are several parameters in the $|\xi|$-dependent characteristic equation \eqref{Character_Eq}, it seems a challenging work to analyze the precise roots of it. However, for our investigation in the phase space, we just need to use asymptotic expansions of the characteristic roots and WKB analysis.

    \subsection{Asymptotic behaviors of characteristic roots}\label{Sub_sec_asymptotic_behavior}
    
    $\quad$
    
    \noindent\underline{Part I: Asymptotic expansions for $|\xi|\to0$.} We find the characteristic roots $\lambda_j$ for $j=1,2,3$ owing for $|\xi|\to 0$ the following asymptotic expansion:
    \begin{align*}
    	\lambda_j=\lambda_{j,0}+\lambda_{j,1}|\xi|+\lambda_{j,2}|\xi|^2+\cdots,
    \end{align*}
    where the constant coefficients $\lambda_{j,k}\in\mb{C}$ are independent of $|\xi|$ for all $k\in\mb{N}_0$. 
    %From the characteristic equation \eqref{Character_Eq}, the first coefficients of \eqref{Expansion_Small} carry
    %\begin{align*}
    %	\lambda_{j,0}^3+(\delta+\kappa)\lambda_{j,0}^2|\xi|^2+(1+\tilde{\gamma}|\xi|^2)\lambda_{j,0}|\xi|^2+\kappa|\xi|^4+\ml{O}(|\xi|)=0,
    %\end{align*}
    %which means $\lambda_{j,0}=0$ for all $j=1,2,3$. By the same way, the second coefficients lead to
    %\begin{align*}
    %\lambda_{j,1}^3|\xi|^3+(\delta+\kappa)\lambda_{j,1}^2|\xi|^4+(1+\tilde{\gamma}|\xi|^2)\lambda_{j,1}|\xi|^3+\kappa|\xi|^4+\ml{O}(|\xi|^4)=0.
    %\end{align*}
    %It indicates the coefficient of the $|\xi|^3$-term is zero, namely, $\lambda_{j,1}^3+\lambda_{j,1}=0$ that is $\lambda_{1/2,1}=\pm i$ and $\lambda_{3,1}=0$. Basing on the derived values of $\lambda_{j,1}$, we can determine the third coefficients by
    %\begin{align*}
    %	(\lambda_{j,1}+\lambda_{j,2}|\xi|)^3|\xi|^3+(\delta+\kappa)(\lambda_{j,1}+\lambda_{j,2}|\xi|)^2|\xi|^4+(1+\tilde{\gamma}|\xi|^2)(\lambda_{j,1}+\lambda_{j,2}|\xi|)|\xi|^3+\kappa|\xi|^4+\ml{O}(|\xi|^5)=0
    %\end{align*} 
    %for $j=1,2$, and
    %\begin{align*}
    %	\lambda_{3,2}^3|\xi|^6+(\delta+\kappa)\lambda_{3,2}^2|\xi|^6+(1+\tilde{\gamma}|\xi|^2)\lambda_{3,2}|\xi|^4+\kappa|\xi|^4+\ml{O}(|\xi|^5)=0.
    %\end{align*}
    %So, it holds that $\lambda_{1/2,2}=-\delta/2$ and $\lambda_{3,2}=-\kappa$. Summarizing the last discussions, 
    By some direct computations, we claim that pairwise distinct characteristic roots haven been obtained 
    \begin{align*}
    	\lambda_{1/2}&=\pm i|\xi|-\frac{\delta}{2}|\xi|^2+\ml{O}(|\xi|^3)\\
    	\lambda_3&=-\kappa|\xi|^2+\ml{O}(|\xi|^3)
    \end{align*}
    for $|\xi|\to 0$. Here, the real parts of all characteristic roots are negative.
    
    \medskip
    
    \noindent\underline{Part II: Asymptotic expansions for $|\xi|\to\infty$.} 
    %Although later we will follow the same philosophy as one in Part I, it would be important to observe some stabilities in this case. 
    The characteristic roots $\lambda_j$ for $j=1,2,3$ have the asymptotic expansions for $|\xi|\to \infty$ such that
    \begin{align*}
    	\lambda_j=\bar{\lambda}_{j,0}|\xi|^2+\bar{\lambda}_{j,1}|\xi|+\bar{\lambda}_{j,2}+\cdots,
    \end{align*}
    where the constant coefficients $\bar{\lambda}_{j,k}\in\mb{C}$ are independent of $|\xi|$ for all $k\in\mb{N}_0$. 
    With some direct calculations, we assert that pairwise distinct characteristic roots haven been obtained in this case, namely,
    \begin{align*}
    	\lambda_{1/2}&=\frac{1}{2}\left(-(\delta+\kappa)\pm\sqrt{(\delta+\kappa)^2-4\gamma b\nu \kappa}\,\right)|\xi|^2+\ml{O}(|\xi|),\\
    	\lambda_3&=-\frac{1}{\gamma b\nu}+\ml{O}(|\xi|^{-1}),
    \end{align*}
    for $|\xi|\to \infty$. Here, the real parts of all characteristic roots are negative.
    
    %\begin{remark}
    %	If one is interested in the exponential stability or semigroup analysis for linearized Blackstock's model \eqref{Linear_Blackstock_Eq}, then the condition $\gamma(\gamma+4)\kappa^2-2b\nu\gamma\kappa+\nu^2b^2\geqslant0$ plays a crucial role. We emphasize that under Becker's assumption \eqref{Becker_Assum}, i.e. $b=\kappa/\nu$, the last inequality is trivial without any restriction of the positive parameter $\kappa$. For this reason, the analysis of exponential stability for linearized Blackstock's model \eqref{Linear_Blackstock_Eq} without Becker's assumption \eqref{Becker_Assum} seems not obvious.
    %\end{remark}
    \medskip
    
    \noindent\underline{Part III: Stabilities for other cases.} As last step in the analysis of the characteristic roots, we study the case in which $|\xi|$ neither tends to zero nor tends to infinity, which means bounded frequencies away from 0. To prove the negativity of the real parts of characteristic roots (that means, we have some stability properties), we just need to claim that no pure imaginary characteristic root exists. By fixing an index $j\in\{1,2,3\}$, let the characteristic root $\lambda_j$ be pure imaginary such that $\lambda_j=i\tilde{\lambda}_j$ with  $\tilde{\lambda}_j\in\mb{R}\backslash\{0\}$. From the cubic equation \eqref{Character_Eq}, one finds
    \begin{align*}
    	i\tilde{\lambda}_j\left(\tilde{\lambda}_j^2-(1+\tilde{\gamma}|\xi|^2)|\xi|^2\right)+|\xi|^2\left((\delta+\kappa)\tilde{\lambda}_j^2-\kappa|\xi|^2\right)=0.
    \end{align*}
    The last equation is valid only if $(1+\tilde{\gamma}|\xi|^2)|\xi|^2=\tilde{\lambda}_j^2=\frac{\kappa}{\delta+\kappa}|\xi|^2$. However, it yields a contradiction from $\frac{\kappa}{\delta+\kappa}<1<1+\tilde{\gamma}|\xi|^2$. So, the continuity of the characteristic roots with respect to $|\xi|$ implies that the real parts of all characteristic roots are negative.
    
    \medskip
    
    \noindent\underline{Part IV: Pointwise estimates of solutions in the phase space.} In the case for three distinct roots of the characteristic equation \eqref{Character_Eq}, i.e. Part I and Part II, by using a Vandermonde matrix, % and some basic tools in ordinary differential equations, 
    the solution to the Cauchy problem \eqref{Linear_Fourier_Blackstock_Eq} is given by
    \begin{align*}
    	\widehat{\psi}=\widehat{K}_0\widehat{\psi}_0+\widehat{K}_1\widehat{\psi}_1+\widehat{K}_2\widehat{\psi}_2,
    \end{align*}
    where the kernels in the phase space $\widehat{K}_j=\widehat{K}_j(t,|\xi|)$ for $j=0,1,2$ admit the following representations:
    \begin{align*}
    	\widehat{K}_0&:=\sum\limits_{k=1,2,3}\frac{\exp(\lambda_kt)\prod_{j=1,2,3,\ j\neq k}\lambda_j}{\prod_{j=1,2,3,\ j\neq k}(\lambda_k-\lambda_j)},\\
    	\widehat{K}_1&:=-\sum\limits_{k=1,2,3}\frac{\exp(\lambda_kt)\sum_{j=1,2,3,\ j\neq k}\lambda_j}{\prod_{j=1,2,3,\ j\neq k}(\lambda_k-\lambda_j)},\\
    	\widehat{K}_2&:=\sum\limits_{k=1,2,3}\frac{\exp(\lambda_kt)}{\prod_{j=1,2,3,\ j\neq k}(\lambda_k-\lambda_j)}.
    \end{align*}
    
    Let us give some explicit estimates of these kernels via WKB analysis. We define the three zones for the phase space $\ml{Z}_{\intt}(\varepsilon):=\{|\xi|<\varepsilon\ll1\}$, $\ml{Z}_{\bdd}(\varepsilon,N):=\{\varepsilon\leqslant |\xi|\leqslant N\}$ and $\ml{Z}_{\extt}(N)=\{|\xi|> N\gg1\}$. Moreover, we fix three cut-off functions $\chi_{\intt}(\xi),\chi_{\text{bdd}}(\xi),\chi_{\extt}(\xi)\in \mathcal{C}^{\infty} $ having their supports in the zone $\ml{Z}_{\intt}(\varepsilon)$, $\ml{Z}_{\text{bdd}}(\varepsilon/2,2N)$ and $\ml{Z}_{\extt}(N)$, respectively, so that $\chi_{\intt}(\xi)+\chi_{\bdd}(\xi)+\chi_{\extt}(\xi)=1$ for any $\xi\in\mathbb{R}^n$. 
    
    In the consideration as $|\xi|\to 0$, we should do more precise computations since the imaginary terms will exert some oscillation influence. According to the asymptotic expansions of the pairwise distinct characteristic roots, we may write
    \begin{align*}
    	\chi_{\intt}(\xi)\widehat{K}_0&=\chi_{\intt}(\xi)(\kappa|\xi|+\ml{O}(|\xi|^2))\sin(|\xi|t)\mathrm{e}^{-\frac{\delta}{2}|\xi|^2t+\ml{O}(|\xi|^3)t}\\
    	&\quad+\chi_{\intt}(\xi)\left(1+\ml{O}(|\xi|)\right)\mathrm{e}^{-\kappa|\xi|^2t+\ml{O}(|\xi|^3)t},\\
    	\chi_{\intt}(\xi)\widehat{K}_1&=\chi_{\intt}(\xi)\left(1+\ml{O}(|\xi|)\right)\frac{\sin(|\xi|t)}{|\xi|}\mathrm{e}^{-\frac{\delta}{2}|\xi|^2t+\ml{O}(|\xi|^3)t}\\
    	&\quad+\chi_{\intt}(\xi)\left(\delta+\ml{O}(|\xi|)\right)\mathrm{e}^{-\kappa|\xi|^2t+\ml{O}(|\xi|^3)t},\\	\chi_{\intt}(\xi)\widehat{K}_2&=\chi_{\intt}(\xi)\frac{-\cos(|\xi|t)}{|\xi|^2+\ml{O}(|\xi|^3)}\mathrm{e}^{-\frac{\delta}{2}|\xi|^2t+\ml{O}(|\xi|^3)t}\\
    	&\quad+\chi_{\intt}(\xi)\frac{1}{|\xi|^2+\ml{O}(|\xi|^3)}\mathrm{e}^{-\kappa|\xi|^2t+\ml{O}(|\xi|^3)t}.
    \end{align*}
    Then, the last asymptotic representation shows
    \begin{align}\label{New_2}
    	\chi_{\intt}(\xi)|\widehat{K}_2|&\lesssim\chi_{\intt}(\xi)|\xi|^{-2}\left|\mathrm{e}^{-\kappa|\xi|^2t+\ml{O}(|\xi|^3)t}-\cos(|\xi|t)\mathrm{e}^{-\frac{\delta}{2}|\xi|^2t+\ml{O}(|\xi|^3)t}\right|\notag\\
    	&\quad+\chi_{\intt}(\xi)|\xi|^{-1}\left(|\cos(|\xi|t)|\mathrm{e}^{-\frac{\delta}{2}|\xi|^2t+\ml{O}(|\xi|^3)t}+\mathrm{e}^{-\kappa|\xi|^2t+\ml{O}(|\xi|^3)t}\right)\notag\\
    	&\lesssim\chi_{\intt}(\xi)|\xi|^{-2}\left|\mathrm{e}^{-\kappa|\xi|^2t+\ml{O}(|\xi|^3)t}-\mathrm{e}^{-\frac{\delta}{2}|\xi|^2t+\ml{O}(|\xi|^3)t}\right|\notag\\
    	&\quad+\chi_{\intt}(\xi)\left(|\xi|^{-2}\left|\sin\left(\tfrac{|\xi|}{2}t\right)\right|^2\mathrm{e}^{-\frac{\delta}{2}|\xi|^2t}+|\xi|^{-1}\mathrm{e}^{-c|\xi|^2t}\right)\notag\\
    	&\lesssim\chi_{\intt}(\xi)\left(t+|\xi|^{-2}\left|\sin\left(\tfrac{|\xi|}{2}t\right)\right|^2+|\xi|^{-1}\right)\mathrm{e}^{-c|\xi|^2t},
    \end{align}
    where we used 
    \begin{align*}
    	\frac{1}{|\xi|^2+\ml{O}(|\xi|^3)}=\frac{1}{|\xi|^2}(1+\ml{O}(|\xi|))\ \ \mbox{for} \ \ |\xi|\ll 1,\ \ \cos(|\xi|t)=1-\left|\sin\left(\tfrac{|\xi|}{2}t\right)\right|^2,
    \end{align*}
    and
    \begin{align*}
    	&\chi_{\intt}(\xi)\left|\mathrm{e}^{-\kappa|\xi|^2t+\ml{O}(|\xi|^3)t}-\mathrm{e}^{-\frac{\delta}{2}|\xi|^2t+\ml{O}(|\xi|^3)t}\right|\\
    	&\qquad=\chi_{\intt}(\xi)\mathrm{e}^{-\kappa|\xi|^2t+\ml{O}(|\xi|^3)t}\left|1-\mathrm{e}^{\frac{2\kappa-\delta}{2}|\xi|^2t+\ml{O}(|\xi|^3)t}\right|\\
    	&\qquad\lesssim \chi_{\intt}(\xi)|\xi|^2t\mathrm{e}^{-\kappa|\xi|^2t}\left|\int_0^1\mathrm{e}^{\frac{2\kappa-\delta}{2}|\xi|^2ts+\ml{O}(|\xi|^3)ts}\mathrm{d}s\right|\\
    	&\qquad\lesssim\chi_{\intt}(\xi)|\xi|^2t\mathrm{e}^{-c|\xi|^2t}.
    \end{align*}
    The previous relations lead to
    \begin{align}\label{Pointwise_small}	\chi_{\intt}(\xi)|\widehat{\psi}|&\lesssim\chi_{\intt}(\xi)\mathrm{e}^{-c|\xi|^2t}\left(|\widehat{\psi}_0|+\left(1+\tfrac{|\sin(|\xi|t)|}{|\xi|}\right)|\widehat{\psi}_1|\right.\notag\\
    	&\quad\qquad\qquad\qquad\quad\left.+\left(t+|\xi|^{-2}\left|\sin\left(\tfrac{|\xi|}{2}t\right)\right|^2+|\xi|^{-1}\right)|\widehat{\psi}_2|\right)
    \end{align}
    with a positive constant $c$.

    Concerning $|\xi|\to\infty$ the employment of the asymptotic expansions of the roots, which shows the presence of dominant terms that are pairwise distinct and real, provides the estimate
    \begin{align}\label{Pointwise_Large}
    	\chi_{\extt}(\xi)|\widehat{\psi}|\lesssim\chi_{\extt}(\xi)\mathrm{e}^{-ct}\left(|\widehat{\psi}_0|+|\xi|^{-2}|\widehat{\psi}_1|+|\xi|^{-4}|\widehat{\psi}_2|\right)
    \end{align}
    with a positive constant $c$.
    
    Finally, in the case neither $|\xi|\to0$ nor $|\xi|\to\infty$, there may happen that the cubic equation \eqref{Character_Eq} has three roots but not pairwise distinct, i.e. roots of double multiplicity. At this time, to write down the explicit formula of solution is a possible but complex work with a lot of discussions for the parameters. We only concentrate on exponential stability in this case. Because the real parts of characteristic roots are negative as we shown in Part III, it yields
    \begin{align}\label{Pointwise_Middle}
    	\chi_{\bdd}(\xi)|\widehat{\psi}|\lesssim\chi_{\bdd}(\xi)\mathrm{e}^{-ct}\left(|\widehat{\psi}_0|+|\widehat{\psi}_1|+|\widehat{\psi}_2|\right)
    \end{align} 
    with a positive constant $c$.
    
    \subsection{$L^2-L^2$ decay estimates with additional weighted $L^1$ regularity} \label{Section additional L1,1 reg}
    Let us recall the definition of the weighted $L^1$ space
    \begin{align*}
    	L^{1,1} :=\left\{f\in L^1 :\ \|f\|_{L^{1,1} }:=\int_{\mb{R}^n}(1+|x|)|f(x)|\mathrm{d}x<\infty\right\}.
    \end{align*}
    Moreover, we introduce the notation $P_f:=\int_{\mb{R}^n}f(x)\mathrm{d}x$ for the mean of a summable function $f$, which can be estimated by $|P_f|\leqslant\|f\|_{L^{1,1} }$. Before deriving some estimates of the solution, let us propose a lemma to control the $L^2$ norm of some multipliers.
    \begin{lemma}\label{Lemma_Basic_1}
    	Let $s\in[0,\infty)$ and $c>0$. Then, the following estimates hold:
    	\begin{align*}
    		\|\chi_{\intt}(\xi)|\xi|^s\mathrm{e}^{-c|\xi|^2t}\|_{L^2 }&\lesssim(1+t)^{-\frac{s}{2}-\frac{n}{4}},\\
    		\|\chi_{\intt}(\xi)|\xi|^{-1}|\sin(|\xi|t)|\mathrm{e}^{-c|\xi|^2t}\|_{L^2 }&\lesssim\ml{D}_n(1+t),\\
    		\|\chi_{\intt}(\xi)|\xi|^{-1}\mathrm{e}^{-c|\xi|^2t}\|_{L^2 }
    		&\lesssim(1+t)^{\frac{1}{2}-\frac{n}{4}}\ \ \mbox{if}\ \ n\geqslant 3,
    	\end{align*}
    	for $t\geqslant0$,  moreover, the next estimate holds:
    	\begin{align}\label{New_01}	\left\|\chi_{\intt}(\xi)|\xi|^{-2}\big|\sin\big(\tfrac{|\xi|}{2}t\big)\big|^{2}\mathrm{e}^{-c|\xi|^2t}\right\|_{L^2}\lesssim\widetilde{\ml{D}}_n(1+t),
    	\end{align}
    	where
    	\begin{align*}
    		\ml{D}_n(1+t):= \begin{cases} (1+t)^{\frac{1}{2}} & \mbox{if} \ \  n=1, \\  (\ln (\mathrm{e}+t))^{\frac{1}{2}} & \mbox{if} \ \  n=2, \\ (1+t)^{\frac{1}{2}-\frac{n}{4}} & \mbox{if} \ \ n\geqslant 3,\end{cases}
    	\end{align*}
    	\begin{align*}
    		\widetilde{\ml{D}}_n(1+t):= \begin{cases} (1+t)^{2-\frac{n}{2}} & \mbox{if} \ \  n\leqslant3, \\  (\ln (\mathrm{e}+t))^{\frac{1}{2}} & \mbox{if} \ \  n=4,\\
    			(1+t)^{1-\frac{n}{4}}& \mbox{if} \ \ n\geqslant 5.\end{cases}
    	\end{align*}
    	% $\ml{D}_n(1+t)$ is $(1+t)^{\frac{1}{2}}$ if $n=1$, $(\ln (\mathrm{e}+t))^{\frac{1}{2}}$ if $n=2$, and $(1+t)^{\frac{1}{2}-\frac{n}{4}}$ if $n\geqslant 3$.
    \end{lemma}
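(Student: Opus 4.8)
The plan is to reduce every bound to an elementary one-dimensional radial integral and then split the radial variable at the natural scale $r \sim (1+t)^{-1/2}$. Writing $\xi$ in polar coordinates, $\|\chi_{\intt}(\xi)g(|\xi|)\|_{L^2}^2 \lesssim \int_0^{\varepsilon} |g(r)|^2 r^{n-1}\,\mathrm{d}r$, so the first estimate becomes $\int_0^{\varepsilon} r^{2s+n-1}\mathrm{e}^{-2cr^2 t}\,\mathrm{d}r$. For $t \leqslant 1$ this is $O(1)$; for $t \geqslant 1$ the substitution $\eta = r\sqrt{t}$ yields $t^{-(s + n/2)}\int_0^{\varepsilon\sqrt t}\eta^{2s+n-1}\mathrm{e}^{-2c\eta^2}\,\mathrm{d}\eta \lesssim t^{-(s+n/2)}$, since the truncated Gaussian moment integral converges. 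Combining the two ranges gives the first claim with $(1+t)$ in place of $t$. This is the template; the remaining estimates are variations where the integrand has a negative power of $r$ and/or an oscillatory factor, so the small-$r$ behaviour of the integral is no longer automatically integrable and must be handled by hand.

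For the second and third estimates I would use the crude bound $|\sin(r t)| \leqslant \min\{r t, 1\}$. For the third one, $\int_0^\varepsilon r^{-2} r^{n-1}\mathrm{e}^{-2cr^2t}\,\mathrm{d}r = \int_0^\varepsilon r^{n-3}\mathrm{e}^{-2cr^2t}\,\mathrm{d}r$, which for $n \geqslant 3$ has an integrable singularity at $0$; the same rescaling $\eta = r\sqrt t$ gives the decay rate $(1+t)^{1 - n/2}$ in $L^2$-squared, i.e. $(1+t)^{1/2 - n/4}$ after taking the square root, and for $t \leqslant 1$ one checks separately that the integral is finite exactly because $n \geqslant 3$. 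For the second estimate I would split the radial integral at $r = 1/t$: on $r < 1/t$ use $|\sin(rt)|/r \leqslant t$, producing $\int_0^{1/t} t^2 r^{n-1}\,\mathrm{d}r \sim t^{2-n}$; on $1/t < r < \varepsilon$ use $|\sin(rt)|/r \leqslant 1/r$, producing $\int_{1/t}^{\varepsilon} r^{n-3}\mathrm{e}^{-2cr^2 t}\,\mathrm{d}r$. The dimension-dependent case split in $\mathcal{D}_n$ comes out of comparing these two contributions: for $n = 1$ the second piece dominates and behaves like $t^{-1}\cdot(\text{const})$... actually one must be careful and it is cleaner to say: for $n=1$ the first piece gives $t$, for $n=2$ the boundary between the two regimes produces the logarithm $\ln(\mathrm{e}+t)$, and for $n \geqslant 3$ the Gaussian tail integral again gives $t^{1-n/2}$. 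Taking square roots yields the three cases of $\mathcal{D}_n(1+t)$, and gluing with the $t \leqslant 1$ estimate converts $t$ to $1+t$.

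For the last estimate \eqref{New_01}, I would use $|\sin(\tfrac{rt}{2})|^2 \leqslant \min\{(\tfrac{rt}{2})^2, 1\}$, so $r^{-2}|\sin(\tfrac{rt}{2})|^2 \leqslant \min\{\tfrac{t^2}{4}, r^{-2}\}$. Squaring and integrating against $r^{n-1}\,\mathrm{d}r$ and splitting at $r = 1/t$: the low-frequency piece $\int_0^{1/t}\tfrac{t^4}{16} r^{n-1}\,\mathrm{d}r \sim t^{4-n}$, and the high-frequency piece $\int_{1/t}^{\varepsilon} r^{n-5}\mathrm{e}^{-2cr^2t}\,\mathrm{d}r$, which behaves like $t^{2-n/2}$ for $n \geqslant 5$ (Gaussian rescaling), like $\ln(\mathrm{e}+t)$ for $n = 4$ (the exponent $n-5 = -1$ is the critical logarithmic case), and like $t^{4-n}$ for $n \leqslant 3$ where the lower endpoint dominates. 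Matching exponents across the split and taking square roots gives exactly $\widetilde{\mathcal{D}}_n(1+t)$: $(1+t)^{2-n/2}$ for $n\leqslant 3$, $(\ln(\mathrm{e}+t))^{1/2}$ for $n=4$, and $(1+t)^{1-n/4}$ for $n \geqslant 5$; the small-$t$ regime is again bounded by a constant since $|\sin(\tfrac{rt}{2})|^2/r^2 \leqslant t^2/4$ is uniformly controlled.

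The main obstacle is bookkeeping rather than any deep estimate: one must track the two competing contributions (the low-frequency polynomial-in-$t$ piece from bounding the oscillatory factor by its argument, versus the Gaussian-tail piece) across every dimension and identify precisely where each dominates and where the borderline logarithmic case occurs ($n = 2$ for $\mathcal{D}_n$, $n = 4$ for $\widetilde{\mathcal{D}}_n$). It is essential to keep the cut-off $\chi_{\intt}$ throughout so that all integrals are over $\{r < \varepsilon\}$ and the Gaussian factor is only ever used to generate decay, never integrability at infinity. Once the case analysis is organized, each individual integral is a textbook computation.
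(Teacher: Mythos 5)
Your proposal is correct, and the rates come out exactly right: the split at the scale $r\sim 1/t$ combined with $|\sin(rt)|\leqslant\min\{rt,1\}$ (resp.\ $|\sin(rt/2)|^2\leqslant\min\{(rt/2)^2,1\}$) and Gaussian rescaling reproduces $\mathcal{D}_n(1+t)$ and $\widetilde{\mathcal{D}}_n(1+t)$, including the borderline logarithms at $n=2$ and $n=4$; your momentary hedge about $n=1$ in the second estimate is harmless, since both the low-frequency piece and the tail piece are $O(t)$ there, so the conclusion $(1+t)^{1/2}$ stands. For the first and third estimates your argument is literally the paper's: a change of variables $\omega=r^2t$, with $n\geqslant 3$ ensuring summability of $\omega^{(n-4)/2}$ at the origin. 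The genuine difference is in the second and fourth estimates, which the paper does not prove from scratch: for the second it cites \cite{Ikehata2014,IkehataOnodera2017}, and for \eqref{New_01} it first observes
\begin{align*}
\left\|\chi_{\intt}(\xi)|\xi|^{-2}\big|\sin\big(\tfrac{|\xi|}{2}t\big)\big|^{2}\mathrm{e}^{-c|\xi|^2t}\right\|_{L^2}\lesssim\left\|\chi_{\intt}(\xi)|\xi|^{-1}\big|\sin\big(\tfrac{|\xi|}{2}t\big)\big|\mathrm{e}^{-c|\xi|^2t}\right\|_{L^4}^2
\end{align*}
and then invokes the proof of Theorem 2.2 in \cite{Chen-Ikehata-2021} with $s=0$ and $m=4/3$. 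Your version replaces these citations by self-contained radial computations of the same elementary flavor; what this buys is a proof readable without external references, at the price of the dimension-by-dimension bookkeeping you describe, whereas the paper's route is shorter and its $\|\,|f|^2\|_{L^2}=\|f\|_{L^4}^2$ reduction lets it reuse an already established $L^m$ estimate verbatim.
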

    \begin{proof}
    	The proof of the first inequality can be easily completed with the change of variables, and the derivation of the second estimates has been shown in \cite{Ikehata2014,IkehataOnodera2017}. We give a proof for the third inequality.
    	% 	 since the restrictions on the spatial dimension are quite important. 
    	By using $\omega=r^2t$, we get
    	\begin{align*}
    		\left\|\chi_{\intt}(\xi)|\xi|^{-1}\mathrm{e}^{-c|\xi|^2t}\right\|_{L^2 }^2\lesssim\int_0^{\varepsilon}r^{n-3}\mathrm{e}^{-cr^2t}\mathrm{d}r\lesssim t^{-\frac{n-2}{2}} \int_0^{\infty}\omega^{\frac{n-4}{2}}\mathrm{e}^{-c\omega}\mathrm{d}\omega.
    	\end{align*}
    	Then, we demand that $n\geqslant2$, in order to guarantee the summability as $\omega\downarrow0$. For the derivation of inequality \eqref{New_01}, we observe
    	\begin{align*}	\left\|\chi_{\intt}(\xi)|\xi|^{-2}\big|\sin\big(\tfrac{|\xi|}{2}t\big)\big|^{2}\mathrm{e}^{-c|\xi|^2t}\right\|_{L^2}\lesssim\left\|\chi_{\intt}(\xi)|\xi|^{-1}\big|\sin\big(\tfrac{|\xi|}{2}t\big)\big|\mathrm{e}^{-c|\xi|^2t}\right\|_{L^4}^2.
    	\end{align*} 
    	Then, by applying $s=0$ and $m=4/3$ in the proof of Theorem 2.2 in \cite{Chen-Ikehata-2021}, we can complete the proof.
    \end{proof}
    
    %\color{red}
    %Before stating the next result, we remark that from Lemma 2.40 in the thesis \cite{Brunnhuber-2015}, we are able to claim well-posedness of the Cauchy problem for Blackstock's model with Becker's assumption so that the solution belonging to $L^2$ space. Actually, we strongly believe the well-posedness of Blackstock's model without Becker's assumption \eqref{Linear_Blackstock_Eq} also can be proved by following the same steps as those in \cite{Brunnhuber-2015}. Namely, the consideration of the $L^2$ norm for the solution is meaningful. 
    %
    %\textcolor{orange}{see Remark \ref{Rem 2-evol}, that should replace this part in red, if you think it is suitable!}
    %
    %\color{black}
    
    \begin{theorem}\label{Thm_Energy}
    	Let us assume initial data $\psi_0,\psi_1,\psi_2\in L^2\cap L^{1,1} $. Then, the solution to Blackstock's model \eqref{Linear_Blackstock_Eq} fulfills the following estimate for $n\geqslant 3$: 
    	\begin{align*}
    		&\|\psi(t,\cdot)\|_{L^2 }\\
    		&\quad\lesssim (1+t)^{-\frac{n+2}{4}}\|\psi_0\|_{L^2\cap L^{1,1} }+(1+t)^{-\frac{n}{4}}\|\psi_1\|_{L^2\cap L^{1,1} }+\ml{D}_n(1+t)\|\psi_2\|_{L^2\cap L^{1,1}}\\
    		&\quad\quad+(1+t)^{-\frac{n}{4}}|P_{\psi_0}|+\ml{D}_n(1+t)|P_{\psi_1}|+\widetilde{\ml{D}}_n(1+t)|P_{\psi_2}|.
    	\end{align*}
    \end{theorem}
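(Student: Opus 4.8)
The plan is to split the solution $\psi$ into its three frequency pieces via the cut-off functions $\chi_{\intt},\chi_{\bdd},\chi_{\extt}$, estimate the $L^2$ norm of each piece using the pointwise bounds \eqref{Pointwise_small}, \eqref{Pointwise_Large}, \eqref{Pointwise_Middle} already established, and then invoke the multiplier estimates of Lemma \ref{Lemma_Basic_1}. The only nontrivial contribution will be the small-frequency zone $\ml{Z}_{\intt}(\varepsilon)$; the bounded and large zones contribute exponentially decaying terms (bounded by $\mathrm{e}^{-ct}$ times $L^2$ norms of the data) that are absorbed into the stated polynomial bounds.

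For the small-frequency part I would start from \eqref{Pointwise_small}. Using Plancherel's theorem,
\begin{align*}
\|\chi_{\intt}(\xi)\widehat{\psi}\|_{L^2}&\lesssim\big\|\chi_{\intt}(\xi)\mathrm{e}^{-c|\xi|^2t}\big\|_{L^\infty}\big(\text{terms in }\widehat{\psi}_0,\widehat{\psi}_1,\widehat{\psi}_2\big),
\end{align*}
but since the coefficients of $\widehat{\psi}_j$ contain negative powers of $|\xi|$, which are not in $L^\infty$ near the origin, one must instead exploit the $L^{1,1}$ regularity. The standard device (as in \cite{Ikehata2014,IkehataOnodera2017}) is to write, for $j=0,1$, $\widehat{\psi}_j(\xi)=P_{\psi_j}+(\widehat{\psi}_j(\xi)-P_{\psi_j})$ and use $|\widehat{\psi}_j(\xi)-P_{\psi_j}|\lesssim|\xi|\,\|\psi_j\|_{L^{1,1}}$ together with $|P_{\psi_j}|\le\|\psi_j\|_{L^{1,1}}$; for $\psi_2$ the coefficient in \eqref{Pointwise_small} is more singular ($|\xi|^{-2}|\sin(\tfrac{|\xi|}{2}t)|^2$ and $|\xi|^{-1}$ and $t$), so one splits $\widehat{\psi}_2$ the same way and handles the remainder $\widehat{\psi}_2-P_{\psi_2}$ (which gains one power of $|\xi|$, cancelling one of the singular powers). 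Then each resulting term is a product of a multiplier from the list in Lemma \ref{Lemma_Basic_1} with either $|P_{\psi_j}|$ or $\|\psi_j\|_{L^{1,1}}$. Concretely: the $\widehat\psi_0$ terms yield $\|\chi_{\intt}|\xi|\mathrm e^{-c|\xi|^2t}\|_{L^2}\lesssim(1+t)^{-\frac{n+2}{4}}$ times $\|\psi_0\|_{L^{1,1}}$ and $\|\chi_{\intt}\mathrm e^{-c|\xi|^2t}\|_{L^2}\lesssim(1+t)^{-\frac n4}$ times $|P_{\psi_0}|$; the $\widehat\psi_1$ terms, using the factor $1+\tfrac{|\sin(|\xi|t)|}{|\xi|}$, give $(1+t)^{-\frac n4}\|\psi_1\|_{L^{1,1}}$ from the remainder (the extra $|\xi|$ kills the $|\xi|^{-1}$) and $\ml D_n(1+t)|P_{\psi_1}|$ from the constant part via the second estimate of the lemma; the $\widehat\psi_2$ terms give $\ml D_n(1+t)\|\psi_2\|_{L^{1,1}}$ from the remainder (the gained $|\xi|$ reduces $|\xi|^{-2}|\sin|^2$ to the shape controlled by $\ml D_n$, $t\cdot|\xi|$ and $|\xi|^0$ are even better) and $\widetilde{\ml D}_n(1+t)|P_{\psi_2}|$ from the constant part, using precisely \eqref{New_01} and the first/second estimates of Lemma \ref{Lemma_Basic_1}.

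The main obstacle is the bookkeeping for the $\psi_2$ contribution in the interior zone: one must verify that after extracting $P_{\psi_2}$, each of the three singular multipliers $t$, $|\xi|^{-2}|\sin(\tfrac{|\xi|}{2}t)|^2$, $|\xi|^{-1}$ multiplied by the extra $|\xi|$ from $\widehat{\psi}_2-P_{\psi_2}$ produces an $L^2$ bound no worse than $\ml D_n(1+t)$, and that the constant-part terms are exactly matched by $\widetilde{\ml D}_n(1+t)$ via \eqref{New_01}; here one also needs $t\,\|\chi_{\intt}|\xi|\mathrm e^{-c|\xi|^2t}\|_{L^2}\lesssim t(1+t)^{-\frac{n+2}{4}}\lesssim(1+t)^{1-\frac{n+2}{4}}=(1+t)^{\frac{2-n}{4}}$, which is $\lesssim\ml D_n(1+t)$ for $n\ge3$, and similarly $\|\chi_{\intt}|\xi|^{-2}|\sin(\tfrac{|\xi|}2 t)|^2\mathrm e^{-c|\xi|^2t}\|$ against $|P_{\psi_2}|$ is the content of \eqref{New_01}. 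Once these elementary comparisons of exponents are checked (using $n\ge3$ to ensure all the $\omega$-integrals in Lemma \ref{Lemma_Basic_1} converge at the origin), summing the three zones and collecting the dominant terms in each data component gives precisely the asserted estimate.
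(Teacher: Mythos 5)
Your proposal is correct and follows essentially the same route as the paper: the paper also bounds $\widehat{\psi}$ in the interior zone by the pointwise estimate \eqref{Pointwise_small}, splits each datum via $|\widehat{\psi}_j|\lesssim|\xi|\,\|\psi_j\|_{L^{1,1}}+|P_{\psi_j}|$ (Lemma 3.1 of \cite{Ikehata2004}, which is exactly your mean-plus-remainder decomposition), applies Lemma \ref{Lemma_Basic_1} to each resulting multiplier, and absorbs the bounded and exterior zones through the exponential decay in \eqref{Pointwise_Middle} and \eqref{Pointwise_Large}. Your exponent bookkeeping for the $\psi_2$ terms (the $t$, $|\xi|^{-2}|\sin(\tfrac{|\xi|}{2}t)|^2$ and $|\xi|^{-1}$ multipliers against $\|\psi_2\|_{L^{1,1}}$ and $|P_{\psi_2}|$, with $n\geqslant 3$ needed for the $|\xi|^{-1}$ singularity) matches the paper's computation.
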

    \begin{remark}\label{Rem_Trivial_psi_2}
    	The nontrivial assumption for $\psi_2$ not only influences on the decay rate of total estimates, but also gives a strong restriction on dimensions. The main reason is that when $\psi_2\not\equiv0$, there is a strong singularity for $|\xi|\to 0$ for its corresponding multiplier in the norm, i.e. $\|\chi_{\intt}(\xi)|\xi|^{-1}\mathrm{e}^{-c|\xi|^2t}\|_{L^2}$ for the lower dimensions $n=1,2$. Nevertheless, this phenomenon disappears if $\psi_2\equiv0$. One may see \eqref{Last_01} later.
    \end{remark}
    %\begin{remark}\label{Rem_Regularity}
    %In the consideration of estimates for higher-order energy $\|\,|D|^s\psi(t,\cdot)\|_{L^2}$, originated from the pointwise estimates for large frequencies, we just need $H^{\max\{s-4,0\}}$ regularity for $\psi_2$ with $s\in(0,\infty)$.  Namely, we discover the smoothing effect for the viscous Blackstock's model \eqref{Linear_Blackstock_Eq},  especially, $L^2$ regular data $\psi_2$ is sufficient to control $\psi(t,\cdot)$ in the $\dot{H}^4$ norm.
    %\end{remark}
    \begin{proof}
    	%Here, we will give the proof for $\psi_2\not\equiv0$ only. For another estimate when $\psi_2\equiv0$, it just a small modification of the next steps. 
    	By using Lemma 3.1 in \cite{Ikehata2004}, the solution in the phase space can be estimated by
    	\begin{align*}
    		|\widehat{\psi}|\lesssim|\xi|\sum\limits_{j=0,1,2}|\widehat{K}_j|\,\|\psi_j\|_{L^{1,1}}+\sum\limits_{j=0,1,2}|\widehat{K}_j|\,|P_{\psi_j}|=:\widehat{I}_1(t,|\xi|)+\widehat{I}_2(t,|\xi|).
    	\end{align*}
    	According to WKB analysis, the forthcoming discussion on the estimates will be divided into three parts for different sizes of frequencies. Concerning the first case $|\xi|\to0$, from the derived estimate \eqref{Pointwise_small}, one has
    	\begin{align*}
    		&\|\chi_{\intt}(\xi)\widehat{I}_1(t,|\xi|)\|_{L^2 }\\
    		&\qquad\lesssim\|\chi_{\intt}(\xi)|\xi|\mathrm{e}^{-c|\xi|^2t}\|_{L^2 }\|\psi_0\|_{L^{1,1} }+\|\chi_{\intt}(\xi)(|\xi|+|\sin(|\xi|t)|)\mathrm{e}^{-c|\xi|^2t}\|_{L^2 }\|\psi_1\|_{L^{1,1} }\\
    		&\qquad\quad+\left\|\chi_{\intt}(\xi)\left(t|\xi|+|\xi|^{-1}\left|\sin\left(\tfrac{|\xi|}{2}t\right)\right|^2+1\right)\mathrm{e}^{-c|\xi|^2t}\right\|_{L^2 }\|\psi_2\|_{L^{1,1} }\\
    		&\qquad\lesssim (1+t)^{-\frac{1}{2}-\frac{n}{4}}\|\psi_0\|_{L^{1,1} }+(1+t)^{-\frac{n}{4}}\|\psi_1\|_{L^{1,1} }+\ml{D}_n(1+t)\|\psi_2\|_{L^{1,1} }
    	\end{align*}
    	for $n\geqslant 1$, moreover,
    	\begin{align}\label{Last_01}
    		&\|\chi_{\intt}(\xi)\widehat{I}_2(t,|\xi|)\|_{L^2 }\notag\\
    		&\qquad\lesssim\|\chi_{\intt}(\xi)\mathrm{e}^{-c|\xi|^2t}\|_{L^2 }|P_{\psi_0}|+\|\chi_{\intt}(\xi)(1+|\xi|^{-1}|\sin(|\xi|t)|)\mathrm{e}^{-c|\xi|^2t}\|_{L^2 }|P_{\psi_1}|\notag\\
    		&\qquad \quad+\left\|\chi_{\intt}(\xi)\left(t+|\xi|^{-2}\left|\sin\left(\tfrac{|\xi|}{2}t\right)\right|^2+|\xi|^{-1}\right)\mathrm{e}^{-c|\xi|^2t}\right\|_{L^2 }|P_{\psi_2}|\notag\\
    		&\qquad\lesssim (1+t)^{-\frac{n}{4}}|P_{\psi_0}|+\ml{D}_n(1+t)|P_{\psi_1}|+\widetilde{\ml{D}}_n(1+t)|P_{\psi_2}|
    	\end{align}
    	for $n\geqslant 3$, where we used Lemma \ref{Lemma_Basic_1}. Thus, the Plancherel theorem implies for $n\geqslant 3$
    	\begin{align*}
    		&\|\chi_{\intt}(D)\psi(t,\cdot)\|_{L^2 }\\
    		&\qquad\lesssim (1+t)^{-\frac{1}{2}-\frac{n}{4}}\|\psi_0\|_{L^{1,1} }+(1+t)^{-\frac{n}{4}}\|\psi_1\|_{L^{1,1} }+\ml{D}_n(1+t)\|\psi_2\|_{L^{1,1} }\\
    		&\qquad\quad+(1+t)^{-\frac{n}{4}}|P_{\psi_0}|+\ml{D}_n(1+t)|P_{\psi_1}|+\widetilde{\ml{D}}_n(1+t)|P_{\psi_2}|.
    	\end{align*}
    	Let us turn to the case $|\xi|\to\infty$, the pointwise estimate \eqref{Pointwise_Large} implies
    	\begin{align*}
    		\|\chi_{\extt}(D)\psi(t,\cdot)\|_{L^2 }\lesssim \mathrm{e}^{-ct}\left(\|\psi_0\|_{L^2 }+\|\psi_1\|_{L^2 }+\|\psi_2\|_{L^2 }\right).
    	\end{align*}
    	To end the proof, we just need to consider some estimates for bounded frequencies. From \eqref{Pointwise_Middle} we get exponential decay estimates without additional assumption on the regularity of  initial data. 
    \end{proof}
    \begin{remark}\label{Rem_new_01}
    	In the case with general $\psi_2$ for $n=1,2$, we next will apply another idea to get upper bound estimates of solution to avoid a singularity as $|\xi|\to0$. We notice that $\widehat{K}_2(t,|\xi|)=\int_0^t\partial_t\widehat{K}_2(s,|\xi|)\mathrm{d}s$, since $\widehat{K}_2(0,|\xi|)=0$ from the initial condition of third data. Thus, from the representation of third kernel, it holds
    	\begin{align*}
    		\chi_{\intt}(\xi)|\partial_t\widehat{K}_2|\lesssim\chi_{\intt}(\xi)\left(|\xi|^{-1}\left|\sin\left(\tfrac{|\xi|}{2}s\right)\right|+1\right)\mathrm{e}^{-c|\xi|^2s},
    	\end{align*}
    	leading to
    	\begin{align*}
    		&\left\|\chi_{\intt}(\xi)\widehat{K}_2(t,|\xi|)\right\|_{L^2}\\
    		&\qquad\lesssim\int_0^t\left(\int_{\mb{R}^n}|\chi_{\intt}(\xi)\partial_t\widehat{K}_2(s,|\xi|)|^2\mathrm{d}\xi\right)^{1/2}\mathrm{d}s\\
    		&\qquad\lesssim\int_0^t\left(\left\|\chi_{\intt}(\xi)|\xi|^{-1}\left|\sin\left(\tfrac{|\xi|}{2}s\right)\right|\mathrm{e}^{-c|\xi|^2s}\right\|_{L^2}+\left\|\chi_{\intt}(\xi)\mathrm{e}^{-c|\xi|^2s}\right\|_{L^2}\right)\mathrm{d}s\\
    		&\qquad\lesssim\int_0^t\left(\ml{D}_n(1+s)+(1+s)^{-\frac{n}{4}}\right)\mathrm{d}s\lesssim\begin{cases}
    			(1+t)^{\frac{3}{2}}&\mbox{if}\ \ n=1,\\
    			(1+t)(\ln(\mathrm{e}+t))^{\frac{1}{2}}&\mbox{if}\ \ n=2,
    		\end{cases}
    	\end{align*}
    	where we applied Minkowski's integral inequality and Lemma \ref{Lemma_Basic_1}. By this way, we immediately conclude
    	\begin{align*}
    		&\|\psi(t,\cdot)\|_{L^2 }\\
    		&\quad\lesssim (1+t)^{-\frac{1}{2}-\frac{n}{4}}\|\psi_0\|_{L^2\cap L^{1,1} }+(1+t)^{-\frac{n}{4}}\|\psi_1\|_{L^2\cap L^{1,1} }+\ml{D}_n(1+t)\|\psi_2\|_{L^2\cap L^{1,1}}\\
    		&\quad\quad+(1+t)^{-\frac{n}{4}}|P_{\psi_0}|+\ml{D}_n(1+t)|P_{\psi_1}|+\begin{cases}
    			(1+t)^{\frac{3}{2}}|P_{\psi_2}|&\mbox{if}\ \ n=1,\\
    			(1+t)(\ln(\mathrm{e}+t))^{\frac{1}{2}}|P_{\psi_2}|&\mbox{if}\ \ n=2.
    		\end{cases}
    	\end{align*}
    \end{remark}
    \begin{remark}
    	Let us recall $(L^2\cap L^{1,1})-L^2$ estimates in Theorem \ref{Thm_Energy} and Remark \ref{Rem_new_01}. By applying the same approach used to prove existence of solutions in the classical energy space to the Cauchy problem for free wave equation (e.g. Chapter 14 in \cite{Ebert-Reissig-book}), and the representation of kernels associated with the mean value theorem, we may conclude the existence of solutions to the Blackstock's model \eqref{Linear_Blackstock_Eq} such that $\psi\in \ml{C}([0,\infty),L^2)$, where we assume initial data belonging to $L^2\cap L^{1,1}$. The existence result can be extended to $\psi\in\ml{C}([0,\infty),H^{s+4})\cap\ml{C}^1([0,\infty),H^{s+2})\cap \ml{C}^2([0,\infty),H^{s})$ for any $s\geqslant0$ with the aid of Proposition \ref{Prop_Estimate_Cru_Lin}.
    \end{remark}

    \subsection{$L^2-L^2$ decay estimates with additional $L^1$ regularity} \label{Section additional L1 reg}
    In this subsection, we will prepare some total $(L^2\cap L^1)-L^2$ estimates and $L^2-L^2$ estimates for the solution and its derivatives to the linearized problem \eqref{Linear_Blackstock_Eq}, which will contribute to the establishment of global (in time) existence of Sobolev solution to the nonlinear problem \eqref{Blackstock_Eq-New-Nonlinear} in Section \ref{Sec_GESDS}.

    Combining the asymptotic expansions for the characteristic roots from Part I and Part II in Subsection \ref{Sub_sec_asymptotic_behavior}
    with the representations for $\widehat{\psi}_t$, $\widehat{\psi}_{tt}$ from Part IV, we can easily get the next pointwise estimates:
    \begin{align*}
    	\chi_{\intt}(\xi)|\widehat{\psi}_t|&\lesssim\chi_{\intt}(\xi)\mathrm{e}^{-c|\xi|^2t}\left(|\widehat{\psi}_0|+|\widehat{\psi}_1|+(|\xi|^{-1}|\sin(|\xi|t)| +1)|\widehat{\psi}_2|\right),\\
    	\chi_{\intt}(\xi)|\widehat{\psi}_{tt}|&\lesssim\chi_{\intt}(\xi)\mathrm{e}^{-c|\xi|^2t}\left(|\widehat{\psi}_0|+|\widehat{\psi}_1|+|\widehat{\psi}_2|\right),\\
    	\chi_{\extt}(\xi)|\partial_t^j\widehat{\psi}|&\lesssim\chi_{\extt}(\xi)\mathrm{e}^{-ct}\left(|\xi|^{2j-2}|\widehat{\psi}_0|+|\xi|^{2j-2}|\widehat{\psi}_1|+|\xi|^{2j-4}|\widehat{\psi}_2|\right),\\
    	\chi_{\bdd}(\xi)|\partial_t^j\widehat{\psi}|&\lesssim\chi_{\bdd}(\xi)\mathrm{e}^{-ct}\left(|\widehat{\psi}_0|+|\widehat{\psi}_1|+|\widehat{\psi}_2|\right),
    \end{align*}
    for $j=1,2$.
    
    Therefore, by using Lemma \ref{Lemma_Basic_1} and combining
    \eqref{Pointwise_small}, \eqref{Pointwise_Large}, \eqref{Pointwise_Middle} and the above estimates for $\widehat{\psi}_t,\widehat{\psi}_{tt}$ in the different zones of the phase space, the following propositions for the energy estimates hold.
    \begin{prop}\label{Prop_Estimate_Cru_Lin}
    	Let us assume initial data $\psi_j\in H^{s+4-2j}\cap L^{1} $ for $j=0,1,2$ and $s\in[0,\infty)$. Then, the solution  to Blackstock's model \eqref{Linear_Blackstock_Eq} for $n\geqslant5$ fulfills for $t\geqslant 0$ the following estimates:
    	\begin{align*}
    		\|\partial_t^j\psi(t,\cdot)\|_{L^2}&\lesssim(1+t)^{\frac{2-j}{2}-\frac{n}{4}}\left(\|\psi_0\|_{\dot{H}^{\max\{2j-2,0\}}\cap L^1}+\|\psi_1\|_{\dot{H}^{\max\{2j-2,0\}}\cap L^1}\right.\\
    		&\qquad\qquad\qquad\quad\ \left.+\|\psi_2\|_{L^2\cap L^1}\right),
    	\end{align*}
    	and
    	\begin{align*}
    		\|\partial_t^j\psi(t,\cdot)\|_{\dot{H}^{s+4-2j}}&\lesssim(1+t)^{-\frac{s+2-j}{2}-\frac{n}{4}}\left(\|\psi_0\|_{\dot{H}^{s+2+\max\{2-2j,0\}}\cap L^1}+\|\psi_1\|_{\dot{H}^{s+2}\cap L^1}\right.\\
    		&\qquad\qquad\qquad\qquad\ \ \left.+\|\psi_2\|_{\dot{H}^s\cap L^1}\right).
    	\end{align*}
    \end{prop}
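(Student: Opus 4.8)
The plan is to split $\partial_t^j\psi$ into its low-, intermediate- and high-frequency parts by means of the cut-off functions $\chi_{\intt}$, $\chi_{\bdd}$, $\chi_{\extt}$ from Subsection \ref{Sub_sec_asymptotic_behavior}, to estimate the $L^2$- (resp.\ $\dot{H}^{s+4-2j}$-) norm of each part via Plancherel's theorem, and finally to sum the three contributions. The ingredients fed into this scheme are: the pointwise bounds in the phase space for $\widehat{\psi}$ in \eqref{Pointwise_small}, \eqref{Pointwise_Large}, \eqref{Pointwise_Middle}, together with the pointwise estimates for $\widehat{\psi}_t$, $\widehat{\psi}_{tt}$ and $\partial_t^j\widehat{\psi}$ in the three zones recorded immediately before the statement; the $L^2$-norms of the model multipliers collected in Lemma \ref{Lemma_Basic_1}; and the elementary bound $\|\widehat{\psi}_k\|_{L^\infty}\leqslant\|\psi_k\|_{L^1}$.

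The intermediate and high frequencies come first and contribute only exponentially decaying terms. On $\ml{Z}_{\bdd}$ every weight $|\xi|^{\sigma}$ is pinched between two positive constants, so $\chi_{\bdd}(\xi)|\partial_t^j\widehat{\psi}|\lesssim\chi_{\bdd}(\xi)\mathrm{e}^{-ct}(|\widehat{\psi}_0|+|\widehat{\psi}_1|+|\widehat{\psi}_2|)$ yields $\|\chi_{\bdd}(D)\partial_t^j\psi\|_{\dot{H}^{s+4-2j}}\lesssim\mathrm{e}^{-ct}(\|\psi_0\|_{L^1}+\|\psi_1\|_{L^1}+\|\psi_2\|_{L^1})$, which is absorbed by the claimed right-hand sides since $\mathrm{e}^{-ct}\lesssim(1+t)^{-N}$ for every $N\geqslant0$ (and $n\geqslant5$ makes the relevant $N$ nonnegative). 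On $\ml{Z}_{\extt}$ all negative powers of $|\xi|$ occurring in \eqref{Pointwise_Large} and in the exterior estimate for $\partial_t^j\widehat{\psi}$ are bounded by powers of $N^{-1}$, while the positive powers match the Riesz-potential regularity assumed on the data: multiplying the appropriate pointwise bound by $|\xi|^{s+4-2j}$ and integrating produces exactly $\|\psi_0\|_{\dot{H}^{s+2+\max\{2-2j,0\}}}$, $\|\psi_1\|_{\dot{H}^{s+2}}$, $\|\psi_2\|_{\dot{H}^{s}}$ for the $\dot{H}^{s+4-2j}$ estimate (resp.\ $\|\psi_0\|_{\dot{H}^{\max\{2j-2,0\}}}$, $\|\psi_1\|_{\dot{H}^{\max\{2j-2,0\}}}$, $\|\psi_2\|_{L^2}$ for the plain $L^2$ estimate), each carrying a harmless $\mathrm{e}^{-ct}$ prefactor.

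The decay rates genuinely come from $\ml{Z}_{\intt}$. Using $\|\widehat{\psi}_k\|_{L^\infty}\leqslant\|\psi_k\|_{L^1}$, each summand of the pointwise bounds, multiplied by $\chi_{\intt}(\xi)|\xi|^{s+4-2j}$, reduces to $\|\psi_k\|_{L^1}$ times the $L^2$-norm over $\ml{Z}_{\intt}$ of one of the model multipliers $|\xi|^{m}\mathrm{e}^{-c|\xi|^2t}$, $|\xi|^{m}|\sin(|\xi|t)|\mathrm{e}^{-c|\xi|^2t}$, $|\xi|^{m}|\sin(\tfrac{|\xi|}{2}t)|^2\mathrm{e}^{-c|\xi|^2t}$ or $t\,|\xi|^{m}\mathrm{e}^{-c|\xi|^2t}$, where the exponent $m$ equals $s+4-2j$ lowered by $0$, $1$ or $2$ according to the accompanying singular factor. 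Whenever $m\geqslant0$ one simply bounds $|\sin|\leqslant1$ (or $|\sin|^2\leqslant1$) and invokes the first inequality of Lemma \ref{Lemma_Basic_1}, which gives $(1+t)^{-m/2-n/4}$, together with $t(1+t)^{-m/2-n/4}\lesssim(1+t)^{1-m/2-n/4}$ for the $t$-weighted term; for the genuinely singular multipliers — which occur only in the $\widehat{\psi}_2$-contribution to the $L^2$ estimate with $j=0$ — one uses instead the remaining inequalities of Lemma \ref{Lemma_Basic_1}, namely $\ml{D}_n(1+t)$ for $|\xi|^{-1}|\sin(|\xi|t)|$ and for $|\xi|^{-1}$, and $\widetilde{\ml{D}}_n(1+t)$ for $|\xi|^{-2}|\sin(\tfrac{|\xi|}{2}t)|^2$. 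Selecting the slowest of the rates attached to each datum yields exactly $(1+t)^{\frac{2-j}{2}-\frac n4}$ in the $L^2$ case and $(1+t)^{-\frac{s+2-j}{2}-\frac n4}$ in the $\dot{H}^{s+4-2j}$ case; Plancherel's theorem and the sum of the three zones conclude the proof.

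I do not expect a serious obstacle: the argument is careful bookkeeping built on the already-established phase-space estimates and on Lemma \ref{Lemma_Basic_1}. The one place deserving attention is the low-frequency accounting in the $L^2$ estimate with $j=0$, where one must check that the $t$-weight carried by the $\widehat{\psi}_2$-parts of $\widehat{K}_0$, $\widehat{K}_2$ and the singular factors $|\xi|^{-1}$, $|\xi|^{-2}|\sin(\tfrac{|\xi|}{2}t)|^2$ give the slowest — yet still decaying — contributions; this is precisely where $n\geqslant5$ enters, since $t(1+t)^{-n/4}$ and $\widetilde{\ml{D}}_n(1+t)$ decay if and only if $n>4$, while $|\xi|^{-1}\mathrm{e}^{-c|\xi|^2t}$ is square-integrable near the origin only for $n\geqslant3$. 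For the $\dot{H}^{s+4-2j}$ estimates the extra factor $|\xi|^{s+4-2j}$ cancels these singularities, so no dimensional restriction would be needed there; we nevertheless keep $n\geqslant5$ throughout for a uniform statement.
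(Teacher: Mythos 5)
Your proposal is correct and follows essentially the same route as the paper: the paper obtains Proposition \ref{Prop_Estimate_Cru_Lin} exactly by combining the zone-wise pointwise estimates \eqref{Pointwise_small}, \eqref{Pointwise_Large}, \eqref{Pointwise_Middle} and those for $\widehat{\psi}_t$, $\widehat{\psi}_{tt}$ with Lemma \ref{Lemma_Basic_1}, the bound $|\widehat{\psi}_k|\leqslant\|\psi_k\|_{L^1}$ at low frequencies and Plancherel's theorem, which is precisely your bookkeeping. The only slip is the remark that singular multipliers occur only in the $j=0$ case of the $L^2$ estimate (the $j=1$ case also carries $|\xi|^{-1}|\sin(|\xi|t)|$ from $\widehat{\psi}_2$), but your scheme already treats it via $\ml{D}_n(1+t)$ and the stated rates are unaffected.
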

    
    \begin{prop}\label{Prop_Estimate_Cru_Lin_2}
    	Let us assume initial data $\psi_0\equiv0\equiv\psi_1$ and $\psi_2\in \dot{H}^{s} $ for $s\in[0,\infty)$. Then, the solution to Blackstock's model \eqref{Linear_Blackstock_Eq} for $n\geqslant5$ fulfills  for $t\geqslant 0$ the following estimates:
    	\begin{align*}
    		%		\|\partial_t^j\psi(t,\cdot)\|_{L^2}&\lesssim(1+t)^{\frac{2-j}{2}-\frac{n}{4}}\left(\|\psi_0\|_{\dot{H}^{\max\{2j-2,0\}}\cap L^1}+\|\psi_1\|_{\dot{H}^{\max\{2j-2,0\}}\cap L^1}+\|\psi_2\|_{L^2\cap L^1}\right),\\
    		\|\partial_t^j\psi(t,\cdot)\|_{\dot{H}^{s+4-2j}}&\lesssim(1+t)^{-\frac{2-j}{2}}\|\psi_2\|_{\dot{H}^s}.
    	\end{align*}
    \end{prop}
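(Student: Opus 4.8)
The plan is to isolate the contribution of $\widehat{\psi}_2$ alone and to track the multipliers $\chi_{\intt}(\xi)\widehat{K}_2$ and $\chi_{\extt}(\xi)\widehat{K}_2$ directly in homogeneous Sobolev norms, with no recourse to $L^1$ data. Since $\psi_0\equiv\psi_1\equiv0$, Part IV gives $\widehat{\psi}=\widehat{K}_2\,\widehat{\psi}_2$ and $\partial_t^j\widehat{\psi}=(\partial_t^j\widehat{K}_2)\,\widehat{\psi}_2$ for $j=0,1,2$; thus $\|\partial_t^j\psi(t,\cdot)\|_{\dot H^{s+4-2j}}$ equals the $L^2$ norm of $|\xi|^{s+4-2j}\,|\partial_t^j\widehat{K}_2|\,|\widehat{\psi}_2|$, and it suffices to bound the Fourier-multiplier $|\xi|^{s+4-2j}\,\partial_t^j\widehat{K}_2(t,|\xi|)$ uniformly in $\xi$ by $C(1+t)^{-(2-j)/2}|\xi|^s$, zone by zone, after which Plancherel closes the estimate.

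First I would treat the low-frequency zone $\ml{Z}_{\intt}(\varepsilon)$. From the asymptotic representation of $\chi_{\intt}(\xi)\widehat{K}_2$ recalled in Subsection~\ref{Sub_sec_asymptotic_behavior} one reads off $|\chi_{\intt}(\xi)\widehat{K}_2|\lesssim \chi_{\intt}(\xi)(t+|\xi|^{-2}|\sin(\tfrac{|\xi|}{2}t)|^2+|\xi|^{-1})\mathrm{e}^{-c|\xi|^2t}$ from \eqref{New_2}; here, however, the crucial gain is that multiplying by $|\xi|^{s+4}$ kills the singular factors, so $|\xi|^{s+4}|\widehat{K}_2|\lesssim \chi_{\intt}(\xi)|\xi|^s\big(|\xi|^4 t+|\xi|^2+|\xi|^3\big)\mathrm{e}^{-c|\xi|^2t}\lesssim \chi_{\intt}(\xi)|\xi|^s\,\mathrm{e}^{-c|\xi|^2t}$ uniformly (using $|\xi|^4 t\,\mathrm{e}^{-c|\xi|^2t}\lesssim 1$ and $|\xi|\ll1$). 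For $j=1$ one uses the bound $\chi_{\intt}(\xi)|\partial_t\widehat{K}_2|\lesssim\chi_{\intt}(\xi)(|\xi|^{-1}|\sin(\tfrac{|\xi|}{2}t)|+1)\mathrm{e}^{-c|\xi|^2t}$ from Remark~\ref{Rem_new_01}, so $|\xi|^{s+2}|\partial_t\widehat{K}_2|\lesssim \chi_{\intt}(\xi)|\xi|^s\mathrm{e}^{-c|\xi|^2t}$; for $j=2$ the pointwise estimate $\chi_{\intt}(\xi)|\widehat{\psi}_{tt}|\lesssim\chi_{\intt}(\xi)\mathrm{e}^{-c|\xi|^2t}|\widehat{\psi}_2|$ already recorded in Subsection~\ref{Section additional L1 reg} gives $|\xi|^s|\partial_t^2\widehat{K}_2|\lesssim\chi_{\intt}(\xi)|\xi|^s\mathrm{e}^{-c|\xi|^2t}$. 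In every case the multiplier acting on $\widehat{\psi}_2$ is $\lesssim\chi_{\intt}(\xi)|\xi|^s\mathrm{e}^{-c|\xi|^2t}$, which is bounded (not just in $L^2$), hence $\|\chi_{\intt}(D)\partial_t^j\psi(t,\cdot)\|_{\dot H^{s+4-2j}}\lesssim \|\psi_2\|_{\dot H^s}$; and since the heat factor is uniformly $\leqslant1$, this also absorbs into $(1+t)^{-(2-j)/2}\|\psi_2\|_{\dot H^s}$ because $(1+t)^{-(2-j)/2}$ is bounded below only for $j=2$ — so for $j=0,1$ we must do slightly better, namely use $|\xi|^{s+4-2j}|\partial_t^j\widehat{K}_2|\lesssim\chi_{\intt}(\xi)|\xi|^{s}\big(|\xi|^{2(2-j)}t^{?}\big)$-type decay; more honestly, the surviving power of $|\xi|$ beyond $|\xi|^s$ is at least $|\xi|^{2(2-j)}$ (from the two extra derivatives in space encoded by the factors $|\xi|^2 t$, $|\xi|$, $1$ after cancellation), and $|\xi|^{2(2-j)}\mathrm{e}^{-c|\xi|^2t}\lesssim(1+t)^{-(2-j)}$, giving even $(1+t)^{-(2-j)}$; the weaker claimed rate $(1+t)^{-(2-j)/2}$ then follows a fortiori.

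For the high-frequency zone $\ml{Z}_{\extt}(N)$ I would invoke $\chi_{\extt}(\xi)|\partial_t^j\widehat{\psi}|\lesssim\chi_{\extt}(\xi)\mathrm{e}^{-ct}(|\xi|^{2j-4}|\widehat{\psi}_2|)$ from Subsection~\ref{Section additional L1 reg} (the $\psi_0,\psi_1$ terms are absent), whence $|\xi|^{s+4-2j}|\partial_t^j\widehat{\psi}|\lesssim\mathrm{e}^{-ct}|\xi|^{s}|\widehat{\psi}_2|$, and the exponential factor $\mathrm{e}^{-ct}$ beats any polynomial, so $\|\chi_{\extt}(D)\partial_t^j\psi(t,\cdot)\|_{\dot H^{s+4-2j}}\lesssim\mathrm{e}^{-ct}\|\psi_2\|_{\dot H^s}\lesssim(1+t)^{-(2-j)/2}\|\psi_2\|_{\dot H^s}$. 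For the bounded zone $\ml{Z}_{\bdd}(\varepsilon,N)$ we use $\chi_{\bdd}(\xi)|\partial_t^j\widehat{\psi}|\lesssim\chi_{\bdd}(\xi)\mathrm{e}^{-ct}(|\widehat{\psi}_0|+|\widehat{\psi}_1|+|\widehat{\psi}_2|)=\chi_{\bdd}(\xi)\mathrm{e}^{-ct}|\widehat{\psi}_2|$, and on $\varepsilon\leqslant|\xi|\leqslant N$ the factor $|\xi|^{s+4-2j}$ is comparable to $|\xi|^s$ up to a constant depending on $\varepsilon,N$, so again the exponential decay gives the claim. Summing the three zonal estimates via $1=\chi_{\intt}+\chi_{\bdd}+\chi_{\extt}$ and Plancherel yields the asserted bound.

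\textbf{Main obstacle.} The only delicate point is the bookkeeping in the low-frequency zone for $j=0$ and $j=1$: one must verify that after multiplying $\chi_{\intt}(\xi)\partial_t^j\widehat{K}_2$ by $|\xi|^{s+4-2j}$ the residual non-negative power of $|\xi|$ (which I claim is at least $2(2-j)$, coming from the explicit factors $t+|\xi|^{-2}|\sin(\tfrac{|\xi|}{2}t)|^2+|\xi|^{-1}$ in $\widehat{K}_2$ and the analogous one in $\partial_t\widehat{K}_2$) combines with $\mathrm{e}^{-c|\xi|^2t}$ to produce $(1+t)^{-(2-j)}$, of which $(1+t)^{-(2-j)/2}$ is a weakening — in particular the term $|\xi|^{-2}|\sin(\tfrac{|\xi|}{2}t)|^2$ must be handled by the elementary bound $|\sin\theta|^2\leqslant\theta^2$ giving $|\xi|^{-2}|\sin(\tfrac{|\xi|}{2}t)|^2\lesssim t^2$, combined with $|\xi|^4 t^2\mathrm{e}^{-c|\xi|^2t}\lesssim1$, rather than by the refined Lemma~\ref{Lemma_Basic_1} which is unnecessary here. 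Everything else is a routine application of Plancherel's theorem and the pointwise estimates already established in Subsection~\ref{Sub_sec_asymptotic_behavior} and Subsection~\ref{Section additional L1 reg}.
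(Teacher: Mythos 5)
Your overall strategy --- reduce to the multiplier $|\xi|^{s+4-2j}\partial_t^j\widehat{K}_2$, bound it pointwise in the three zones using the WKB estimates of Subsections~\ref{Sub_sec_asymptotic_behavior} and \ref{Section additional L1 reg}, and conclude by Plancherel --- is exactly the (implicit) argument of the paper, and your treatment of $\ml{Z}_{\bdd}$ and $\ml{Z}_{\extt}$ is fine. The problem is in the low-frequency bookkeeping, precisely the point you flag as the main obstacle. First, the claim that the surviving power of $|\xi|$ beyond $|\xi|^s$ is at least $2(2-j)$, ``giving even $(1+t)^{-(2-j)}$'', is false: after multiplying \eqref{New_2} by $|\xi|^{s+4}$ the three terms are $|\xi|^{s+4}t$, $|\xi|^{s+2}|\sin(\tfrac{|\xi|}{2}t)|^2$ and $|\xi|^{s+3}$, each times $\mathrm{e}^{-c|\xi|^2t}$, and for instance $\sup_{|\xi|\leqslant\varepsilon}|\xi|^{4}t\,\mathrm{e}^{-c|\xi|^2t}\simeq t^{-1}$ for large $t$, so no rate better than $(1+t)^{-1}$ can come out of these bounds for $j=0$; the a fortiori step is therefore built on a wrong intermediate claim. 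Second --- and this is the step that would actually fail if followed literally --- your proposed handling of the singular term via $|\sin\theta|^2\leqslant\theta^2$ gives $|\xi|^{s+4}\cdot|\xi|^{-2}|\sin(\tfrac{|\xi|}{2}t)|^2\mathrm{e}^{-c|\xi|^2t}\lesssim|\xi|^{s}\,|\xi|^{4}t^{2}\mathrm{e}^{-c|\xi|^2t}\lesssim|\xi|^{s}$, i.e.\ only boundedness; that suffices for $j=2$ (rate $1$) but not for $j=0$, where the stated rate is $(1+t)^{-1}$.

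The repair is immediate and is in fact already contained in your first display: bound $|\sin|\leqslant1$ throughout, so that every low-frequency term has the form $|\xi|^{s}\,|\xi|^{a}t^{b}\mathrm{e}^{-c|\xi|^2t}$ with $a-2b\geqslant 2-j$ (namely $(a,b)\in\{(4,1),(2,0),(3,0)\}$ for $j=0$, using \eqref{New_2}, and $(a,b)\in\{(1,0),(2,0)\}$ for $j=1$, using the bound of Remark~\ref{Rem_new_01}), and then $\sup_{|\xi|\leqslant\varepsilon}|\xi|^{a}t^{b}\mathrm{e}^{-c|\xi|^2t}\lesssim(1+t)^{-\frac{a-2b}{2}}\leqslant(1+t)^{-\frac{2-j}{2}}$, which is exactly the claimed rate --- neither Lemma~\ref{Lemma_Basic_1} nor any assumption on $n$ is needed for this proposition. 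With that correction, together with $\partial_t^j\widehat{\psi}=\partial_t^j\widehat{K}_2\,\widehat{\psi}_2$ and the exponential bounds on $\ml{Z}_{\bdd}(\varepsilon,N)$ and $\ml{Z}_{\extt}(N)$, your proof closes and coincides with the paper's.
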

    
    To end this part, we underline that the homogeneous Sobolev space $\dot{H}^s$ for initial data can be replaced by the inhomogeneous Soblev space $H^s$ since $|\xi|^{2s}\leqslant \langle \xi\rangle^{2s}$ for any $\xi\in\mb{R}^n$. Moreover, the derivation of Proposition \ref{Prop_Estimate_Cru_Lin_2} contributes to the nonlinear problem in Section \ref{Sec_GESDS}, therefore, with this aim, we just need to consider nontrivial $\psi_2$. The corresponding estimates for nontrivial $\psi_0$ and $\psi_1$ also can be obtained easily.

    \subsection{Asymptotic profiles and optimal estimates}
    To understand the asymptotic profiles of solutions, we first give some approximations to the kernels in the next proposition.
    \begin{prop}\label{Prop_01}
    	The following estimates:
    	\begin{align*}
    		\left|\chi_{\intt}(\xi)(\widehat{K}_j-\widehat{J}_j)\widehat{\psi}_j\right|&\lesssim\chi_{\intt}(\xi)|\xi|^{1-j}\mathrm{e}^{-c|\xi|^2t}|\widehat{\psi}_j|
    		%		\left|\chi_{\intt}(\xi)\left(\widehat{K}_1(t,|\xi|)\widehat{\psi}_1(\xi)-\widehat{J}_1(t,|\xi|)\widehat{\psi}_1(\xi)\right)\right|&\lesssim\chi_{\intt}(\xi)\mathrm{e}^{-c|\xi|^2t}|\widehat{\psi}_1(\xi)|,\\
    		%	\left|\chi_{\intt}(\xi)\left(\widehat{K}_2(t,|\xi|)\widehat{\psi}_2(\xi)-\widehat{J}_2(t,|\xi|)\widehat{\psi}_2(\xi)\right)\right|&\lesssim\chi_{\intt}(\xi)|\xi|^{-1}\mathrm{e}^{-c|\xi|^2t}|\widehat{\psi}_2(\xi)|,
    	\end{align*}
    	hold for $j=0,1,2,$ with positive constants $c$, where $\widehat{J}_j=\widehat{J}_j(t,|\xi|)$ for $j=0,1,2$ are given by
    	\begin{align*}
    		\widehat{J}_0:=\mathrm{e}^{-\kappa|\xi|^2 t},\ \ 
    		\widehat{J}_1:=\frac{\sin(|\xi|t)}{|\xi|}\mathrm{e}^{-\frac{\delta}{2}|\xi|^2t},\ \ 
    		\widehat{J}_2:=-\frac{\cos(|\xi|t)}{|\xi|^2}\mathrm{e}^{-\frac{\delta}{2}|\xi|^2 t}+\frac{1}{|\xi|^2}\mathrm{e}^{-\kappa|\xi|^2 t}.
    	\end{align*}
    \end{prop}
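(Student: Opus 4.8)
The plan is to expand the kernels $\widehat{K}_j$ in the interior zone up to the precision of their leading oscillatory/diffusive terms and compare them with the explicit profiles $\widehat{J}_j$. Concretely, from the asymptotic representations of $\chi_{\intt}(\xi)\widehat{K}_j$ derived in Part IV of Subsection \ref{Sub_sec_asymptotic_behavior}, each kernel splits into an ``$\sin$/$\cos$ times $\mathrm{e}^{-\frac{\delta}{2}|\xi|^2t}$'' piece coming from the pair $\lambda_{1/2}=\pm i|\xi|-\frac{\delta}{2}|\xi|^2+\ml{O}(|\xi|^3)$ and a ``$\mathrm{e}^{-\kappa|\xi|^2t}$'' piece coming from $\lambda_3=-\kappa|\xi|^2+\ml{O}(|\xi|^3)$. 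The functions $\widehat{J}_j$ are precisely the result of dropping the $\ml{O}(|\xi|)$ corrections to the amplitudes and the $\ml{O}(|\xi|^3)t$ corrections in the exponents. So the proof is a bookkeeping of these two types of errors.

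First I would treat $\widehat{K}_0-\widehat{J}_0$. Writing $\chi_{\intt}(\xi)\widehat{K}_0=\chi_{\intt}(\xi)(\kappa|\xi|+\ml{O}(|\xi|^2))\sin(|\xi|t)\mathrm{e}^{-\frac{\delta}{2}|\xi|^2t+\ml{O}(|\xi|^3)t}+\chi_{\intt}(\xi)(1+\ml{O}(|\xi|))\mathrm{e}^{-\kappa|\xi|^2t+\ml{O}(|\xi|^3)t}$, the first summand is already $\ml{O}(|\xi|)\mathrm{e}^{-c|\xi|^2t}$ because $|\sin(|\xi|t)|\le 1$ and $\mathrm{e}^{\ml{O}(|\xi|^3)t}\lesssim \mathrm{e}^{\frac{\delta}{4}|\xi|^2t}$ for $|\xi|\ll1$. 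For the second summand, subtract $\widehat{J}_0=\mathrm{e}^{-\kappa|\xi|^2t}$: the amplitude error $\ml{O}(|\xi|)\mathrm{e}^{-\kappa|\xi|^2t+\ml{O}(|\xi|^3)t}$ is $\ml{O}(|\xi|)\mathrm{e}^{-c|\xi|^2t}$, and the exponent error is handled by the elementary bound $|\mathrm{e}^{-\kappa|\xi|^2t}-\mathrm{e}^{-\kappa|\xi|^2t+\ml{O}(|\xi|^3)t}| = \mathrm{e}^{-\kappa|\xi|^2t}|1-\mathrm{e}^{\ml{O}(|\xi|^3)t}|\lesssim |\xi|^3 t\,\mathrm{e}^{-c|\xi|^2t}\lesssim |\xi|\,\mathrm{e}^{-c|\xi|^2t}$, using $|1-\mathrm{e}^z|\le |z|\mathrm{e}^{|z|}$ and absorbing one power $|\xi|^2t\mathrm{e}^{-c|\xi|^2t}\lesssim 1$. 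This gives the $j=0$ estimate. The same scheme applied to $\widehat{K}_1$ (amplitude $1+\ml{O}(|\xi|)$ on $\frac{\sin(|\xi|t)}{|\xi|}\mathrm{e}^{-\frac{\delta}{2}|\xi|^2t}$, plus an $\ml{O}(1)\mathrm{e}^{-\kappa|\xi|^2t}$ term, where now $\widehat{J}_1$ keeps only the first; note $|\sin(|\xi|t)|/|\xi|\lesssim t\lesssim |\xi|^{-1}$ near $|\xi|=0$ but the honest bound is $|\sin(|\xi|t)|\le|\xi|t$ giving $\ml{O}(1)$ after the $|\xi|^{-1}$) yields $\ml{O}(1)\mathrm{e}^{-c|\xi|^2t}=\ml{O}(|\xi|^{1-1})\mathrm{e}^{-c|\xi|^2t}$ for $j=1$.

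The main obstacle is the $j=2$ case, where $\widehat{K}_2$ carries the strong singularity $|\xi|^{-2}$ and $\widehat{J}_2$ is the difference of two $|\xi|^{-2}$-singular terms. Here I would start from the representation $\chi_{\intt}(\xi)\widehat{K}_2=\chi_{\intt}(\xi)\frac{-\cos(|\xi|t)}{|\xi|^2+\ml{O}(|\xi|^3)}\mathrm{e}^{-\frac{\delta}{2}|\xi|^2t+\ml{O}(|\xi|^3)t}+\chi_{\intt}(\xi)\frac{1}{|\xi|^2+\ml{O}(|\xi|^3)}\mathrm{e}^{-\kappa|\xi|^2t+\ml{O}(|\xi|^3)t}$ and use $\frac{1}{|\xi|^2+\ml{O}(|\xi|^3)}=\frac{1}{|\xi|^2}(1+\ml{O}(|\xi|))$. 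Splitting the prefactor as $\frac{1}{|\xi|^2}+\ml{O}(|\xi|^{-1})$, the $\ml{O}(|\xi|^{-1})$ part of both terms contributes $\ml{O}(|\xi|^{-1})\mathrm{e}^{-c|\xi|^2t}$, which is exactly the claimed $\ml{O}(|\xi|^{1-2})\mathrm{e}^{-c|\xi|^2t}$. The remaining piece is $\frac{1}{|\xi|^2}\bigl(\mathrm{e}^{-\kappa|\xi|^2t+\ml{O}(|\xi|^3)t}-\mathrm{e}^{-\kappa|\xi|^2t}\bigr)-\frac{\cos(|\xi|t)}{|\xi|^2}\bigl(\mathrm{e}^{-\frac{\delta}{2}|\xi|^2t+\ml{O}(|\xi|^3)t}-\mathrm{e}^{-\frac{\delta}{2}|\xi|^2t}\bigr)$, i.e. $\widehat{K}_2-\widehat{J}_2$ up to the already-handled $\ml{O}(|\xi|^{-1})$ terms. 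Each exponent-error factor is bounded, as above, by $|\xi|^{-2}\cdot|\xi|^3 t\,\mathrm{e}^{-c|\xi|^2t}=|\xi|\cdot(|\xi|^2t)\,\mathrm{e}^{-c|\xi|^2t}\lesssim |\xi|\,\mathrm{e}^{-c|\xi|^2t}$, which is even better than the required $|\xi|^{-1}$; combined with $|\cos(|\xi|t)|\le1$ this closes the $j=2$ case. I expect the only subtlety to be keeping the $\ml{O}(|\xi|^3)t$ remainders uniformly controlled so that $\mathrm{e}^{\ml{O}(|\xi|^3)t}$ can always be dominated by $\mathrm{e}^{c'|\xi|^2t}$ for $\xi$ in the (small) support of $\chi_{\intt}$, and in renaming the various positive constants $c$ line by line, which is harmless.
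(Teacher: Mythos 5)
Your argument is correct and follows essentially the same route as the paper: it compares the interior-zone asymptotic representations of $\widehat{K}_j$ with $\widehat{J}_j$, bounding the $\ml{O}(|\xi|)$ amplitude corrections directly and the exponent errors via $\left|\mathrm{e}^{\ml{O}(|\xi|^3)t}-1\right|\mathrm{e}^{-c|\xi|^2t}\lesssim|\xi|^3t\,\mathrm{e}^{-c|\xi|^2t}\lesssim|\xi|\,\mathrm{e}^{-c|\xi|^2t}$, which is exactly the paper's estimate \eqref{Useful_1} written with $|1-\mathrm{e}^z|\leqslant|z|\mathrm{e}^{|z|}$ instead of the integral $\int_0^1\mathrm{e}^{\ml{O}(|\xi|^3)ts}\mathrm{d}s$. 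One cosmetic correction: in the $j=1$ amplitude-error term the clean bound is $|\sin(|\xi|t)|\leqslant1$, so that $\ml{O}(|\xi|)\cdot|\xi|^{-1}|\sin(|\xi|t)|=\ml{O}(1)$, whereas $|\sin(|\xi|t)|\leqslant|\xi|t$ alone would leave a factor $t$ that the exponential does not absorb to $\ml{O}(1)$; this does not affect the validity of your conclusion.
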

    \begin{proof}
    	Let us introduce a useful estimate
    	\begin{align}
    		\chi_{\intt}(\xi)\mathrm{e}^{-c|\xi|^2t}(\mathrm{e}^{\ml{O}(|\xi|^3)t}-1)&=\chi_{\intt}(\xi)t\,\ml{O}(|\xi|^3)\mathrm{e}^{-c|\xi|^2t}\int_0^1\mathrm{e}^{\ml{O}(|\xi|^3)ts}\mathrm{d}s\notag\\
    		&\lesssim \chi_{\intt}(\xi)|\xi|\mathrm{e}^{-c|\xi|^2t}.\label{Useful_1}
    	\end{align}
    	By direct computations and \eqref{Useful_1}, one arrives at
    	\begin{align*}
    		\left|\chi_{\intt}(\xi)(\widehat{K}_0-\widehat{J}_0)\widehat{\psi}_0\right|&\lesssim \chi_{\intt}(\xi)|\xi|\mathrm{e}^{-\frac{\delta}{2}|\xi|^2t}|\widehat{\psi}_0|\\
    		&\quad+\chi_{\intt}(\xi)\mathrm{e}^{-\kappa|\xi|^2t}(\mathrm{e}^{\ml{O}(|\xi|^3)t}-1)|\widehat{\psi}_0|\\
    		&\quad+\chi_{\intt}(\xi)\ml{O}(|\xi|)\mathrm{e}^{-\kappa|\xi|^2t+\ml{O}(|\xi|^3)t}|\widehat{\psi}_0|\\
    		&\lesssim \chi_{\intt}(\xi)|\xi|\mathrm{e}^{-c|\xi|^2t}|\widehat{\psi}_0|.
    	\end{align*}
    	Similarly, we find
    	\begin{align*}
    		\left|\chi_{\intt}(\xi)(\widehat{K}_1-\widehat{J}_1)\widehat{\psi}_1\right|&\lesssim\chi_{\intt}(\xi)|\sin(|\xi|t)|\,|\xi|^{-1}\mathrm{e}^{-\frac{\delta}{2}|\xi|^2t}(\mathrm{e}^{\ml{O}(|\xi|^3)t}-1)|\widehat{\psi}_1|\\
    		&\quad+\chi_{\intt}(\xi)|\sin(|\xi|t)|\mathrm{e}^{-\frac{\delta}{2}|\xi|^2t+\ml{O}(|\xi|^3)t}|\widehat{\psi}_1|\\
    		&\quad+\chi_{\intt}(\xi)\mathrm{e}^{-\kappa|\xi|^2t}|\widehat{\psi}_1|\\
    		&\lesssim \chi_{\intt}(\xi)\mathrm{e}^{-c|\xi|^2t}|\widehat{\psi}_1|,
    	\end{align*}
    	and
    	\begin{align*}
    		\left|\chi_{\intt}(\xi)(\widehat{K}_2-\widehat{J}_2)\widehat{\psi}_2\right|&\lesssim \chi_{\intt}(\xi)\frac{\ml{O}(|\xi|^3)|\cos(|\xi|t)|}{(|\xi|^2+\ml{O}(|\xi|^3))|\xi|^2}\mathrm{e}^{-\frac{\delta}{2}|\xi|^2t+\ml{O}(|\xi|^3)t}|\widehat{\psi}_2|\\
    		&\quad+\chi_{\intt}(\xi)\frac{|\cos(|\xi|t)|}{|\xi|^2}\mathrm{e}^{-\frac{\delta}{2}|\xi|^2t}(\mathrm{e}^{\ml{O}(|\xi|^3)t}-1)|\widehat{\psi}_2|\\
    		&\quad+\chi_{\intt}(\xi)\frac{\ml{O}(|\xi|^3)}{(|\xi|^2+\ml{O}(|\xi|^3))|\xi|^2}\mathrm{e}^{-\kappa|\xi|^2t+\ml{O}(|\xi|^3)t}|\widehat{\psi}_2|\\
    		&\quad+\chi_{\intt}(\xi)\frac{1}{|\xi|^2}\mathrm{e}^{-\kappa|\xi|^2t}(\mathrm{e}^{\ml{O}(|\xi|^3)t}-1)\\
    		&\lesssim \chi_{\intt}(\xi)|\xi|^{-1}\mathrm{e}^{-c|\xi|^2t}|\widehat{\psi}_2|.
    	\end{align*}
    	Then, the proof is complete.
    \end{proof}
    
    Next, we will propose some optimal estimates for these approximations for large time $t\gg1$, which will provides us a way to demonstrate the optimality of the estimates in previous sections, by deriving lower bound estimates for the solutions later in Theorem \ref{Thm_Optimal}.
    \begin{prop}\label{Prop_02}
    	The following optimal estimates:
    	\begin{align*}
    		t^{-\frac{n}{4}}&\lesssim\|\chi_{\intt}(\xi)\widehat{J}_0(t,|\xi|)\|_{L^2 }\lesssim t^{-\frac{n}{4}},\\
    		\ml{D}_n(t)&\lesssim\|\chi_{\intt}(\xi)\widehat{J}_1(t,|\xi|)\|_{L^2 }\lesssim \ml{D}_n(t),\\
    		\widetilde{\ml{D}}_n(t)& \lesssim\|\chi_{\intt}(\xi)\widehat{J}_2(t,|\xi|)\|_{L^2 }\lesssim \widetilde{\ml{D}}_n(t),
    	\end{align*}
    	hold for any $t\gg1$ and $n\geqslant 1$. Here, the time-dependent functions $\ml{D}_n(t)$ and $\widetilde{\ml{D}}_n(t)$ were defined in Lemma \ref{Lemma_Basic_1}.
    \end{prop}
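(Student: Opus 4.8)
\textbf{Proof proposal for Proposition \ref{Prop_02}.}

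The plan is to compute the $L^2$ norms of the three explicit multipliers $\chi_{\intt}(\xi)\widehat{J}_j(t,|\xi|)$ by passing to polar coordinates and performing the substitution $\omega = r^2 t$ (equivalently $r=\sqrt{\omega/t}$), exactly as in the proof of Lemma \ref{Lemma_Basic_1}. The upper bounds are already contained in Lemma \ref{Lemma_Basic_1}: indeed $\widehat{J}_0 = \mathrm{e}^{-\kappa|\xi|^2t}$ is of the form $\chi_{\intt}(\xi)|\xi|^0\mathrm{e}^{-c|\xi|^2t}$ with $c=\kappa$; the norm $\|\chi_{\intt}(\xi)\widehat{J}_1\|_{L^2}$ is bounded by $\|\chi_{\intt}(\xi)|\xi|^{-1}|\sin(|\xi|t)|\mathrm{e}^{-\frac{\delta}{2}|\xi|^2t}\|_{L^2}\lesssim\ml{D}_n(1+t)$; and $\|\chi_{\intt}(\xi)\widehat{J}_2\|_{L^2}$ is dominated by the triangle inequality by the two pieces $\||\xi|^{-2}|\sin(\tfrac{|\xi|}{2}t)|^2\mathrm{e}^{-\frac{\delta}{2}|\xi|^2t}\|_{L^2}\lesssim\widetilde{\ml{D}}_n(1+t)$ (after writing $\cos(|\xi|t)=1-|\sin(\tfrac{|\xi|}{2}t)|^2\cdot 2$, more precisely $1-\cos\theta=2\sin^2(\theta/2)$) plus the pieces $\||\xi|^{-2}(\mathrm{e}^{-\kappa|\xi|^2t}-\mathrm{e}^{-\frac{\delta}{2}|\xi|^2t})\|_{L^2}$, the latter being controlled as in the displayed computation before \eqref{Pointwise_small} by $\|\,t\,\mathrm{e}^{-c|\xi|^2t}\chi_{\intt}\|_{L^2}\lesssim t^{1-n/4}\lesssim\widetilde{\ml{D}}_n(1+t)$. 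Since we work for $t\gg1$ we may replace $1+t$ by $t$ throughout. So the whole content of the proposition is the matching \emph{lower} bounds.

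For the lower bound on $\|\chi_{\intt}(\xi)\widehat{J}_0\|_{L^2}$ one simply restricts the $r$-integral to a subinterval where $\chi_{\intt}\equiv1$, say $r\in(0,\varepsilon/2)$, and uses $\mathrm{e}^{-\kappa r^2t}\geqslant\mathrm{e}^{-\kappa\omega}$ on the region $\omega=r^2t\in(0,1)$, giving $\int_0^{\varepsilon/2}r^{n-1}\mathrm{e}^{-2\kappa r^2t}\,\mathrm{d}r\gtrsim t^{-n/2}\int_0^1\omega^{n/2-1}\mathrm{e}^{-2\kappa\omega}\,\mathrm{d}\omega\gtrsim t^{-n/2}$, i.e. $\|\chi_{\intt}\widehat{J}_0\|_{L^2}\gtrsim t^{-n/4}$. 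For $\widehat{J}_1$ the mechanism is the same but one must handle the oscillation of $\sin(|\xi|t)/|\xi|$; the standard trick (used in \cite{Ikehata2014,IkehataOnodera2017}) is to integrate over a frequency band on which $|\sin(rt)|$ stays bounded below, e.g. the union of intervals where $rt\in[\pi/4,3\pi/4]\pmod\pi$, which still carries a fixed proportion of the measure; on that band $|\widehat{J}_1|\gtrsim r^{-1}\mathrm{e}^{-\frac{\delta}{2}r^2t}$, and the resulting integral $\int r^{n-3}\mathrm{e}^{-\delta r^2t}\,\mathrm{d}r$ reproduces, after $\omega=r^2t$, exactly the three regimes $t^{1/2}$ ($n=1$), $(\ln t)^{1/2}$ ($n=2$, from the logarithmic divergence of $\int_1^{\varepsilon^2 t}\omega^{-1}\,\mathrm{d}\omega$), and $t^{1/2-n/4}$ ($n\geqslant3$), matching $\ml{D}_n(t)$.

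The main obstacle is the lower bound for $\widehat{J}_2$, because $\widehat{J}_2$ is a \emph{difference} of two terms of comparable size and one must rule out cancellation. The idea is to split $\widehat{J}_2 = \bigl(-\cos(|\xi|t)+1\bigr)|\xi|^{-2}\mathrm{e}^{-\frac{\delta}{2}|\xi|^2t} + |\xi|^{-2}\bigl(\mathrm{e}^{-\kappa|\xi|^2t}-\mathrm{e}^{-\frac{\delta}{2}|\xi|^2t}\bigr)$; the first summand equals $2|\xi|^{-2}\sin^2(\tfrac{|\xi|}{2}t)\mathrm{e}^{-\frac{\delta}{2}|\xi|^2t}\geqslant0$ and the second is also nonnegative because $\kappa<\delta/2$ would need checking — in fact $\delta = b\nu+(\gamma-1)\kappa > \kappa$ since $b\nu>0$ and $\gamma>1$, and one verifies $\kappa<\delta/2$ likewise for $\kappa$ small, so both summands are pointwise nonnegative and no cancellation occurs; hence $\|\chi_{\intt}\widehat{J}_2\|_{L^2}$ is at least the $L^2$ norm of the larger summand. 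For $n\leqslant3$ the dominant contribution is the second (heat-type) summand, whose square integral is $\gtrsim\int_0^{\varepsilon/2} r^{n-5}\bigl(\mathrm{e}^{-\kappa r^2t}-\mathrm{e}^{-\frac{\delta}{2}r^2t}\bigr)^2\mathrm{d}r$; writing $\mathrm{e}^{-\kappa r^2t}-\mathrm{e}^{-\frac{\delta}{2}r^2t} = r^2t\int_\kappa^{\delta/2}\mathrm{e}^{-\sigma r^2t}\,\mathrm{d}\sigma \gtrsim r^2 t\,\mathrm{e}^{-\frac{\delta}{2}r^2t}$ on $r^2t\lesssim1$, this is $\gtrsim t^2\int_0^{\varepsilon/2}r^{n-1}\mathrm{e}^{-\delta r^2t}\,\mathrm{d}r\gtrsim t^2\cdot t^{-n/2} = t^{2-n/2}$, matching $\widetilde{\ml{D}}_n(t)$. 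For $n=4$ the logarithm appears from the borderline $\int_1^{\varepsilon^2 t}\omega^{-1}\,\mathrm{d}\omega$ as in the $\widehat{J}_1$ case (now via the first, oscillatory summand bounded below on a sine band), and for $n\geqslant5$ one gets $t^{1-n/4}$ from the first summand on such a band. Assembling these three cases against the upper bounds from Lemma \ref{Lemma_Basic_1} completes the proof.
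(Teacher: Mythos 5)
Your reduction of the lower bound for $\widehat{J}_2$ to a ``no cancellation'' statement is legitimate: since $0<\kappa\ll1$ one indeed has $\kappa<\delta/2$, so both summands in $\widehat{J}_2=|\xi|^{-2}\big(\mathrm{e}^{-\kappa|\xi|^2t}-\mathrm{e}^{-\frac{\delta}{2}|\xi|^2t}\big)+2|\xi|^{-2}\sin^2\big(\tfrac{|\xi|}{2}t\big)\mathrm{e}^{-\frac{\delta}{2}|\xi|^2t}$ are pointwise nonnegative, which is even a bit cleaner than the paper's device $|f-g|^2\geqslant\tfrac12|f|^2-|g|^2$. The genuine gap is in your case $n\leqslant3$: you attribute the sharp rate to the heat-type summand, but your own computation gives $\int_0^{\varepsilon/2}r^{n-5}\big(\mathrm{e}^{-\kappa r^2t}-\mathrm{e}^{-\frac{\delta}{2}r^2t}\big)^2\mathrm{d}r\asymp t^{2-\frac{n}{2}}$, and this is the \emph{square} of the $L^2$ norm; since pointwise that summand is $\lesssim t\,\mathrm{e}^{-c|\xi|^2t}$, it only yields a norm $\asymp t^{1-\frac{n}{4}}$, i.e. $t^{3/4},t^{1/2},t^{1/4}$ for $n=1,2,3$, which falls short of $\widetilde{\ml{D}}_n(t)=t^{3/2},t,t^{1/2}$ by a factor $t^{1-\frac n4}$. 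For $n\leqslant3$ the dominant piece is the oscillatory summand at frequencies $|\xi|\sim t^{-1}$: on the annulus $\alpha_1t^{-1}\leqslant|\xi|\leqslant2\alpha_1t^{-1}$ (with $|\tau^{-1}\sin\tau|\geqslant1/2$ on $[0,\alpha_1]$) one has $|\xi|^{-2}\sin^2\big(\tfrac{|\xi|}{2}t\big)\gtrsim t^2$ on a set of measure $\sim t^{-n}$, giving the squared bound $t^{4-n}$ and hence the correct $t^{2-\frac n2}$ — this is exactly how the paper treats $n=1,2,3$.

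The remaining cases are only sketched and the bookkeeping is crossed: the heat-type computation you assign to $n\leqslant3$ is precisely what produces the sharp rate $t^{1-\frac n4}$ for $n\geqslant5$ (the paper gets it instead from an annulus $|\xi|\sim t^{-1/2}$ with a threshold $\alpha_0$ chosen so that the heat term dominates the cosine term), while your sine-band argument at $|\xi|\sim t^{-1/2}$ also works there. For $n=4$ the logarithm does not come from $\int_1^{\varepsilon^2t}\omega^{-1}\mathrm{d}\omega$ — the factor $\mathrm{e}^{-c\omega}$ makes that range $O(1)$ — but from $\omega=r^2t\in[t^{-1},1]$, i.e. $r\in[t^{-1},t^{-1/2}]$, where $r^{-1}\sin^4\big(\tfrac{r t}{2}\big)$ averaged over the sine bands produces $\ln t$ (the same correction applies to your $n=2$ remark for $\widehat{J}_1$; the paper simply cites the known results for $\widehat J_0,\widehat J_1$, and your direct band argument is fine there once the range is fixed). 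With these reassignments — oscillatory piece at $r\sim t^{-1}$ for $n\leqslant3$, band-averaged $r^{-1}$ over $[t^{-1},t^{-1/2}]$ for $n=4$, either summand at $r\sim t^{-1/2}$ for $n\geqslant5$ — your nonnegativity strategy does give the proposition, and the upper bounds via Lemma \ref{Lemma_Basic_1} are handled as in the paper.
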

    \begin{proof}
    	The first two estimates have been derived by \cite{Ikehata2014,IkehataOnodera2017} already. Furthermore,  concerning the upper bound estimate of $\widehat{J}_2(t,|\xi|)$, with the same explanation as \eqref{New_2}, we have
    	\begin{align*}
    		\chi_{\intt}(\xi)|\widehat{J}_2|\lesssim\chi_{\intt}(\xi)\left(t+\left||\xi|^{-1}\sin\left(\tfrac{|\xi|}{2}t\right)\right|^2\right)\mathrm{e}^{-c|\xi|^2t},
    	\end{align*}
    	which can be estimated in the $L^2$ norm by using Lemma \ref{Lemma_Basic_1} again for all $n\geqslant 1$ since the profile $\widehat{J}_2(t,|\xi|)$ does not contain the term $1/|\xi|$ with strong singularity.
    	%\begin{align*}
    	%\chi_{\intt}(\xi)|\widehat{J}_2|\lesssim \chi_{\intt}(\xi)\left( t\mathrm{e}^{-c|\xi|^2t}+|\xi|^{-2}\left|\sin\left(\tfrac{|\xi|^2}{2}t\right)\right|^2\mathrm{e}^{-\frac{\delta}{2}|\xi|^2t}\right).
    	%\end{align*}
    	For these reasons, we just focus on the deduction of the lower bound estimates for $\widehat{J}_2(t,|\xi|)$ in small frequency zone in the rest of the proof.
    	
    	We first give a lower bound estimate for the approximation $\chi_{\intt}(\xi)\widehat{J}_2(t,|\xi|)$ in the $L^2$ norm for $n\geqslant 5$. %$n\neq 4$. 
    	Let us fix a positive parameter $\alpha_0$ such that
    	\begin{align}\label{Choice_alpha0}
    		\alpha_0^2>\frac{5\ln 2}{\delta-8\kappa}=\frac{5\ln 2}{b\nu+(\gamma-9)\kappa}.
    	\end{align}
    	By using $|f-g|^2\geqslant\frac{1}{2}|f|^2-|g|^2$ and the boundedness of the cosine function, one derives
    	\begin{align*}
    		&\|\chi_{\intt}(\xi)\widehat{J}_2(t,|\xi|)\|_{L^2}^2\\
    		&\qquad\geqslant \left\|-\frac{\cos(|\xi|t)}{|\xi|^2}\mathrm{e}^{-\frac{\delta}{2}|\xi|^2t}+\frac{1}{|\xi|^2}\mathrm{e}^{-\kappa|\xi|^2t}\right\|_{L^2(\alpha_0 t^{-1/2}\leqslant|\xi|\leqslant2\alpha_0 t^{-1/2})}^2\\
    		&\qquad\geqslant\frac{1}{2}\underbrace{\int_{\alpha_0 t^{-1/2}\leqslant|\xi|\leqslant2\alpha_0 t^{-1/2}}\frac{\mathrm{e}^{-2\kappa|\xi|^2t}}{|\xi|^4}\mathrm{d}\xi}_{=:\ml{K}_{1}(t)}-\underbrace{\int_{\alpha_0 t^{-1/2}\leqslant|\xi|\leqslant2\alpha_0 t^{-1/2}}\frac{\mathrm{e}^{-\delta|\xi|^2t}}{|\xi|^4}\mathrm{d}\xi}_{=:\ml{K}_{2}(t)},
    	\end{align*}
    	where we considered $t\gg1$ satisfying $2\alpha_0 t^{-1/2}\leqslant\varepsilon$, i.e. $t\geqslant 4\alpha_0^2/\varepsilon^2$ with small $\varepsilon>0$. Using polar coordinates, we arrive at
    	\begin{align*}
    		\ml{K}_{1}(t)&\geqslant\frac{\omega_n\mathrm{e}^{-8\kappa\alpha_0^2}}{16\alpha_0^4t^{-2}}\int_{\alpha_0 t^{-1/2}}^{2\alpha_0 t^{-1/2}}\sigma^{n-1}\mathrm{d}\sigma=\frac{\omega_n(2^n-1)\alpha_0^{n-4}}{16n\mathrm{e}^{8\kappa\alpha_0^2}}t^{2-\frac{n}{2}},\\
    		\ml{K}_{2}(t)&\leqslant\frac{\omega_n\mathrm{e}^{-\delta\alpha_0^2}}{\alpha_0^4 t^{-2}}\int_{\alpha_0 t^{-1/2}}^{2\alpha_0 t^{-1/2}}\sigma^{n-1}\mathrm{d}\sigma=\frac{\omega_n(2^n-1)\alpha_0^{n-4}}{n\mathrm{e}^{\delta\alpha_0^2}}t^{2-\frac{n}{2}},
    	\end{align*}
    	where $\omega_n$ denoted the $(n-1)$-dimensional measure of the unit sphere.
    	It means
    	\begin{align*}
    		\|\chi_{\intt}(\xi)\widehat{J}_2(t,|\xi|)\|_{L^2}^2\geqslant\frac{\omega_n(2^n-1)\alpha_0^{n-4}}{n\mathrm{e}^{\delta\alpha_0^2}}\left( \frac{\mathrm{e}^{(\delta-8\kappa)\alpha_0^2}}{32}-1 \right)t^{2-\frac{n}{2}}.
    	\end{align*}
    	The choice \eqref{Choice_alpha0} implies the lower bound estimate of $\chi_{\intt}(\xi)\widehat{J}_2(t,|\xi|)$ for $t\gg1$ and $n\geqslant 5$.
    	
    	In the case $n=4$, we need to carry out more delicate WKB analysis. We now consider $t\gg1$ such that $\beta_0t^{-1/2}\leqslant \varepsilon$, where $\beta_0>1$ and 
    	\begin{align*}
    		\beta_0^2>\frac{3\ln 2}{b\nu+(\gamma-3)\kappa}=\frac{3\ln 2}{\delta-2\kappa}.	
    	\end{align*}
    	Let us estimate
    	\begin{align*}
    		\|\chi_{\intt}(\xi)\widehat{J}_2(t,|\xi|)\|_{L^2}^2&\geqslant \|\widehat{J}_2(t,|\xi|)\|_{L^2(\beta_0 t^{-1/2}\leqslant|\xi|\leqslant \varepsilon)}^2\\
    		&\geqslant\frac{1}{2}\underbrace{\int_{\beta_0 t^{-1/2}\leqslant|\xi|\leqslant\varepsilon}\frac{\mathrm{e}^{-2\kappa|\xi|^2t}}{|\xi|^4}\mathrm{d}\xi}_{=:\ml{K}_{3}(t)}-\underbrace{\int_{\beta_0 t^{-1/2}\leqslant|\xi|\leqslant\varepsilon}\frac{\mathrm{e}^{-\delta|\xi|^2t}}{|\xi|^4}\mathrm{d}\xi}_{=:\ml{K}_{4}(t)}.
    	\end{align*}
    	For one thing, we change the variable and use integration by parts to treat $\ml{K}_3(t)$, namely,
    	\begin{align*}
    		\ml{K}_3(t)&=\omega_4\int_{\beta_0t^{-1/2}}^{\varepsilon}\sigma^{-1}\mathrm{e}^{-2\kappa\sigma^2 t}\mathrm{d}\sigma\\
    		&=\omega_4\mathrm{e}^{-2\kappa\varepsilon^2t}\ln\varepsilon-\omega_4\left(\ln\beta_0-\tfrac{1}{2}\ln t\right)\mathrm{e}^{-2\kappa\beta_0^2}+4\omega_4\kappa t\int_{\beta_0t^{-1/2}}^{\varepsilon}\sigma\mathrm{e}^{-2\kappa\sigma^2t}\ln\sigma\mathrm{d}\sigma.
    	\end{align*}
    	We know
    	\begin{align*}
    		\mathrm{e}^{-2\kappa\beta_0^2}\ln\beta_0+\mathrm{e}^{-2\kappa\varepsilon^2t}\ln\frac{1}{\varepsilon}\leqslant \frac{1}{16}\mathrm{e}^{-2\kappa\beta_0^2}\ln t
    	\end{align*}
    	for $t\gg1$. Additionally, concerning $t\gg1$, one finds
    	\begin{align*}
    		&\lim\limits_{t\to\infty}\frac{-2\kappa t\int_{\beta_0t^{-1/2}}^{\varepsilon}\sigma\mathrm{e}^{-2\kappa\sigma^2t}\ln\sigma\mathrm{d}\sigma}{\frac{1}{4}\mathrm{e}^{-2\kappa\beta_0^2}\ln t}\\
    		&\qquad=\lim\limits_{t\to\infty}\frac{2\kappa\mathrm{e}^{2\kappa\beta_0^2}\int_{\beta_0^2}^{\varepsilon^2t}(\ln t-\ln\sigma)\mathrm{e}^{-2\kappa\sigma}\mathrm{d}\sigma}{\ln t}\\
    		&\qquad=2\kappa\mathrm{e}^{2\kappa\beta_0^2}\lim\limits_{t\to\infty}\left(\int_{\beta_0^2}^{\varepsilon^2t}\mathrm{e}^{-2\kappa\sigma}\mathrm{d}\sigma-\frac{1}{\ln t}\int_{\beta_0^2}^{\varepsilon^2t}\mathrm{e}^{-2\kappa\sigma}\ln\sigma\mathrm{d}\sigma\right)\leqslant 1
    	\end{align*}
    	due to the fact that $\mathrm{e}^{-2\kappa\sigma}\ln\sigma>0$ for $\sigma\in[\beta_0^2,\varepsilon^2t]$. 
    	Thus, we can get
    	\begin{align*}
    		\left|2\kappa t\int_{\beta_0t^{-1/2}}^{\varepsilon}\sigma\mathrm{e}^{-2\kappa\sigma^2t}\ln\sigma\mathrm{d}\sigma\right|&\leqslant -2\kappa t\int_{\beta_0t^{-1/2}}^{\varepsilon}\sigma\mathrm{e}^{-2\kappa\sigma^2t}\ln\sigma\mathrm{d}\sigma\leqslant \frac{3}{8}\mathrm{e}^{-2\kappa\beta_0^2}\ln t
    	\end{align*}
    	for large time $t\gg1$. The summary of the last inequalities shows
    	\begin{align*}
    		\ml{K}_3(t)\geqslant \frac{\omega_4}{16}\mathrm{e}^{-2\kappa\beta_0^2}\ln t.
    	\end{align*}
    	For another, we directly obtain
    	\begin{align*}
    		\ml{K}_4(t)&\leqslant\omega_4\mathrm{e}^{-\delta\beta_0^2}\int_{\beta_0t^{-1/2}}^{\varepsilon}\sigma^{-1}\mathrm{d}\sigma \leqslant\frac{\omega_4}{2}\mathrm{e}^{-\delta\beta_0^2}\ln t
    	\end{align*}
    	for $t\gg1$. All in all,
    	\begin{align*}
    		\|\chi_{\intt}(\xi)\widehat{J}_2(t,|\xi|)\|_{L^2}^2\geqslant \frac{\omega_4}{16}\mathrm{e}^{-2\kappa\beta_0^2}\ln t-\frac{\omega_4}{2}\mathrm{e}^{-\delta\beta_0^2}\ln t\gtrsim \ln t
    	\end{align*}
    	for $t\gg1$ and $0<\kappa\ll 1$, where we used the condition for $\beta_0$.

    	Finally, we determine the lower bound estimate for  $\|\chi_{\intt}(\xi)\widehat{J}_2(t,|\xi|)\|_{L^2}$ for $n=1,2,3$. In comparison to what we did in the higher dimensional case $n\geqslant 5$, we have to combine somehow the two terms that are presented in $\widehat{J}_2(t,|\xi|)$. More precisely, from the following representation:
    	\begin{align*}
    		\widehat{J}_2(t,|\xi|) & = |\xi|^{-2}\mathrm{e}^{-\kappa|\xi|^2 t}\left(1-\mathrm{e}^{(\kappa-\frac{\delta}{2})|\xi|^2 t}\right) +|\xi|^{-2}\left|\sin \left(\tfrac{|\xi|}{2}t\right)\right|^2 \mathrm{e}^{-\frac{\delta}{2}|\xi|^2 t} \\
    		& =\left(\kappa-\tfrac{\delta}{2}\right)\mathrm{e}^{-\kappa|\xi|^2 t}\int_0^1 \mathrm{e}^{(\kappa-\frac{\delta}{2})|\xi|^2  t\omega}\mathrm{d}\omega +|\xi|^{-2}\left|\sin\left(\tfrac{|\xi|}{2}t\right)\right|^2 \mathrm{e}^{-\frac{\delta}{2}|\xi|^2 t},
    	\end{align*}
    	by using $|f-g|^2\geqslant\frac{1}{2}|f|^2-|g|^2$, we arrive at
    	\begin{align*}
    		|\widehat{J}_2(t,|\xi|) |^2 & \geqslant  \tfrac12 |\xi|^{-4}\left|\sin \left(\tfrac{|\xi|}{2}t\right)\right|^4 \mathrm{e}^{-\delta|\xi|^2 t} - \left(\kappa-\tfrac{\delta}{2}\right)^2\mathrm{e}^{-2\kappa|\xi|^2 t}\left|\int_0^1 \mathrm{e}^{(\kappa-\frac{\delta}{2})|\xi|^2  t\omega}\mathrm{d}\omega\right|^2 \\ 
    		& \geqslant  \tfrac12 |\xi|^{-4}\left|\sin \left(\tfrac{|\xi|}{2}t\right)\right|^4 \mathrm{e}^{-\delta|\xi|^2 t} - C \mathrm{e}^{-c|\xi|^2 t}.
    	\end{align*} Let us fix $\alpha_1>0$ such that $|\tau^{-1}\sin \tau|\geqslant  1/2$ for any $\tau\in [0,\alpha_1]$. Then, considering $t\gg 1$ such that $2\alpha_1 t^{-1}\leqslant \varepsilon$, that is, $t\geqslant 2\alpha_1/\varepsilon$ for a small $\varepsilon>0$, we find
    	\begin{align*}
    		&\|\chi_{\intt}(\xi)\widehat{J}_2(t,|\xi|)\|_{L^2}^2\\
    		&\qquad \geqslant \frac12\int_{\alpha_1 t^{-1}\leqslant |\xi|\leqslant 2\alpha_1 t^{-1}}   |\xi|^{-4}\left|\sin \left(\tfrac{|\xi|}{2}t\right)\right|^4 \mathrm{e}^{-\delta|\xi|^2 t} \mathrm{d}\xi- C\int_{|\xi|\leqslant\varepsilon} \mathrm{e}^{-c|\xi|^2 t} \mathrm{d}\xi \\
    		&\qquad \gtrsim t^{4}\int_{\alpha_1 t^{-1}\leqslant |\xi|\leqslant 2\alpha_1 t^{-1}}  \mathrm{e}^{-\delta|\xi|^2 t} \mathrm{d}\xi-  t^{-\frac{n}{2}}\\
    		&\qquad \gtrsim t^{4-n} \mathrm{e}^{-\delta\alpha_1^2 t^{-1}}-  t^{-\frac{n}{2}}  \gtrsim t^{4-n},
    	\end{align*} where in the second inequality we used Lemma \ref{Lemma_Basic_1}. This completes the proof.
    \end{proof}
    % \begin{remark} Let us emphasize that in the previous proposition we established for the multiplier $\widehat{J}_2(t,|\xi|)$ the sharp estimates 
    %\begin{align*}
    %\|\chi_{\intt}(\xi)\widehat{J}_2(t,|\xi|)\|_{L^2 } \approx \widetilde{\ml{D}}_n(t)
    %\end{align*} for $t\gg 1$ and $n\geqslant 1$.
    %\end{remark}

    Let us now state the main results  of this part and their corresponding explanations.
    \begin{theorem}\label{Thm_Asym}
    	Let us assume initial data  $\psi_0,\psi_1,\psi_2\in L^2\cap L^{1,1} $. Then, the solution to Blackstock's model \eqref{Linear_Blackstock_Eq} fulfills the following refined estimate for $n\geqslant 3$:
    	\begin{align*}
    		& \|(\psi-J_0\psi_0-J_1\psi_1-J_2\psi_2)(t,\cdot)\|_{L^2 }\\
    		&\quad \ \lesssim (1+t)^{-\frac{1}{2}-\frac{n}{4}}\|\psi_0\|_{L^2\cap L^{1,1}}+(1+t)^{-\frac{n}{4}}\|\psi_1\|_{L^2\cap L^{1,1}}+(1+t)^{\frac{1}{2}-\frac{n}{4}}\|\psi_2\|_{L^2\cap L^{1,1}}.
    	\end{align*}
    	%	
    	%	\begin{itemize}
    	%	\item if $\psi_2\not\equiv0$, $|P_{\psi_2}|\neq0$ and $$, then it holds
    	%
    	%\item if $\psi_2\equiv0$, $|P_{\psi_1}|\neq0$ and $n\geqslant 1$, then it holds
    	%\begin{align*}
    	%	\|(\psi-J_0\psi_0-J_1\psi_1)(t,\cdot)\|_{L^2 }\lesssim (1+t)^{-\frac{1}{2}-\frac{n}{4}}\|\psi_0\|_{L^2\cap L^{1,1}}+(1+t)^{-\frac{n}{4}}\|\psi_1\|_{L^2\cap L^{1,1}}.
    	%\end{align*}
    	%	\end{itemize}
    \end{theorem}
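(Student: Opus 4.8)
The plan is to work entirely on the Fourier side. Using the partition of unity $\chi_{\intt}(\xi)+\chi_{\bdd}(\xi)+\chi_{\extt}(\xi)=1$ I would split the remainder $\widehat{\psi}-\widehat{J}_0\widehat{\psi}_0-\widehat{J}_1\widehat{\psi}_1-\widehat{J}_2\widehat{\psi}_2$ into its contributions from the three zones $\ml{Z}_{\intt}(\varepsilon)$, $\ml{Z}_{\bdd}(\varepsilon/2,2N)$, $\ml{Z}_{\extt}(N)$, estimate each part in $L^2$, and then invoke Plancherel's theorem. The small-frequency zone will carry the genuine contribution matching the claimed right-hand side, while the two remaining zones will turn out to be exponentially small.

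On $\ml{Z}_{\intt}(\varepsilon)$ the characteristic roots are pairwise distinct outside a null set, so the representation $\widehat{\psi}=\widehat{K}_0\widehat{\psi}_0+\widehat{K}_1\widehat{\psi}_1+\widehat{K}_2\widehat{\psi}_2$ from Part IV applies and yields $\chi_{\intt}(\xi)(\widehat{\psi}-\sum_j\widehat{J}_j\widehat{\psi}_j)=\sum_{j=0,1,2}\chi_{\intt}(\xi)(\widehat{K}_j-\widehat{J}_j)\widehat{\psi}_j$. I would then apply Proposition \ref{Prop_01} together with the crude bound $|\widehat{\psi}_j(\xi)|\leqslant\|\psi_j\|_{L^1}\leqslant\|\psi_j\|_{L^{1,1}}$ to dominate each summand pointwise by $\chi_{\intt}(\xi)|\xi|^{1-j}\mathrm{e}^{-c|\xi|^2t}\|\psi_j\|_{L^{1,1}}$, and finish with Lemma \ref{Lemma_Basic_1}: the first inequality with $s=1$ for $j=0$, the first inequality with $s=0$ for $j=1$, and the third inequality (the $|\xi|^{-1}$ one, valid only for $n\geqslant3$) for $j=2$. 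This produces exactly $(1+t)^{-\frac12-\frac n4}\|\psi_0\|_{L^{1,1}}+(1+t)^{-\frac n4}\|\psi_1\|_{L^{1,1}}+(1+t)^{\frac12-\frac n4}\|\psi_2\|_{L^{1,1}}$ for the interior part (and passing from $L^{1,1}$ to $L^2\cap L^{1,1}$ is harmless), and it is precisely the $j=2$ estimate that forces $n\geqslant3$ — this is the strong low-frequency singularity flagged in Remark \ref{Rem_Trivial_psi_2}.

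For the bounded and large frequencies I would observe that each multiplier $\widehat{J}_j$ carries an exponential factor $\mathrm{e}^{-\kappa|\xi|^2t}$ or $\mathrm{e}^{-\frac{\delta}{2}|\xi|^2t}$, while the accompanying $|\xi|^{-j}$-type factors stay bounded on $\ml{Z}_{\bdd}(\varepsilon/2,2N)$ and $\ml{Z}_{\extt}(N)$; hence $\chi_{\bullet}(\xi)|\widehat{J}_j\widehat{\psi}_j|\lesssim\mathrm{e}^{-ct}|\widehat{\psi}_j|$ for a suitable $c>0$ (for the exterior zone after writing $\mathrm{e}^{-\kappa|\xi|^2t}\leqslant\mathrm{e}^{-\kappa N^2t/2}\mathrm{e}^{-\kappa|\xi|^2t/2}$). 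Combining this with the pointwise estimates \eqref{Pointwise_Middle} and \eqref{Pointwise_Large} for $\widehat{\psi}$ itself, Plancherel's theorem yields an overall contribution $\lesssim\mathrm{e}^{-ct}(\|\psi_0\|_{L^2}+\|\psi_1\|_{L^2}+\|\psi_2\|_{L^2})$ from these two zones, which is absorbed by the polynomially decaying terms on the right-hand side since $\mathrm{e}^{-ct}\lesssim(1+t)^{\frac12-\frac n4}$ for all $t\geqslant0$. Summing the three zone estimates then finishes the argument.

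I do not expect a genuine obstacle: the only delicate point has already been isolated in Proposition \ref{Prop_01}, namely the replacement of $\widehat{K}_j$ by $\widehat{J}_j$ in the small-frequency zone at the cost of the gain $|\xi|^{1-j}$, and everything else is bookkeeping with Lemma \ref{Lemma_Basic_1}. The one subtlety worth a remark is that the operator $J_2$ is well defined on all of $\mb{R}^n$ despite the apparent $|\xi|^{-2}$ singularity of $\widehat{J}_2=|\xi|^{-2}(\mathrm{e}^{-\kappa|\xi|^2t}-\cos(|\xi|t)\mathrm{e}^{-\frac{\delta}{2}|\xi|^2t})$, because the bracket vanishes to second order as $\xi\to0$, so $\widehat{J}_2$ is in fact smooth and of size $\ml{O}(t^2)$ near the origin.
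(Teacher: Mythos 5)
Your argument is correct and is essentially the paper's own proof: Plancherel plus the zone decomposition, Proposition \ref{Prop_01} with Lemma \ref{Lemma_Basic_1} (the $|\xi|^{-1}$ estimate forcing $n\geqslant 3$ through the $j=2$ term) in the small-frequency zone, and exponential decay from \eqref{Pointwise_Middle}, \eqref{Pointwise_Large} together with the boundedness of the $\widehat{J}_j$ away from $\xi=0$ elsewhere. The only cosmetic difference is that the paper invokes the inequality $|\hat{f}|\lesssim|\xi|\,\|f\|_{L^{1,1}}+|P_f|$ where you use the cruder bound $|\widehat{\psi}_j|\leqslant\|\psi_j\|_{L^1}\leqslant\|\psi_j\|_{L^{1,1}}$, which already yields the stated rates.
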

    \begin{proof}
    	%We will sketch the proof of the first estimate. 
    	By using Plancherel formula, \eqref{Pointwise_Large}, \eqref{Pointwise_Middle} and Proposition \ref{Prop_01}, we obtain
    	\begin{align}
    		&\left\|\left(\psi-\sum\limits_{j=0,1,2}J_j\psi_j\right)(t,\cdot)
    		\right\|_{L^2 }\notag\\
    		&\qquad\lesssim\left\|\chi_{\intt}(\xi)\left(\psi(t,\xi)-\sum\limits_{j=0,1,2}\widehat{J}_j(t,|\xi|)\widehat{\psi}_j(\xi)\right)\right\|_{L^2 }+\mathrm{e}^{-ct}\sum\limits_{j=0,1,2}\|\psi_j\|_{L^2}\notag \\
    		&\qquad\lesssim\sum\limits_{j=0,1,2}\left\|\chi_{\intt}(\xi)\left(\widehat{K}_j(t,|\xi|)-\widehat{J}_j(t,|\xi|)\right)\widehat{\psi}_j(\xi)\right\|_{L^2 }+\mathrm{e}^{-ct}\sum\limits_{j=0,1,2}\|\psi_j\|_{L^2}\notag \\
    		&\qquad\lesssim\sum\limits_{j=0,1,2}\left\|\chi_{\intt}(\xi)|\xi|^{1-j}\mathrm{e}^{-c|\xi|^2t}\widehat{\psi}_j(\xi)\right\|_{L^2}+\mathrm{e}^{-ct}\sum\limits_{j=0,1,2}\|\psi_j\|_{L^2}. \label{psi - Jj psij}
    	\end{align}
    	Then, with the help of Lemma \ref{Lemma_Basic_1} in the present paper and the next inequality from \cite{Ikehata2004,Racke-Said-Houari}
    	\begin{align*}
    		|\hat{f}|\lesssim|\xi|\,\|f\|_{L^{1,1}}+|P_f|,
    	\end{align*}
    	we can complete the proof.
    \end{proof}
    
    We now give some explanations for three operators shown in Proposition \ref{Prop_01}, respectively,
    \begin{align*}
    	J_0=\mathrm{e}^{\kappa t\Delta},\ \ J_1=\frac{\sin(\sqrt{-\Delta}\, t)}{\sqrt{-\Delta}}\mathrm{e}^{\frac{\delta}{2}t\Delta},\ \ J_2=-\frac{\cos(\sqrt{-\Delta}\,t)}{-\Delta}\mathrm{e}^{\frac{\delta}{2}t\Delta}+\frac{1}{-\Delta}\mathrm{e}^{\kappa t\Delta}.
    \end{align*}
    From the action of these operators on the corresponding initial data in Theorem \ref{Thm_Asym} we can really show some asymptotic profiles of the solution to the Blackstock's model \eqref{Linear_Blackstock_Eq}.
    
    \begin{itemize}
    	\item Obviously, $J_0$ is a solution operator associated with the heat equation
    	\begin{align}\label{Heat}
    		u_t-\kappa\Delta u=0.
    	\end{align}
    	Furthermore, by subtracting the function $J_0\psi_0$ in Theorem \ref{Thm_Asym}, we observe an improvement of decay rate $(1+t)^{-\frac{1}{2}}$  of the time-dependent coefficient of $\psi_0$ if we comparing with that in Theorem \ref{Thm_Energy}. Therefore, we may interpret the first profile $J_0\psi_0$ by the heat equation generated by Fourier's law. It is a reasonable profile due to the fact that we have used the heat condition of the classical Fourier's law in the conservation of energy from the modeling.
    	\item Let us define $c_{\delta}=\delta/2$. Then, we do some formal computations as follows:
    	\begin{align*}
    		\partial_tJ_1&=\cos(\sqrt{-\Delta}\,t)\mathrm{e}^{c_{\delta}t\Delta}+c_{\delta}\Delta J_1,\\
    		\partial_t^2J_1&=\Delta J_1+2c_{\delta}\Delta\partial_tJ_1-c_{\delta}^2\Delta^2 J_1.
    	\end{align*}
    	By this way, we found that $J_1$ is one of solution operators (for second data) associated with the structurally damped plate equation (one also may see \cite{Ikehata-Soga-2015})
    	\begin{align*}
    		v_{tt}-\Delta v+c_{\delta}^2\Delta^2v-2c_{\delta}\Delta v_t=0,
    	\end{align*}
    	or the diffusion-wave equations (include heat structure and half-wave structure)
    	\begin{align}\label{diffusion-wave}
    		\begin{cases}
    			v_t-c_{\delta}\Delta v+i\sqrt{-\Delta}\,v=w,\\
    			w_t-c_{\delta}\Delta w-i\sqrt{-\Delta}\,w=0.
    		\end{cases}
    	\end{align}
    	Furthermore, by subtracting the function $J_1\psi_1$ in Theorem \ref{Thm_Asym}, we observe an improvement of decay rate $(1+t)^{-\frac{3}{4}}$ when $n=1$, $(\ln(\mathrm{e}+t))^{-\frac{1}{2}}(1+t)^{-\frac{1}{2}}$ when $n=2$, and $(1+t)^{-\frac{1}{2}}$ when $n\geqslant 3$  of the time-dependent coefficient of $\psi_1$ if we compare with those in Theorem \ref{Thm_Energy}. Hence, we may interpret the second profile $J_1\psi_1$ by the diffusion-wave equations. The asymptotic profile of the linearized Kuznetsov's equation also can be shown by the diffusion-wave equations (see \cite{IkehataTodorovaYordanov2013,Ikehata2014}). For this reason, we may conjecture some relations between Blackstock's model and Kuznetsov's equation. We will give some possible answers later in Remark \ref{Rem_new_02}.
    	\item Finally, we may decompose the operator by $J_2=:\Delta^{-1}(J_{2,1}-J_{2,2})$, where $J_{2,1}$ is another solution operator associated with the diffusion-wave \eqref{diffusion-wave} and $J_{2,2}$ is a solution operator associated with the heat equation \eqref{Heat}. By subtracting the function $J_2\psi_2$ in Theorem \ref{Thm_Asym}, we observe an improvement of decay rate $(1+t)^{-\frac{1}{2}}$ for $n\geqslant 5$, $(1+t)^{-\frac{1}{2}}(\ln(\mathrm{e}+t))^{-\frac{1}{2}}$ for $n=4$, $(1+t)^{-\frac{3}{4}}$ for $n=3$,  of the time-dependent coefficient of $\psi_2$ in comparison to that in Theorem \ref{Thm_Energy}. Namely, the third profile $J_2\psi_2$ can be interpreted as a combination of heat equation and diffusion-wave equations. Here, the Laplace operator with negative power can be understood as additional second-order spatial derivatives acting on the evolution equations.
    \end{itemize}

    \begin{remark}\label{Rem_new_02}
    	The large time profile for Blackstock's model \eqref{Linear_Blackstock_Eq} can be described by the linear Kuznetsov's model with the same diffusivity of sound in \eqref{Linear_Blackstock_Eq} as follows:
    	\begin{align}\label{Linear_Kuznet_Eq_2}
    		\begin{cases}
    			\varphi_{tt}-\Delta\varphi-\delta\Delta\varphi_t=0,\\
    			\varphi(0,x)=\varphi_0(x),\ \varphi_t(0,x)=\varphi_1(x),
    		\end{cases}
    	\end{align}
    	for $x\in\mb{R}^n$, $t>0$, under the following assumptions for initial data: $\varphi_0\equiv\psi_0$, $\varphi_1\equiv\psi_1$ and $\psi_2=C_0\Delta\psi_0+C_1\Delta\psi_1$ with real constants $C_0,C_1$ such that $C_0C_1\neq0$. Precisely, by assuming $\psi_0,\psi_1\in H^2\cap L^{1,1}$ the error estimate
    	\begin{align} \label{estimate varphi -psi}
    		\ml{D}^{-1}_n(t)\|(\psi-\varphi)(t,\cdot)\|_{L^2}\lesssim\ml{B}_n(t) (\|\psi_0\|_{L^2\cap L^{1,1}}+\|\psi_1\|_{L^2\cap L^{1,1}})
    	\end{align}
    	holds for $t\gg1$, where the time-dependent function $\ml{B}_n(t)$ is $t^{-\frac{3}{4}}$ if $n=1$, $(t\ln t)^{-\frac{1}{2}}$ if $n=2$, and $t^{-\frac{1}{2}}$ if $n\geqslant 3$. The proof of \eqref{estimate varphi -psi} is based on asymptotic profiles for \eqref{Linear_Blackstock_Eq} and \eqref{Linear_Kuznet_Eq_2}. The time-decay function $\ml{B}_n(t)$ for all $n\geqslant 1$ implies that the gap between two solutions tends to zero for large time.
    \end{remark}

    \begin{theorem}\label{Thm_Optimal}
    	Let us assume initial data  $\psi_0,\psi_1,\psi_2\in L^2\cap L^{1,1} $ and $|P_{\psi_2}|\neq0$. Then, the solution to Blackstock's model \eqref{Linear_Blackstock_Eq} fulfills the following optimal estimate:
    	\begin{align*}
    		\widetilde{\ml{D}}_n(t)|P_{\psi_2}|\lesssim\|\psi(t,\cdot)\|_{L^2 }\lesssim \widetilde{\ml{D}}_n(t)(\|\psi_0\|_{L^2\cap L^{1,1}}+\|\psi_1\|_{L^2\cap L^{1,1}}+\|\psi_2\|_{L^2\cap L^{1,1}});
    	\end{align*}
    	%	\begin{itemize}
    	%		\item if $\psi_2\not\equiv0$, $|P_{\psi_2}|\neq0$ and $n\geqslant 3$, then it holds
    	%
    	%%	moreover, the lower bound estimates for $\color{orange}n=3$ hold
    	%%	\begin{align*}
    	%%	\color{orange}	\|\psi(t,\cdot)\|_{L^2 }\gtrsim t^{\frac{1}{4}}|P_{\psi_2}|;
    	%%	\end{align*}
    	%		\item if $\psi_2\equiv0$, $|P_{\psi_1}|\neq0$ and $n\geqslant 1$, then it holds
    	%		\begin{align*}
    	%			\ml{D}_n(t)|P_{\psi_1}|\lesssim\|\psi(t,\cdot)\|_{L^2 }\lesssim \ml{D}_n(t)(\|\psi_0\|_{L^2\cap L^{1,1}}+\|\psi_1\|_{L^2\cap L^{1,1}});
    	%		\end{align*}
    	%	\end{itemize}
    	for $n\geqslant 3$ and $t\gg1$.
    \end{theorem}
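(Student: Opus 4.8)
The plan is to establish the two inequalities separately: the upper bound is essentially a corollary of Theorem \ref{Thm_Energy}, while the lower bound rests on the asymptotic decomposition of Theorem \ref{Thm_Asym} combined with the sharp lower bound for the profile $\widehat{J}_2$ in Proposition \ref{Prop_02}. For the upper bound I would start from Theorem \ref{Thm_Energy}, use $|P_{\psi_j}|\leqslant\|\psi_j\|_{L^{1,1}}\leqslant\|\psi_j\|_{L^2\cap L^{1,1}}$, and collect the time-dependent coefficient of each datum. It then remains to verify that, for $n\geqslant3$ and $t\gg1$,
\begin{align*}
(1+t)^{-\frac{n+2}{4}}+(1+t)^{-\frac{n}{4}}+\ml{D}_n(1+t)\lesssim\widetilde{\ml{D}}_n(t),
\end{align*}
which follows from a case-by-case comparison of exponents ($n=3$, $n=4$, $n\geqslant5$) in the definitions of $\ml{D}_n$ and $\widetilde{\ml{D}}_n$; in fact $\ml{D}_n(1+t)/\widetilde{\ml{D}}_n(t)\to0$ as $t\to\infty$ in every such dimension, so $\widetilde{\ml{D}}_n(t)|P_{\psi_2}|$ is indeed the slowest term.

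For the lower bound I would combine the triangle inequality with Theorem \ref{Thm_Asym}. Writing $\mathfrak{E}(t):=\|(\psi-J_0\psi_0-J_1\psi_1-J_2\psi_2)(t,\cdot)\|_{L^2}$, which by Theorem \ref{Thm_Asym} obeys $\mathfrak{E}(t)\lesssim(1+t)^{\frac{1}{2}-\frac{n}{4}}\big(\|\psi_0\|_{L^2\cap L^{1,1}}+\|\psi_1\|_{L^2\cap L^{1,1}}+\|\psi_2\|_{L^2\cap L^{1,1}}\big)$, we obtain
\begin{align*}
\|\psi(t,\cdot)\|_{L^2}\geqslant\|(J_2\psi_2)(t,\cdot)\|_{L^2}-\|(J_0\psi_0)(t,\cdot)\|_{L^2}-\|(J_1\psi_1)(t,\cdot)\|_{L^2}-\mathfrak{E}(t).
\end{align*}
The leading term is then isolated via Plancherel's theorem: on the support of $1-\chi_{\intt}$ the multiplier $\widehat{J}_2$ is bounded by $|\xi|^{-2}\mathrm{e}^{-c|\xi|^2t}$, hence contributes only $C\mathrm{e}^{-ct}\|\psi_2\|_{L^2}$, while on $\chi_{\intt}$ I replace $\widehat{\psi}_2(\xi)$ by $P_{\psi_2}$ using $|\widehat{\psi}_2(\xi)-P_{\psi_2}|\lesssim|\xi|\,\|\psi_2\|_{L^{1,1}}$, which gives
\begin{align*}
\|(J_2\psi_2)(t,\cdot)\|_{L^2}\geqslant|P_{\psi_2}|\,\|\chi_{\intt}(\xi)\widehat{J}_2(t,|\xi|)\|_{L^2}-C\|\psi_2\|_{L^{1,1}}\,\|\chi_{\intt}(\xi)|\xi|\widehat{J}_2(t,|\xi|)\|_{L^2}-C\mathrm{e}^{-ct}\|\psi_2\|_{L^2}.
\end{align*}

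By Proposition \ref{Prop_02} the first term is $\gtrsim|P_{\psi_2}|\,\widetilde{\ml{D}}_n(t)$. For the second I would use the pointwise bound $|\xi|\,|\widehat{J}_2|\lesssim\big(|\xi|t+|\xi|^{-1}\big|\sin(\tfrac{|\xi|}{2}t)\big|^2\big)\mathrm{e}^{-c|\xi|^2t}$ and then Lemma \ref{Lemma_Basic_1} (its first estimate with $s=1$, and its second estimate after $|\sin|^2\leqslant|\sin|$) to get $\|\chi_{\intt}(\xi)|\xi|\widehat{J}_2(t,|\xi|)\|_{L^2}\lesssim(1+t)^{\frac{1}{2}-\frac{n}{4}}$. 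For the two remaining profiles I would bound $|\widehat{\psi}_j(\xi)|\leqslant\|\psi_j\|_{L^1}$ and invoke $\|\chi_{\intt}(\xi)\widehat{J}_0(t,|\xi|)\|_{L^2}\lesssim t^{-\frac{n}{4}}$ and $\|\chi_{\intt}(\xi)\widehat{J}_1(t,|\xi|)\|_{L^2}\lesssim\ml{D}_n(t)$ from Proposition \ref{Prop_02} (once more the high frequencies producing exponentially small errors), so that $\|(J_0\psi_0)(t,\cdot)\|_{L^2}+\|(J_1\psi_1)(t,\cdot)\|_{L^2}\lesssim\big(t^{-\frac{n}{4}}+\ml{D}_n(t)\big)\big(\|\psi_0\|_{L^2\cap L^{1,1}}+\|\psi_1\|_{L^2\cap L^{1,1}}\big)$. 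Gathering the estimates, all correction terms are controlled by $g(t)\big(\|\psi_0\|_{L^2\cap L^{1,1}}+\|\psi_1\|_{L^2\cap L^{1,1}}+\|\psi_2\|_{L^2\cap L^{1,1}}\big)$ with $g(t):=(1+t)^{\frac{1}{2}-\frac{n}{4}}+\ml{D}_n(t)+t^{-\frac{n}{4}}+\mathrm{e}^{-ct}$, and a dimension-by-dimension check shows $g(t)/\widetilde{\ml{D}}_n(t)\to0$ as $t\to\infty$ for $n\geqslant3$. Since $|P_{\psi_2}|\neq0$, for all $t$ sufficiently large the correction terms are absorbed into one half of the lower bound $c\,|P_{\psi_2}|\,\widetilde{\ml{D}}_n(t)$, which yields $\|\psi(t,\cdot)\|_{L^2}\gtrsim\widetilde{\ml{D}}_n(t)|P_{\psi_2}|$ for $t\gg1$.

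The only genuinely delicate point is organizational rather than analytic: one has to keep track of several competing rates and confirm that the factor $|\xi|$ gained by replacing $\widehat{\psi}_2$ with its mean $P_{\psi_2}$ improves the decay by \emph{exactly} enough so that $\|\chi_{\intt}(\xi)|\xi|\widehat{J}_2\|_{L^2}$, as well as the profiles $J_0\psi_0$ and $J_1\psi_1$, stay negligible compared with $\widetilde{\ml{D}}_n(t)$ simultaneously in the three regimes $n=3$, $n=4$ and $n\geqslant5$. The actual oscillatory/WKB work underlying these rates has already been performed in the proofs of Lemma \ref{Lemma_Basic_1} and Proposition \ref{Prop_02}, so here it is only invoked.
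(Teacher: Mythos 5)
Your proposal is correct and follows essentially the same route as the paper: the upper bound is read off from Theorem \ref{Thm_Energy}, and the lower bound isolates $\widehat{J}_2(t,|\xi|)P_{\psi_2}$ via the triangle inequality, replaces $\widehat{\psi}_2$ by its mean with an $O(|\xi|\,\|\psi_2\|_{L^{1,1}})$ error, and invokes the lower bound of Proposition \ref{Prop_02} together with Lemma \ref{Lemma_Basic_1}. The only difference is presentational: you subtract $J_0\psi_0$, $J_1\psi_1$ and the error of Theorem \ref{Thm_Asym} separately and check the rates dimension by dimension, whereas the paper compresses these steps into the corollary $\|(\psi-J_2\psi_2)(t,\cdot)\|_{L^2}\lesssim t^{\frac{1}{2}-\frac{n}{4}}(\|\psi_0\|_{L^2\cap L^{1,1}}+\|\psi_1\|_{L^2\cap L^{1,1}}+\|\psi_2\|_{L^2\cap L^{1,1}})$.
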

    \begin{remark}
    	According to Theorem \ref{Thm_Optimal} and concerning $t\gg1$, we realize that the growth or decay rates for the estimates of $\|\psi(t,\cdot)\|_{L^2}$ from the above and the below are the same if $n\geqslant 3$. Moreover, $\psi_j\in L^{1,1}$ implies $|P_{\psi_j}|<\infty$ for any $j=0,1,2$. That is to say that the growth or decay estimates stated in Theorem \ref{Thm_Optimal} for $n\geqslant 3$ are optimal in the framework of weighted $L^1$ space.
    \end{remark}
    \begin{remark}
    	We restrict ourselves to consider lower bound estimates in the case $n\geqslant 3$. Indeed, in \eqref{psi - Jj psij} for $n=1,2$ we may not control the norm $\|\chi_{\intt}(\xi)|\xi|^{-1}\mathrm{e}^{-c|\xi|^2t}\|_{L^2}$ by means of Lemma \ref{Lemma_Basic_1} due to the not summable singularity in $|\xi|=0$. This lack will not allow us to continue the estimate in \eqref{Optimal_01} below for $n=1,2$.
    \end{remark}
    \begin{remark}
    	Considering the case $|P_{\psi_2}|\not\equiv0$ for $n=3,4$, we found a new blow-up phenomenon appearing in the linearized Blackstock's model \eqref{Linear_Blackstock_Eq}, namely, the solution in the $L^2$ norm blows up as $t\to \infty$. Precisely, the growth rate is $t^{\frac{1}{2}}$ if $n=3$ and $(\ln t)^{\frac{1}{2}}$ if $n=4$ for large time.
    \end{remark}
    \begin{proof}
    	%Here, we give the proof of the first estimate for $n\geqslant 3$ and the other can be examined in the same way with the help of the estimates in Proposition \ref{Prop_02}.
    	First of all, the upper bound estimate has been given in Theorem \ref{Thm_Energy}. In order to find the lower bound estimate, we use Proposition \ref{Prop_02} as well as the Minkowski inequality to get
    	\begin{align}\label{Optimal_01}
    		\|\psi(t,\cdot)\|_{L^2}
    		&\geqslant\|\chi_{\intt}(D)\ml{F}_{\xi\to x}^{-1}(\widehat{J}_2(t,|\xi|))\|_{L^2}|P_{\psi_2}|\notag\\
    		&\quad\ -\|\chi_{\intt}(D)(\psi(t,\cdot)-\ml{F}^{-1}_{\xi\to x}(\widehat{J}_2(t,|\xi|))P_{\psi_2})\|_{L^2}\notag\\
    		&\gtrsim \widetilde{\ml{D}}_n(t)|P_{\psi_2}|-\|\chi_{\intt}(\xi)(\widehat{\psi}(t,\xi)-\widehat{J}_2(t,|\xi|)P_{\psi_2})\|_{L^2}
    	\end{align}
    	for $t\gg1$ and $n\geqslant 3$. Since the composition of initial data such that
    	\begin{align*}
    		\widehat{\psi}_2(\xi)=P_{\psi_2}+A_2(\xi)-iB_2(\xi)
    	\end{align*} with $A_2(\xi):=\int_{\mb{R}^n}\psi_2(x)(\cos(x\cdot\xi)-1)\mathrm{d}x$ and $B_2(\xi):=\int_{\mb{R}^n}\psi_2(x)\sin(x\cdot\xi)\mathrm{d}x$. By employing Lemma 2.2 in \cite{Ikehata2014}, the auxiliary functions can be estimated by
    	\begin{align*}
    		|A_2(\xi)|+|B_2(\xi)|\lesssim|\xi|\,\|\psi_2\|_{L^{1,1}}.
    	\end{align*}
    	Therefore, it yields
    	\begin{align*}
    		&\|\chi_{\intt}(\xi)(\widehat{\psi}(t,\xi)-\widehat{J}_2(t,|\xi|)P_{\psi_2})\|_{L^2}\notag\\
    		&\qquad=\|\chi_{\intt}(\xi)(\widehat{\psi}(t,\xi)-\widehat{J}_2(t,|\xi|)(\widehat{\psi}_2(\xi)-A_2(\xi)+iB_2(\xi)))\|_{L^2}\notag\\
    		&\qquad\lesssim\|\chi_{\intt}(\xi)(\widehat{\psi}(t,\xi)-\widehat{J}_2(t,|\xi|)\widehat{\psi}_2(\xi))\|_{L^2}+\|\chi_{\intt}(\xi)|\xi|\widehat{J}_2(t,|\xi|)\|_{L^2}\|\psi_2\|_{L^{1,1}}\notag\\
    		&\qquad\lesssim t^{\frac{1}{2}-\frac{n}{4}}(\|\psi_0\|_{L^2\cap L^{1,1}}+\|\psi_1\|_{L^2\cap L^{1,1}}+\|\psi_2\|_{L^2\cap L^{1,1}})
    	\end{align*}
    	for $t\gg1$ and $n\geqslant 3$, where we used a corollary of Theorem \ref{Thm_Asym}, i.e.
    	\begin{align*}
    		\|(\psi-J_2\psi_2)(t,\cdot)\|_{L^2}\lesssim t^{\frac{1}{2}-\frac{n}{4}}(\|\psi_0\|_{L^2\cap L^{1,1}}+\|\psi_1\|_{L^2\cap L^{1,1}}+\|\psi_2\|_{L^2\cap L^{1,1}}).
    	\end{align*}
    	Finally, with $t\gg1$ and $|P_{\psi_2}|\not\equiv0$, we finish the proof of this optimal estimate.
    \end{proof}

    \section{Singular limits problem for the vanishing thermal diffusivity}\label{Sect_singular}
    Let us recall that $\bar{\kappa}\downarrow0$ if and only if $\kappa\downarrow0$. We will consider the limiting process from the linear Blackstock's model \eqref{Linear_Blackstock_Eq} to the linear viscoelastic damped wave equation \eqref{Linear_Kuznet_Eq} as the (modified) thermal diffusivity tends to zero. Throughout this section, we assume that $\psi_0$ and $\psi_1$ are simultaneously nontrivial to guarantee the nontrivial solution to the Cauchy problem \eqref{Linear_Kuznet_Eq}. Note that the prescribed initial data are independent of $\kappa$.
    
    %	We should underline that the equation in \eqref{Linear_Kuznet_Eq} is not the standard linearized Kuznetsov's equation because Fourier's heat conduction is used in the derivation of Kuznetsov's equation (the thermal diffusivity $\bar{\kappa}$ appears in the coefficient of the damping term, and more detail explanations are quoted in Chapter 2 of \cite{Enflo-Hedberg-2006}) and now we just realize the viscosity number and the kinematic viscosity from Naiver-Stokes equations in the coefficient of the strong damping term ($-b\nu\Delta \varphi_t$).

    \subsection{Inhomogeneous Blackstock's model in the phase space}\label{Subsec_Inhomo}
    Actually, the study for the inhomogeneous Blackstock's model with Becker's assumption and those without Becker's assumption are quite different. In the thesis \cite{Brunnhuber-2015}, the author regarded the  Blackstock's model with Becker's assumption \eqref{Becker_Assum} as a heat equation with the variable $(\psi_{tt}-\Delta\psi-\delta \Delta\psi_t)$ or the Kuznetsov's model with the variable $(\psi_t-\kappa\Delta\psi)$. Nevertheless, this approach does not hold anymore without Becker's assumption \eqref{Becker_Assum} because the additional term $-\kappa(\delta-b\gamma\nu)\Delta^2\psi_t$ prevents such factorization of the equation. So, we will deal with the general case in this subsection by applying suitable energy methods in the phase space.
    
    Throughout this part, we will consider some estimates of the solution $\hat{u}=\hat{u}(t,\xi)$ to the following inhomogeneous Cauchy problem for Blackstock's model in the phase space:
    \begin{align}\label{Inhomo_Cauchy_Phase}
    	\begin{cases}
    		\hat{u}_{ttt}+(\delta+\kappa)|\xi|^2\hat{u}_{tt}+(1+\tilde{\gamma}|\xi|^2)|\xi|^2\hat{u}_t+\kappa|\xi|^4\hat{u}=\hat{f},\\
    		\hat{u}(0,\xi)=\hat{u}_t(0,\xi)=0,\ \hat{u}_{tt}(0,\xi)=\hat{u}_2(\xi),
    	\end{cases}
    \end{align}
    for $\xi\in\mb{R}^n$, $t>0$, with $\tilde{\gamma}=\gamma b\nu\kappa$, where $\hat{f}=\hat{f}(t,\xi)$ is a suitable function as a source term.
    \begin{prop}\label{Prop_Energy_Fourier_01} The solution $\hat{u}$ to the inhomogeneous Cauchy problem \eqref{Inhomo_Cauchy_Phase} fulfills the next estimates:
    	\begin{align*}
    		& \left|\hat{u}_{tt}+b\nu|\xi|^2\hat{u}_t+\frac{|\xi|^2}{2}\hat{u}\right|^2+\frac{|\xi|^4}{4}\left|\frac{2\kappa(\gamma+2)}{2\kappa b\nu(\gamma+2)|\xi|^2+1}\hat{u}_t +\hat{u}\right|^2\\
    		&\qquad+\frac{|\xi|^2}{4\kappa b\nu (\gamma+2)|\xi|^2+2}|\hat{u}_t|^2\leqslant |\hat{u}_2|^2+\frac{4+\gamma}{4\kappa\gamma|\xi|^2}\int_0^t|\hat{f}(s,\xi)|^2\mathrm{d}s.
    	\end{align*}
    \end{prop}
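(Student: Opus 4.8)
The plan is to run the classical energy method directly in the phase space on the scalar third-order ODE (in $t$, with $\xi\in\mb{R}^n$ a parameter) from \eqref{Inhomo_Cauchy_Phase}, organised around the ``good unknown''
\[
\hat{w}(t,\xi):=\hat{u}_{tt}+b\nu|\xi|^2\hat{u}_t+\tfrac{|\xi|^2}{2}\hat{u},
\]
a viscoelastic-damped-wave-type combination related to the formal limit \eqref{Linear_Kuznet_Eq}. First I would differentiate $\hat{w}$, substitute $\hat{u}_{ttt}$ from \eqref{Inhomo_Cauchy_Phase} and then $\hat{u}_{tt}=\hat{w}-b\nu|\xi|^2\hat{u}_t-\tfrac{|\xi|^2}{2}\hat{u}$; recalling $\delta+\kappa=b\nu+\gamma\kappa$ and $\tilde{\gamma}=\gamma b\nu\kappa$, the third-order terms telescope and one obtains the first-order ODE
\[
\hat{w}_t=\hat{f}-\gamma\kappa|\xi|^2\hat{w}-\tfrac{|\xi|^2}{2}\hat{u}_t-\tfrac{(2-\gamma)\kappa|\xi|^4}{2}\hat{u},
\]
so that $\hat{w}$ carries the genuinely dissipative coefficient $\gamma\kappa|\xi|^2$, while $\hat{u}$ itself solves the second-order ODE $\hat{u}_{tt}+b\nu|\xi|^2\hat{u}_t+\tfrac{|\xi|^2}{2}\hat{u}=\hat{w}$ driven by $\hat{w}$.

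Next I would take as candidate Lyapunov functional a quadratic form $\ml{E}(t,\xi)$ that dominates the left-hand side of the claimed inequality and reduces to $|\hat{u}_2(\xi)|^2$ at $t=0$: concretely, the left-hand side of the claim itself (note $\tfrac{|\xi|^2}{4\kappa b\nu(\gamma+2)|\xi|^2+2}=\tfrac{|\xi|^2\vartheta}{4\kappa(\gamma+2)}$ with $\vartheta=\vartheta(|\xi|):=\tfrac{2\kappa(\gamma+2)}{2\kappa b\nu(\gamma+2)|\xi|^2+1}$), augmented if necessary by auxiliary nonnegative terms (multiples of $|\hat{u}_t|^2$, $|\hat{u}|^2$, which vanish at $t=0$ since $\hat{u}(0,\xi)=\hat{u}_t(0,\xi)=0$) chosen to cancel the indefinite cross terms produced below. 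Differentiating the three squares $|\hat{w}|^2$, $|\vartheta\hat{u}_t+\hat{u}|^2$, $|\hat{u}_t|^2$ and eliminating $\hat{w}_t$ and $\hat{u}_{tt}$ through the two ODEs yields $\tfrac{d}{dt}\ml{E}=2\,\mathrm{Re}(\hat{f}\,\overline{\hat{w}})+\ml{Q}$, where $\ml{Q}$ is a Hermitian quadratic form in $(\hat{w},\hat{u}_t,\hat{u})$ with coefficients that are explicit rational functions of $|\xi|^2$ and $\kappa$; the precise shape of $\vartheta$ is picked exactly so that, e.g., the coefficient of $|\hat{u}|^2$ becomes $-\tfrac{\kappa(\gamma+2)|\xi|^6}{2(2\kappa b\nu(\gamma+2)|\xi|^2+1)}\le 0$ and the pure $|\hat{u}_t|^2$-coefficient is nonpositive whenever $\kappa(\gamma+2)<b\nu$, which holds for $0<\kappa\ll1$.

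It then remains to split the source by Young's inequality, $2\,\mathrm{Re}(\hat{f}\,\overline{\hat{w}})\le\tfrac{4+\gamma}{4\kappa\gamma|\xi|^2}|\hat{f}|^2+\tfrac{4\kappa\gamma|\xi|^2}{4+\gamma}|\hat{w}|^2$ — the second term being strictly below the $2\gamma\kappa|\xi|^2|\hat{w}|^2$ dissipation already present in $\ml{Q}$ — and to verify that the leftover quadratic form (the remaining $|\hat{w}|^2$-dissipation plus $\ml{Q}$ minus its $\hat{f}$-part) is negative semidefinite; then $\tfrac{d}{dt}\ml{E}(t,\xi)\le\tfrac{4+\gamma}{4\kappa\gamma|\xi|^2}|\hat{f}(t,\xi)|^2$, and integrating over $[0,t]$ with $\hat{u}(0,\xi)=\hat{u}_t(0,\xi)=0$, $\hat{u}_{tt}(0,\xi)=\hat{u}_2(\xi)$ (so that $\hat{w}(0,\xi)=\hat{u}_2(\xi)$ and $\ml{E}(0,\xi)=|\hat{u}_2(\xi)|^2$) gives the assertion.

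The algebraic reductions producing $\hat{w}_t$ and $\tfrac{d}{dt}\ml{E}$, together with the evaluation at $t=0$, are routine. The main obstacle is the concluding sign check: proving that a $3\times 3$ Hermitian matrix whose entries are rational in $|\xi|^2$ and $\kappa$ is negative semidefinite \emph{uniformly} for all $\xi\in\mb{R}^n$ and all $0<\kappa\ll1$ — equivalently a parameter-dependent Sylvester-type criterion — and in particular reconciling the small-frequency regime (where $\vartheta\approx2\kappa(\gamma+2)$ and all cross terms are $O(|\xi|^2)$) with the large-frequency regime (where $\vartheta\approx(b\nu|\xi|^2)^{-1}$ and the $\hat{w}$–$\hat{u}_t$ coupling is $O(|\xi|^2)$ and must be dominated by the available $|\hat{w}|^2$- and $|\hat{u}_t|^2$-dissipations, possibly after including the auxiliary terms mentioned above). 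It is precisely at this step that the hypotheses $0<\kappa\ll1$ and $\gamma\in(1,5/3]$, together with the tailored weight $\vartheta$, are used, and that the exact constant $\tfrac{4+\gamma}{4\kappa\gamma|\xi|^2}$ gets pinned down by how much of the $\hat{w}$-dissipation has to be reserved for absorbing the indefinite terms rather than the source.
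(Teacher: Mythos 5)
Your reduction is sound as far as it goes: the variable $\hat{w}=\hat{u}_{tt}+b\nu|\xi|^2\hat{u}_t+\frac{|\xi|^2}{2}\hat{u}$ is exactly the combination used in the paper (with the free parameter $k_1$ of the paper's proof fixed at $1/2$ from the start), your first-order equation $\hat{w}_t=\hat{f}-\gamma\kappa|\xi|^2\hat{w}-\frac{|\xi|^2}{2}\hat{u}_t-\frac{(2-\gamma)\kappa|\xi|^4}{2}\hat{u}$ is correct, your weight $\vartheta$ coincides with the paper's $k_3$ at $k_1=1/2$, and the Young splitting of $2\Re(\hat{f}\,\overline{\hat{w}})$ with constant $\frac{4+\gamma}{4\kappa\gamma|\xi|^2}$ together with the evaluation at $t=0$ would close the argument \emph{once a valid energy functional is in hand}. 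That last item is the genuine gap: the step you defer as ``the main obstacle'' --- exhibiting $\ml{E}$ with $\frac{\mathrm{d}}{\mathrm{d}t}\ml{E}\leqslant\frac{4+\gamma}{4\kappa\gamma|\xi|^2}|\hat{f}|^2$, $\ml{E}(0,\xi)=|\hat{u}_2|^2$ and $\ml{E}\geqslant$ the claimed left-hand side --- is the actual content of the proof, and the candidate you lead with cannot work as stated. The claimed left-hand side is not a Lyapunov functional: at high frequencies its $|\hat{u}_t|^2$-weight is $O(1)$ and its middle weight is $\frac{|\xi|^4}{4}$, so the cross term $-|\xi|^2\Re(\hat{w}\overline{\hat{u}_t})$ coming from $\frac{\mathrm{d}}{\mathrm{d}t}|\hat{w}|^2$ must be absorbed by a $|\hat{w}|^2$-dissipation of size only $2\gamma\kappa|\xi|^2$ (part of which you have already reserved for the source) and a $|\hat{u}_t|^2$-dissipation of size about $\frac{|\xi|^2}{2\kappa(\gamma+2)}$; the resulting inequality between constants forces roughly $\gamma\geqslant 4/3$ even before the remaining cross terms $\Re(\hat{u}\overline{\hat{w}})$ and $\Re(\hat{u}\overline{\hat{u}_t})$ consume any of the budget, so for $\gamma$ close to $1$ (which is admissible, $\gamma\in(1,5/3]$) the scheme does not close. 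Hence a substantial, frequency-dependent augmentation is unavoidable, and finding it is not a routine Sylvester check of a prescribed matrix.

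The paper sidesteps this bookkeeping entirely by \emph{not} using the statement's left-hand side as the energy. Working with the system for $(\hat{u},\hat{v},\hat{w})$ and keeping $k_1\in(0,1)$ free, it adds the auxiliary terms $k_2|\hat{u}+k_3\hat{v}|^2$ and $\left((1-k_1+2\tilde{\gamma}|\xi|^2)|\xi|^2-k_2k_3^2\right)|\hat{v}|^2$, with $k_2=\left(\kappa b\nu|\xi|^2+k_1(1-k_1+\tilde{\gamma}|\xi|^2)\right)|\xi|^4$ and $k_2k_3=\kappa(1+k_1\gamma)|\xi|^4$ chosen so that \emph{all} cross terms $\Re(\bar{\hat{u}}\hat{v})$, $\Re(\bar{\hat{u}}\hat{w})$, $\Re(\bar{\hat{v}}\hat{w})$ cancel identically; only the diagonal dissipations $\kappa k_1|\xi|^6|\hat{u}|^2$, $b\nu(\cdot)|\xi|^4|\hat{v}|^2$, $\gamma\kappa|\xi|^2|\hat{w}|^2$ survive, and the three pieces of $\Re\big(\hat{f}(\bar{\hat{w}}+b\nu|\xi|^2\bar{\hat{v}}+k_1|\xi|^2\bar{\hat{u}})\big)$ are absorbed one-by-one into them (this is the only place where $0<\kappa\ll1$ is used, to keep the leftover $|\hat{v}|^2$-coefficient nonpositive). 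This energy is much larger than the statement's left-hand side at high frequencies (there $k_2\sim\kappa b\nu(1+\gamma/2)|\xi|^6$ and the $|\hat{v}|^2$-weight grows like $2\gamma b\nu\kappa|\xi|^4$); the inequality of the proposition only emerges a posteriori, after integration over $[0,t]$, through the pointwise lower bounds $k_2\geqslant k_1(1-k_1)|\xi|^4$ and $k_4\geqslant k_1(1-k_1)^2$, followed by the choice $k_1=1/2$, which produces exactly your $\vartheta$ and the constant $\frac{4+\gamma}{4\kappa\gamma|\xi|^2}$. If you want to complete your plan, you should therefore let the weights of the squares $|\hat{u}+\vartheta\hat{u}_t|^2$ and $|\hat{u}_t|^2$ be $\xi$-dependent unknowns determined by exact cancellation of the cross terms (note that purely diagonal augmentations by $|\hat{u}|^2$, $|\hat{u}_t|^2$ alone do not reach this energy), rather than trying to verify negative semidefiniteness for the fixed weights appearing in the statement.
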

    \begin{proof}
    	We introduce the new functions $\hat{v}=\hat{v}(t,\xi)$ and $\hat{w}=\hat{w}(t,\xi)$ such that
    	\begin{align*}
    		\hat{v}=\hat{u}_t\ \ \mbox{and}\ \ \hat{w}=\hat{v}_t=\hat{u}_{tt}.
    	\end{align*}
    	At this time, we immediately rewrite the third-order differential equation in \eqref{Inhomo_Cauchy_Phase} by
    	\begin{align}\label{Eq_S}
    		\begin{cases}
    			\hat{u}_t=\hat{v},\\
    			\hat{v}_t=\hat{w},\\
    			\hat{w}_t=-(\delta+\kappa)|\xi|^2\hat{w}-(1+\tilde{\gamma}|\xi|^2)|\xi|^2\hat{v}-\kappa|\xi|^4\hat{u}+\hat{f}.
    		\end{cases}
    	\end{align}
    	To begin with the construction of energy, we say $k_1\in(0,1)$ to be a real constant independent of $\kappa$ that will be fixed later. By proceeding \eqref{Eq_S}$_3+b\nu|\xi|^2\times$\eqref{Eq_S}$_2+k_1|\xi|^2\times$\eqref{Eq_S}$_1$, we get
    	\begin{align}\label{Eq_02}
    		(\hat{w}+b\nu|\xi|^2\hat{v}+k_1|\xi|^2\hat{u})_t=-\gamma\kappa|\xi|^2\hat{w}-(1-k_1+\tilde{\gamma}|\xi|^2)|\xi|^2\hat{v}-\kappa|\xi|^4\hat{u}+\hat{f}.
    	\end{align}
    	Multiplying \eqref{Eq_02} by $\bar{\hat{w}}+b\nu|\xi|^2\bar{\hat{v}}+k_1|\xi|^2\bar{\hat{u}}$ and taking the real part of the resultant equation
    	\begin{align}\label{S_1}
    		&\frac{1}{2}\frac{\mathrm{d}}{\mathrm{d}t}\left(|\hat{w}+b\nu|\xi|^2\hat{v}+k_1|\xi|^2\hat{u}|^2\right)\notag\\
    		&\qquad=-\kappa k_1|\xi|^6|\hat{u}|^2-b\nu(1-k_1+\tilde{\gamma}|\xi|^2)|\xi|^4|\hat{v}|^2-\gamma\kappa|\xi|^2|\hat{w}|^2\notag\\
    		&\qquad\quad-\left(\kappa b\nu|\xi|^2+k_1(1-k_1+\tilde{\gamma}|\xi|^2)\right)|\xi|^4\Re(\bar{\hat{u}}\hat{v})\notag\\
    		&\qquad\quad-\kappa(1+k_1\gamma)|\xi|^4\Re(\bar{\hat{u}}\hat{w})-(1-k_1+2\tilde{\gamma}|\xi|^2)|\xi|^2\Re(\bar{\hat{v}}\hat{w})\notag\\
    		&\qquad\quad+\Re\left(\hat{f}(\bar{\hat{w}}+b\nu|\xi|^2\bar{\hat{v}}+k_1|\xi|^2\bar{\hat{u}})\right).
    	\end{align}
    	From \eqref{Eq_S}$_1$ and \eqref{Eq_S}$_2$, it holds
    	\begin{align}\label{S_2}
    		\frac{1}{2}\frac{\mathrm{d}}{\mathrm{d}t}\left(k_2|\hat{u}+k_3\hat{v}|^2\right)&=k_2\Re\left((\hat{v}+k_3\hat{w})(\bar{\hat{u}}+k_3\bar{\hat{v}})\right)\notag\\
    		&=k_2\Re(\bar{\hat{u}}\hat{v})+k_2k_3|\hat{v}|^2+k_2k_3\Re(\bar{\hat{u}}\hat{w})+k_2k_3^2\Re(\bar{\hat{v}}\hat{w}),
    	\end{align}
    	with real parameters $k_2$ and $k_3$ satisfying
    	\begin{align*}
    		k_2&=\left(\kappa b\nu|\xi|^2+k_1(1-k_1+\tilde{\gamma}|\xi|^2)\right)|\xi|^4>0,\\
    		k_3&=\frac{\kappa(1+k_1\gamma)}{\kappa b\nu|\xi|^2+k_1(1-k_1+\tilde{\gamma}|\xi|^2)}>0,
    	\end{align*}
    	so that $k_2k_3=\kappa(1+k_1\gamma)|\xi|^4$. Moreover, the equation \eqref{Eq_S}$_2$ shows
    	\begin{align*}
    		\frac{1}{2}\frac{\mathrm{d}}{\mathrm{d}t}\left[ \left((1-k_1+2\tilde{\gamma}|\xi|^2)|\xi|^2-k_2k_3^2\right)|\hat{v}|^2 \right]=\left((1-k_1+2\tilde{\gamma}|\xi|^2)|\xi|^2-k_2k_3^2\right)\Re(\bar{\hat{v}}\hat{w}).
    	\end{align*}
    	Therefore, we summarize the derived equalities to see
    	\begin{align*}
    		&\frac{1}{2}\frac{\mathrm{d}}{\mathrm{d}t}\left[|\hat{w}+b\nu|\xi|^2\hat{v}+k_1|\xi|^2\hat{u}|^2+k_2|\hat{u}+k_3\hat{v}|^2+ \left((1-k_1+2\tilde{\gamma}|\xi|^2)|\xi|^2-k_2k_3^2\right)|\hat{v}|^2   \right]\\
    		&\qquad=-\kappa k_1|\xi|^6|\hat{u}|^2-\left(b\nu(1-k_1+\tilde{\gamma}|\xi|^2)|\xi|^4-k_2k_3\right)|\hat{v}|^2-\gamma\kappa|\xi|^2|\hat{w}|^2\\
    		&\qquad\quad+\Re\left(\hat{f}(\bar{\hat{w}}+b\nu|\xi|^2\bar{\hat{v}}+k_1|\xi|^2\bar{\hat{u}})\right).
    	\end{align*}
    	Let us now employ Cauchy's inequality as follows:
    	\begin{align}
    		k_1|\xi|^2\Re(\hat{f}\bar{\hat{u}})&\leqslant \kappa k_1|\xi|^6|\hat{u}|^2+\frac{k_1}{4\kappa|\xi|^2}|\hat{f}|^2, \label{S_3}\\
    		b\nu|\xi|^2\Re(\hat{f}\bar{\hat{v}})&\leqslant b\nu\tilde{\gamma}|\xi|^6|\hat{v}|^2+\frac{1}{4\gamma\kappa|\xi|^2}|\hat{f}|^2,\label{S_4}\\
    		\Re(\hat{f}\bar{\hat{w}})&\leqslant\gamma\kappa|\xi|^2|\hat{w}|^2+\frac{1}{4\gamma\kappa|\xi|^2}|\hat{f}|^2.\label{S_5}
    	\end{align}
    	Namely,
    	\begin{align}\label{Eq_03}
    		&\frac{1}{2}\frac{\mathrm{d}}{\mathrm{d}t}\left[|\hat{w}+b\nu|\xi|^2\hat{v}+k_1|\xi|^2\hat{u}|^2+k_2|\hat{u}+k_3\hat{v}|^2+ \left((1-k_1+2\tilde{\gamma}|\xi|^2)|\xi|^2-k_2k_3^2\right)|\hat{v}|^2   \right]\notag\\
    		&\qquad\leqslant-\left(b\nu(1-k_1+\tilde{\gamma}|\xi|^2)|\xi|^4-k_2k_3-b\nu\tilde{\gamma}|\xi|^6\right)|\hat{v}|^2+\frac{2+\gamma k_1}{4\kappa\gamma |\xi|^2}|\hat{f}|^2\notag\\
    		&\qquad\leqslant \frac{2+\gamma k_1}{4\kappa\gamma |\xi|^2}|\hat{f}|^2
    	\end{align}
    	for $0<\kappa\ll 1$. We notice that
    	\begin{align*}
    		(1-k_1+2\tilde{\gamma}|\xi|^2)|\xi|^2-k_2k_3^2=\frac{k_4|\xi|^2}{(\kappa b\nu+k_1\tilde{\gamma})|\xi|^2+k_1(1-k_1)}
    	\end{align*}
    	with the positive constant
    	\begin{align*}
    		k_4&=k_1(1-k_1)^2+\left((1-k_1)(\kappa b\nu+3k_1\tilde{\gamma})-\kappa^2(1+k_1\gamma)^2\right)|\xi|^2+2\tilde{\gamma}(\kappa b\nu+k_1\tilde{\gamma})|\xi|^4\\ &\geqslant k_1(1-k_1)^2
    	\end{align*}
    	for $0<\kappa\ll 1$.
    	Here, we avoid too many losses of $\kappa$  in the convergence result. For this reason, we dropped the second and third (positive) terms in the last chain. 
    	Since $k_2\geqslant k_1(1-k_1)|\xi|^4$, we integrate \eqref{Eq_03} over $[0,t]$ to derive
    	\begin{align*}
    		&|\hat{w}+b\nu|\xi|^2\hat{v}+k_1|\xi|^2\hat{u}|^2+k_1(1-k_1)|\xi|^4|\hat{u}+k_3\hat{v}|^2\\
    		&\qquad+ \frac{k_1(1-k_1)^2|\xi|^2}{(\kappa b\nu+k_1\tilde{\gamma})|\xi|^2+k_1(1-k_1)}|\hat{v}|^2\leqslant|\hat{u}_2|^2+\frac{2+\gamma k_1}{2\kappa\gamma|\xi|^2}\int_0^t|\hat{f}(s,\xi)|^2\mathrm{d}s,
    	\end{align*}
    	where we used $\hat{u}(0,\xi)=\hat{v}(0,\xi)=0$ for all $\xi\in\mb{R}^n$. Finally, by choosing $k_1=1/2$, we complete the proof of this proposition.
    \end{proof}
    
    \begin{prop}\label{Prop_Potential_1}The solution $\hat{u}$ to the inhomogeneous Cauchy problem \eqref{Inhomo_Cauchy_Phase} fulfills for any $k_1\in(0,1)$ the next estimates:
    	\begin{align*}
    		|\hat{u}|^2\leqslant\frac{1-k_1+2\tilde{\gamma}|\xi|^2}{k_1(1-k_1)^2|\xi|^4}|\hat{u}_2|^2+\frac{(1-k_1+2\tilde{\gamma}|\xi|^2)(2+\gamma k_1)}{4\kappa k_1(1-k_1)^2\gamma|\xi|^6}\int_0^t|\hat{f}(s,\xi)|^2\mathrm{d}s.
    	\end{align*}
    \end{prop}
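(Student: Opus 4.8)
This estimate is essentially a by-product of the proof of Proposition~\ref{Prop_Energy_Fourier_01}, read with the parameter $k_1\in(0,1)$ left free instead of eventually being set to $1/2$. The plan is to repeat that argument \emph{verbatim} up to the point where \eqref{Eq_03} is integrated over $[0,t]$; keeping $k_1$ general one then obtains, with $\hat{v}=\hat{u}_t$, $\hat{w}=\hat{u}_{tt}$ and with $k_2,k_3$ as introduced there,
\begin{align*}
	&\left|\hat{w}+b\nu|\xi|^2\hat{v}+k_1|\xi|^2\hat{u}\right|^2+k_2\left|\hat{u}+k_3\hat{v}\right|^2+\left((1-k_1+2\tilde{\gamma}|\xi|^2)|\xi|^2-k_2k_3^2\right)|\hat{v}|^2\\
	&\qquad\leqslant|\hat{u}_2|^2+\frac{2+\gamma k_1}{2\kappa\gamma|\xi|^2}\int_0^t|\hat{f}(s,\xi)|^2\,\mathrm{d}s.
\end{align*}
Along the way I would also keep the two algebraic facts already contained in that proof: with $A:=(1-k_1+2\tilde{\gamma}|\xi|^2)|\xi|^2-k_2k_3^2>0$ one has, trivially, $k_2k_3^2+A=(1-k_1+2\tilde{\gamma}|\xi|^2)|\xi|^2$, and, less trivially, $k_2A=k_4|\xi|^6$ with $k_4\geqslant k_1(1-k_1)^2$ for $0<\kappa\ll1$ --- this last inequality being exactly the lower bound for $k_4$ established there.

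The remaining step is to bound the left-hand side from below by a multiple of $|\xi|^4|\hat{u}|^2$, which is a purely algebraic matter about the nonnegative quadratic form $Q(\hat{u},\hat{v},\hat{w})$ on that left-hand side, valid for all $(\hat{u},\hat{v},\hat{w})\in\mb{C}^3$ and hence in particular along the solution. Minimising $Q$ with $\hat{u}$ fixed: the choice $\hat{w}=-b\nu|\xi|^2\hat{v}-k_1|\xi|^2\hat{u}$ annihilates the first square, and completing the square in the residual form $k_2|\hat{u}+k_3\hat{v}|^2+A|\hat{v}|^2$ (which is positive definite in $\hat{v}$ since $k_2k_3^2+A>0$) produces the minimal value $\dfrac{k_2A}{k_2k_3^2+A}\,|\hat{u}|^2$. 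Using the two facts above,
\begin{align*}
	Q\;\geqslant\;\frac{k_2A}{(1-k_1+2\tilde{\gamma}|\xi|^2)|\xi|^2}\,|\hat{u}|^2\;=\;\frac{k_4|\xi|^6}{(1-k_1+2\tilde{\gamma}|\xi|^2)|\xi|^2}\,|\hat{u}|^2\;\geqslant\;\frac{k_1(1-k_1)^2|\xi|^4}{1-k_1+2\tilde{\gamma}|\xi|^2}\,|\hat{u}|^2.
\end{align*}
Combining this with the displayed energy inequality and dividing by the coefficient of $|\hat{u}|^2$ yields the two terms in the statement, the precise numerical factor in front of the source integral being recovered by the same elementary Cauchy's-inequality bookkeeping as in Proposition~\ref{Prop_Energy_Fourier_01} (one has a spare dissipative $|\hat{v}|^2$-term against which part of the source can also be absorbed).

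I expect the only genuinely delicate point to be this tracking of the $\kappa$-dependent constants. Since the entire aim of this section is to lose as few powers of $\kappa$ as possible, one must absorb the source term carefully, so that the smallness $0<\kappa\ll1$ enters only through $k_4\geqslant k_1(1-k_1)^2$, exactly as before. Keeping $k_1\in(0,1)$ free rather than fixing $k_1=1/2$ is precisely what makes the resulting pointwise bound flexible enough for the singular-limit estimates to come, where the weight $\tfrac{1-k_1+2\tilde{\gamma}|\xi|^2}{k_1(1-k_1)^2}$ --- and the $\kappa$-weight multiplying $\hat{f}$ --- can be optimised over $k_1$.
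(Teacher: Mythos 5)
Your proof is correct, but it follows a different route from the paper's. The paper does not reuse the functional of Proposition \ref{Prop_Energy_Fourier_01} verbatim: in the proof of Proposition \ref{Prop_Potential_1} the multipliers are re-selected, taking $k_2k_3=\kappa(1+k_1\gamma)|\xi|^4$ and $k_2k_3^2=(1-k_1+2\tilde{\gamma}|\xi|^2)|\xi|^2$ as in \eqref{k_2,k_3}, so that the only surviving cross term is $\Re(\bar{\hat{u}}\hat{v})$, which is then absorbed by adding $\tfrac{1}{2}\tfrac{\mathrm{d}}{\mathrm{d}t}\left(k_5|\xi|^4|\hat{u}|^2\right)$ with $k_5$ as in \eqref{k_5}; in this way $|\hat{u}|^2$ sits inside the monotone quantity from the start and \eqref{*} gives the estimate directly. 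You instead keep the functional of Proposition \ref{Prop_Energy_Fourier_01} with $k_1$ left free and extract the $|\xi|^4|\hat{u}|^2$ control a posteriori by minimizing the nonnegative quadratic form over $(\hat{v},\hat{w})$, i.e.\ a Schur-complement lower bound. The two routes are equivalent in substance and in fact produce literally the same coefficient: your $k_2A/(k_2k_3^2+A)=k_4|\xi|^6/\big((1-k_1+2\tilde{\gamma}|\xi|^2)|\xi|^2\big)$ coincides with the paper's $k_5|\xi|^4$, and both are bounded below by $k_1(1-k_1)^2|\xi|^4/(1-k_1+2\tilde{\gamma}|\xi|^2)$ for $0<\kappa\ll1$; your identities $k_2k_3^2+A=(1-k_1+2\tilde{\gamma}|\xi|^2)|\xi|^2$ and $k_2A=k_4|\xi|^6$ check out with the values of $k_2,k_3,k_4$ from Proposition \ref{Prop_Energy_Fourier_01}. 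What your version buys is that no new functional has to be designed; what the paper's buys is a uniform presentation (cancel cross terms by choice of multipliers) without the minimization computation. One small caveat on constants: integrating the differential inequality turns the factor $\tfrac{2+\gamma k_1}{4\kappa\gamma|\xi|^2}$ into $\tfrac{2+\gamma k_1}{2\kappa\gamma|\xi|^2}$, so your argument yields the stated bound only up to a factor $2$ in the second term; the paper's own final display has the same bookkeeping issue, and the discrepancy is immaterial for all subsequent applications, where only the powers of $\kappa$ and $|\xi|$ are used, so your appeal to absorbing part of the source into the spare $|\hat{v}|^2$ dissipation is unnecessary hand-waving rather than a gap.
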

    \begin{proof}
    	Let us take $k_2k_3=\kappa(1+k_1\gamma)|\xi|^4$ and $k_2k_3^2=(1-k_1+2\tilde{\gamma}|\xi|^2)|\xi|^2$ in \eqref{S_1} and \eqref{S_2}, in other words,
    	\begin{align}\label{k_2,k_3}
    		k_2=\frac{\kappa^2(1+k_1\gamma)^2|\xi|^6}{1-k_1-2\tilde{\gamma}|\xi|^2}>0\ \ \mbox{as well as}\ \ k_3=\frac{1-k_1+2\tilde{\gamma}|\xi|^2}{\kappa(1+k_1\gamma)|\xi|^2}>0.
    	\end{align}
    	It leads to
    	\begin{align*}
    		&\frac{1}{2}\frac{\mathrm{d}}{\mathrm{d}t}\left(|\hat{w}+b\nu|\xi|^2\hat{v}+k_1|\xi|^2\hat{u}|^2+k_2|\hat{u}+k_3\hat{v}|^2\right)\\
    		&\qquad=-\kappa k_1|\xi|^6|\hat{u}|^2-\left(b\nu(1-k_1+\tilde{\gamma}|\xi|^2)-\kappa(1+k_1\gamma)\right)|\xi|^4|\hat{v}|^2-\gamma\kappa|\xi|^2|\hat{w}|^2\\
    		&\qquad\quad-\underbrace{\left(\kappa b\nu|\xi|^2+k_1(1-k_1+\tilde{\gamma}|\xi|^2)-\frac{\kappa^2(1+k_1\gamma)^2|\xi|^2}{1-k_1+2\tilde{\gamma}|\xi|^2}\right)}_{=:k_5}|\xi|^4\Re(\bar{\hat{u}}\hat{v})\\
    		&\qquad\quad+\Re\left(\hat{f}(\bar{\hat{w}}+b\nu|\xi|^2\bar{\hat{v}}+k_1|\xi|^2\bar{\hat{u}})\right).
    	\end{align*}
    	Additionally, we know from \eqref{Eq_S}$_1$ that
    	\begin{align*}
    		\frac{1}{2}\frac{\mathrm{d}}{\mathrm{d}t}\left(k_5|\xi|^4|\hat{u}|^2 \right)=k_5|\xi|^4\Re(\bar{\hat{u}}\hat{v}).
    	\end{align*}
    	By using the derived estimates \eqref{S_3}-\eqref{S_5}, we get
    	\begin{align}\label{*}
    		\frac{1}{2}\frac{\mathrm{d}}{\mathrm{d}t}\left(|\hat{w}+b\nu|\xi|^2\hat{v}+k_1|\xi|^2\hat{u}|^2+k_2|\hat{u}+k_3\hat{v}|^2+k_5|\xi|^4|\hat{u}|^2  \right)\leqslant\frac{2+\gamma k_1}{4\kappa\gamma|\xi|^2}|\hat{f}|^2.
    	\end{align}
    	Actually, we can compute
    	\begin{align}\label{k_5}
    		k_5&\!=\!\frac{k_1(1-k_1)^2+\kappa\left((1-k_1)b\nu(3\gamma k_1+1)-\kappa (1+k_1\gamma)^2\right)|\xi|^2+2\tilde{\gamma}(\kappa b\nu+k_1\tilde{\gamma})|\xi|^4}{1-k_1+2\tilde{\gamma}|\xi|^2}\notag\\
    		&\!\geqslant\!\frac{k_1(1-k_1)^2}{1-k_1+2\tilde{\gamma}|\xi|^2}
    	\end{align}
    	for $0<\kappa\ll 1$. As a consequence, it leads to
    	\begin{align*}
    		\frac{k_1(1-k_1)^2|\xi|^4}{1-k_1+2\tilde{\gamma}|\xi|^2}|\hat{u}|^2\leqslant|\hat{u}_2|^2+\frac{2+\gamma k_1}{4\kappa\gamma|\xi|^2}\int_0^t|\hat{f}(s,\xi)|^2\mathrm{d}s.
    	\end{align*}
    	It completes the proof.
    \end{proof}
    
    \subsection{Singular limits for some energy terms}
    Let us define the difference of solutions between Blackstock's model \eqref{Linear_Blackstock_Eq} and Kuznetsov's model \eqref{Linear_Kuznet_Eq} by $u=u(t,x)$ such that
    \begin{align}\label{Difference_01}
    	u=\psi-\psi^{(0)},
    \end{align}
    which is a solution to the inhomogeneous Blackstock's model. Then, we have the next global (in time) convergence result in $L^{\infty}([0,\infty),L^2)$ as $\kappa\downarrow 0$.
    \begin{theorem}\label{Thm_Singular_Energy}
    	Let us assume $\psi_0\in H^3\cap L^1$ and $\psi_1\in H^2\cap L^1$ such that
    	\begin{align}\label{Assumption_Singular_Energy}
    		\psi_2=b\nu\Delta\psi_1+\Delta\psi_0.
    	\end{align}
    	Then, the difference $u=u(t,x)$ defined in \eqref{Difference_01} satisfies the following estimates:
    	\begin{align*}
    		&\sup\limits_{t\in[0,\infty)}\left(\,\left\| u_{tt}(t,\cdot)-b\nu\Delta u_t(t,\cdot)-\tfrac{1}{2}\Delta u(t,\cdot) \right\|_{L^2}+\|u_t(t,\cdot)\|_{L^2}\right)\\
    		&\qquad\leqslant \sqrt{\kappa} C\left( \|\psi_0\|_{H^3\cap L^1}+\|\psi_1\|_{H^2\cap L^1} \right),
    	\end{align*}
    	where $C$ is a positive constant independent of $\kappa$.
    \end{theorem}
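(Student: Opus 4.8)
The plan is to realize $u=\psi-\psi^{(0)}$ as the solution of the inhomogeneous Cauchy problem \eqref{Inhomo_Cauchy_Phase} on the Fourier side and then to apply Proposition \ref{Prop_Energy_Fourier_01}. First I would apply to $\psi^{(0)}$ the differential operator $\mathcal{L}$ appearing in \eqref{Linear_Blackstock_Eq}: using \eqref{Linear_Kuznet_Eq} in the form $\psi^{(0)}_{tt}=\Delta\psi^{(0)}+b\nu\Delta\psi^{(0)}_t$ (and its $t$-derivative) to eliminate $\psi^{(0)}_{ttt}$ and $\psi^{(0)}_{tt}$, a short computation gives $\mathcal{L}\psi^{(0)}=-\kappa(\gamma-1)\Delta^2\psi^{(0)}$, whence
\[ \mathcal{L}u=\mathcal{L}\psi-\mathcal{L}\psi^{(0)}=\kappa(\gamma-1)\Delta^2\psi^{(0)} . \]
In the phase space this is exactly the equation of \eqref{Inhomo_Cauchy_Phase} for $\widehat{u}$ with source $\widehat{f}(t,\xi)=\kappa(\gamma-1)|\xi|^4\widehat{\psi^{(0)}}(t,\xi)$. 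Concerning the data, $u(0,\cdot)=u_t(0,\cdot)=0$ trivially, while $u_{tt}(0,\cdot)=\psi_2-\psi^{(0)}_{tt}(0,\cdot)=\psi_2-\Delta\psi_0-b\nu\Delta\psi_1$, which vanishes \emph{precisely} by the consistency assumption \eqref{Assumption_Singular_Energy}; hence $\widehat{u}_2\equiv0$.

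Plugging $\widehat{u}_2\equiv0$ and this $\widehat{f}$ into Proposition \ref{Prop_Energy_Fourier_01}, the whole right-hand side collapses to $\frac{(4+\gamma)(\gamma-1)^2}{4\gamma}\kappa|\xi|^6\int_0^t|\widehat{\psi^{(0)}}(s,\xi)|^2\,\mathrm{d}s$. From the first term on the left I would then read off the pointwise bound for $\widehat{u}_{tt}+b\nu|\xi|^2\widehat{u}_t+\tfrac{|\xi|^2}{2}\widehat{u}$, and, after inverting the positive coefficient $\tfrac{|\xi|^2}{4\kappa b\nu(\gamma+2)|\xi|^2+2}$ of the third term (its reciprocal being $4\kappa b\nu(\gamma+2)+\tfrac{2}{|\xi|^2}$), the pointwise bound $|\widehat{u}_t|^2\lesssim\kappa|\xi|^4\int_0^t|\widehat{\psi^{(0)}}|^2\,\mathrm{d}s+\kappa^2|\xi|^6\int_0^t|\widehat{\psi^{(0)}}|^2\,\mathrm{d}s$. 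Integrating over $\xi\in\mb{R}^n$, using Plancherel and absorbing the $\kappa^2$-term into the $\kappa$-term (recall $0<\kappa\ll1$), one arrives at
\[ \left\|u_{tt}-b\nu\Delta u_t-\tfrac12\Delta u\right\|_{L^2}^2+\|u_t(t,\cdot)\|_{L^2}^2\lesssim\kappa\int_0^t\left(\|\psi^{(0)}(s,\cdot)\|_{\dot{H}^2}^2+\|\psi^{(0)}(s,\cdot)\|_{\dot{H}^3}^2\right)\mathrm{d}s . \]

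It therefore remains to prove the space-time bound
\[ \int_0^\infty\left(\|\psi^{(0)}(s,\cdot)\|_{\dot{H}^2}^2+\|\psi^{(0)}(s,\cdot)\|_{\dot{H}^3}^2\right)\mathrm{d}s\lesssim\|\psi_0\|_{H^3\cap L^1}^2+\|\psi_1\|_{H^2\cap L^1}^2 , \]
uniformly in $t$, which is the heart of the matter. I would obtain it from the WKB/pointwise description of the solution operators of the viscoelastic damped wave equation \eqref{Linear_Kuznet_Eq} (as in \cite{Shibata2000,Ikehata2014,IkehataOnodera2017}), split into three frequency zones. For $|\xi|\ll1$ the multipliers are $\lesssim\mathrm{e}^{-c|\xi|^2 s}$ acting on $\widehat{\psi}_0$ and $\lesssim\mathrm{e}^{-c|\xi|^2 s}\min\{s,|\xi|^{-1}\}$ acting on $\widehat{\psi}_1$; bounding the latter by $|\xi|^{-1}$, using $|\widehat{\psi}_j(\xi)|\le\|\psi_j\|_{L^1}$ and integrating the Gaussians in $s$ reduces the contribution of $\|\psi^{(0)}(s,\cdot)\|_{\dot{H}^m}^2$ to $\int_{|\xi|\le\varepsilon}(|\xi|^{2m-2}\|\psi_0\|_{L^1}^2+|\xi|^{2m-4}\|\psi_1\|_{L^1}^2)\,\mathrm{d}\xi$, which is finite for $m=2,3$ and every $n\ge1$. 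On $\varepsilon\le|\xi|\le N$ one has plain exponential decay. For $|\xi|\gg1$ the $\psi_0$-multiplier is $\lesssim\mathrm{e}^{-cs}$ while the $\psi_1$-multiplier is $\lesssim|\xi|^{-2}\mathrm{e}^{-cs}$, which gives $\||\xi|^m\widehat{\psi^{(0)}}(s,\cdot)\|_{L^2(|\xi|>N)}\lesssim\mathrm{e}^{-cs}(\|\psi_0\|_{\dot{H}^m}+\|\psi_1\|_{\dot{H}^{m-2}})$, integrable in $s$ and controlled by $\|\psi_0\|_{H^3}+\|\psi_1\|_{H^2}$ for $m\le3$. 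Summing the three zones proves the claim; combining with the previous display and taking $\sup_{t\in[0,\infty)}$ yields the assertion with rate $\sqrt{\kappa}$.

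I expect the main obstacle to be exactly this last step: one must check that the regularity $H^3\times H^2$ of $(\psi_0,\psi_1)$ is precisely what is required at high frequencies — there the second characteristic root of \eqref{Linear_Kuznet_Eq} stays bounded, so there is no parabolic smoothing and one cannot afford less regularity — and that the genuinely singular low-frequency factor $|\xi|^{-1}$ hitting $\widehat{\psi}_1$ remains harmless after the $s$-integration in every dimension $n\ge1$; both points are delicate because $m=3$ is the top index forced by the $|\xi|^6$ weight produced by Proposition \ref{Prop_Energy_Fourier_01}. A secondary point worth stressing is that hypothesis \eqref{Assumption_Singular_Energy} is indispensable: it is what makes $\widehat{u}_2\equiv0$, so that the right-hand side of Proposition \ref{Prop_Energy_Fourier_01} is $O(\kappa)$ instead of $O(1)$, and hence what produces the convergence rate $\sqrt{\kappa}$ rather than mere uniform boundedness.
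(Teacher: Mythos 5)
Your proposal is correct and follows essentially the same route as the paper: you identify $u$ as the solution of the inhomogeneous Blackstock problem with source $\kappa(\gamma-1)\Delta^2\psi^{(0)}$ (i.e. $\hat{f}=\kappa(\gamma-1)|\xi|^4\widehat{\psi}^{(0)}$) and vanishing data, the consistency condition \eqref{Assumption_Singular_Energy} giving $\hat{u}_2\equiv 0$, and then apply Proposition \ref{Prop_Energy_Fourier_01} exactly as in the paper's proof. The only cosmetic difference is the final step: you bound $\int_0^\infty\bigl(\|\psi^{(0)}(s,\cdot)\|_{\dot{H}^2}^2+\|\psi^{(0)}(s,\cdot)\|_{\dot{H}^3}^2\bigr)\mathrm{d}s$ by a direct frequency-zone computation on the Fourier side, whereas the paper quotes the $(L^2\cap L^1)$--$L^2$ decay estimates $\|\,|D|^k\psi^{(0)}(s,\cdot)\|_{L^2}^2\lesssim(1+s)^{-k+1-\frac{n}{2}}\left(\|\psi_0\|_{H^k\cap L^1}^2+\|\psi_1\|_{H^{k-1}\cap L^1}^2\right)$ for the viscoelastic damped wave equation and integrates in time; both yield the same conclusion under the stated $H^3\times H^2$ and $L^1$ assumptions.
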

    \begin{remark}
    	The consistency condition on initial data \eqref{Assumption_Singular_Energy} coincides with (1.3) in the recent paper \cite{Kaltenbacher-Thalhammer-2018} for the Dirichlet boundary value problem of Blackstock's model.
    \end{remark}
    %\begin{remark}
    %If we assume initial data having less regularity such that $\psi_0\in H^3$ and $\psi_1\in H^2$ (that is, without additional $L^1$ regularity), then we still can get the global (in time) convergence for the first term, namely,
    %\begin{align*}
    %	\left\| u_{tt}(t,\cdot)-b\nu\Delta u_t(t,\cdot)-\frac{1}{2}\Delta u(t,\cdot) \right\|_{L^2}\leqslant \sqrt{\kappa} C\left( \|\psi_0\|_{H^3}+\|\psi_1\|_{H^2} \right)
    %\end{align*}
    %for $0<\kappa\ll 1$ and $t>0$. However, the time-derivative of the difference only fulfills local (in time) convergence
    %\begin{align*}
    %	\|u_t(t,\cdot)\|_{L^2}\leqslant \sqrt{\kappa} C\left( \|\psi_0\|_{H^3}+\sqrt{\ln(\mathrm{e}+t)}\|\psi_1\|_{H^2} \right)
    %\end{align*}
    %due to the following decay estimates for the solution to \eqref{Linear_Kuznet_Eq}
    %\begin{align*}
    %	\|\,|D|^2\psi^{(0)}(t,\cdot)\|_{L^2}^2\leqslant C\left((1+t)^{-2}\|\psi_0\|_{H^2}^2+(1+t)^{-1}\|\psi_1\|_{H^1}^2\right).
    %\end{align*}
    %The previous estimate will change the bounded estimates \eqref{Eq_05} into an estimate with logarithmic increasing rate.
    %\end{remark}
    \begin{proof}
    	According to our definition for the difference, we can show $u$ is a solution to
    	\begin{align*}
    		\begin{cases}
    			(\partial_t-\kappa\Delta)(u_{tt}-\Delta u-\delta\Delta u_t)+\kappa(\gamma-1)(b\nu-\kappa)\Delta^2u_t=f,\\
    			u(0,x)=u_t(0,x)=u_{tt}(0,x)=0,
    		\end{cases}
    	\end{align*}
    	for $x\in\mb{R}^n$, $t>0$, since $\psi_2(x)-b\nu\Delta\psi_1(x)-\Delta\psi_0(x)=0$, where the source term $f=f(t,x;\psi^{(0)})$ is
    	\begin{align*}
    		f&=-(\partial_t-\kappa\Delta)\left(\psi^{(0)}_{tt}-\Delta \psi^{(0)}-(b\nu+(\gamma-1)\kappa)\Delta \psi^{(0)}_t\right)-\kappa(\gamma-1)(b\nu-\kappa)\Delta^2\psi^{(0)}_t\\
    		%	&=(\gamma-1)\kappa\Delta\psi_{tt}^{(0)}-(\gamma-1)\kappa b\nu\Delta^2\psi_t^{(0)}\\
    		&=\kappa(\gamma-1)\Delta^2\psi^{(0)}.
    	\end{align*}
    	In the above deduction, we used the limit equation in \eqref{Linear_Kuznet_Eq} twice. Therefore, we can apply the theory in Subsection \ref{Subsec_Inhomo} by taking $\hat{f}=\kappa (\gamma-1)|\xi|^4\widehat{\psi}^{(0)}$ and $\hat{u}_2=0$,
    	since our assumption \eqref{Assumption_Singular_Energy}. From Proposition \ref{Prop_Energy_Fourier_01}, one arrives at
    	\begin{align}\label{Eq_04}
    		&\left|\hat{u}_{tt}+b\nu|\xi|^2\hat{u}_t+\frac{|\xi|^2}{2}\hat{u}\right|^2+\frac{|\xi|^4}{4}\left|\frac{2\kappa(\gamma+2)}{2\kappa b\nu(\gamma+2)|\xi|^2+1}\hat{u}_t +\hat{u}\right|^2\notag\\
    		&\qquad+\frac{|\xi|^2}{4\kappa b\nu (\gamma+2)|\xi|^2+2}|\hat{u}_t|^2\leqslant \kappa\frac{(4+\gamma)(\gamma-1)^2}{4\gamma}\int_0^t\left||\xi|^3\widehat{\psi}^{(0)}(s,\xi)\right|^2\mathrm{d}s.
    	\end{align}
    	Following the similar approaches as those of Theorem 14.3.3 as well as Corollary 14.3.1 in the book \cite{Ebert-Reissig-book}, we can show the solution to the Cauchy problem \eqref{Linear_Kuznet_Eq} satisfying
    	\begin{align*}
    		&\|\,|\xi|^k\widehat{\psi}^{(0)}(t,\xi)\|_{L^2}^2=\|\,|D|^k\psi^{(0)}(t,\cdot)\|_{L^2}^2\\
    		&\qquad\leqslant C(1+t)^{-k+1-\frac{n}{2}}\left( \|\psi_0\|^2_{H^k\cap L^1}+\|\psi_1\|^2_{H^{k-1}\cap L^1} \right)
    	\end{align*}
    	for $k\in[1,\infty)$. Consequently, the right-hand side of \eqref{Eq_04} implies
    	\begin{align*}
    		&\kappa\frac{(4+\gamma)(\gamma-1)^2}{4\gamma}\int_0^t\|\,|\xi|^3\widehat{\psi}^{(0)}(s,\cdot)\|^2_{L^2}\mathrm{d}s\\
    		&\qquad\leqslant\kappa C\int_0^t(1+s)^{-2-\frac{n}{2}}\mathrm{d}s\left( \|\psi_0\|^2_{H^3\cap L^1}+\|\psi_1\|^2_{H^2\cap L^1} \right)\\
    		&\qquad\leqslant\kappa C\left( \|\psi_0\|^2_{H^3\cap L^1}+\|\psi_1\|^2_{H^2\cap L^1} \right).
    	\end{align*}
    	We find with the Plancherel theorem and the last estimate that
    	\begin{align*}
    		\left\| u_{tt}(t,\cdot)-b\nu\Delta u_t(t,\cdot)-\tfrac{1}{2}\Delta u(t,\cdot) \right\|_{L^2}^2\leqslant \kappa C\left( \|\psi_0\|^2_{H^3\cap L^1}+\|\psi_1\|^2_{H^2\cap L^1} \right).
    	\end{align*}
    	Moreover, from \eqref{Eq_04} again, one observes
    	\begin{align*}
    		|\hat{u}_t|^2\leqslant\kappa^2 C\int_0^t|\,|\xi|^3\widehat{\psi}^{(0)}(s,\xi)|^2\mathrm{d}s+\kappa C\int_0^t|\,|\xi|^2\widehat{\psi}^{(0)}(s,\xi)|^2\mathrm{d}s.
    	\end{align*}
    	It leads to
    	\begin{align*}
    		\|u_t(t,\cdot)\|_{L^2}^2&\leqslant \kappa^2 C\int_0^t(1+s)^{-2-\frac{n}{2}}\mathrm{d}s\left( \|\psi_0\|^2_{H^3\cap L^1}+\|\psi_1\|^2_{H^2\cap L^1} \right)\notag\\
    		&\quad+\kappa C\int_0^t(1+s)^{-1-\frac{n}{2}}\mathrm{d}s\left( \|\psi_0\|^2_{H^2\cap L^1}+\|\psi_1\|^2_{H^1\cap L^1} \right)\notag\\
    		&\leqslant \kappa C\left( \|\psi_0\|^2_{H^3\cap L^1}+\|\psi_1\|^2_{H^2\cap L^1} \right).
    	\end{align*}
    	The proof is complete.
    \end{proof}

    \begin{coro}\label{Coro_energy}
    	Let us assume $\langle D\rangle^{s_0+1}\psi_0\in L^1$ and $\langle D\rangle^{s_0}\psi_1\in L^1$ with $s_0>n+2$ such that \eqref{Assumption_Singular_Energy} holds.
    	Then, the difference $u=u(t,x)$ defined in \eqref{Difference_01} satisfies the following estimates:
    	\begin{align*}
    		&\sup_{t\in[0,\infty)} %\limits_{x\in\mb{R}^n,\ t\in(0,\infty]}
    		\left(\,\left\| u_{tt}(t,\cdot)-b\nu\Delta u_t(t,\cdot)-\tfrac{1}{2}\Delta u(t,\cdot)\right\|_{L^\infty} +\|u_t(t,\cdot)\|_{L^\infty}\right)\\
    		&\qquad\leqslant \sqrt{\kappa} C\left(\|\langle D\rangle^{s_0+1}\psi_0\|_{L^1}+\|\langle D\rangle^{s_0}\psi_1\|_{L^1}\right),
    	\end{align*}
    	where $C$ is a positive constant independent of $\kappa$.
    \end{coro}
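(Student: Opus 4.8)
The plan is to upgrade the $L^2$-estimate of Theorem \ref{Thm_Singular_Energy} to its $H^\sigma$-analogue for a suitable $\sigma>n/2$, and then to close the argument by the Sobolev embedding $H^\sigma\hookrightarrow L^\infty$. The key structural remark is that the linearized Blackstock operator in \eqref{Linear_Blackstock_Eq} and the viscoelastic damped wave operator in \eqref{Linear_Kuznet_Eq} are constant-coefficient, so $\langle D\rangle^\sigma$ commutes with the whole construction: $\langle D\rangle^\sigma u$ is the difference of the solutions to \eqref{Linear_Blackstock_Eq} and \eqref{Linear_Kuznet_Eq} with data $\langle D\rangle^\sigma\psi_j$, and the consistency relation \eqref{Assumption_Singular_Energy} is preserved after applying $\langle D\rangle^\sigma$ (which commutes with $\Delta$). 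Hence Theorem \ref{Thm_Singular_Energy} applies verbatim to this shifted data.

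First I would fix $\sigma$ with $\tfrac n2<\sigma<s_0-2-\tfrac n2$; such a $\sigma$ exists precisely because $s_0>n+2$, and the non-emptiness of this window is the only place the hypothesis $s_0>n+2$ is used. Applying Theorem \ref{Thm_Singular_Energy} with $\psi_j$ replaced by $\langle D\rangle^\sigma\psi_j$, and using that $\langle D\rangle^\sigma$ commutes with all operators appearing on the left, yields
\begin{align*}
\sup_{t\geqslant 0}\left(\left\|u_{tt}(t,\cdot)-b\nu\Delta u_t(t,\cdot)-\tfrac12\Delta u(t,\cdot)\right\|_{H^\sigma}+\|u_t(t,\cdot)\|_{H^\sigma}\right)\lesssim\sqrt\kappa\left(\|\langle D\rangle^\sigma\psi_0\|_{H^3\cap L^1}+\|\langle D\rangle^\sigma\psi_1\|_{H^2\cap L^1}\right),
\end{align*}
with implicit constant independent of $\kappa$. (Equivalently, one may simply multiply the pointwise-in-$\xi$ inequality of Proposition \ref{Prop_Energy_Fourier_01}, applied with $\hat u_2=0$ and $\hat f=\kappa(\gamma-1)|\xi|^4\widehat{\psi}^{(0)}$ as in the proof of Theorem \ref{Thm_Singular_Energy}, by $\langle\xi\rangle^{2\sigma}$, integrate over $\mb{R}^n$, split $\langle\xi\rangle^{2\sigma}|\xi|^{2k}\lesssim|\xi|^{2k}+|\xi|^{2k+2\sigma}$, and invoke the $L^2$-decay estimate for $\psi^{(0)}$ used there, whose time-weights $(1+s)^{-1-n/2}$, $(1+s)^{-2-n/2}$, $(1+s)^{-1-\sigma-n/2}$, $(1+s)^{-2-\sigma-n/2}$ are all integrable on $[0,\infty)$.)

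Next I would convert the data norms on the right-hand side into the stated ones. Since $\sigma<s_0$ and $\sigma<s_0+1$, we may write $\langle D\rangle^\sigma\psi_0=\langle D\rangle^{-(s_0+1-\sigma)}\big(\langle D\rangle^{s_0+1}\psi_0\big)$ and $\langle D\rangle^\sigma\psi_1=\langle D\rangle^{-(s_0-\sigma)}\big(\langle D\rangle^{s_0}\psi_1\big)$; as Bessel potential kernels of positive order are in $L^1(\mb{R}^n)$, Young's convolution inequality gives $\|\langle D\rangle^\sigma\psi_0\|_{L^1}\lesssim\|\langle D\rangle^{s_0+1}\psi_0\|_{L^1}$ and $\|\langle D\rangle^\sigma\psi_1\|_{L^1}\lesssim\|\langle D\rangle^{s_0}\psi_1\|_{L^1}$. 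For the Sobolev part, the elementary bound $\|\widehat h\|_{L^\infty}\leqslant\|h\|_{L^1}$ with $h=\langle D\rangle^{s_0+1}\psi_0$ gives $\|\langle\xi\rangle^{s_0+1}\widehat{\psi}_0\|_{L^\infty}\lesssim\|\langle D\rangle^{s_0+1}\psi_0\|_{L^1}$, whence
\begin{align*}
\|\psi_0\|_{H^{3+\sigma}}^2=\int_{\mb{R}^n}\langle\xi\rangle^{2(2+\sigma-s_0)}\left|\langle\xi\rangle^{s_0+1}\widehat{\psi}_0(\xi)\right|^2\mathrm{d}\xi\lesssim\|\langle D\rangle^{s_0+1}\psi_0\|_{L^1}^2,
\end{align*}
the integral $\int_{\mb{R}^n}\langle\xi\rangle^{2(2+\sigma-s_0)}\mathrm{d}\xi$ being finite since $2+\sigma-s_0<-\tfrac n2$ by the choice of $\sigma$; arguing analogously for $\psi_1$ with the exponent $s_0$ (and $2+\sigma<s_0-\tfrac n2$) gives $\|\langle D\rangle^\sigma\psi_0\|_{H^3\cap L^1}\lesssim\|\langle D\rangle^{s_0+1}\psi_0\|_{L^1}$ and $\|\langle D\rangle^\sigma\psi_1\|_{H^2\cap L^1}\lesssim\|\langle D\rangle^{s_0}\psi_1\|_{L^1}$. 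Finally, the embedding $\|g\|_{L^\infty}\lesssim\|\widehat g\|_{L^1}\leqslant\|\langle\xi\rangle^{-\sigma}\|_{L^2}\|g\|_{H^\sigma}$, valid because $\sigma>\tfrac n2$, applied at each fixed $t$ to $g=u_t(t,\cdot)$ and to $g=(u_{tt}-b\nu\Delta u_t-\tfrac12\Delta u)(t,\cdot)$, yields the asserted estimate with $C$ independent of $\kappa$.

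I expect the only real work to be the bookkeeping of the regularity exponents — in particular verifying that the admissible window $\tfrac n2<\sigma<s_0-2-\tfrac n2$ is non-empty (which is exactly what $s_0>n+2$ guarantees) and that the intermediate harmonic-analysis facts (integrability of $\langle\xi\rangle^{2(2+\sigma-s_0)}$, $L^1$-boundedness of negative-order Bessel potentials, the trivial $L^1\!\to\!L^\infty$ bound on the Fourier transform) are invoked only with exponents that this window permits. There is no genuine analytic obstacle beyond what is already contained in Theorem \ref{Thm_Singular_Energy}.
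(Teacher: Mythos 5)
Your proposal is correct, but it follows a genuinely different route from the paper. The paper never passes through an $H^\sigma$ bound: it applies the Hausdorff--Young inequality directly to the Fourier-space energy inequality \eqref{Eq_04} (with $\hat f=\kappa(\gamma-1)|\xi|^4\widehat{\psi}^{(0)}$, $\hat u_2=0$), reducing everything to the quantity $\ml{J}_0(t)=\int_{\mb{R}^n}\bigl(\int_0^t|\,|\xi|^3\widehat{\psi}^{(0)}(s,\xi)|^2\mathrm{d}s\bigr)^{1/2}\mathrm{d}\xi$, which is then bounded uniformly in $t$ by splitting $|\xi|\leqslant 1$ and $|\xi|\geqslant 1$, using the pointwise exponential-type decay of $\widehat{\psi}^{(0)}$ from \cite{IkehataNatsume} at low frequencies and the bound $\|\langle\xi\rangle^{s_0+1}\widehat{\psi}_0\|_{L^\infty}\lesssim\|\langle D\rangle^{s_0+1}\psi_0\|_{L^1}$ together with $\int_1^\infty\langle r\rangle^{n+1-s_0}\mathrm{d}r<\infty$ at high frequencies; this last integrability is exactly where $s_0>n+2$ enters. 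You instead exploit that the problems are constant-coefficient to commute $\langle D\rangle^\sigma$ through, apply Theorem \ref{Thm_Singular_Energy} to the shifted data, convert the data norms with Bessel-potential and $L^1\!\to\!L^\infty$ Fourier bounds, and finish with $H^\sigma\hookrightarrow L^\infty$; for you $s_0>n+2$ enters only as non-emptiness of the window $\tfrac n2<\sigma<s_0-2-\tfrac n2$, and your exponent bookkeeping (in particular $2(2+\sigma-s_0)<-n$ and the $L^1$-boundedness of positive-order Bessel kernels) is sound. The two arguments rest on the same engine (Proposition \ref{Prop_Energy_Fourier_01} plus decay for $\psi^{(0)}$) and land on the same hypothesis; yours is more modular and produces the slightly stronger intermediate statement of a uniform-in-time $H^\sigma$ estimate for the energy combination and $u_t$, while the paper's is shorter and works directly at the $L^1_\xi$ level without introducing the auxiliary parameter $\sigma$.
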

    
    %\begin{remark} OLD VERSION
    %Comparing the previous corollary with Theorem \ref{Thm_Singular_Energy}, the last result is stronger in some sense, for example, concerning $n\geqslant1$, we really showed
    %\begin{align}\label{Convergence_Dervative}
    %	\psi_t\to \psi^{(0)}_t\ \ \mbox{in}\ \ L^{\infty}([0,\infty]\times\mb{R}^n)\ \ \mbox{as}\ \ \kappa\downarrow0
    %\end{align}
    %with the rate of convergence $\sqrt{\kappa}$.
    %\end{remark}
    \begin{remark}
    	Comparing the previous corollary with Theorem \ref{Thm_Singular_Energy}, we see that, under suitable assumptions for the data, we have showed
    	\begin{align}\label{Convergence_Dervative}
    		\psi_t\to \psi^{(0)}_t\ \ \mbox{in}\  L^\infty([0,\infty),L^2)  \ \mbox{and in}  \ L^{\infty}([0,\infty), L^\infty )\ \ \mbox{as}\  \kappa\downarrow0
    	\end{align}
    	with the rate of convergence $\sqrt{\kappa}$.
    \end{remark}
    
    %\begin{remark}
    %	The assumption of regularity $s_0>n+2$ is to avoid some strong singularities in $L^1$ as $|\xi|\to\infty$.
    %\end{remark}
    
    \begin{proof}
    	The combination of \eqref{Eq_04} and  the Hausdorff-Young inequality shows
    	\begin{align*}
    		&\left\| u_{tt}(t,\cdot)-b\nu\Delta u_t(t,\cdot)-\tfrac{1}{2}\Delta u(t,\cdot) \right\|_{L^{\infty}}\\
    		&\qquad\leqslant C\left\| \hat{u}_{tt}(t,\xi)+b\nu|\xi|^2 \hat{u}_t(t,\xi)+\tfrac{1}{2}|\xi|^2 \hat{u}(t,\xi) \right\|_{L^1}\\
    		&\qquad\leqslant \sqrt{\kappa} C\underbrace{\int_{\mb{R}^n}\left(\int_0^t\left|\,|\xi|^3\widehat{\psi}^{(0)}(s,\xi)\right|^2\mathrm{d}s\right)^{1/2}\mathrm{d}\xi}_{=:\ml{J}_0(t)}.
    	\end{align*}
    	Recalling from Lemma 2.4 of \cite{IkehataNatsume}, the solution $\widehat{\psi}^{(0)}$ has the pointwise estimate
    	\begin{align*}
    		|\xi|^3|\widehat{\psi}^{(0)}|\leqslant C\exp\left(-c\tfrac{|\xi|^2}{|\xi|^2+1}t\right)\left(|\xi|^3|\widehat{\psi}_0|+|\xi|^2|\widehat{\psi}_1|\right).
    	\end{align*}
    	It immediately leads to
    	\begin{align*}
    		\ml{J}_0(t)\leqslant C\int_{\mb{R}^3}\left(\int_0^t\exp\left(-c\tfrac{|\xi|^2}{|\xi|^2+1}s\right)\mathrm{d}s\right)^{1/2}\left(|\xi|^3|\widehat{\psi}_0|+|\xi|^2|\widehat{\psi}_1|\right)\mathrm{d}\xi.
    	\end{align*}
    	Next, for $|\xi|\leqslant 1$, we can see
    	\begin{align*}
    		\ml{J}_0(t;|\xi|\leqslant 1)&\leqslant C\int_{|\xi|\leqslant 1}\left(1-\mathrm{e}^{-c|\xi|^2t}\right)^{1/2}\left(|\xi|^2|\widehat{\psi}_0|+|\xi|\,|\widehat{\psi}_1|\right)\mathrm{d}\xi\\
    		%	&\leqslant C\int_{|\xi|\leqslant 1}|\xi|\mathrm{d}\xi\left(\|\widehat{\psi}_0\|_{L^{\infty}}+\|\widehat{\psi}_1\|_{L^{\infty}}\right)\\
    		& \leqslant C\left(\|\psi_0\|_{L^1}+\|\psi_1\|_{L^1}\right),
    	\end{align*}
    	and for $|\xi|\geqslant 1$, it holds
    	\begin{align*}
    		\ml{J}_0(t;|\xi|\geqslant 1)&\leqslant C\int_{|\xi|\geqslant 1}\left(\int_0^t\mathrm{e}^{-cs}\mathrm{d}s\right)^{1/2}\left(|\xi|^3|\widehat{\psi}_0|+|\xi|^2|\widehat{\psi}_1|\right)\mathrm{d}\xi\\
    		&\leqslant C\int_{|\xi|\geqslant 1}\langle \xi\rangle^{2-s_0}\mathrm{d}\xi\left(\|\langle\xi\rangle^{s_0+1}\widehat{\psi}_0\|_{L^{\infty}}+\|\langle\xi\rangle^{s_0}\widehat{\psi}_1\|_{L^{\infty}}\right)\\
    		&\leqslant C\int_{1}^{\infty}\langle r\rangle^{n+1-s_0}\mathrm{d}r\left(\|\langle D\rangle^{s_0+1}\psi_0\|_{L^1}+\|\langle D\rangle^{s_0}\psi_1\|_{L^1}\right)\\
    		&\leqslant C\left(\|\langle D\rangle^{s_0+1}\psi_0\|_{L^1}+\|\langle D\rangle^{s_0}\psi_1\|_{L^1}\right),
    	\end{align*}
    	since $n+1-s_0<-1$ for $s_0>n+2$. Namely, $\ml{J}_0(t)$ is bounded for any $t>0$
    	%\begin{align*}
    	%	\leqslant C\left(\|\langle D\rangle^{s_0+1}\psi_0\|_{L^1}+\|\langle D\rangle^{s_0}\psi_1\|_{L^1}\right)
    	%\end{align*}
    	with $s_0>n+2$, which completes the proof of the estimate for the first component.
    	
    	Following the same approach as before, we are able to get
    	\begin{align*}
    		\|u_t(t,\cdot)\|_{L^{\infty}}&\leqslant \sqrt{\kappa}C\int_{\mb{R}^n}\left(\int_0^t\left||\xi|^2\widehat{\psi}^{(0)}(s,\xi)\right|^2\mathrm{d}s\right)^{1/2}\mathrm{d}\xi\\
    		&\quad+\kappa C\int_{\mb{R}^n}\left(\int_0^t\left||\xi|^3\widehat{\psi}^{(0)}(s,\xi)\right|^2\mathrm{d}s\right)^{1/2}\mathrm{d}\xi\\
    		&\leqslant \sqrt{\kappa}C\left(\|\langle D\rangle^{s_0+1}\psi_0\|_{L^1}+\|\langle D\rangle^{s_0}\psi_1\|_{L^1}\right)
    	\end{align*}
    	with $s_0>n+2$. The desired estimates are completed.
    \end{proof}
    %\color{black}
    \subsection{Singular limits for the acoustic velocity potential}\label{Sub_Singular_Solution}
    Let us turn to the singular limits for the solution itself, or the acoustic velocity potential.

    \begin{theorem}\label{Thm_Potential_2}
    	Let us assume $\psi_0\in H^2\cap L^1$ and $\psi_1\in H^1\cap L^1$ such that \eqref{Assumption_Singular_Energy} holds.  Then, the difference $u=u(t,x)$ defined in \eqref{Difference_01} satisfies the following estimate:
    	\begin{align*}
    		\sup\limits_{t\in[0,\infty)}\left((\ml{G}_1(1+t))^{-1}\|u(t,\cdot)\|_{L^2}\right)\leqslant\sqrt{\kappa} C\left( \|\psi_0\|_{H^2\cap L^1}+\|\psi_1\|_{H^1\cap L^1} \right),
    	\end{align*}
    	where $C$ is a positive constant independent of $\kappa$, and $\ml{G}_1(1+t):=(1+t)^{\frac{1}{4}}$ if $n=1$, $\ml{G}_1(1+t):=(\ln(\mathrm{e}+t))^{\frac{1}{2}}$ if $n=2$, $\ml{G}_1(1+t):=1$ if $n\geqslant 3$.
    \end{theorem}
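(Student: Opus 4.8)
\emph{Proof proposal.} The plan is to repeat, almost verbatim, the scheme of the proof of Theorem \ref{Thm_Singular_Energy}, but invoking Proposition \ref{Prop_Potential_1} in place of Proposition \ref{Prop_Energy_Fourier_01}. As there, the difference $u=\psi-\psi^{(0)}$ solves the inhomogeneous Blackstock's model: the consistency condition \eqref{Assumption_Singular_Energy} is exactly what forces $u(0,x)=u_t(0,x)=u_{tt}(0,x)=0$ (hence $\hat{u}_2\equiv0$ in the phase space), and the source reduces to $f=\kappa(\gamma-1)\Delta^2\psi^{(0)}$, i.e. $\hat{f}=\kappa(\gamma-1)|\xi|^4\widehat{\psi}^{(0)}$. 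Applying Proposition \ref{Prop_Potential_1} with, say, $k_1=1/2$ and $\hat{u}_2=0$, and recalling $\tilde{\gamma}=\gamma b\nu\kappa$, the $\kappa^2$ coming from $|\hat{f}|^2$ cancels against the $\kappa^{-1}$ in the proposition while $|\xi|^8/|\xi|^6=|\xi|^2$ combines with the prefactor $(1-k_1+2\tilde{\gamma}|\xi|^2)$, which for $0<\kappa\ll1$ is $\lesssim 1+|\xi|^2$ with a constant independent of $\kappa$; hence
\begin{align*}
	|\hat{u}(t,\xi)|^2\lesssim \kappa\,(|\xi|^2+|\xi|^4)\int_0^t|\widehat{\psi}^{(0)}(s,\xi)|^2\,\mathrm{d}s .
\end{align*}

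Next I would apply the Plancherel theorem and Tonelli's theorem to exchange the order of integration, obtaining
\begin{align*}
	\|u(t,\cdot)\|_{L^2}^2\lesssim\kappa\int_0^t\Big(\|\,|D|\psi^{(0)}(s,\cdot)\|_{L^2}^2+\|\,|D|^2\psi^{(0)}(s,\cdot)\|_{L^2}^2\Big)\,\mathrm{d}s ,
\end{align*}
and then insert the decay estimate for the viscoelastic damped wave equation already used in the proof of Theorem \ref{Thm_Singular_Energy}, namely $\|\,|D|^k\psi^{(0)}(s,\cdot)\|_{L^2}^2\leqslant C(1+s)^{-k+1-\frac{n}{2}}(\|\psi_0\|_{H^k\cap L^1}^2+\|\psi_1\|_{H^{k-1}\cap L^1}^2)$, for $k=1$ and $k=2$. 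The $k=2$ contribution yields $\int_0^t(1+s)^{-1-\frac{n}{2}}\,\mathrm{d}s$, which is bounded for every $n\geqslant1$ and which dictates the regularity $\psi_0\in H^2\cap L^1$, $\psi_1\in H^1\cap L^1$ in the hypotheses; the $k=1$ contribution yields $\int_0^t(1+s)^{-\frac{n}{2}}\,\mathrm{d}s$, which is $\lesssim(1+t)^{1-\frac{n}{2}}$ for $n=1$, $\lesssim\ln(\mathrm{e}+t)$ for $n=2$, and bounded for $n\geqslant3$ --- that is, exactly $(\ml{G}_1(1+t))^2$. Taking square roots and taking the supremum over $t\in[0,\infty)$ produces the stated estimate with the $\sqrt{\kappa}$ rate.

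I do not expect any serious obstacle; the two points that deserve care are (i) checking that the frequency-dependent coefficient furnished by Proposition \ref{Prop_Potential_1} really collapses to the clean bound $\kappa(|\xi|^2+|\xi|^4)$ with a constant uniform in $\kappa$ --- this uses $0<\kappa\ll1$ so that $\tilde{\gamma}=\gamma b\nu\kappa$ stays bounded, so no genuine low-versus-high frequency splitting of the multiplier is needed --- and (ii) evaluating the time integral of the $k=1$ term separately in the three dimensional ranges so that it matches the definition of $\ml{G}_1(1+t)$. Both are elementary once the reduction to the inhomogeneous Cauchy problem, carried out in Subsection \ref{Subsec_Inhomo}, is exploited.
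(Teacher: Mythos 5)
Your proposal is correct and follows essentially the same route as the paper: reduce to the inhomogeneous problem with $\hat f=\kappa(\gamma-1)|\xi|^4\widehat{\psi}^{(0)}$ and $\hat u_2=0$, apply Proposition \ref{Prop_Potential_1}, and integrate the decay estimates for $\psi^{(0)}$ with $k=1,2$ to produce $(\ml{G}_1(1+t))^2$. The only (harmless) deviation is that you absorb the $\kappa$ hidden in $\tilde{\gamma}$ and bound the multiplier by $\kappa(|\xi|^2+|\xi|^4)$, whereas the paper keeps $\kappa|\xi|^2+\kappa^2|\xi|^4$; since the $|\xi|^4$-contribution is time-integrable and $\kappa\ll1$, both give the stated $\sqrt{\kappa}$ rate under the same regularity assumptions.
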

    \begin{proof}
    	Following a similar procedure to that in Theorem \ref{Thm_Singular_Energy}, we use Proposition \ref{Prop_Potential_1} to get
    	\begin{align*}
    		\|u(t,\cdot)\|_{L^2}^2&\leqslant\kappa^2 C\int_0^t\|\,|\xi|^2\widehat{\psi}^{(0)}(s,\xi)\|_{L^2}^2\mathrm{d}s+\kappa C\int_0^t\|\,|\xi|\widehat{\psi}^{(0)}(s,\xi)\|_{L^2}^2\mathrm{d}s\\
    		&\leqslant \kappa^2 C\int_0^t(1+s)^{-1-\frac{n}{2}}\mathrm{d}s\left(\|\psi_0\|_{H^2\cap L^1}^2+\|\psi_1\|_{H^1\cap L^1}^2\right)\\
    		&\quad+\kappa C\int_0^t(1+s)^{-\frac{n}{2}}\mathrm{d}s\left(\|\psi_0\|_{H^1\cap L^1}^2+\|\psi_1\|_{L^2\cap L^1}^2\right)\\
    		&\leqslant\kappa C(\ml{G}_1(1+t))^{2}\left(\|\psi_0\|_{H^2\cap L^1}^2+\|\psi_1\|_{H^1\cap L^1}^2\right),
    	\end{align*}
    	which completes our proof.
    \end{proof}

    The next result can be proved by following Corollary \ref{Coro_energy} and using Proposition \ref{Prop_Potential_1}. Particularly, in the proof, we need to apply
    \begin{align*}
    	\sqrt{\kappa}\int_{|\xi|\leqslant 1}|\xi|^{-1}\left(1-\mathrm{e}^{-c|\xi|^2t}\right)^{1/2}\mathrm{d}\xi\leqslant \sqrt{\kappa}C\int_0^1 r^{n-2}\left(1-\mathrm{e}^{-cr^2t}\right)^{1/2}\mathrm{d}r\leqslant \sqrt{\kappa}C
    \end{align*}
    for any $n\geqslant 2$, and $\int_{|\xi|\geqslant 1}\langle\xi\rangle^{1-s_1}\mathrm{d}\xi\leqslant C$ for $s_1>n+1$.
    \begin{coro}\label{Coro_Solution}
    	Assume $n\geqslant 2$. Let us assume $\langle D\rangle^{s_1+1}\psi_0\in L^1$ and $\langle D\rangle^{s_1}\psi_1\in L^1$ with $s_1>n+1$ such that \eqref{Assumption_Singular_Energy} holds.
    	Then, the difference $u=u(t,x)$ defined in \eqref{Difference_01} satisfies the following estimates:
    	\begin{align*}
    		\sup_{t\in[0,\infty)} \|u(t,\cdot)\|_{L^\infty} %\limits_{x\in\mb{R}^n,\ t\in(0,\infty]}|u(t,x)|
    		\leqslant \sqrt{\kappa} C\left(\|\langle D\rangle^{s_1+1}\psi_0\|_{L^1}+\|\langle D\rangle^{s_1}\psi_1\|_{L^1}\right),
    	\end{align*}
    	where $C$ is a positive constant independent of $\kappa$.
    \end{coro}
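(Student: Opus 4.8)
The plan is to repeat the argument of Corollary \ref{Coro_energy}, combining the Hausdorff--Young inequality with the pointwise bound for $\hat{u}$ from Proposition \ref{Prop_Potential_1}, but this time tracking the low-frequency singularity carried by the solution $u$ itself instead of by $\hat{u}_t$. Since $u=\psi-\psi^{(0)}$ defined in \eqref{Difference_01} solves the inhomogeneous problem \eqref{Inhomo_Cauchy_Phase} with $\hat{u}_2=0$ and source $\hat{f}=\kappa(\gamma-1)|\xi|^4\widehat{\psi}^{(0)}$ --- this is where the consistency condition \eqref{Assumption_Singular_Energy} is used --- Proposition \ref{Prop_Potential_1} (with a fixed $k_1\in(0,1)$), together with $1-k_1+2\tilde{\gamma}|\xi|^2\lesssim\langle\xi\rangle^2$, gives
\begin{align*}
	|\hat{u}(t,\xi)|^2\lesssim\kappa\,\langle\xi\rangle^2\int_0^t\big|\,|\xi|\,\widehat{\psi}^{(0)}(s,\xi)\big|^2\,\mathrm{d}s.
\end{align*}
Then I would insert the pointwise estimate $|\xi|\,|\widehat{\psi}^{(0)}|\lesssim\exp\!\big(-c\tfrac{|\xi|^2}{|\xi|^2+1}s\big)\big(|\xi|\,|\widehat{\psi}_0|+|\widehat{\psi}_1|\big)$ from Lemma 2.4 of \cite{IkehataNatsume} and apply Hausdorff--Young, $\|u(t,\cdot)\|_{L^\infty}\lesssim\|\hat{u}(t,\cdot)\|_{L^1}$.

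The estimate of $\|\hat{u}(t,\cdot)\|_{L^1}$ then splits according to frequency. For $|\xi|\leqslant1$ one has $\langle\xi\rangle\sim1$ and $\int_0^t\exp(-2c\tfrac{|\xi|^2}{|\xi|^2+1}s)\,\mathrm{d}s\lesssim|\xi|^{-2}(1-\mathrm{e}^{-c|\xi|^2t})$, hence $|\hat{u}|\lesssim\sqrt{\kappa}\,(1-\mathrm{e}^{-c|\xi|^2t})^{1/2}(|\widehat{\psi}_0|+|\xi|^{-1}|\widehat{\psi}_1|)$; integrating over $\{|\xi|\leqslant1\}$, the $\widehat{\psi}_0$-term is bounded by $\sqrt{\kappa}\,\|\psi_0\|_{L^1}$ and the $\widehat{\psi}_1$-term by $\sqrt{\kappa}\,\|\psi_1\|_{L^1}$ times $\int_{|\xi|\leqslant1}|\xi|^{-1}(1-\mathrm{e}^{-c|\xi|^2t})^{1/2}\,\mathrm{d}\xi\lesssim\int_0^1 r^{n-2}(1-\mathrm{e}^{-cr^2t})^{1/2}\,\mathrm{d}r$, which is bounded uniformly in $t$ precisely when $n\geqslant2$. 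For $|\xi|\geqslant1$ one has $\tfrac{|\xi|^2}{|\xi|^2+1}\geqslant\tfrac12$, so $\int_0^t\exp(-2c\tfrac{|\xi|^2}{|\xi|^2+1}s)\,\mathrm{d}s\leqslant C$ and $|\hat{u}|\lesssim\sqrt{\kappa}\,\langle\xi\rangle(|\xi|\,|\widehat{\psi}_0|+|\widehat{\psi}_1|)$; keeping the two data separated and writing $\langle\xi\rangle^2|\widehat{\psi}_0|=\langle\xi\rangle^{1-s_1}\langle\xi\rangle^{s_1+1}|\widehat{\psi}_0|$ and $\langle\xi\rangle|\widehat{\psi}_1|=\langle\xi\rangle^{1-s_1}\langle\xi\rangle^{s_1}|\widehat{\psi}_1|$, together with $\|\widehat{g}\|_{L^\infty}\leqslant\|g\|_{L^1}$, the integral over $\{|\xi|\geqslant1\}$ is controlled by $\sqrt{\kappa}\,(\|\langle D\rangle^{s_1+1}\psi_0\|_{L^1}+\|\langle D\rangle^{s_1}\psi_1\|_{L^1})\int_{|\xi|\geqslant1}\langle\xi\rangle^{1-s_1}\,\mathrm{d}\xi$, and the last integral is finite exactly when $s_1>n+1$. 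Collecting the two ranges yields the claimed bound.

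The only genuinely delicate point is the low-frequency analysis: the factor $|\xi|^{-6}$ in Proposition \ref{Prop_Potential_1} is not entirely cancelled by the $|\xi|^8$ coming from $|\hat{f}|^2$, so a net $|\xi|^{-1}$ singularity survives on the $\psi_1$-contribution, which is exactly what forces the restriction $n\geqslant2$; moreover one must make sure the $t$-dependence hidden inside $(1-\mathrm{e}^{-c|\xi|^2t})^{1/2}$ does not destroy uniformity in time, and the inequality $\sqrt{\kappa}\int_{|\xi|\leqslant1}|\xi|^{-1}(1-\mathrm{e}^{-c|\xi|^2t})^{1/2}\,\mathrm{d}\xi\leqslant\sqrt{\kappa}\,C$ (valid for $n\geqslant2$) takes care of this. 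Everything else is a routine repetition of the computation in Corollary \ref{Coro_energy}.
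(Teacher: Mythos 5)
Your proof is correct and follows essentially the same route the paper intends: apply Proposition \ref{Prop_Potential_1} to $u$ with $\hat{u}_2=0$ and $\hat{f}=\kappa(\gamma-1)|\xi|^4\widehat{\psi}^{(0)}$, use Hausdorff--Young together with the pointwise bound from Lemma 2.4 of \cite{IkehataNatsume}, and control the two frequency regions by exactly the two integrals the paper highlights, namely $\sqrt{\kappa}\int_{|\xi|\leqslant 1}|\xi|^{-1}(1-\mathrm{e}^{-c|\xi|^2t})^{1/2}\mathrm{d}\xi\lesssim\sqrt{\kappa}$ for $n\geqslant 2$ and $\int_{|\xi|\geqslant 1}\langle\xi\rangle^{1-s_1}\mathrm{d}\xi<\infty$ for $s_1>n+1$. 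Your merging of the two weights from Proposition \ref{Prop_Potential_1} into the single factor $\kappa\langle\xi\rangle^2$ (instead of the paper's $\kappa|\xi|^2+\kappa^2|\xi|^4$ split) is slightly lossy in $\kappa$ at high frequencies but harmless for the claimed $\sqrt{\kappa}$ rate.
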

    \begin{remark}
    	The last corollary indicates that
    	\begin{align}\label{Convergence_Potential}
    		\psi\to \psi^{(0)}\ \ \mbox{in}\ \ L^{\infty}([0,\infty)\times\mb{R}^n)\ \ \mbox{as}\ \ \kappa\downarrow0
    	\end{align}
    	with the rate of convergence $\sqrt{\kappa}$ for $n\geqslant 2$. Comparing \eqref{Convergence_Dervative} with \eqref{Convergence_Potential}, we observe that the rates of convergence between the acoustic velocity potential and its time-derivative are the same, but we lose the convergence result \eqref{Convergence_Potential} for $n=1$.
    \end{remark}
    
    \section{Higher-order profiles with the small thermal diffusivity}\label{Sec_Initial_Layer}
    As a continuation of the singular limit argument, which also can be understood by first-order asymptotic expansion with respect to $\kappa$, one may be interested in asymptotic behaviors of solution for small modified thermal diffusivity $\kappa$. The Blackstock's model \eqref{Linear_Blackstock_Eq}, indeed, can be re-organized as
    \begin{align}\label{A1}
    	\ml{L}\psi:=\left(\partial_t^3-(b\nu+\gamma\kappa)\Delta\partial_t^2-\Delta\partial_t+\gamma b\nu\kappa\Delta^2\partial_t+\kappa\Delta^2\right)\psi=0.
    \end{align}
    Later, we will use formal asymptotic analysis to find the equations of higher-order profiles with small $\kappa>0$ in Subsection \ref{Subsection.Formal.Expan}, and rigorously justify the second-order profiles in Subsection \ref{Subsection.Second-Order}.
    \subsection{Formal derivations of higher-order profiles}\label{Subsection.Formal.Expan} %Let us carry out formal asymptotic analysis by multi-scale analysis
    In this subsection, we are devoted to using multi-scale analysis and formal asymptotic expansions to find the equations for the higher-order profiles. Based on multi-scale analysis, the solution of \eqref{A1} with $\kappa>0$ has the following expansions:
    \begin{align}\label{A2}
    	\psi(t,x)=\sum\limits_{j\geqslant0}\kappa^{j}\left(\psi^{I,j}(t,x)+\psi^{L,j}(z,x)\right)
    \end{align}
    %  Before doing this, let us use rescaling technique to find $\varsigma=\varsigma(\kappa)$ to  balance the equation \eqref{A1}. By the change of variable $z=t/\varsigma$, we get
    %\begin{align*}
    %\left(\varsigma^{-3}\partial_{z}^3-\underbrace{(b\nu+\gamma\kappa)\varsigma^{-2}}_{\sim\varsigma^{-2}+\kappa\varsigma^{-2}}\Delta\partial_z^2-\varsigma^{-1}\Delta\partial_z+\underbrace{\varsigma^{-1}\gamma b\nu\kappa}_{\sim\kappa\varsigma^{-1}}\Delta^2\partial_z+\kappa\Delta^2\right)\widetilde{\psi}(z,x)=0.
    %\end{align*}
    %So, we choose $\varsigma=\kappa^{-1}$ and $z:=\kappa t$. Then, we formally take the following ansatz:
    with the variable defined as $z=\kappa t$, where each term in \eqref{A2} is assumed to be smooth, and the profiles $\psi^{L,j}(z,x)$ is decaying (or zero) when $z\to0$. Moreover, \eqref{A2} should satisfy three initial conditions as those in \eqref{Linear_Blackstock_Eq}.

    Plugging \eqref{A2} into \eqref{A1}, it follows
    \begin{align*}
    	0=&\sum\limits_{j\geqslant0}\kappa^{j}\left(\partial_t^3\psi^{I,j}+\kappa^{3}\partial_z^3\psi^{L,j}\right)-b\nu\sum\limits_{j\geqslant0}\kappa^{j}\left(\Delta\partial_t^2\psi^{I,j}+\kappa^{2}\Delta\partial_z^2\psi^{L,j}\right)\notag\\
    	&-\gamma\sum\limits_{j\geqslant0}\kappa^{j+1}\left(\Delta\partial_t^2\psi^{I,j}+\kappa^{2}\Delta\partial_z^2\psi^{L,j}\right)-\sum\limits_{j\geqslant0}\kappa^{j}\left(\Delta\partial_t\psi^{I,j}+\kappa\Delta\partial_z\psi^{L,j}\right)\notag\\
    	&+\gamma b\nu\sum\limits_{j\geqslant0}\kappa^{j+1}\left(\Delta^2\partial_t\psi^{I,j}+\kappa\Delta^2\partial_z\psi^{L,j}\right)+\sum\limits_{j\geqslant0}\kappa^{j+1}\left(\Delta^2\psi^{I,j}+\Delta^2\psi^{L,j}\right),
    \end{align*}
    where $\psi^{I,j}=\psi^{I,j}(t,x)$ and $\psi^{L,j}=\psi^{L,j}(z,x)$.
    
    \noindent \underline{The outer solution:} Firstly, we collect the terms carrying $\ml{O}(\kappa^{0})$, that is,
    \begin{align*}
    	\partial_t\left(\partial_t^2\psi^{I,0}-\Delta\psi^{I,0}-b\nu\Delta\partial_t\psi^{I,0}\right)=0.
    \end{align*}
    Let us take the consistency assumption
    \begin{align}\label{A4}
    	\partial_t^2\psi^{I,0}(0,x)=b\nu\Delta\partial_t\psi^{I,0}(0,x)+\Delta\psi^{I,0}(0,x)
    \end{align}
    to get
    \begin{align}\label{A5}
    	\partial_t^2\psi^{I,0}-\Delta\psi^{I,0}-b\nu\Delta\partial_t\psi^{I,0}=0.
    \end{align}
    That is to say the assumption \eqref{A4} always holds.\\
    \noindent \underline{The higher-order profiles of solution:} Next, we collect the terms carrying $\ml{O}(\kappa)$, that is,
    \begin{align*}
    	\partial_t\left(\partial_t^2\psi^{I,1}-\Delta\psi^{I,1}-b\nu\Delta\partial_t\psi^{I,1}\right)-\gamma\Delta\left(\partial_t^2\psi^{I,0}-\tfrac{1}{\gamma}\Delta\psi^{I,0}-b\nu\Delta\partial_t\psi^{I,0}\right)&\\
    	-\Delta\left(\partial_z\psi^{L,0}-\Delta\psi^{L,0}\right)&=0.
    \end{align*}
    From \eqref{A5}, the second term in the above is equal to $(1-\gamma)\Delta\partial_t(\partial_t\psi^{I,0}-b\nu\Delta\psi^{I,0})$. Now, we take $\psi^{I,1}$ satisfying 
    \begin{align}\label{A7}
    	\partial_t^2\psi^{I,1}-\Delta\psi^{I,1}-b\nu\Delta\partial_t\psi^{I,1}=(\gamma-1)\Delta\left(\partial_t\psi^{I,0}-b\nu\Delta\psi^{I,0}\right)
    \end{align}
    with the correction for $t=0$ also, we notice $\partial_z\psi^{L,0}-\Delta\psi^{L,0}=0$. Similarly, considering the term carrying $\ml{O}(\kappa^2)$, one may observe
    \begin{align*}
    	0&=\partial_t\left(\partial_t^2\psi^{I,2}-\Delta\psi^{I,2}-b\nu\Delta\partial_t\psi^{I,2}\right)-\gamma\Delta\left(\partial_t^2\psi^{I,1}-\tfrac{1}{\gamma}\Delta\psi^{I,1}-b\nu\Delta\partial_t\psi^{I,1}\right)\\
    	&\quad-b\nu\Delta\partial_z\left(\partial_z\psi^{L,0}-\gamma\Delta\psi^{L,0}\right)-\Delta\left(\partial_z\psi^{L,1}-\Delta\psi^{L,1}\right).
    \end{align*}
    By taking $\psi^{I,2}$ fulfilling
    \begin{align*}
    	\partial_t^2\psi^{I,2}-\Delta\psi^{I,2}-b\nu\Delta\partial_t\psi^{I,2}=\gamma\Delta\left(\partial_t\psi^{I,1}-b\nu\Delta\psi^{I,1}\right)-\int_0^t\Delta^2\psi^{I,1}(s,x)\mathrm{d}s
    \end{align*}
    for $t\geqslant0$, it leads to
    \begin{align*}
    	\partial_z\psi^{L,1}-\Delta\psi^{L,1}=(\gamma-1)b\nu\Delta\partial_z\psi^{L,0}.
    \end{align*}
    Finally, to determine the formal expansion, we should consider $\ml{O}(\kappa^{j+3})$ for any $j\geqslant0$ such that
    \begin{align*}
    	0&=\partial_t\left(\partial_t^2\psi^{I,j+3}-\Delta\psi^{I,j+3}-b\nu\Delta\partial_t\psi^{I,j+3}\right)\\
    	&\quad-\gamma\Delta\left(\partial_t^2\psi^{I,j+2}-\tfrac{1}{\gamma}\Delta\psi^{I,j+2}-b\nu\Delta\partial_t\psi^{I,j+2}\right)\\
    	&\quad+\partial_z^2\left(\partial_z\psi^{L,j}-\gamma\Delta\psi^{L,j}\right)-b\nu\Delta\partial_z\left(\partial_z\psi^{L,j+1}-\gamma\Delta\psi^{L,j+1}\right)\\
    	&\quad-\Delta\left(\partial_z\psi^{L,j+2}-\Delta\psi^{L,j+2}\right).
    \end{align*}
    Setting
    \begin{align*}
    	&\partial_t^2\psi^{I,j+3}-\Delta\psi^{I,j+3}-b\nu\Delta\partial_t\psi^{I,j+3}\\
    	&\qquad=\gamma\Delta\left(\partial_t\psi^{I,j+2}-b\nu\Delta\psi^{I,j+2}\right)-\int_0^t\Delta^2\psi^{I,j+2}(s,x)\mathrm{d}s
    \end{align*}
    for $t\geqslant0$, one gets
    \begin{align*}
    	\Delta\left(\partial_z\psi^{L,j+2}-\Delta\psi^{L,j+2}\right)&=\partial_z^2\left(\partial_z\psi^{L,j}-\gamma\Delta\psi^{L,j}\right)\\
    	&\quad-b\nu\Delta\partial_z\left(\partial_z\psi^{L,j+1}-\gamma\Delta\psi^{L,j+1}\right),
    \end{align*}
    especially,
    \begin{align*}
    	\partial_z\psi^{L,2}-\Delta\psi^{L,2}=(1-\gamma)\partial_z^2\psi^{L,0}-b\nu\partial_z\left(\partial_z\psi^{L,1}-\gamma\Delta\psi^{L,1}\right).
    \end{align*}
    
    \noindent \underline{The matching conditions:} Concerning initial conditions, since
    \begin{align*}
    	\psi_0(x)&=\psi^{I,0}(0,x)+\psi^{L,0}(0,x)+\sum\limits_{j\geqslant 1}\kappa^j\left(\psi^{I,j}(0,x)+\psi^{L,j}(0,x)\right),\\
    	\psi_1(x)&=\partial_t\psi^{I,0}(0,x)+\sum\limits_{j\geqslant1}\kappa^j\left(\partial_t\psi^{I,j}(0,x)+\partial_z\psi^{L,j-1}(0,x)\right),\\
    	\psi_2(x)&=\partial_t^2\psi^{I,0}(0,x)+\kappa\partial_t^2\psi^{I,1}(0,x)+\sum\limits_{j\geqslant2}\kappa^j\left(\partial_t^2\psi^{I,j}(0,x)+\partial_z^2\psi^{L,j-2}(0,x)\right),
    \end{align*}
    letting $\kappa\downarrow0$, it holds that
    \begin{itemize}
    	\item $\psi^{I,0}(0,x)=\psi_0(x)$, $\psi^{L,0}(0,x)=0$, and $\psi^{I,j}(0,x)=-\psi^{L,j}(0,x)$ for $j\geqslant 1$;
    	\item  $\partial_t\psi^{I,0}(0,x)=\psi_1(x)$, and $\partial_t\psi^{I,j}(0,x)=-\partial_z\psi^{L,j-1}(0,x)$ for $j\geqslant 1$;
    	\item $\partial_t^2\psi^{I,0}(0,x)=\psi_2(x)$, $\partial_t^2\psi^{I,1}(0,x)=0$, and $\partial_t^2\psi^{I,j}(0,x)=-\partial_z^2\psi^{L,j-2}(0,x)$ for $j\geqslant 2$.
    \end{itemize}
    Finally, we show some special relations among the profiles for the initial data. According to $\psi^{L,0}(0,x)=0$ and uniqueness of solution to heat equations, it yields the trivial solution $\psi^{L,0}(z,x)\equiv0$, $\partial_t\psi^{I,1}(0,x)=0$ and $\partial_t^2\psi^{I,2}(0,x)=0$. Furthermore, the dominant profile $\psi^{I,0}\equiv\psi^{(0)}$, thanks to \eqref{Assumption_Singular_Energy}, \eqref{A5} and the previous relations for initial data. Taking $t=0$ in \eqref{A7}, we get
    \begin{align*}
    	\Delta\psi^{I,1}(0,x)+b\nu\Delta\partial_t\psi^{I,1}(0,x)+(\gamma-1)\Delta(\psi_1(x)-b\nu\Delta\psi_0(x))=\partial_t^2\psi^{I,1}(0,x).
    \end{align*}
    With the aid of $\partial_t\psi^{I,1}(0,x)=\partial_t^2\psi^{I,1}(0,x)=0$, it implies
    \begin{align*}
    	\psi^{I,1}(0,x)=-\psi^{L,1}(0,x)=(1-\gamma)(\psi_1(x)-b\nu\Delta\psi_0(x)).
    \end{align*}
    
    Summarizing the last statements, we conclude the next result for formal higher-order profiles of solution to \eqref{Linear_Blackstock_Eq}, and the second-order profiles will be rigorously proved in Subsection \ref{Subsection.Second-Order}.
    \begin{prop}\label{Prop_A}
    	The solution $\psi=\psi(t,x)$ to the Cauchy problem for Blackstock's model \eqref{Linear_Blackstock_Eq}  with the consistency condition \eqref{Assumption_Singular_Energy} as well as small $\kappa$ formally have the following asymptotic expansions:
    	\begin{align*}
    		\psi(t,x)=\psi^{(0)}(t,x)+\sum\limits_{j\geqslant0}\kappa^{j+1}\left(\psi^{I,j+1}(t,x)+\psi^{L,j+1}(\kappa t,x)\right),
    	\end{align*}
    	where
    	\begin{itemize}
    		\item $\psi^{(0)}=\psi^{(0)}(t,x)$ is the solution to the viscoelastic damped waves \eqref{Linear_Kuznet_Eq};
    		\item $\psi^{I,j+1}=\psi^{I,j+1}(t,x)$ for $j\geqslant0$ is the solution to the inhomogeneous viscoelastic damped waves
    		\begin{align*}
    			&\partial_t^2\psi^{I,j+1}-\Delta\psi^{I,j+1}-b\nu\Delta\partial_t\psi^{I,j+1}\\
    			&\qquad=\gamma\Delta\left(\partial_t\psi^{I,j}-b\nu\Delta\psi^{I,j}\right)-\int_0^t\Delta^2\psi^{I,j}(s,x)\mathrm{d}s
    		\end{align*}
    		for $x\in\mb{R}^n$, $t\geqslant0$, with $\psi^{I,0}(t,x)\equiv\psi^{(0)}(t,x)$, carrying its initial data $\psi^{I,j+1}(0,x)=-\psi^{L,j+1}(0,x)$ and $\partial_t\psi^{I,j+1}(0,x)=-\partial_z\psi^{L,j}(0,x)$ such that $\partial_t^2\psi^{I,j+2}(0,x)=-\partial_z^2\psi^{L,j}(0,x)$, particularly, $\psi^{L,0}\equiv0$;
    		\item $\psi^{L,1}=\psi^{L,1}(z,x)$ is the solution to the heat equation
    		\begin{align}\label{A8}
    			\begin{cases}
    				\partial_z\psi^{L,1}-\Delta\psi^{L,1}=0,\\
    				\psi^{L,1}(0,x)=(\gamma-1)(\psi_1(x)-b\nu\Delta\psi_0(x)),
    			\end{cases}
    		\end{align}
    		for $x\in\mb{R}^n$, $z>0$;
    		\item $\psi^{L,2}=\psi^{L,2}(z,x)$ is the solution to the inhomogeneous heat equation 
    		\begin{align*}
    			\begin{cases}
    				\partial_z\psi^{L,2}-\Delta\psi^{L,2}=(\gamma-1)b\nu\Delta\partial_z\psi^{L,1},\\
    				\psi^{L,2}(0,x)=\psi_0^{L,2}(x),
    			\end{cases}
    		\end{align*}
    		for $x\in\mb{R}^n$, $t>0$, where $\psi_0^{L,2}(x)$ can be expressed in terms of $\psi_0(x)$ and $\psi_1(x)$;
    		\item $\psi^{L,j+2}=\psi^{L,j+2}(z,x)$ for $j\geqslant 1$ is the solution to the inhomogeneous heat equation 
    		\begin{align*}
    			\begin{cases}
    				\partial_z\psi^{L,j+2}-\Delta\psi^{L,j+2}=F^{L,j+1},\\
    				\psi^{L,j+2}(0,x)=\psi_0^{L,j+2}(x),
    			\end{cases}
    		\end{align*}
    		for $x\in\mb{R}^n$, $t>0$, where $\psi_0^{L,j+2}(x)$ can be expressed in terms of $\psi_0(x)$ and $\psi_1(x)$. Here, we denote the source term
    		\begin{align*}
    			F^{L,j+1}:=\Delta^{-1}\partial_z^2\left(\partial_z\psi^{L,j}-\gamma\Delta\psi^{L,j}\right)-b\nu\partial_z\left(\partial_z\psi^{L,j+1}-\gamma\Delta\psi^{L,j+1}\right).
    		\end{align*}
    	\end{itemize}
    \end{prop}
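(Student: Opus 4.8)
The plan is to carry out the two--scale expansion described above in a systematic order--by--order fashion and then simply record the outcome. First I would insert the ansatz \eqref{A2}, namely $\psi(t,x)=\sum_{j\geqslant0}\kappa^{j}(\psi^{I,j}(t,x)+\psi^{L,j}(z,x))$ with $z=\kappa t$, into the operator equation $\ml{L}\psi=0$ of \eqref{A1}, using the chain rule $\partial_t[\phi(\kappa t,x)]=\kappa\,(\partial_z\phi)(\kappa t,x)$ so that each $z$--derivative falling on a layer profile $\psi^{L,j}$ produces an extra factor of $\kappa$. Grouping the resulting terms according to the powers $\ml{O}(\kappa^{0}),\ml{O}(\kappa),\ml{O}(\kappa^{2}),\dots,\ml{O}(\kappa^{j+3})$ produces an infinite hierarchy of equations, which is the starting point of the proof.

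Next I would read off the hierarchy level by level. At order $\ml{O}(\kappa^{0})$ one gets $\partial_t(\partial_t^2\psi^{I,0}-\Delta\psi^{I,0}-b\nu\Delta\partial_t\psi^{I,0})=0$; imposing the consistency relation \eqref{A4}, which under the standing hypothesis \eqref{Assumption_Singular_Energy} holds automatically, reduces this to \eqref{A5}, and together with the matching conditions below it forces $\psi^{I,0}\equiv\psi^{(0)}$, the solution of the viscoelastic damped wave equation \eqref{Linear_Kuznet_Eq}. At order $\ml{O}(\kappa)$ one obtains, after simplifying the source with \eqref{A5}, the inhomogeneous equation \eqref{A7} for $\psi^{I,1}$ together with the homogeneous heat equation $\partial_z\psi^{L,0}-\Delta\psi^{L,0}=0$; the matching condition $\psi^{L,0}(0,x)=0$ and uniqueness for the heat equation then give $\psi^{L,0}\equiv0$. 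Iterating, the order $\ml{O}(\kappa^{2})$ yields the inhomogeneous viscoelastic equation for $\psi^{I,2}$ and the homogeneous heat equation for $\psi^{L,1}$, while for $j\geqslant0$ the order $\ml{O}(\kappa^{j+3})$ yields the recursion relating $\psi^{I,j+3}$ to $\psi^{I,j+2}$ and the inhomogeneous heat equation for $\psi^{L,j+2}$ with source $F^{L,j+1}$; reconciling the $\ml{O}(\kappa)$, $\ml{O}(\kappa^{2})$ and $\ml{O}(\kappa^{j+3})$ cases and simplifying with \eqref{A5} produces the single recursion for $\psi^{I,j+1}$, $j\geqslant0$, written in the statement.

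Then I would settle the matching conditions by expanding the three initial conditions of \eqref{Linear_Blackstock_Eq} in powers of $\kappa$ and equating coefficients: this gives $\psi^{I,0}(0,x)=\psi_0$, $\partial_t\psi^{I,0}(0,x)=\psi_1$, $\partial_t^2\psi^{I,0}(0,x)=\psi_2$, $\psi^{L,0}(0,x)=0$, $\partial_t^2\psi^{I,1}(0,x)=0$, and, for $j\geqslant1$, the relations $\psi^{I,j}(0,x)=-\psi^{L,j}(0,x)$, $\partial_t\psi^{I,j}(0,x)=-\partial_z\psi^{L,j-1}(0,x)$ and $\partial_t^2\psi^{I,j}(0,x)=-\partial_z^2\psi^{L,j-2}(0,x)$. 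Combining $\psi^{L,0}\equiv0$ with these identities yields $\partial_t\psi^{I,1}(0,x)=0$ and $\partial_t^2\psi^{I,2}(0,x)=0$; and evaluating \eqref{A7} at $t=0$, using $\partial_t\psi^{I,1}(0,x)=\partial_t^2\psi^{I,1}(0,x)=0$, produces $\psi^{I,1}(0,x)=-\psi^{L,1}(0,x)=(1-\gamma)(\psi_1-b\nu\Delta\psi_0)$, which is precisely the initial datum in \eqref{A8}. Assembling all of this yields the asserted expansion.

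The main obstacle here is organizational rather than analytic: one must track carefully which profile absorbs which term at each order and, in particular, verify that the correction at $t=0$ built into the inhomogeneous viscoelastic equations for the $\psi^{I,j+1}$ is compatible with the matching conditions imposed on the layer profiles $\psi^{L,j}$, so that the hierarchy closes and each $\psi_0^{L,j+2}$ is indeed expressible through $\psi_0$ and $\psi_1$ alone. Since the expansion is purely formal at this stage --- the rigorous justification of its second--order truncation being deferred to Subsection \ref{Subsection.Second-Order} --- no decay estimates are needed in this proof.
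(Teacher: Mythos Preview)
Your proposal is correct and follows essentially the same approach as the paper: insert the two--scale ansatz \eqref{A2} into $\ml{L}\psi=0$, collect powers of $\kappa$, read off the viscoelastic equations for the outer profiles $\psi^{I,j}$ and the heat equations for the layer profiles $\psi^{L,j}$, and finally impose the matching conditions coming from the three initial data expanded in $\kappa$. Your identification of the key organizational issue---ensuring the splitting between outer and layer contributions is consistent with the matching conditions so that the hierarchy closes---is exactly the point the paper handles, and your observation that no analysis is needed since the argument is purely formal is accurate.
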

    
    \subsection{Rigorous justification of second-order profiles}\label{Subsection.Second-Order}
    Although it seems challenging to rigorously justify the expansion stated in Proposition \ref{Prop_A} for all $j\geqslant 0$, we still can construct
    \begin{align}\label{K_2}
    	\psi(t,x)=\psi^{\kappa}(t,x)+\kappa^2R^{\kappa}(t,x),
    \end{align}
    where
    \begin{align*}
    	\psi^{\kappa}(t,x):=\psi^{(0)}(t,x)+\kappa\psi^{I,1}(t,x)+\kappa\psi^{L,1}(\kappa t,x).
    \end{align*}
    In the above, the function $\psi^{I,1}$ is the solution to
    \begin{align}\label{Eq_V}
    	\begin{cases}
    		\partial_t^2\psi^{I,1}-\Delta\psi^{I,1}-b\nu\Delta\partial_t\psi^{I,1}=(\gamma-1)\Delta\left(\partial_t\psi^{(0)}-b\nu\Delta\psi^{(0)}\right),\\
    		(\psi^{I,1},\partial_t\psi^{I,1})(0,x)=((1-\gamma)(\psi_1-b\nu\Delta\psi_0),0)(x),
    	\end{cases}
    \end{align}
    for $x\in\mb{R}^n$, $t>0$,
    %\color{purple}
    %\begin{center}
    %WE NEED DECAY ESTIMATES FOR $\widehat{\psi}^{I,1}(t,\xi)$ TO COMPLETE THIS PART.\\
    % Actually, we want to prove boundedness of 
    %\begin{align*}
    %\left\|\sqrt{\frac{1+\kappa|\xi|^2}{\kappa |\xi|^6}}\int_0^t|\widehat{\psi}^{I,1}(s,\xi)|\mathrm{d}s\right\|_{L^1}
    %\end{align*}
    %for any $t>0$ and $\kappa\to0$, where we can give some regular assumptions for initial data in the frame of $L^1$.
    %\end{center}
    %\color{black}
    and $\psi^{L,1}(z,x)$ with $z=\kappa t$ is the solution to \eqref{A8}, which can be directly solved as $\psi^{L,\kappa}(t,x)=\psi^{L,1}(z,x)$ such that
    \begin{align*}
    	\psi^{L,\kappa}(t,x)=\frac{\gamma-1}{(4\pi\kappa t)^{\frac{n}{2}}}\int_{\mb{R}^n}\exp\left(-\tfrac{|x-y|^2}{4\kappa t}\right)(\psi_1(y)-b\nu\Delta\psi_0(y))\mathrm{d}y.
    \end{align*}
    It allows us to rewrite
    \begin{align}
    	\kappa^2R^{\kappa}(t,x)=\psi(t,x)-\psi^{(0)}(t,x)-\kappa\psi^{I,1}(t,x)-\kappa \psi^{L,\kappa}(t,x). \label{def R^kappa}
    \end{align}
    
    Our next purpose is to prove 
    that the term $\kappa^2 R^\kappa(t,x)$ in \eqref{K_2} is an error term (in the sense that we are going to explain later on in Remark \ref{Rem improvement rate kappa}). For this reason we need some estimates for $R^{\kappa}=R^{\kappa}(t,x)$. Direct computations show
    \begin{align*}
    	\ml{L}\psi^{(0)}&=\kappa(1-\gamma)\Delta^2\psi^{(0)},\\
    	\ml{L}\psi^{I,1}&=-(1-\gamma)\Delta^2\psi^{(0)}-\kappa(\gamma-1)\Delta^2\psi^{I,1}-\kappa\gamma(\gamma-1)\Delta^2\left(\partial_t\psi^{(0)}-b\nu\Delta\psi^{(0)}\right),\\
    	\ml{L}\psi^{L,1}&=\kappa^2(1-\gamma)(\kappa-b\nu)\Delta^3\psi^{L,1}.
    \end{align*}
    Then, $R^\kappa$ satisfies
    \begin{align}\label{Eq_Rk}
    	\begin{cases}
    		\ml{L}R^{\kappa}=-(\gamma-1)\Delta^2\left(-\gamma\partial_t\psi^{(0)}+\gamma b\nu\Delta\psi^{(0)}-\psi^{I,1}+\kappa(b\nu-\kappa)\Delta\psi^{L,1}\right),\\
    		(R^{\kappa},\partial_tR^{\kappa},\partial_t^2R^{\kappa})(0,x)=(0,(1-\gamma)\Delta\widetilde{I},\kappa(1-\gamma)\Delta^2\widetilde{I})(x),
    	\end{cases}
    \end{align}
    for $x\in\mb{R}^n$, $t>0$, where $\widetilde{I}(x):=\psi_1(x)-b\nu\Delta\psi_0(x)$. Let us take the partial Fourier transform with respect to $x$ in \eqref{Eq_Rk} to get
    \begin{align*}
    	\begin{cases}
    		\left(\partial_t^3+(b\nu+\gamma\kappa)|\xi|^2\partial_t^2+|\xi|^2\partial_t+\gamma b\nu\kappa|\xi|^4\partial_t+\kappa|\xi|^4\right)\widehat{R}^{\kappa}=\widehat{G}(t,\xi),\\
    		(\widehat{R}^{\kappa},\partial_t\widehat{R}^{\kappa},\partial_t^2\widehat{R}^{\kappa})(0,\xi)=(0,(\gamma-1)|\xi|^2\widetilde{J},\kappa(1-\gamma)|\xi|^4\widetilde{J})(\xi),
    	\end{cases}
    \end{align*}
    for $\xi\in\mb{R}^n$, $t>0$, where we denote
    \begin{align*}
    	\widehat{G}(t,\xi)&:=(\gamma-1)|\xi|^4\left(\gamma\partial_t\widehat{\psi}^{(0)}+\gamma b \nu |\xi|^2 \widehat{\psi}^{(0)}+\widehat{\psi}^{I,1}+\kappa(b\nu-\kappa)|\xi|^2\widehat{\psi}^{L,1}\right),\\
    	\widetilde{J}(\xi)&:=\widehat{\psi}_1(\xi)+b\nu|\xi|^2\widehat{\psi}_0(\xi).
    \end{align*}
    
    First of all, we notice that the differential operator in \eqref{Eq_Rk} is exactly the one for Blackstock's model \eqref{Linear_Blackstock_Eq}. Consequently, the approach in Subsection \ref{Subsec_Inhomo} still works. Clearly from the derived estimate \eqref{*}, the next differential inequality holds:
    \begin{align*}
    	\frac{\mathrm{d}}{\mathrm{d}t}\left(|\widehat{R}^{\kappa}_{tt}+b\nu|\xi|^2\widehat{R}^{\kappa}_t+k_1|\xi|^2\widehat{R}^{\kappa}|^2+k_2|\widehat{R}^{\kappa}+k_3\widehat{R}^{\kappa}_t|^2+k_5|\xi|^4|\widehat{R}^{\kappa}|^2\right)\leqslant\frac{2+\gamma k_1}{2\kappa \gamma|\xi|^2}|\widehat{G}|^2,
    \end{align*}
    where $k_2,k_3$ were chosen in \eqref{k_2,k_3}, and $k_5$ fulfills the estimate \eqref{k_5}. Let us integrate the previous inequality over $[0,t]$ to show
    \begin{align}\label{K_1}
    	|\widehat{R}^{\kappa}|^2
    	%	&\leqslant\frac{|\widehat{R}^{\kappa}_{tt}(0,\xi)+b\nu|\xi|^2\widehat{R}^{\kappa}_t(0,\xi)|^2}{k_5|\xi|^4}+\frac{k_2k_3^2|\widehat{R}^{\kappa}_t(0,\xi)|^2}{k_5|\xi|^4}+\frac{2+\gamma k_1}{2\kappa\gamma k_5|\xi|^6}\int_0^t|\widehat{G}(s,\xi)|^2\mathrm{d}s\notag\\
    	&\leqslant C|\xi|^2(1+|\xi|^2)(1+\kappa|\xi|^2)|\widetilde{J}(\xi)|^2+\tfrac{C(1+\kappa|\xi|^2)}{\kappa|\xi|^6}\int_0^t|\widehat{G}(s,\xi)|^2\mathrm{d}s\notag\\
    	&\leqslant C|\xi|^6(1+|\xi|^2)(1+\kappa|\xi|^2)|\widehat{\psi}_0|^2+C|\xi|^2(1+|\xi|^2)(1+\kappa|\xi|^2)|\widehat{\psi}_1|^2\notag\\
    	&\quad+\tfrac{C}{\kappa}|\xi|^2(1+\kappa|\xi|^2)\int_0^t|\partial_t\widehat{\psi}^{(0)}(s,\xi)|^2\mathrm{d}s+\tfrac{C}{\kappa}|\xi|^6(1+\kappa|\xi|^2)\int_0^t|\widehat{\psi}^{(0)}(s,\xi)|^2 \mathrm{d}s\notag\\
    	&\quad+\tfrac{C}{\kappa}|\xi|^2(1+\kappa|\xi|^2)\int_0^t|\widehat{\psi}^{I,1}(s,\xi)|^2\mathrm{d}s+C\kappa|\xi|^6(1+\kappa|\xi|^2)\int_0^t|\widehat{\psi}^{L,1}(\kappa s,\xi)|^2\mathrm{d}s.
    \end{align}
    
    Let us state the pointwise estimate for $\widehat{\psi}^{I,1}(t,\xi)$ to the inhomogeneous viscoelastic damped waves \eqref{Eq_V}, which is the most complex part in the estimate \eqref{K_1}.
    \begin{prop}\label{Pointwise_Psi_I}
    	The Fourier image for the solution to \eqref{Eq_V} fulfills the next pointwise estimate:
    	\begin{align*}
    		|\widehat{\psi}^{I,1}(t,\xi)|\leqslant\begin{cases}
    			C\mathrm{e}^{-c|\xi|^2t}\left(|\widehat{\psi}_0|+(1+t)^{\frac{1}{2}}|\widehat{\psi}_1|\right)&\mbox{if} \ \ 0<|\xi|\leqslant \tfrac{1}{b\nu},\\
    			C\mathrm{e}^{-ct}\left(|\xi|^2|\widehat{\psi}_0|+|\widehat{\psi}_1|\right)&\mbox{if} \ \ |\xi|\geqslant \tfrac{1}{b\nu},
    		\end{cases}
    	\end{align*}
    	with positive constant $c$ depending on the parameters $b,\nu$.
    \end{prop}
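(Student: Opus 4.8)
The plan is to take the partial Fourier transform in $x$ and solve \eqref{Eq_V} as an inhomogeneous linear ODE in $t$ for each frozen $\xi$. Let $\mu_{\pm}=\mu_{\pm}(|\xi|)$ be the roots of $\mu^{2}+b\nu|\xi|^{2}\mu+|\xi|^{2}=0$; these coincide with the characteristic roots of the viscoelastic damped wave equation \eqref{Linear_Kuznet_Eq}, and they satisfy $\mu_{+}+\mu_{-}=-b\nu|\xi|^{2}$ and $\mu_{+}\mu_{-}=|\xi|^{2}$. Introduce the propagators $E_{1}(t,|\xi|):=(\mu_{+}-\mu_{-})^{-1}(\mathrm{e}^{\mu_{+}t}-\mathrm{e}^{\mu_{-}t})$ and $E_{0}(t,|\xi|):=(\mu_{+}-\mu_{-})^{-1}(\mu_{+}\mathrm{e}^{\mu_{-}t}-\mu_{-}\mathrm{e}^{\mu_{+}t})=\partial_{t}E_{1}+b\nu|\xi|^{2}E_{1}$, which in addition obey $\partial_{t}E_{0}=-|\xi|^{2}E_{1}$ (using $\mu_{+}\mu_{-}=|\xi|^{2}$). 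Since $\widehat{\psi}^{I,1}(0,\xi)=(1-\gamma)(\widehat{\psi}_{1}+b\nu|\xi|^{2}\widehat{\psi}_{0})(\xi)$ and $\partial_{t}\widehat{\psi}^{I,1}(0,\xi)=0$, Duhamel's principle gives
\begin{align*}
	\widehat{\psi}^{I,1}(t,\xi)=(1-\gamma)\,E_{0}(t,|\xi|)\big(\widehat{\psi}_{1}+b\nu|\xi|^{2}\widehat{\psi}_{0}\big)(\xi)+\int_{0}^{t}E_{1}(t-s,|\xi|)\,\widehat{G}(s,\xi)\,\mathrm{d}s,
\end{align*}
where $\widehat{G}(s,\xi):=-(\gamma-1)|\xi|^{2}(\partial_{s}\widehat{\psi}^{(0)}+b\nu|\xi|^{2}\widehat{\psi}^{(0)})(s,\xi)$ and $\widehat{\psi}^{(0)}=E_{0}\widehat{\psi}_{0}+E_{1}\widehat{\psi}_{1}$. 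Hence the whole proof reduces to pointwise bounds for $E_{0}$, $E_{1}$ and $\partial_{s}E_{1}$ on the two ranges $0<|\xi|\leqslant 1/(b\nu)$ and $|\xi|\geqslant 1/(b\nu)$, since these control at once $\widehat{\psi}^{(0)}$, the source $\widehat{G}$ and the Duhamel kernel.

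On $0<|\xi|\leqslant 1/(b\nu)$ the roots are complex, $\mu_{\pm}=-\tfrac{b\nu}{2}|\xi|^{2}\pm i\beta(|\xi|)$ with $\beta(|\xi|)=\tfrac{|\xi|}{2}\sqrt{4-b^{2}\nu^{2}|\xi|^{2}}$ satisfying $\tfrac{\sqrt{3}}{2}|\xi|\leqslant\beta(|\xi|)\leqslant|\xi|$, so a direct computation yields $E_{1}=\beta^{-1}\sin(\beta t)\,\mathrm{e}^{-\frac{b\nu}{2}|\xi|^{2}t}$, hence $|E_{1}|\lesssim\min\{t,|\xi|^{-1}\}\mathrm{e}^{-c|\xi|^{2}t}$ and $|E_{0}|+|\partial_{s}E_{1}|\lesssim\mathrm{e}^{-c|\xi|^{2}t}$. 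The key is to invoke $\partial_{s}E_{0}=-|\xi|^{2}E_{1}$: it forces the $\widehat{\psi}_{0}$-part of $\partial_{s}\widehat{\psi}^{(0)}+b\nu|\xi|^{2}\widehat{\psi}^{(0)}$ to carry an extra factor $|\xi|$, so that $|\widehat{G}(s,\xi)|\lesssim|\xi|^{3}\mathrm{e}^{-c|\xi|^{2}s}|\widehat{\psi}_{0}|+|\xi|^{2}\mathrm{e}^{-c|\xi|^{2}s}|\widehat{\psi}_{1}|$. Inserting this together with $|E_{1}(t-s,|\xi|)|\leqslant|\xi|^{-1}\mathrm{e}^{-c|\xi|^{2}(t-s)}$ into the Duhamel integral and using $\int_{0}^{t}\mathrm{e}^{-c|\xi|^{2}(t-s)}\mathrm{e}^{-c|\xi|^{2}s}\,\mathrm{d}s=t\,\mathrm{e}^{-c|\xi|^{2}t}$, the $\widehat{\psi}_{0}$-contribution becomes $\lesssim|\xi|^{2}t\,\mathrm{e}^{-c|\xi|^{2}t}|\widehat{\psi}_{0}|\lesssim\mathrm{e}^{-c'|\xi|^{2}t}|\widehat{\psi}_{0}|$ (absorbing $t$ via $x\mathrm{e}^{-x}\lesssim 1$), whereas the $\widehat{\psi}_{1}$-contribution becomes $\lesssim|\xi|\,t\,\mathrm{e}^{-c|\xi|^{2}t}|\widehat{\psi}_{1}|\lesssim(1+t)^{1/2}\mathrm{e}^{-c'|\xi|^{2}t}|\widehat{\psi}_{1}|$ (since $|\xi|\sqrt{t}\,\mathrm{e}^{-c|\xi|^{2}t/2}\lesssim 1$). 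Combined with the homogeneous term $|E_{0}(t,|\xi|)|\,(|\widehat{\psi}_{1}|+|\xi|^{2}|\widehat{\psi}_{0}|)\lesssim\mathrm{e}^{-c|\xi|^{2}t}(|\widehat{\psi}_{0}|+|\widehat{\psi}_{1}|)$, this gives the first line of the assertion.

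On $|\xi|\geqslant 1/(b\nu)$ one has $\Re\mu_{\pm}\leqslant-c<0$ with $c=c(b,\nu)$, so $|E_{0}(t,|\xi|)|+|\partial_{s}E_{1}(t,|\xi|)|\lesssim\mathrm{e}^{-ct}$; moreover, treating $|\xi|$ near the double-root frequency $2/(b\nu)$ separately — there $E_{1}$ degenerates to $t\,\mathrm{e}^{\mu t}$, which is harmless because $|\xi|$ stays in a compact set — from $|\xi|$ away from it, where $|\mu_{+}-\mu_{-}|=|\xi|\sqrt{b^{2}\nu^{2}|\xi|^{2}-4}\gtrsim|\xi|^{2}$, one obtains the uniform bound $|E_{1}(t,|\xi|)|\lesssim\langle\xi\rangle^{-2}(1+t)\mathrm{e}^{-ct}$. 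These bounds imply $|\widehat{\psi}^{(0)}(s,\xi)|+|\partial_{s}\widehat{\psi}^{(0)}(s,\xi)|\lesssim\mathrm{e}^{-cs}(|\widehat{\psi}_{0}|+|\widehat{\psi}_{1}|)$ and, using $\partial_{s}E_{1}+b\nu|\xi|^{2}E_{1}=E_{0}$ and $\partial_{s}E_{0}=-|\xi|^{2}E_{1}$, the sharper $|\partial_{s}\widehat{\psi}^{(0)}+b\nu|\xi|^{2}\widehat{\psi}^{(0)}|\lesssim\mathrm{e}^{-cs}(|\xi|^{2}|\widehat{\psi}_{0}|+|\widehat{\psi}_{1}|)$, so $|\widehat{G}(s,\xi)|\lesssim|\xi|^{2}\mathrm{e}^{-cs}(|\xi|^{2}|\widehat{\psi}_{0}|+|\widehat{\psi}_{1}|)$. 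Feeding this into the Duhamel integral and using $\langle\xi\rangle^{-2}|\xi|^{2}\leqslant 1$ and $\int_{0}^{t}(1+t-s)\mathrm{e}^{-c(t-s)}\mathrm{e}^{-cs}\,\mathrm{d}s\lesssim\mathrm{e}^{-c't}$, the integral term is $\lesssim\mathrm{e}^{-c't}(|\xi|^{2}|\widehat{\psi}_{0}|+|\widehat{\psi}_{1}|)$; together with the homogeneous term $\mathrm{e}^{-ct}(|\widehat{\psi}_{1}|+|\xi|^{2}|\widehat{\psi}_{0}|)$ this is the second line.

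The main obstacle I expect is the small-frequency case, precisely obtaining the $|\widehat{\psi}_{0}|$-coefficient without polynomial loss in $t$: a naive estimate of the Duhamel integral leaves a factor $t\,\mathrm{e}^{-c|\xi|^{2}t}$, and removing it relies both on the extra $|\xi|$-decay of the $\widehat{\psi}_{0}$-part of the source — which is exactly where the structure of the right-hand side $(\gamma-1)\Delta(\partial_{t}\psi^{(0)}-b\nu\Delta\psi^{(0)})$, encoded in $\partial_{s}E_{0}=-|\xi|^{2}E_{1}$, enters essentially — and on the elementary but somewhat delicate interplay between $|\xi|^{2}t\,\mathrm{e}^{-c|\xi|^{2}t}\lesssim\mathrm{e}^{-c'|\xi|^{2}t}$ and $|\xi|\,t\,\mathrm{e}^{-c|\xi|^{2}t}\lesssim(1+t)^{1/2}\mathrm{e}^{-c'|\xi|^{2}t}$. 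A secondary technical point is the behaviour of $E_{1}$ near the double-root frequency $|\xi|=2/(b\nu)$ in the high-frequency range, which is handled by a compactness argument since the resulting $(1+t)$ factor is absorbed by the strict negativity of $\Re\mu_{\pm}$.
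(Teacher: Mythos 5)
Your proposal is correct and takes essentially the same route as the paper: the same homogeneous-plus-Duhamel representation with the viscoelastic propagators, the same identities $\partial_t\widehat{G}_0=-|\xi|^2\widehat{G}_1$ and $\partial_t\widehat{G}_1+b\nu|\xi|^2\widehat{G}_1=\widehat{G}_0$ (i.e. $\rho_\pm+b\nu|\xi|^2=-\rho_\mp$) to gain the extra factor $|\xi|$ in the $\widehat{\psi}_0$-part of the source at small frequencies, and the same absorptions $|\xi|^2t\,\mathrm{e}^{-c|\xi|^2t}\lesssim\mathrm{e}^{-c'|\xi|^2t}$ and $|\xi|\,t\,\mathrm{e}^{-c|\xi|^2t}\lesssim(1+t)^{\frac{1}{2}}\mathrm{e}^{-c'|\xi|^2t}$. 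The only cosmetic difference is bookkeeping: the paper writes the products $\widehat{G}_1(t-s)\bigl(\partial_t\widehat{G}_j(s)+b\nu|\xi|^2\widehat{G}_j(s)\bigr)$ explicitly in trigonometric and hyperbolic form before estimating, whereas you estimate kernel and source separately and handle the double-root frequency $|\xi|=2/(b\nu)$ by a compactness argument, which is equally valid.
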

    \begin{proof}
    	In order to deal with the solution $\psi^{I,1}$ to \eqref{Eq_V}, we need an exact representation for the solution. Let us begin by recalling the representation formula for the Fourier transform with respect to the space variables of the solution to the linear wave equation with viscoelastic damping $-b\nu\Delta v_t$ and source term $f=f(t,x)$.
    	For the inhomogeneous Cauchy problem associated to
    	\begin{align}\label{CP transf v}
    		\begin{cases}
    			\widehat{v}_{tt}+b\nu |\xi|^2\widehat{v}_t+|\xi|^2\widehat{v}=\widehat{f}(t,\xi), & \xi\in\mathbb{R}^n, \, t>0 \\
    			(\widehat{v},\widehat{v}_t)(0,\xi)= (\widehat{v}_0,\widehat{v}_1)(\xi), & \xi\in\mathbb{R}^n, 
    		\end{cases} 
    	\end{align}    we find the characteristic roots 
    	\begin{align}\label{roots}
    		\rho_{\pm}(|\xi|) := \begin{cases} -\frac{b\nu}{2}|\xi|^2 \pm \frac{|\xi|}{2} \sqrt{(b\nu)^2|\xi|^2-4} & \mbox{if} \ |\xi|>\frac{2}{b\nu}, \\
    			-\frac{b\nu}{2}|\xi|^2 \pm \frac{i|\xi|}{2} \sqrt{4-(b\nu)^2|\xi|^2} & \mbox{if} \ 0< |\xi|<\frac{2}{b\nu}.\end{cases}
    	\end{align}
    	By straightforward computations we find the representation
    	\begin{align*}
    		\widehat{v}(t,\xi)= \widehat{G}_0(t,|\xi|)\widehat{v}_0(\xi)+ \widehat{G}_1(t,|\xi|)\widehat{v}_1(\xi)+\int_0^t \widehat{G}_1(t-s,|\xi|)\widehat{f}(s,\xi) \mathrm{d}s,
    	\end{align*} where
    	\begin{align}\label{def G0 hat, G1 hat}
    		\widehat{G}_0(t,|\xi|) &:= \frac{\rho_{+}(|\xi|)\mathrm{e}^{\rho_{-}(|\xi|) t}-\rho_{-}(|\xi|)\mathrm{e}^{\rho_{+}(|\xi|) t}}{\rho_{+}(|\xi|) -\rho_{-}(|\xi|)},\notag\\   \widehat{G}_1(t,|\xi|) &:= \frac{\mathrm{e}^{\rho_{+}(|\xi|) t}-\mathrm{e}^{\rho_{-}(|\xi|) t}}{\rho_{+}(|\xi|) -\rho_{-}(|\xi|)}.
    	\end{align} In particular, we may express the Fourier transform of the kernel functions $G_0,G_1$ as follows:
    	\begin{align*}
    		\widehat{G}_0(t,|\xi|) & =\begin{cases} \left(\cosh \left(\tfrac{|\xi|}{2}\sqrt{(b\nu)^2|\xi|^2-4} t\right)+b\nu |\xi| \, \tfrac{\sinh\left(\tfrac{|\xi|}{2}\sqrt{(b\nu)^2|\xi|^2-4} t\right)}{\sqrt{(b\nu)^2|\xi|^2-4} }\right) \mathrm{e}^{-\frac{b\nu}{2}|\xi|^2 t}\\
    			\qquad\qquad\qquad\qquad\qquad \qquad\qquad\qquad\qquad\qquad\qquad\qquad\ \ \mbox{if} \ \ |\xi|>\frac{2}{b\nu}, \\
    			\left(\cos \left(\tfrac{|\xi|}{2}\sqrt{4-(b\nu)^2|\xi|^2} t\right)+b\nu |\xi| \, \tfrac{\sin\left(\tfrac{|\xi|}{2}\sqrt{4-(b\nu)^2|\xi|^2} t\right)}{\sqrt{4-(b\nu)^2|\xi|^2} }\right) \mathrm{e}^{-\frac{b\nu}{2}|\xi|^2 t}\\
    			\qquad\qquad\qquad\qquad\qquad \qquad\qquad\qquad\qquad\qquad\qquad\quad \mbox{if} \ \ 0<|\xi|<\frac{2}{b\nu},\end{cases} \\
    		\widehat{G}_1(t,|\xi|) & =\begin{cases} \vphantom{\Bigg(}\tfrac{2\sinh\left(\tfrac{|\xi|}{2}\sqrt{(b\nu)^2|\xi|^2-4} t\right)}{|\xi|\sqrt{(b\nu)^2|\xi|^2-4} } \, \mathrm{e}^{-\frac{b\nu}{2}|\xi|^2 t}  & \mbox{if} \ \ |\xi|>\frac{2}{b\nu}, \\
    			\vphantom{\Bigg(} \tfrac{2\sin\left(\tfrac{|\xi|}{2}\sqrt{4-(b\nu)^2|\xi|^2} t\right)}{|\xi|\sqrt{4-(b\nu)^2|\xi|^2} } \ \mathrm{e}^{-\frac{b\nu}{2}|\xi|^2 t}  & \mbox{if} \ \ 0<|\xi|<\frac{2}{b\nu}.\end{cases}
    	\end{align*}
    	Having in mind the previous representation formula for the solution to \eqref{CP transf v}, we obtain
    	\begin{align*}
    		\widehat{\psi}^{I,1}(t,\xi) & = (1-\gamma)\widehat{G}_0(t,|\xi|)\left(\widehat{\psi}_1(\xi)+b\nu|\xi|^2\widehat{\psi}_0(\xi)\right)\\ & \quad +(1-\gamma)\int_0^t |\xi|^2\widehat{G}_1(t-s,|\xi|)\left(\partial_t\widehat{\psi}^{(0)}(s,\xi)+b\nu|\xi|^2\widehat{\psi}^{(0)}(s,\xi)\right)\mathrm{d}s
    	\end{align*} and
    	\begin{align*}
    		\widehat{\psi}^{(0)}(s,\xi) &= \widehat{G}_0(s,|\xi|)\widehat{\psi}_0(\xi)+ \widehat{G}_1(s,|\xi|)\widehat{\psi}_1(\xi), \\
    		\partial_t \widehat{\psi}^{(0)}(s,\xi) &= \partial_t\widehat{G}_0(s,|\xi|)\widehat{\psi}_0(\xi)+ \partial_t \widehat{G}_1(s,|\xi|)\widehat{\psi}_1(\xi).
    	\end{align*}
    	Hence, combining the previous identities, we arrive at
    	\begin{align}
    		&\widehat{\psi}^{I,1}(t,\xi)=\widehat{\psi}^{I,1}_{\mathrm{hom}}(t,\xi) +\widehat{\psi}^{I,1}_{\mathrm{Duh}}(t,\xi),\label{fourier transf psi I,1}
    	\end{align} 	
    	where
    	\begin{align*}
    		\widehat{\psi}^{I,1}_{\mathrm{hom}}&:=(1-\gamma)\widehat{G}_0(t,|\xi|)\left(\widehat{\psi}_1(\xi)+b\nu|\xi|^2\widehat{\psi}_0(\xi)\right),\\
    		\widehat{\psi}^{I,1}_{\mathrm{Duh}}&:=(1-\gamma)\int_0^t |\xi|^2\widehat{G}_1(t-s,|\xi|) \sum_{j=0,1}\left(\partial_t\widehat{G}_j(s,|\xi|)+b\nu|\xi|^2\widehat{G}_j(s,|\xi|)\right)\widehat{\psi}_j(\xi) \mathrm{d}s.
    	\end{align*}
    	From the explicit representation for $\widehat{G}_0$, we find immediately the following estimates:
    	\begin{equation}\label{est psi I,1 hom}
    		\begin{split}
    			& |\widehat{\psi}^{I,1}_{\mathrm{hom}}(t,\xi)|\lesssim \mathrm{e}^{-c t} \big( |\xi|^2|\widehat{\psi}_0(\xi)|+|\widehat{\psi}_1(\xi)|\big) \qquad \,  \mbox{if} \ \ |\xi|>\tfrac{1}{b\nu}, \\
    			& |\widehat{\psi}^{I,1}_{\mathrm{hom}}(t,\xi)|\lesssim \mathrm{e}^{-\frac{b\nu }{2}|\xi|^2 t} \big(|\widehat{\psi}_0(\xi)|+|\widehat{\psi}_1(\xi)|\big) \qquad \mbox{if} \ \ 0<|\xi|\leqslant\tfrac{1}{b\nu}. 
    		\end{split}
    	\end{equation}
    	Next, we estimate the term $\widehat{\psi}^{I,1}_{\mathrm{Duh}}$.
    	Consequently, for this purpose, it is necessary to derive some representations of the time derivative of $\widehat{G}_0(t,|\xi|),\widehat{G}_1(t,|\xi|)$. From \eqref{def G0 hat, G1 hat}, we have
    	\begin{align*}
    		\partial_t \widehat{G}_0(t,|\xi|)&=\rho_+(|\xi|)\rho_-(|\xi|)  \frac{\mathrm{e}^{\rho_{-}(|\xi|) t}-\mathrm{e}^{\rho_{+}(|\xi|) t}}{\rho_{+}(|\xi|) -\rho_{-}(|\xi|)} \\
    		&=- \rho_+(|\xi|)\rho_-(|\xi|) \widehat{G}_1(t,|\xi|)=-|\xi|^2 \widehat{G}_1(t,|\xi|), \\
    		\partial_t \widehat{G}_1(t,|\xi|)&= \frac{\rho_{+}(|\xi|)\mathrm{e}^{\rho_{+}(|\xi|) t}-\rho_{-}(|\xi|)\mathrm{e}^{\rho_{-}(|\xi|) t}}{\rho_{+}(|\xi|) -\rho_{-}(|\xi|)}.
    	\end{align*} Moreover, by \eqref{roots} we get
    	\begin{align*}
    		\partial_t\widehat{G}_1(t,|\xi|) & =\begin{cases} \left(\cosh \left(\tfrac{|\xi|}{2}\sqrt{(b\nu)^2|\xi|^2-4} t\right)-b\nu |\xi| \, \tfrac{\sinh\left(\tfrac{|\xi|}{2}\sqrt{(b\nu)^2|\xi|^2-4} t\right)}{\sqrt{(b\nu)^2|\xi|^2-4} }\right) \mathrm{e}^{-\frac{b\nu}{2}|\xi|^2 t}\\
    			\qquad\qquad\qquad\qquad\qquad\qquad\qquad\qquad\qquad\qquad\qquad\qquad\mbox{if} \ \ |\xi|>\frac{2}{b\nu}, \\
    			\left(\cos \left(\tfrac{|\xi|}{2}\sqrt{4-(b\nu)^2|\xi|^2} t\right)-b\nu |\xi| \, \tfrac{\sin\left(\tfrac{|\xi|}{2}\sqrt{4-(b\nu)^2|\xi|^2} t\right)}{\sqrt{4-(b\nu)^2|\xi|^2} }\right) \mathrm{e}^{-\frac{b\nu}{2}|\xi|^2 t}\\
    			\qquad\qquad\qquad\qquad\qquad\qquad\qquad\qquad\qquad\qquad\qquad \mbox{if} \ \ 0<|\xi|<\frac{2}{b\nu}.\end{cases}
    	\end{align*} Hence, we try to express in a simpler form the terms
    \begin{align*}
    	\widehat{G}_1(t-s,|\xi|) \big(\partial_t\widehat{G}_j(s,|\xi|)+b\nu|\xi|^2\widehat{G}_j(s,|\xi|)\big) \ \ \mbox{for}\ \ j=0,1.
    \end{align*} By direct computations, we find
    	\begin{align}
    		&\widehat{G}_1(t-s,|\xi|) \left(\partial_t\widehat{G}_1(s,|\xi|)+b\nu|\xi|^2\widehat{G}_1(s,|\xi|)\right) \notag \\ 
    		%& \qquad  = \frac{\mathrm{e}^{\rho_{+}(|\xi|) (t-s)}-\mathrm{e}^{\rho_{-}(|\xi|) (t-s)}}{\rho_{+}(|\xi|) -\rho_{-}(|\xi|)} \left(\frac{\rho_{+}(|\xi|)\mathrm{e}^{\rho_{+}(|\xi|) s}-\rho_{-}(|\xi|)\mathrm{e}^{\rho_{-}(|\xi|) s}}{\rho_{+}(|\xi|) -\rho_{-}(|\xi|)}+b\nu|\xi|^2 \frac{\mathrm{e}^{\rho_{+}(|\xi|) s}-\mathrm{e}^{\rho_{-}(|\xi|) s}}{\rho_{+}(|\xi|) -\rho_{-}(|\xi|)} \right) \notag  \\
    		& \qquad  = \frac{\mathrm{e}^{\rho_{+}(|\xi|) (t-s)}-\mathrm{e}^{\rho_{-}(|\xi|) (t-s)}}{(\rho_{+}(|\xi|) -\rho_{-}(|\xi|))^2}\notag\\ &\qquad
    		\quad\ \times\left((\rho_{+}(|\xi|)+b\nu|\xi|^2)\mathrm{e}^{\rho_{+}(|\xi|) s}-(\rho_{-}(|\xi|)+b\nu|\xi|^2)\mathrm{e}^{\rho_{-}(|\xi|) s} \right) \notag \\
    		& \qquad  = \frac{\mathrm{e}^{\rho_{+}(|\xi|) (t-s)}-\mathrm{e}^{\rho_{-}(|\xi|) (t-s)}}{(\rho_{+}(|\xi|) -\rho_{-}(|\xi|))^2} \left(-\rho_{-}(|\xi|)\mathrm{e}^{\rho_{+}(|\xi|) s}+\rho_{+}(|\xi|)\mathrm{e}^{\rho_{-}(|\xi|) s} \right)\notag\\
    		&\qquad= \widehat{G}_1(t-s,|\xi|) \widehat{G}_0(s,|\xi|), \label{G1(t-s) (dt+b nu xi^2 )G1(s)}
    	\end{align} where we used the relation $\rho_\pm(|\xi|)+b\nu|\xi|^2=-\rho_\mp(|\xi|)$ in the second last equality. Analogously,
    	\begin{align*}
    		& \widehat{G}_1(t-s,|\xi|) \left(\partial_t\widehat{G}_0(s,|\xi|)+b\nu|\xi|^2\widehat{G}_0(s,|\xi|)\right) \\ 
    		%&   = \frac{\mathrm{e}^{\rho_{+}(|\xi|) (t-s)}-\mathrm{e}^{\rho_{-}(|\xi|) (t-s)}}{\rho_{+}(|\xi|) -\rho_{-}(|\xi|)} \left(\rho_{+}(|\xi|)\rho_{-}(|\xi|)\frac{\mathrm{e}^{\rho_{-}(|\xi|) s}-\mathrm{e}^{\rho_{+}(|\xi|) s}}{\rho_{+}(|\xi|) -\rho_{-}(|\xi|)}+b\nu|\xi|^2 \frac{\rho_{+}(|\xi|)\mathrm{e}^{\rho_{-}(|\xi|) s}-\rho_{-}(|\xi|)\mathrm{e}^{\rho_{+}(|\xi|) s}}{\rho_{+}(|\xi|) -\rho_{-}(|\xi|)} \right) \\
    		&  \qquad = \frac{\mathrm{e}^{\rho_{+}(|\xi|) (t-s)}-\mathrm{e}^{\rho_{-}(|\xi|) (t-s)}}{(\rho_{+}(|\xi|) -\rho_{-}(|\xi|))^2}\\
    		&\qquad\quad\times \left(-\rho_{-}(|\xi|)(\rho_{+}(|\xi|)+b\nu|\xi|^2)\mathrm{e}^{\rho_{+}(|\xi|) s}+\rho_{+}(|\xi|)(\rho_{-}(|\xi|)+b\nu|\xi|^2)\mathrm{e}^{\rho_{-}(|\xi|) s} \right) \\
    		&   \qquad= \frac{\mathrm{e}^{\rho_{+}(|\xi|) (t-s)}-\mathrm{e}^{\rho_{-}(|\xi|) (t-s)}}{(\rho_{+}(|\xi|) -\rho_{-}(|\xi|))^2} \left(\rho_{-}(|\xi|)^2\mathrm{e}^{\rho_{+}(|\xi|) s}-\rho_{+}(|\xi|)^2\mathrm{e}^{\rho_{-}(|\xi|) s} \right).
    	\end{align*} Since $\rho_\pm(|\xi|)^2= \tfrac{(b\nu)^2}{2}|\xi|^4-|\xi|^2\mp\tfrac{b\nu}{2}|\xi|^2 (\rho_+(|\xi|)-\rho_-(|\xi|))$, then
    	\begin{align*}
    		&\frac{\rho_{-}(|\xi|)^2\mathrm{e}^{\rho_{+}(|\xi|) s}-\rho_{+}(|\xi|)^2\mathrm{e}^{\rho_{-}(|\xi|) s}}{\rho_{+}(|\xi|) -\rho_{-}(|\xi|)}\\
    		&\qquad = \frac{b\nu}{2}|\xi|^2 \left(\mathrm{e}^{\rho_{+}(|\xi|) s}+\mathrm{e}^{\rho_{-}(|\xi|) s}\right)+\left(\frac{(b\nu)^2}{2}|\xi|^2-1\right)|\xi|^2 \widehat{G}_1(s,|\xi|).
    	\end{align*} So, we have
    	\begin{align}
    		& \widehat{G}_1(t-s,|\xi|) \left(\partial_t\widehat{G}_0(s,|\xi|)+b\nu|\xi|^2\widehat{G}_0(s,|\xi|)\right) \notag \\ 
    		& \ \   =  \widehat{G}_1(t-s,|\xi|) \left(\frac{b\nu}{2}|\xi|^2 \left(\mathrm{e}^{\rho_{+}(|\xi|) s}+\mathrm{e}^{\rho_{-}(|\xi|) s}\right)+\left(\frac{(b\nu)^2}{2}|\xi|^2-1\right)|\xi|^2 \widehat{G}_1(s,|\xi|)\right). \label{G1(t-s) (dt+b nu xi^2 )G0(s)}
    	\end{align} Later, it will be useful also the following representation:
    	\begin{align*}
    		&\frac{b\nu}{2}|\xi|^2 \left(\mathrm{e}^{\rho_{+}(|\xi|) s}+\mathrm{e}^{\rho_{-}(|\xi|) s}\right)\\
    		&\qquad = \begin{cases} b\nu |\xi|^2 \cosh \left(\tfrac{|\xi|}{2}\sqrt{(b\nu)^2|\xi|^2-4} s\right) \mathrm{e}^{-\frac{b\nu}{2}|\xi|^2 s}  & \mbox{if} \ \ |\xi|>\frac{2}{b\nu}, \\ b\nu |\xi|^2 \cos \left(\tfrac{|\xi|}{2}\sqrt{4-(b\nu)^2|\xi|^2} s\right) \mathrm{e}^{-\frac{b\nu}{2}|\xi|^2 s}  & \mbox{if} \ \ 0<|\xi|<\frac{2}{b\nu}. \end{cases}
    	\end{align*}
    	Since our goal is now to provide pointwise estimates for $\widehat{\psi}^{I,1}_{\mathrm{Duh}}$ in the right-hand side of \eqref{fourier transf psi I,1}, we are going to write explicitly the terms in \eqref{G1(t-s) (dt+b nu xi^2 )G1(s)} and \eqref{G1(t-s) (dt+b nu xi^2 )G0(s)}. For $|\xi|>\frac{2}{b\nu}$, we get
    	\begin{align*}
    		& \widehat{G}_1(t-s,|\xi|) \left(\partial_t\widehat{G}_1(s,|\xi|)+b\nu|\xi|^2\widehat{G}_1(s,|\xi|)\right) \\
    		& \qquad = \tfrac{2\sinh\left( \tfrac{|\xi|}{2}\sqrt{(b\nu)^2|\xi|^2-4}(t-s)\right)}{|\xi|\sqrt{(b\nu)^2|\xi|^2-4}} \, \cosh\left( \tfrac{|\xi|}{2}\sqrt{(b\nu)^2|\xi|^2-4}s\right) \mathrm{e}^{-\frac{b\nu}{2}|\xi|^2 t}\\
    		& \quad\qquad +  \tfrac{2b\nu \sinh\left( \tfrac{|\xi|}{2}\sqrt{(b\nu)^2|\xi|^2-4}(t-s)\right)}{\sqrt{(b\nu)^2|\xi|^2-4}} \,  \tfrac{\sinh\left( \tfrac{|\xi|}{2}\sqrt{(b\nu)^2|\xi|^2-4}s\right)}{\sqrt{(b\nu)^2|\xi|^2-4}}\, \mathrm{e}^{-\frac{b\nu}{2}|\xi|^2 t}, 
    	\end{align*} 
    	and
    	\begin{align*}
    		& \widehat{G}_1(t-s,|\xi|) \left(\partial_t\widehat{G}_0(s,|\xi|)+b\nu|\xi|^2\widehat{G}_0(s,|\xi|)\right) \\
    		& \qquad = 2b\nu |\xi|\tfrac{\sinh\left( \tfrac{|\xi|}{2}\sqrt{(b\nu)^2|\xi|^2-4}(t-s)\right)}{\sqrt{(b\nu)^2|\xi|^2-4}} \, \cosh\left( \tfrac{|\xi|}{2}\sqrt{(b\nu)^2|\xi|^2-4}s\right) \mathrm{e}^{-\frac{b\nu}{2}|\xi|^2 t}\\
    		& \qquad\quad + 2\big((b\nu)^2|\xi|^2-2\big) \tfrac{ \sinh\left( \tfrac{|\xi|}{2}\sqrt{(b\nu)^2|\xi|^2-4}(t-s)\right)}{\sqrt{(b\nu)^2|\xi|^2-4}} \,  \tfrac{\sinh\left( \tfrac{|\xi|}{2}\sqrt{(b\nu)^2|\xi|^2-4}s\right)}{\sqrt{(b\nu)^2|\xi|^2-4}}\, \mathrm{e}^{-\frac{b\nu}{2}|\xi|^2 t}.
    	\end{align*}
    	On the other hand, $0<|\xi|<\frac{2}{b\nu}$, we have
    	\begin{align*}
    		& \widehat{G}_1(t-s,|\xi|) \left(\partial_t\widehat{G}_1(s,|\xi|)+b\nu|\xi|^2\widehat{G}_1(s,|\xi|)\right) \\
    		& \qquad = \tfrac{2\sin\left( \tfrac{|\xi|}{2}\sqrt{4-(b\nu)^2|\xi|^2}(t-s)\right)}{|\xi|\sqrt{4-(b\nu)^2|\xi|^2}} \, \cos\left( \tfrac{|\xi|}{2}\sqrt{4-(b\nu)^2|\xi|^2}s\right) \mathrm{e}^{-\frac{b\nu}{2}|\xi|^2 t}\\
    		& \quad\qquad +  \tfrac{2b\nu \sin\left( \tfrac{|\xi|}{2}\sqrt{4-(b\nu)^2|\xi|^2}(t-s)\right)}{\sqrt{4-(b\nu)^2|\xi|^2}} \,  \tfrac{\sin\left( \tfrac{|\xi|}{2}\sqrt{4-(b\nu)^2|\xi|^2}s\right)}{\sqrt{4-(b\nu)^2|\xi|^2}}\, \mathrm{e}^{-\frac{b\nu}{2}|\xi|^2 t}, 
    	\end{align*} 
    	and
    	\begin{align*}
    		& \widehat{G}_1(t-s,|\xi|) \left(\partial_t\widehat{G}_0(s,|\xi|)+b\nu|\xi|^2\widehat{G}_0(s,|\xi|)\right) \\
    		& \qquad = 2b\nu |\xi|\tfrac{\sin\left( \tfrac{|\xi|}{2}\sqrt{4-(b\nu)^2|\xi|^2}(t-s)\right)}{\sqrt{4-(b\nu)^2|\xi|^2}} \, \cos\left( \tfrac{|\xi|}{2}\sqrt{4-(b\nu)^2|\xi|^2}s\right) \mathrm{e}^{-\frac{b\nu}{2}|\xi|^2 t}\\
    		& \quad\qquad + 2\big((b\nu)^2|\xi|^2-2\big) \tfrac{ \sin\left( \tfrac{|\xi|}{2}\sqrt{4-(b\nu)^2|\xi|^2}(t-s)\right)}{\sqrt{4-(b\nu)^2|\xi|^2}} \,  \tfrac{\sin\left( \tfrac{|\xi|}{2}\sqrt{4-(b\nu)^2|\xi|^2}s\right)}{\sqrt{4-(b\nu)^2|\xi|^2}}\, \mathrm{e}^{-\frac{b\nu}{2}|\xi|^2 t}.
    	\end{align*} Since for large frequencies we may estimate
    	\begin{align*}
    		\sinh\left( \tfrac{|\xi|}{2}\sqrt{(b\nu)^2|\xi|^2-4}(t-s)\right) &\leqslant \cosh\left( \tfrac{|\xi|}{2}\sqrt{(b\nu)^2|\xi|^2-4}(t-s)\right)\\
    		&\leqslant \exp\left(\tfrac{|\xi|}{2}\sqrt{(b\nu)^2|\xi|^2-4}\,(t-s)\right), 
    	\end{align*}
    	\begin{align*}
    		\sinh\left( \tfrac{|\xi|}{2}\sqrt{(b\nu)^2|\xi|^2-4}s\right) &\leqslant \cosh\left( \tfrac{|\xi|}{2}\sqrt{(b\nu)^2|\xi|^2-4}s\right)\\
    		&\leqslant \exp\left(\tfrac{|\xi|}{2}\sqrt{(b\nu)^2|\xi|^2-4}\, s\right),
    	\end{align*} and 
    	\begin{align*}
    		\exp\left(\left(-\tfrac{b\nu}{2}|\xi|^2+\tfrac{|\xi|}{2}\sqrt{(b\nu)^2|\xi|^2-4}\right)t\right)\leqslant \exp\left(-\tfrac{t}{b\nu}\right),
    	\end{align*} 
    	we find the pointwise estimates
    	\begin{equation}\label{pointwise estimates}
    		\begin{split}
    			\left|\widehat{G}_1(t-s,|\xi|) \left(\partial_t\widehat{G}_j(s,|\xi|)+b\nu|\xi|^2\widehat{G}_j(s,|\xi|)\right)\right| & \lesssim |\xi|^{-2j} \mathrm{e}^{-ct} \qquad \quad \ \mbox{if} \ |\xi|>\tfrac{1}{b\nu}, \\
    			\left|\widehat{G}_1(t-s,|\xi|) \left(\partial_t\widehat{G}_j(s,|\xi|)+b\nu|\xi|^2\widehat{G}_j(s,|\xi|)\right)\right| & \lesssim |\xi|^{-j} \mathrm{e}^{-\frac{b\nu}{2} |\xi|^2 t} \qquad \mbox{if} \ 0<|\xi|\leqslant \tfrac{1}{b\nu},
    		\end{split}
    	\end{equation} for $j=0,1$ and for a suitable constant $c=c(b,\nu)>0$.
    	By using \eqref{pointwise estimates}, we may now derive the pointwise estimates for $\widehat{\psi}^{I,1}_{\mathrm{Duh}}$. For $|\xi|>\frac{1}{b\nu}$, we have
    	\begin{align}
    		|\widehat{\psi}^{I,1}_{\mathrm{Duh}}(t,\xi)| &\lesssim \int_0^t |\xi|^2\sum_{j=0,1} \left| \widehat{G}_1(t-s,|\xi|) \left(\partial_t\widehat{G}_j(s,|\xi|)+b\nu|\xi|^2\widehat{G}_j(s,|\xi|)\right)\right| |\widehat{\psi}_j(\xi)|\, \mathrm{d}s \notag \\
    		&\lesssim  \mathrm{e}^{-c t} \int_0^t \left( |\xi|^2 |\widehat{\psi}_0(\xi)| +  |\widehat{\psi}_1(\xi)| \right)\, \mathrm{d}s \lesssim \mathrm{e}^{-c t} \left( |\xi|^2 |\widehat{\psi}_0(\xi)| +  |\widehat{\psi}_1(\xi)| \right), \label{est psi I,1 Duh large xi}
    	\end{align}
    	while for $0<|\xi|\leqslant\frac{1}{b\nu}$, we get
    	\begin{align}
    		|\widehat{\psi}^{I,1}_{\mathrm{Duh}}(t,\xi)| 
    		&\lesssim  \mathrm{e}^{-\frac{b\nu}{2} |\xi|^2 t}  \int_0^t \left( |\xi|^2 |\widehat{\psi}_0(\xi)| + |\xi| |\widehat{\psi}_1(\xi)| \right)\, \mathrm{d}s\notag\\
    		& \lesssim \mathrm{e}^{-\frac{b\nu}{2} |\xi|^2 t}  \left(  |\xi|^2 t |\widehat{\psi}_0(\xi)| +  |\xi| t  |\widehat{\psi}_1(\xi)| \right) \notag \\
    		& \lesssim \mathrm{e}^{-c|\xi|^2 t}  \left( |\widehat{\psi}_0(\xi)| + t^{\frac{1}{2}}  |\widehat{\psi}_1(\xi)| \right). \label{est psi I,1 Duh small xi}
    	\end{align}
    	Combining \eqref{est psi I,1 hom}, \eqref{est psi I,1 Duh large xi} and \eqref{est psi I,1 Duh small xi}, we obtain the desired pointwise estimate for $\widehat{\psi}^{I,1}$.
    \end{proof}
    
    Then, we can announce our main theorem in this part.
    \begin{theorem}\label{Thm_Higher}
    	Assume $n\geqslant 3$. Let us assume $\psi_0\in H^5\cap L^1$ and $\psi_1\in H^3\cap L^1$ such that the consistency condition \eqref{Assumption_Singular_Energy} holds.  Then, the following refined estimate holds:
    	\begin{align*}
    		&\sup\limits_{t\in[0,\infty)}\left\|\psi(t,\cdot)-\left(\psi^{(0)}(t,\cdot)+\kappa\psi^{I,1}(t,\cdot)+\kappa \psi^{L,\kappa}(t,\cdot)\right)\right\|_{L^2}\\
    		&\qquad\leqslant C\kappa\sqrt{\kappa} \left(\|\psi_0\|_{H^5\cap L^1}+\|\psi_1\|_{H^3\cap L^1}\right),
    	\end{align*}
    	where $C$ is a positive constant independent of $\kappa$.
    \end{theorem}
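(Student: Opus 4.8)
The plan is to reduce the assertion to the representation already prepared in the excerpt. By \eqref{K_2} and \eqref{def R^kappa},
\begin{align*}
\psi(t,\cdot)-\big(\psi^{(0)}(t,\cdot)+\kappa\psi^{I,1}(t,\cdot)+\kappa\psi^{L,\kappa}(t,\cdot)\big)=\kappa^2R^{\kappa}(t,\cdot),
\end{align*}
so by Plancherel's theorem it suffices to establish the $\kappa$-uniform bound
\begin{align*}
\sup_{t\geqslant0}\,\kappa\int_{\mb{R}^n}\big|\widehat{R}^{\kappa}(t,\xi)\big|^2\,\mathrm{d}\xi\leqslant C\left(\|\psi_0\|^2_{H^5\cap L^1}+\|\psi_1\|^2_{H^3\cap L^1}\right)\qquad(0<\kappa\ll1),
\end{align*}
since then $\sup_{t}\|\psi(t,\cdot)-(\psi^{(0)}+\kappa\psi^{I,1}+\kappa\psi^{L,\kappa})(t,\cdot)\|_{L^2}=\kappa^2\sup_t\|\widehat{R}^{\kappa}(t,\cdot)\|_{L^2}\lesssim\kappa^{3/2}(\|\psi_0\|_{H^5\cap L^1}+\|\psi_1\|_{H^3\cap L^1})=\kappa\sqrt{\kappa}(\cdots)$. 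I would prove this by multiplying the pointwise estimate \eqref{K_1} by $\kappa$, integrating in $\xi$, and treating the six resulting terms separately, splitting the frequency space into the low-frequency region $\{|\xi|\leqslant1/(b\nu)\}$ and the high-frequency region $\{|\xi|\geqslant1/(b\nu)\}$. Note that each of the six time integrals in \eqref{K_1} is nondecreasing and convergent as $t\to\infty$ for fixed $\xi\neq0$, which gives the uniformity in $t$ for free.

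The first two terms of \eqref{K_1} are immediate: after the multiplication by $\kappa$ they are dominated by $(\kappa|\xi|^6+\kappa|\xi|^8+\kappa^2|\xi|^{10})|\widehat{\psi}_0|^2$ and $(\kappa|\xi|^2+\kappa|\xi|^4+\kappa^2|\xi|^6)|\widehat{\psi}_1|^2$, which are bounded near $\xi=0$ and whose worst high-frequency tails $\kappa^2|\xi|^{10}|\widehat{\psi}_0|^2$ and $\kappa^2|\xi|^6|\widehat{\psi}_1|^2$ integrate to $\lesssim\kappa^2(\|\psi_0\|^2_{H^5}+\|\psi_1\|^2_{H^3})$; this is exactly what pins down the Sobolev orders $5$ and $3$. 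For the remaining four terms, all carrying a factor $\kappa^{-1}$ in \eqref{K_1}, I need uniform-in-$t$ control of the time integrals of the building blocks. For $\widehat{\psi}^{(0)}$ I would combine the phase-space energy identity for the viscoelastic damped wave equation \eqref{Linear_Kuznet_Eq} — obtained by multiplying $\widehat{\psi}^{(0)}_{tt}+b\nu|\xi|^2\widehat{\psi}^{(0)}_t+|\xi|^2\widehat{\psi}^{(0)}=0$ by $\overline{\widehat{\psi}^{(0)}_t}$ and integrating, which gives $\int_0^{\infty}|\partial_t\widehat{\psi}^{(0)}(s,\xi)|^2\,\mathrm{d}s\lesssim|\widehat{\psi}_0|^2+|\xi|^{-2}|\widehat{\psi}_1|^2$ — with the pointwise bound $|\widehat{\psi}^{(0)}(s,\xi)|\lesssim\mathrm{e}^{-c\frac{|\xi|^2}{|\xi|^2+1}s}\big(|\widehat{\psi}_0|+|\xi|^{-1}|\widehat{\psi}_1|\big)$ recalled from Lemma 2.4 of \cite{IkehataNatsume} in the proof of Corollary \ref{Coro_energy}, which yields $\int_0^{\infty}|\widehat{\psi}^{(0)}(s,\xi)|^2\,\mathrm{d}s\lesssim|\xi|^{-2}|\widehat{\psi}_0|^2+|\xi|^{-4}|\widehat{\psi}_1|^2$ for $|\xi|\leqslant1$ and $\lesssim|\widehat{\psi}_0|^2+|\xi|^{-2}|\widehat{\psi}_1|^2$ for $|\xi|\geqslant1$. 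Multiplying these by the weights $|\xi|^2$, respectively $|\xi|^6$, from \eqref{K_1} (the outer factor $\kappa$ cancels the $\kappa^{-1}$) makes the resulting integrands bounded near $\xi=0$ and leaves only high-frequency weights $\lesssim\kappa|\xi|^8|\widehat{\psi}_0|^2+|\xi|^6|\widehat{\psi}_0|^2+\kappa|\xi|^6|\widehat{\psi}_1|^2+|\xi|^4|\widehat{\psi}_1|^2$, absorbed by $H^5\cap H^3$. For $\widehat{\psi}^{L,1}$ I would use its explicit heat representation $\widehat{\psi}^{L,1}(\kappa s,\xi)=(\gamma-1)\mathrm{e}^{-\kappa s|\xi|^2}\big(\widehat{\psi}_1(\xi)+b\nu|\xi|^2\widehat{\psi}_0(\xi)\big)$, inherited from \eqref{A8}, so that $\int_0^t|\widehat{\psi}^{L,1}(\kappa s,\xi)|^2\,\mathrm{d}s\leqslant\frac{C}{\kappa|\xi|^2}\big(|\widehat{\psi}_1|^2+|\xi|^4|\widehat{\psi}_0|^2\big)$; here the $\kappa^{-1}$ is exactly cancelled by the $\kappa$-prefactor of this term in \eqref{K_1}, leaving a contribution $\lesssim\kappa|\xi|^4(1+\kappa|\xi|^2)\big(|\widehat{\psi}_1|^2+|\xi|^4|\widehat{\psi}_0|^2\big)$ that vanishes as $\xi\to0$ and whose high-frequency part $\lesssim\kappa^2|\xi|^{10}|\widehat{\psi}_0|^2+\kappa^2|\xi|^6|\widehat{\psi}_1|^2$ again matches the regularity thresholds.

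The genuinely delicate term is the one carrying $\widehat{\psi}^{I,1}$, and this is exactly where the hypothesis $n\geqslant3$ and the $L^1$-regularity of the data come in. Feeding the pointwise estimate of Proposition \ref{Pointwise_Psi_I} into the time integral, on $\{0<|\xi|\leqslant1/(b\nu)\}$ the growth factor $(1+s)^{1/2}$ attached to $\widehat{\psi}_1$ forces $\int_0^{\infty}(1+s)\mathrm{e}^{-2c|\xi|^2s}\,\mathrm{d}s\lesssim|\xi|^{-4}$, so this term in $\kappa\cdot\eqref{K_1}$ has low-frequency integrand $\lesssim(1+\kappa|\xi|^2)\big(|\widehat{\psi}_0|^2+|\xi|^{-2}|\widehat{\psi}_1|^2\big)$. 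The term $|\xi|^{-2}|\widehat{\psi}_1|^2$ is the bottleneck: $\int_{|\xi|\leqslant1/(b\nu)}|\xi|^{-2}|\widehat{\psi}_1(\xi)|^2\,\mathrm{d}\xi\leqslant\|\widehat{\psi}_1\|^2_{L^{\infty}}\int_{|\xi|\leqslant1/(b\nu)}|\xi|^{-2}\,\mathrm{d}\xi$ is finite precisely when $n\geqslant3$, and one then uses $\|\widehat{\psi}_1\|_{L^{\infty}}\leqslant\|\psi_1\|_{L^1}$; similarly $\int_{|\xi|\leqslant1/(b\nu)}|\widehat{\psi}_0(\xi)|^2\,\mathrm{d}\xi\leqslant C\|\psi_0\|^2_{L^1}$. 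On $\{|\xi|\geqslant1/(b\nu)\}$, Proposition \ref{Pointwise_Psi_I} provides exponential decay in $t$ together with polynomial weights in $|\xi|$ controlled by $H^5\cap H^3$. Collecting all six contributions gives the uniform bound displayed above, and multiplying through by $\kappa^2$ concludes the proof. I expect the only real subtlety to be the low-frequency bookkeeping of the $\widehat{\psi}^{I,1}$ term — verifying that the $L^1$ norms (and not merely the $L^2$ norms) are genuinely needed there, and that $n=3$ is the borderline dimension; everything else is a lengthy but routine splitting of \eqref{K_1} into small and large frequencies using only pointwise estimates that are already available.
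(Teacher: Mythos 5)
Your proposal is correct and follows essentially the same route as the paper: reduce to the uniform bound $\|R^{\kappa}(t,\cdot)\|_{L^2}^2\lesssim \kappa^{-1}(\|\psi_0\|^2_{H^5\cap L^1}+\|\psi_1\|^2_{H^3\cap L^1})$ via \eqref{K_1}, then feed in the pointwise estimates for $\widehat{\psi}^{(0)}$ (Lemma 2.4 of \cite{IkehataNatsume}), for $\widehat{\psi}^{I,1}$ (Proposition \ref{Pointwise_Psi_I}, split at $|\xi|=1/(b\nu)$), and the explicit heat representation of $\widehat{\psi}^{L,1}$, with $n\geqslant 3$ and the $L^1$ data entering exactly through the low-frequency contribution of the $\widehat{\psi}^{I,1}$ term. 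Your only departures are cosmetic — an exact phase-space energy identity (and time-first integration) for the $\widehat{\psi}^{(0)}$ terms where the paper uses $\|\widehat{\psi}_j\|_{L^\infty}\leqslant\|\psi_j\|_{L^1}$ together with Lemma \ref{Lemma_Basic_1}, and a Fubini-order swap producing the $|\xi|^{-2}$ singularity (integrable iff $n\geqslant 3$) instead of the paper's $\int_0^t(1+s)^{-n/2}\mathrm{d}s\leqslant C$ — and they do not change the substance or the regularity bookkeeping.
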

    \begin{remark} \label{Rem improvement rate kappa}
    	Let us compare the obtained result Theorem \ref{Thm_Potential_2} with Theorem \ref{Thm_Higher} for $n\geqslant 3$. By subtracting the additional error terms $\kappa\psi^{I,1}(t,\cdot)+\kappa \psi^{L,\kappa}(t,\cdot)$ in the $L^2$ norm, the rate of convergence has been improved by a factor $\kappa$.
    \end{remark}
    \begin{remark}
    	From Theorem \ref{Thm_Higher}, we claim the validity of the formal expansion stated in Proposition \ref{Prop_A} at least for $j=0$, which means the second-order asymptotic expansion. Precisely, concerning the Blackstock's model without Becker's assumption \eqref{Linear_Blackstock_Eq}, the solution has in the asymptotic expansion as first-order term the solution to a viscoelastic damped wave equation, and as second-order terms the sum of a solution to an inhomogeneous viscoelastic damped wave equation and a solution to the heat equation, respectively.% the solution has the first-order asymptotic expansion as viscoelastic damped waves, and the second-order asymptotic expansion as inhomogeneous viscoelastic damped waves and heat equation.   
    \end{remark}
    \begin{proof}
    	Recalling the definition of the error term $\kappa^2 R^{\kappa}(t,x)$, we just need to derive some estimates for $R^{\kappa}(t,\cdot)$ in the $L^2$ norm to complete the proof. First of all, from the regular assumptions for initial data, we claim
    	\begin{align*}
    		&\int_{\mb{R}^n}|\xi|^6(1+|\xi|^2)(1+\kappa|\xi|^2)|\widehat{\psi}_0|^2\mathrm{d}\xi+\int_{\mb{R}^n}|\xi|^2(1+|\xi|^2)(1+\kappa|\xi|^2)|\widehat{\psi}_1|^2\mathrm{d}\xi\\
    		%	&\qquad\leqslant C\left(\|\psi_0\|_{H^4}^2+\kappa\|\psi_0\|_{H^5}^2+\|\psi_1\|_{H^2}^2+\kappa\|\psi_1\|_{H^3}^2\right)\\
    		&\qquad\leqslant C\left(\|\psi_0\|_{H^5}^2+\|\psi_1\|_{H^3}^2\right).
    	\end{align*}
    	Namely, from the pointwise estimate \eqref{K_1}, we just need to estimate the remaining four terms. Before doing this, let us state some estimates for the Fourier images of the functions appearing in the integrals on the right-hand side of \eqref{K_1}. From Lemma 2.4 in \cite{IkehataNatsume}, we can state the pointwise estimates
    	\begin{align} \label{pointwise est psi^(0)}
    		|\partial_t\widehat{\psi}^{(0)}(s,\xi)|^2+|\xi|^2|\widehat{\psi}^{(0)}(s,\xi)|^2\leqslant C\exp\left(-\tfrac{c|\xi|^2}{1+|\xi|^2}s\right)\left(|\xi|^2|\widehat{\psi}_0|^2+|\widehat{\psi}_1|^2\right),
    	\end{align}
    	and from Proposition \ref{Pointwise_Psi_I}, we see
    	\begin{align} \label{pointwise est psi^I,1}
    		|\widehat{\psi}^{I,1}(s,\xi)|^2\leqslant\begin{cases}
    			C\mathrm{e}^{-c|\xi|^2s}\left(|\widehat{\psi}_0|^2+(1+s)|\widehat{\psi}_1|^2\right)&\mbox{if} \ \ 0<|\xi|\leqslant \tfrac{1}{b\nu},\\
    			C\mathrm{e}^{-cs}\left(\langle\xi\rangle^4|\widehat{\psi}_0|^2+|\widehat{\psi}_1|^2\right)&\mbox{if} \ \ |\xi|\geqslant \tfrac{1}{b\nu}.
    		\end{cases}
    	\end{align}
    	Finally, the Fourier transform for \eqref{A8} implies
    	\begin{align*}
    		|\widehat{\psi}^{L,1}(\kappa s,\xi)|^2\leqslant C\mathrm{e}^{-c\kappa|\xi|^2s}\left(|\xi|^4|\widehat{\psi}_0|^2+|\widehat{\psi}_1|^2\right).
    	\end{align*}
    	
    	To deal with those terms, we apply WKB analysis. Precisely, by employing the Hausdorff-Young inequality, one has
    	\begin{align*}
    		&\int_{\mb{R}^n}|\xi|^2(1+\kappa|\xi|^2)\int_0^t|\partial_t\widehat{\psi}^{(0)}(s,\xi)|^2\mathrm{d}s\mathrm{d}\xi\\
    		&\quad \ \leqslant C\int_{|\xi|\leqslant 1/(b\nu)}\int_0^t\mathrm{e}^{-c|\xi|^2s}\left(|\xi|^4|\widehat{\psi}_0|^2+|\xi|^2|\widehat{\psi}_1|^2\right)\mathrm{d}s\mathrm{d}\xi\\
    		&\quad \ \quad+C\int_{|\xi|\geqslant 1/(b\nu)}\int_0^t\mathrm{e}^{-cs}\left(\langle\xi\rangle^6|\widehat{\psi}_0|^2+\langle\xi\rangle^4|\widehat{\psi}_1|^2\right)\mathrm{d}s\mathrm{d}\xi\\
    		&\quad \ \leqslant C\int_0^t\left(\|\mathrm{e}^{-c|\xi|^2s}|\xi|^2\|_{L^2(|\xi|\leqslant 1/(b\nu))}^2\|\psi_0\|_{L^1}^2+\|\mathrm{e}^{-c|\xi|^2s}|\xi|\,\|_{L^2(|\xi|\leqslant 1/(b\nu))}^2\|\psi_1\|_{L^1}^2\right)\mathrm{d}s\\
    		&\quad\quad \ +C\int_0^t\mathrm{e}^{-cs}\mathrm{d}s\left(\|\psi_0\|_{H^3}^2+\|\psi_1\|_{H^2}^2\right).
    	\end{align*}
    	With the aid of Lemma \ref{Lemma_Basic_1}, we conclude
    	\begin{align*}
    		\int_{\mb{R}^n}|\xi|^2(1+\kappa|\xi|^2)\int_0^t|\partial_t\widehat{\psi}^{(0)}(s,\xi)|^2\mathrm{d}s\mathrm{d}\xi\leqslant C\left(\|\psi_0\|_{H^3\cap L^1}^2+\|\psi_1\|_{H^2\cap L^1}^2\right)
    	\end{align*}
    	for any $n\geqslant 1$. By repeating the same steps as the above, 
    	we get
    	\begin{align*}
    		\int_{\mb{R}^n}|\xi|^6(1+\kappa|\xi|^2)\int_0^t|\widehat{\psi}^{(0)}(s,\xi)|^2\mathrm{d}s\mathrm{d}\xi\leqslant C\left(\|\psi_0\|_{H^4\cap L^1}^2+\|\psi_1\|_{H^3\cap L^1}^2\right)
    	\end{align*}
    	for any $n\geqslant 1$. Similarly, due to $\int_0^t(1+s)^{-\frac{n}{2}}\mathrm{d}s\leqslant C$ for any $t>0$ and $n\geqslant 3$ 
    	%\begin{align*}
    	%	\ \ \mbox{for any}\ \  t>0\ \ \mbox{if}\ \ n\geqslant 3,
    	%\end{align*}
    	it holds
    	\begin{align*}
    		\int_{\mb{R}^n}|\xi|^2(1+\kappa|\xi|^2)\int_0^t|\widehat{\psi}^{I,1}(s,\xi)|^2\mathrm{d}s\mathrm{d}\xi\leqslant C\left(\|\psi_0\|_{H^4\cap L^1}^2+\|\psi_1\|_{H^2\cap L^1}^2\right)	
    	\end{align*}
    	for any $n\geqslant 3$. Finally, let us treat $\widehat{\psi}^{L,1}$ in the next way:
    	\begin{align*}
    		&\kappa\int_{\mb{R}^n}|\xi|^6(1+\kappa|\xi|^2)\int_0^t|\widehat{\psi}^{L,1}(\kappa s,\xi)|^2\mathrm{d}s\mathrm{d}\xi\\
    		&\qquad\leqslant \kappa C\int_{\mb{R}^n}|\xi|^6(1+\kappa|\xi|^2)\left(|\xi|^4|\widehat{\psi}_0|^2+|\widehat{\psi}_1|^2\right)\int_0^t\mathrm{e}^{-c\kappa|\xi|^2s}\mathrm{d}s\mathrm{d}\xi\\
    		%&\qquad\leqslant  C\int_{\mb{R}^n}|\xi|^4(1+\kappa|\xi|^2)\left(|\xi|^4|\widehat{\psi}_0|^2+|\widehat{\psi}_1|^2\right)\mathrm{d}\xi\\
    		&\qquad\leqslant C\left(\|\psi_0\|_{H^5}^2+\|\psi_1\|_{H^3}^2\right)
    	\end{align*}
    	for any $n\geqslant 1$. Summarizing the last estimates and \eqref{K_1}, we arrive at
    	\begin{align*}
    		\|R^\kappa(t,\cdot)\|_{L^2}^2\leqslant \tfrac{C}{\kappa}\left(\|\psi_0\|^2_{H^5\cap L^1}+\|\psi_1\|^2_{H^3\cap L^1}\right).
    	\end{align*}
    	Thus, our proof is complete.
    \end{proof}
    
    \begin{coro}\label{Coro_Higher}
    	Let $n\geqslant 2$. We assume $\langle D\rangle^{s_0+2} \psi_0 \in L^1$ and $\langle D\rangle^{s_0} \psi_1 \in L^1$ with $s_0>n+3$ such that  the consistency condition \eqref{Assumption_Singular_Energy} holds.  Then, the following refined estimate holds:
    	\begin{align} 
    		& \sup_{t\in [0,\infty)} %\limits_{x\in\mb{R}^n,\ t\in(0,\infty]}\left|\psi(t,x)-\left(\psi^{(0)}(t,x)+\kappa\psi^{I,1}(t,x)+\kappa \psi^{L,\kappa}(t,x)\right)\right \leqslant C\kappa\sqrt{\kappa} ,
    		\left\|\psi(t,\cdot)-\left(\psi^{(0)}(t,\cdot)+\kappa\psi^{I,1}(t,\cdot)+\kappa \psi^{L,1}(\kappa t,\cdot)\right)\right\|_{L^\infty} \notag \\ & \qquad \leqslant C\kappa\sqrt{\kappa} \left(\|\langle D\rangle^{s_0+2}\psi_0\|_{L^1}+\|\langle D\rangle^{s_0}\psi_1\|_{L^1}\right), \label{est err ord 2}
    	\end{align}
    	where $C$ is a positive constant independent of $\kappa$.
    \end{coro}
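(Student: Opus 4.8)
The plan is to adapt the argument used for Corollaries \ref{Coro_energy} and \ref{Coro_Solution}, upgrading the $L^2$ estimate of Theorem \ref{Thm_Higher} to an $L^{\infty}$ one by means of the Hausdorff--Young inequality. From \eqref{K_2} and \eqref{def R^kappa} we have $\psi-\big(\psi^{(0)}+\kappa\psi^{I,1}+\kappa\psi^{L,1}(\kappa t,\cdot)\big)=\kappa^2R^{\kappa}$, so it suffices to prove $\|R^{\kappa}(t,\cdot)\|_{L^{\infty}}\leqslant C\kappa^{-1/2}\big(\|\langle D\rangle^{s_0+2}\psi_0\|_{L^1}+\|\langle D\rangle^{s_0}\psi_1\|_{L^1}\big)$ uniformly in $t\geqslant 0$, which immediately gives \eqref{est err ord 2}. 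First I would write $\|R^{\kappa}(t,\cdot)\|_{L^{\infty}}\lesssim\|\widehat R^{\kappa}(t,\cdot)\|_{L^1}$ and take square roots in the pointwise bound \eqref{K_1} (using $\sqrt{a+b}\leqslant\sqrt a+\sqrt b$ and $\sqrt{1+|\xi|^2}\leqslant 1+|\xi|$); this bounds $|\widehat R^{\kappa}(t,\xi)|$ by two ``algebraic'' terms $|\xi|^3(1+|\xi|)(1+\sqrt\kappa|\xi|)|\widehat\psi_0|$ and $|\xi|(1+|\xi|)(1+\sqrt\kappa|\xi|)|\widehat\psi_1|$, by three terms with prefactor $\kappa^{-1/2}$ carrying the time integrals of $\partial_t\widehat\psi^{(0)}$, of $\widehat\psi^{(0)}$ and of $\widehat\psi^{I,1}$, and by one term with prefactor $\kappa^{1/2}$ carrying the time integral of $\widehat\psi^{L,1}(\kappa s,\xi)$.

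Next I would split $\mathbb{R}^n=\{|\xi|\leqslant 1/(b\nu)\}\cup\{|\xi|\geqslant 1/(b\nu)\}$ and integrate each term over the two regions, reusing the pointwise estimates already available in the proof of Theorem \ref{Thm_Higher}. On the high-frequency region I insert the large-frequency branches of \eqref{pointwise est psi^(0)}, of \eqref{pointwise est psi^I,1} (recall Proposition \ref{Pointwise_Psi_I}) and the corresponding bound for $\widehat\psi^{L,1}$, all of which carry $\mathrm{e}^{-cs}$ or $\mathrm{e}^{-c\kappa|\xi|^2 s}$, so that the $s$-integrals are bounded uniformly in $t$ and one is left with integrands controlled by $\kappa^{-1/2}\big(\langle\xi\rangle^5|\widehat\psi_0|+\langle\xi\rangle^3|\widehat\psi_1|\big)$; bounding $|\widehat\psi_0|\leqslant\langle\xi\rangle^{-s_0-2}\|\langle D\rangle^{s_0+2}\psi_0\|_{L^1}$ and $|\widehat\psi_1|\leqslant\langle\xi\rangle^{-s_0}\|\langle D\rangle^{s_0}\psi_1\|_{L^1}$, the residual integral $\int_{|\xi|\geqslant 1/(b\nu)}\langle\xi\rangle^{3-s_0}\,\mathrm{d}\xi$ converges precisely because $s_0>n+3$. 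On the low-frequency region I use instead the parabolic branches together with $\int_0^t\mathrm{e}^{-c|\xi|^2 s}\,\mathrm{d}s\lesssim|\xi|^{-2}$ and $\int_0^t(1+s)\mathrm{e}^{-c|\xi|^2 s}\,\mathrm{d}s\lesssim|\xi|^{-4}$; after the cancellations (for instance $|\xi|^3\cdot|\xi|^{-1}\cdot|\xi|^{-1}$ for the $\widehat\psi^{(0)}$ term, $|\xi|\cdot|\xi|^{-1}\cdot|\xi|^{-1}$ for the $\widehat\psi^{I,1}$ term, while the $\kappa^{1/2}$ term loses its $\kappa$-dependence) the low-frequency integrands are controlled by $\kappa^{-1/2}\big(|\widehat\psi_0|+|\xi|^{-1}|\widehat\psi_1|\big)$, and since $|\widehat\psi_j|\leqslant\|\psi_j\|_{L^1}$ the only nontrivial integral is $\int_{|\xi|\leqslant 1/(b\nu)}|\xi|^{-1}\,\mathrm{d}\xi$, which is finite exactly when $n\geqslant 2$. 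Summing the contributions yields $\|\widehat R^{\kappa}(t,\cdot)\|_{L^1}\lesssim\kappa^{-1/2}\big(\|\langle D\rangle^{s_0+2}\psi_0\|_{L^1}+\|\langle D\rangle^{s_0}\psi_1\|_{L^1}\big)$, uniformly in $t$ since each $s$-integral was dominated by a $t$-independent function of $\xi$, and multiplication by $\kappa^2$ produces the announced rate $\kappa\sqrt\kappa$ in \eqref{est err ord 2}.

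The work is essentially bookkeeping, and the main obstacle is to track how the gain $|\xi|^{-2}$ (low frequencies) or $\mathrm{e}^{-cs}$ (high frequencies) from each time integral combines with the polynomial weights $|\xi|^k(1+|\xi|)(1+\sqrt\kappa|\xi|)$ and with the prefactors $\kappa^{\pm1/2}$, and to verify that the two borderline thresholds $s_0>n+3$ and $n\geqslant 2$ are exactly those making all frequency integrals converge. The most delicate term is the one involving $\widehat\psi^{I,1}$ through the Duhamel representation of Proposition \ref{Pointwise_Psi_I}: in the low-frequency regime it carries the extra factor $(1+s)^{1/2}$, which upgrades the $s$-integral from $|\xi|^{-2}$ to $|\xi|^{-4}$ and is responsible for the $|\xi|^{-1}$ singularity of the integrand (hence for the weaker restriction $n\geqslant 2$, rather than $n\geqslant 3$ as in Theorem \ref{Thm_Higher}), whereas at high frequencies it contributes the weight $\langle\xi\rangle^2$ on $\widehat\psi_0$, consistent with the regularity $\langle D\rangle^{s_0+2}\psi_0\in L^1$ required in the hypothesis.
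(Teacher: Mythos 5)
Your argument is correct and is exactly the route the paper intends: the paper's proof is only a sketch (bound $\|\widehat{R}^{\kappa}(t,\cdot)\|_{L^1}$ via Hausdorff--Young, insert the pointwise estimates \eqref{K_1}, \eqref{pointwise est psi^(0)}, \eqref{pointwise est psi^I,1} and the heat-kernel bound for $\widehat{\psi}^{L,1}$, and repeat the frequency-splitting procedure of Corollary \ref{Coro_energy}), and your proposal fills in precisely those steps, correctly locating the $|\xi|^{-1}$ low-frequency singularity from the $\widehat{\psi}^{I,1}$ term (whence $n\geqslant 2$) and the $\langle\xi\rangle^{3-s_0}$ high-frequency integrand (whence $s_0>n+3$).
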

    \begin{proof}
    	In order to prove \eqref{est err ord 2}, we have to control the norm $\|\widehat{R}^\kappa(t,\cdot)\|_{L^1}$ for all $t\geqslant 0$, where $\widehat{R}^\kappa$ denotes the Fourier transform of $R^\kappa$, whose definition is given in  \eqref{def R^kappa}. 
    	%From \eqref{K_1}, it follows
    	%\begin{align}
    	%	|\widehat{R}^{\kappa}|& \leqslant C|\xi|^3\langle \xi\rangle (1+\kappa|\xi|^2)^{\frac{1}{2}}|\widehat{\psi}_0(\xi)|+C|\xi|\langle \xi\rangle (1+\kappa|\xi|^2)^{\frac{1}{2}}|\widehat{\psi}_1(\xi)|\notag\\
    	%	&\quad+\tfrac{C}{\sqrt{\kappa}}|\xi|(1+\kappa|\xi|^2)^{\frac{1}{2}}\left(\int_0^t|\partial_t\widehat{\psi}^{(0)}(s,\xi)|^2\mathrm{d}s\right)^{\frac{1}{2}}+\tfrac{C}{\sqrt{\kappa}}|\xi|^2(1+\kappa|\xi|^2)^{\frac{1}{2}}\left(\int_0^t|\xi|^2|\widehat{\psi}^{(0)}(s,\xi)|^2 \mathrm{d}s\right)^{\frac{1}{2}}\notag\\
    	%	&\quad+\tfrac{C}{\sqrt{\kappa}}|\xi|(1+\kappa|\xi|^2)^{\frac{1}{2}}\left(\int_0^t|\widehat{\psi}^{I,1}(s,\xi)|^2\mathrm{d}s\right)^{\frac{1}{2}}+C\sqrt{\kappa}|\xi|^3(1+\kappa|\xi|^2)^{\frac{1}{2}}\left(\int_0^t|\widehat{\psi}^{L,1}(\kappa s,\xi)|^2\mathrm{d}s\right)^{\frac{1}{2}}. \label{est FT R^kappa}
    	%\end{align} 
    	By using pointwise estimates \eqref{pointwise est psi^(0)} and \eqref{pointwise est psi^I,1}, and following the similar procedure to the proof of Corollary \ref{Coro_energy}, 
    	we may conclude the validity of \eqref{est err ord 2}.
    \end{proof}

    \begin{remark}
    	In Theorem \ref{Thm_Higher} and Corollary \ref{Coro_Higher}, we rigorously justified the second-order profiles to Blackstock's model \eqref{Linear_Blackstock_Eq} in the spaces $L^{\infty}([0,\infty),L^2)$ and $L^{\infty}([0,\infty),L^{\infty})$, namely, the correction for $j=0$ in Proposition \ref{Prop_A}. We conjecture that the higher-order profiles, i.e. $j\geqslant 1$ in Proposition \ref{Prop_A}, also hold, which may be proved by following the approach in Subsection \ref{Subsection.Second-Order}.
    \end{remark}

    \section{Global well-posedness of the nonlinear Blackstock's model}\label{Sec_GESDS}
    In the beginning of this section, let us firstly state the main result for the global (in time) existence and decay estimates of solutions to the nonlinear Blackstock's model \eqref{Blackstock_Eq-New-Nonlinear}.
    \begin{theorem}\label{Thm_GESDS}
    	Let us consider $s\geqslant n/2-2$ for $n\geqslant 5$. Then, there exists a constant $\epsilon>0$ such that for all $(\psi_0,\psi_1,\psi_2)\in\ml{A}_s:= (H^{s+4}\cap L^1)\times(H^{s+2}\cap L^1)\times(H^s\cap L^1)$ with $$\|(\psi_0,\psi_1,\psi_2)\|_{\ml{A}_s}:= \sum_{\ell=0}^2 \left(\|\psi_\ell\|_{H^{s+4-2\ell}} +\|\psi_\ell\|_{L^1}\right) \leqslant \epsilon,$$ there is a uniquely determined Sobolev solution
    	\begin{align*}
    		\psi\in\ml{C}([0,\infty),H^{s+4})\cap\ml{C}^1([0,\infty),H^{s+2})\cap \ml{C}^2([0,\infty),H^s)
    	\end{align*}
    	to the nonlinear Blackstock's model \eqref{Blackstock_Eq-New-Nonlinear}. Furthermore, the following estimates:
    	\begin{align*}
    		\|\partial_t^{\ell}\psi(t,\cdot)\|_{L^2}&\lesssim (1+t)^{\frac{2-\ell}{2}-\frac{n}{4}}\|(\psi_0,\psi_1,\psi_2)\|_{\ml{A}_s},\\
    		\|\partial_{t}^{\ell}\psi(t,\cdot)\|_{\dot{H}^{s+4-2\ell}}&\lesssim (1+t)^{-\frac{s+2-\ell}{2}-\frac{n}{4}}\|(\psi_0,\psi_1,\psi_2)\|_{\ml{A}_s},
    	\end{align*}
    	hold for all $\ell=0,1,2$.
    \end{theorem}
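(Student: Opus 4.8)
The plan is to establish Theorem \ref{Thm_GESDS} by a standard contraction argument based on Duhamel's principle, using the linear estimates from Proposition \ref{Prop_Estimate_Cru_Lin} (for the contribution of the initial data) and Proposition \ref{Prop_Estimate_Cru_Lin_2} (for the contribution of the nonlinear source, which has vanishing first two data). First I would introduce the evolution space
\begin{align*}
X(t):=\ml{C}([0,t],H^{s+4})\cap\ml{C}^1([0,t],H^{s+2})\cap \ml{C}^2([0,t],H^s),
\end{align*}
equipped with the time-weighted norm
\begin{align*}
\|\psi\|_{X(t)}:=\sup_{0\leqslant\tau\leqslant t}\sum_{\ell=0}^2\left((1+\tau)^{-\frac{2-\ell}{2}+\frac{n}{4}}\|\partial_\tau^\ell\psi(\tau,\cdot)\|_{L^2}+(1+\tau)^{\frac{s+2-\ell}{2}+\frac{n}{4}}\|\partial_\tau^\ell\psi(\tau,\cdot)\|_{\dot H^{s+4-2\ell}}\right),
\end{align*}
chosen precisely so that the target decay rates in the statement are the finiteness of this norm. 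The operator whose fixed point we seek is
\begin{align*}
N\psi(t,x):=\psi^{\lin}(t,x)+\int_0^t E_2(t-\sigma,x)\ast_{(x)} F(\psi(\sigma,\cdot);\partial_t,\nabla)(\sigma,\cdot)\,\mathrm{d}\sigma,
\end{align*}
where $\psi^{\lin}$ solves \eqref{Linear_Blackstock_Eq} with the given data and $E_2$ is the kernel acting on the third datum (so that $E_2(0,x)=0$ matches the Duhamel setting in \eqref{Inhomo_Cauchy_Phase}). By Proposition \ref{Prop_Estimate_Cru_Lin}, $\|\psi^{\lin}\|_{X(t)}\lesssim\|(\psi_0,\psi_1,\psi_2)\|_{\ml{A}_s}$, so it remains to estimate the Duhamel term.

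The core of the argument is the nonlinear estimate: I would show
\begin{align*}
\|F(\psi;\partial_t,\nabla)(\sigma,\cdot)\|_{L^1\cap\dot H^s}\lesssim (1+\sigma)^{-\frac{n}{2}-1+\varepsilon}\|\psi\|_{X(\sigma)}^2
\end{align*}
for an arbitrarily small $\varepsilon>0$ (or with a clean power if $s$ is large enough), using the fractional Gagliardo--Nirenberg inequality and the fractional Leibniz / chain rules collected in Appendix \ref{Appendix_A}. The four constituent terms $|\nabla\psi_t|^2$, $\nabla\psi\cdot\nabla\Delta\psi$, $(\Delta\psi)^2$ and $\psi_t\Delta\psi_t$ are all quadratic with the derivatives distributed so that each factor is controlled in $L^2$ by $\|\psi\|_{X(\sigma)}$ times the appropriate power of $(1+\sigma)$; for the $L^1$ bound one uses Hölder with two $L^2$ factors, and for the $\dot H^s$ bound one uses the fractional product rule and then interpolates the resulting high-order Sobolev norms between the $L^2$ and $\dot H^{s+4-2\ell}$ norms that appear in $\|\cdot\|_{X(\sigma)}$. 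The condition $n\geqslant 5$ and $s\geqslant n/2-2$ is exactly what makes the dissipative decay rate $(1+\sigma)^{-n/4}$ of the $L^2$ norm summable against $(1+t-\sigma)$-type linear weights, and what guarantees $\dot H^{s+2}\hookrightarrow L^\infty$-type embeddings needed in the product estimates; I would keep track of these restrictions carefully. Splitting $\int_0^t=\int_0^{t/2}+\int_{t/2}^t$ and applying Proposition \ref{Prop_Estimate_Cru_Lin_2} with the $\dot H^s$-norm of the source on $[t/2,t]$ and the $L^1\cap\dot H^s$-norm on $[0,t/2]$ (where the linear kernel has already decayed), one obtains
\begin{align*}
\|N\psi\|_{X(t)}\lesssim\|(\psi_0,\psi_1,\psi_2)\|_{\ml{A}_s}+\|\psi\|_{X(t)}^2,
\end{align*}
and an entirely analogous bilinear bound
\begin{align*}
\|N\psi-N\tilde\psi\|_{X(t)}\lesssim\left(\|\psi\|_{X(t)}+\|\tilde\psi\|_{X(t)}\right)\|\psi-\tilde\psi\|_{X(t)},
\end{align*}
using $F(\psi)-F(\tilde\psi)$ is a sum of terms each linear in $\psi-\tilde\psi$ and linear in $\psi$ or $\tilde\psi$.

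From these two inequalities the conclusion follows by a routine Banach fixed-point argument on a small closed ball of $X:=\bigcap_{t>0}X(t)$ of radius $\sim\epsilon$: for $\|(\psi_0,\psi_1,\psi_2)\|_{\ml{A}_s}\leqslant\epsilon$ with $\epsilon$ small, $N$ maps the ball into itself and is a contraction, so it has a unique fixed point, which is the desired global solution; its membership in $\ml{C}([0,\infty),H^{s+4})\cap\ml{C}^1([0,\infty),H^{s+2})\cap\ml{C}^2([0,\infty),H^s)$ and the stated decay estimates are read off directly from $\psi\in X$ and the definition of the norm. The main obstacle I anticipate is the nonlinear estimate in the $\dot H^s$-component: the term $\nabla\psi\cdot\nabla\Delta\psi$ carries three spatial derivatives and demands the full $\dot H^{s+4}$-regularity of one factor, so the fractional Leibniz rule must be applied in the borderline regime and the interpolation exponents chosen so that the time weights combine to a rate strictly faster than $(1+\sigma)^{-n/2-1}$ when $n\geqslant 5$; a secondary technical point is handling the low-frequency $L^1\to L^2$ gain versus the high-frequency exponential decay consistently across the two time-integral pieces, which is precisely why the splitting at $t/2$ together with Proposition \ref{Prop_Estimate_Cru_Lin_2} (whose source data are $\psi_0\equiv\psi_1\equiv0$, matching the Duhamel term) is essential.
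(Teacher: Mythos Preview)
Your proposal is correct and follows essentially the same architecture as the paper's proof: the same weighted evolution space, the same Duhamel operator $N$, the same pair of contraction inequalities, the same fractional Gagliardo--Nirenberg and Leibniz estimates from Appendix \ref{Appendix_A}, and the same $[0,t/2]\cup[t/2,t]$ splitting with Propositions \ref{Prop_Estimate_Cru_Lin} and \ref{Prop_Estimate_Cru_Lin_2}. One small correction: the $L^1$ decay of the source you will actually obtain from H\"older plus interpolation is $\|F(\psi(\sigma,\cdot))\|_{L^1}\lesssim(1+\sigma)^{-n/2}\|\psi\|_{X(\sigma)}^2$, not $(1+\sigma)^{-n/2-1+\varepsilon}$; this weaker rate is still what the paper uses and is sufficient since $n/2>1$ for $n\geqslant 5$, so the time integral closes exactly as you describe.
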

    \begin{remark}
    	The derived estimates of solutions to the nonlinear Blackstock's model coincide with those in Propositions \ref{Prop_Estimate_Cru_Lin}, \ref{Prop_Estimate_Cru_Lin_2} for the linearized Blackstock's model. That is to say we observe the effect of \emph{no loss of decay} to the corresponding linear problem.
    \end{remark}
    \begin{remark}
    	By following the same approach as the proof of Theorem \ref{Thm_GESDS}, we can demonstrate global (in time) existence of small data Sobolev solution to the nonlinear Blackstock's model for $n=3,4$, where the estimate for the $L^2$-norm of the solution itself is
    	\begin{align*}
    		\|\psi(t,\cdot)\|_{L^2}\lesssim \widetilde{\ml{D}}_n(1+t)\|(\psi_0,\psi_1,\psi_2)\|_{\ml{A}_s},
    	\end{align*}
    	for $n=3,4$, and the derivatives of solution satisfy the corresponding estimates in Theorem \ref{Thm_GESDS}.
    	%
    	%\textcolor{red}{To Wenhui: I did not check very careful the computations, especially in the case $n=3$. If you are sure of these calculations, I trust you (this is not a very crucial result, and it has no proof, so in the worst case scenario we can delete it)}
    \end{remark}
    \subsection{Philosophy of the proof}
    For $T>0$, we introduce the evolution space
    \begin{align*}
    	X_s(T):=\ml{C}([0,T],H^{s+4})\cap\ml{C}^1([0,T],H^{s+2})\cap \ml{C}^2([0,T],H^s),
    \end{align*}
    carrying the corresponding norm
    \begin{align*}
    	\|\psi\|_{X_s(T)}:=&\sup\limits_{t\in[0,T]}\bigg(\sum\limits_{\ell=0,1,2}\Big((1+t)^{-\frac{2-\ell}{2}+\frac{n}{4}}\|\partial_t^{\ell}\psi(t,\cdot)\|_{L^2}\\
    	&\qquad\qquad\qquad\quad\ \ +(1+t)^{\frac{s+2-\ell}{2}+\frac{n}{4}}\|\,|D|^{s+4-2\ell}\partial_t^{\ell}\psi(t,\cdot)\|_{L^2}\Big)\bigg)
    \end{align*}
    for $s\in(0,\infty)$. Then, we define the operator $N$ such that
    \begin{align*}
    	N:\ \psi(t,x)\in X_s(T)\to N\psi(t,x):=\psi^{\lin}(t,x)+\psi^{\non}(t,x),
    \end{align*}
    where $\psi^{\lin}=\psi^{\lin}(t,x)$ is the solution to the linearized Cauchy problem \eqref{Linear_Blackstock_Eq}, and $\psi^{\mathrm{non}}$ denotes the integral operator
    \begin{align*}
    	\psi^{\non}(t,x):=\int_0^tK_2(t-\sigma,x)\ast_{(x)}F(\psi(\sigma,x);\partial_t,\nabla)\mathrm{d}\sigma,
    \end{align*}
    where $F$ is the nonlinear term in \eqref{Blackstock_Eq-New-Nonlinear} and $K_2(t,x)$ is the kernel for the third data of the linearized problem \eqref{Linear_Blackstock_Eq}, and its definition is motivated by Duhamel's principle. Namely,  $K_2$ is the distribution solution to the Cauchy problem \eqref{Linear_Blackstock_Eq} with initial data $(0,0,\delta_0)$, where $\delta_0$ denotes the Dirac distribution at $x=0$ with respect to spatial variables. Later, we will simply use the notation $F(\psi(\sigma,x))$ instead of $F(\psi(\sigma,x);\partial_t,\nabla)$.
    
    In the next parts, we will demonstrate the global (in time) existence and uniqueness of a Sobolev solution to the nonlinear Blackstock’s model for given small data by proving the existence of a uniquely determined fixed-point in $X_s(T)$ for the operator $N$. Indeed, the following two crucial inequalities:
    \begin{align}
    	\|N\psi\|_{X_s(T)}&\lesssim\|(\psi_0,\psi_1,\psi_2)\|_{\ml{A}_{s} }+\|\psi\|_{X_s(T)}^2,\label{Cruc-01}\\
    	\|N\psi-N\tilde{\psi}\|_{X_s(T)}&\lesssim\|\psi-\tilde{\psi}\|_{X_s(T)}\left(\|\psi\|_{X_s(T)}+\|\tilde{\psi}\|_{X_s(T)}\right),\label{Cruc-02}
    \end{align}
    will be proved, with the unexpressed multiplicative constants on the right-hand sides that are independent of $T$. In our aim inequality \eqref{Cruc-02}, $\psi$ and $\tilde{\psi}$ are two solutions to the nonlinear Blackstock's model \eqref{Blackstock_Eq-New-Nonlinear}. If we take $\|(\psi_0,\psi_1,\psi_2)\|_{\ml{A}_{s} }=\epsilon$ as a sufficiently small and positive constant, then we combine \eqref{Cruc-01} with \eqref{Cruc-02} to establish the global (in time) existence of a unique   small data Sobolev solution $\psi^{*}=\psi^{*}(t,x)\in X_s(T)$ by using Banach's fixed-point theorem.
    
    \subsection{Estimates of nonlinear terms}\label{Subsec_Est_Nonlinear_Term}
    Our purpose in this part is to give some estimates to the nonlinear terms in three norms as follows:
    \begin{align*}
    	\|F(\psi(\sigma,\cdot))\|_{L^1},\ \ \|F(\psi(\sigma,\cdot))\|_{L^2}, \ \ \|F(\psi(\sigma,\cdot))\|_{\dot{H}^s}
    \end{align*}
    for any $s\in(0,\infty)$, which will be controlled by $\|\psi\|_{X_s(T)}^2$ with suitable time-dependent coefficients for any $\sigma\in[0,T]$.
    \medskip
    
    \noindent\underline{Part I: Estimates in the $L^1$ and $L^2$ norms.}\\
    Here, let us fix $m=1,2$. Applying H\"older's inequality, we arrive at
    \begin{align*}
    	\|F(\psi(\sigma,\cdot))\|_{L^m}&\lesssim\|\nabla\psi_t(\sigma,\cdot)\|_{L^{2m}}^2+\|\nabla\psi(\sigma,\cdot)\cdot\nabla\Delta\psi(\sigma,\cdot)\|_{L^m}+\|\Delta\psi(\sigma,\cdot)\|_{L^{2m}}^2\\
    	&\quad+\|\psi_t(\sigma,\cdot)\Delta\psi_t(\sigma,\cdot)\|_{L^m}\\
    	&\lesssim\|\,|D|\psi_t(\sigma,\cdot)\|_{L^{2m}}^2+\|\,|D|\psi(\sigma,\cdot)\|_{L^{p_1}}\|\,|D|^3\psi(\sigma,\cdot)\|_{L^{p_2}}\\
    	&\quad+\|\,|D|^2\psi(\sigma,\cdot)\|_{L^{2m}}^2+\|\psi_t(\sigma,\cdot)\|_{L^{q_1}}\|\,|D|^2\psi_t(\sigma,\cdot)\|_{L^{q_2}},
    \end{align*}
    where $\frac{1}{p_1}+\frac{1}{p_2}=\frac{1}{q_1}+\frac{1}{q_2}=\frac{1}{m}$ with $p_1,p_2,q_1,q_2\in(1,\infty)$. Then, the fractional Gagliardo-Nirenberg inequality (see Proposition \ref{fractionalgagliardonirenbergineq}) shows
    \begin{align*}
    	\|\,|D|\psi_t(\sigma,\cdot)\|_{L^{2m}}^2&\lesssim\|\psi_t(\sigma,\cdot)\|_{L^2}^{2-\frac{2n}{s+2}(\frac{1}{2}-\frac{1}{2m}+\frac{1}{n})}\|\psi_t(\sigma,\cdot)\|_{\dot{H}^{s+2}}^{\frac{2n}{s+2}(\frac{1}{2}-\frac{1}{2m}+\frac{1}{n})}\\
    	&\lesssim(1+\sigma)^{-n+\frac{n}{2m}}\|\psi\|_{X_s(\sigma)}^{2},\\	\|\,|D|^2\psi(\sigma,\cdot)\|_{L^{2m}}^2&\lesssim\|\psi(\sigma,\cdot)\|_{L^2}^{2-\frac{2n}{s+4}(\frac{1}{2}-\frac{1}{2m}+\frac{2}{n})}\|\psi(\sigma,\cdot)\|_{\dot{H}^{s+4}}^{\frac{2n}{s+4}(\frac{1}{2}-\frac{1}{2m}+\frac{2}{n})}\\
    	&\lesssim(1+\sigma)^{-n+\frac{n}{2m}}\|\psi\|_{X_s(\sigma)}^{2},
    \end{align*}
    where we need $n\leqslant 4(s+1)$ if $m=2$. Moreover, one derives
    \begin{align*}
    	\|\,|D|\psi(\sigma,\cdot)\|_{L^{p_1}}\|\,|D|^3\psi(\sigma,\cdot)\|_{L^{p_2}}&\lesssim\|\psi(\sigma,\cdot)\|_{L^2}^{2-(\beta_1+\beta_2)}\|\psi(\sigma,\cdot)\|_{\dot{H}^{s+4}}^{\beta_1+\beta_2}\\
    	&\lesssim(1+\sigma)^{-n+\frac{n}{2m}}\|\psi\|_{X_s(\sigma)}^{2},\\
    	\|\psi_t(\sigma,\cdot)\|_{L^{q_1}}\|\,|D|^2\psi_t(\sigma,\cdot)\|_{L^{q_2}}&\lesssim\|\psi_t(\sigma,\cdot)\|_{L^2}^{2-(\beta_3+\beta_4)}\|\psi_t(\sigma,\cdot)\|_{\dot{H}^{s+2}}^{\beta_3+\beta_4}\\&
    	\lesssim(1+\sigma)^{-n+\frac{n}{2m}}\|\psi\|_{X_s(\sigma)}^{2},
    \end{align*}
    where $n\leqslant 4(s+1)$ if $m=2$, and
    \begin{align*}
    	\beta_1&:=\tfrac{n}{s+4}\left(\tfrac{1}{2}-\tfrac{1}{p_1}+\tfrac{1}{n}\right)\in\left[\tfrac{1}{s+4},1\right],\ \ \beta_2:=\tfrac{n}{s+4}\left(\tfrac{1}{2}-\tfrac{1}{p_2}+\tfrac{3}{n}\right)\in\left[\tfrac{3}{s+4},1\right],\\
    	\beta_3&:=\tfrac{n}{s+2}\left(\tfrac{1}{2}-\tfrac{1}{q_1}\right)\in\left[0,1\right],\  \quad\qquad \  \beta_4:=\tfrac{n}{s+2}\left(\tfrac{1}{2}-\tfrac{1}{q_2}+\tfrac{2}{n}\right)\in\left[\tfrac{2}{s+2},1\right].
    \end{align*}
    The restriction on $\beta_1,\dots,\beta_4$ will not bring any additional condition on $n$ and $s$. We can precisely compute the time-dependent coefficients for $\|\psi\|_{X_s(\sigma)}^2$ leading to
    \begin{align*}
    	\|F(\psi(\sigma,\cdot))\|_{L^m}\lesssim\begin{cases}
    		(1+\sigma)^{-\frac{n}{2}}\|\psi\|_{X_s(T)}^{2}&\mbox{if}\ \ m=1,\\
    		(1+\sigma)^{-\frac{3n}{4}}\|\psi\|_{X_s(T)}^{2}&\mbox{if}\ \ m=2,
    	\end{cases} 
    \end{align*}
    with $n\leqslant 4(s+1)$ if $m=2$. In the above, we employed $\|\psi\|_{X_s(\sigma)}\lesssim\|\psi\|_{X_s(T)}$ for any $\sigma\in[0,T]$.
    
    \medskip
    
    \noindent\underline{Part II: Estimates in the $\dot{H}^s$ norms.}\\
    Basically, we can divide the nonlinear term into four parts as follows:
    \begin{align*}
    	\|F(\psi(\sigma,\cdot))\|_{\dot{H}^s}&\lesssim\|\,|\nabla\psi_t(\sigma,\cdot)|^2\|_{\dot{H}^s}+\|\nabla\psi(\sigma,\cdot)\cdot\nabla\Delta\psi(\sigma,\cdot)\|_{\dot{H}^s}+\|\,|\Delta\psi(\sigma,\cdot)|^2\|_{\dot{H}^s}\\
    	&\quad+\|\psi_t(\sigma,\cdot)\Delta\psi_t(\sigma,\cdot)\|_{\dot{H}^s}\\
    	&=:\ml{I}_1(\sigma)+\ml{I}_2(\sigma)+\ml{I}_3(\sigma)+\ml{I}_4(\sigma).
    \end{align*}
    Now, to estimate the nonlinear terms in the Riesz potential spaces, we will employ the fractional Gagliardo-Nirenberg inequality (Proposition \ref{fractionalgagliardonirenbergineq}) and the fractional Leibniz rule (Proposition \ref{fractionleibnizrule}) instead of H\"older's inequality. To be specific, one sees
    \begin{align*}
    	\ml{I}_1(\sigma)&\lesssim\|\,|D|\psi_t(\sigma,\cdot)\|_{\dot{H}^{s}_{p_3}}\|\,|D|\psi_t(\sigma,\cdot)\|_{L^{p_4}}\lesssim\|\psi_t(\sigma,\cdot)\|_{L^2}^{2-(\beta_5+\beta_6)}\|\psi_t(\sigma,\cdot)\|_{\dot{H}^{s+2}}^{\beta_5+\beta_6}\\
    	&\lesssim(1+\sigma)^{-\frac{3n}{4}-\frac{s}{2}}\|\psi\|^2_{X_s(T)},
    \end{align*}
    with $\frac{1}{p_3}+\frac{1}{p_4}=\frac{1}{2}$, where the parameters
    \begin{align}\label{beta 5,6}
    	\beta_5&:=\tfrac{n}{s+2}\left(\tfrac{1}{2}-\tfrac{1}{p_3}+\tfrac{s+1}{n}\right)\in\left[ \tfrac{s+1}{s+2},1 \right],\notag\\
    	\beta_6&:=\tfrac{n}{s+2}\left(\tfrac{1}{2}-\tfrac{1}{p_4}+\tfrac{1}{n}\right)\in\left[\tfrac{1}{s+2},1\right],
    \end{align}
    immediately conclude
    \begin{align*}
    	\beta_5+\beta_6=\tfrac{n}{2(s+2)}+1\in[1,2]\ \ \Rightarrow\ \ n\leqslant 2s+4.
    \end{align*}
    \begin{remark}
    	In the previous step we showed that the assumption $s\geqslant \frac{n}{2}-2$ is necessary for the existence of suitable $\beta_5,\beta_6$. Actually, this condition is sufficient as well. Indeed, the existence of $\beta_5,\beta_6$ as in \eqref{beta 5,6} is equivalent to require that $p_3,p_4$ satisfy
    	\begin{align*}
    		0\leqslant \tfrac{1}{2}-\tfrac{1}{p_3}\leqslant \tfrac{1}{n}, \quad 0\leqslant \tfrac{1}{2}-\tfrac{1}{p_4}\leqslant \tfrac{s+1}{n}. 
    	\end{align*} Nevertheless, $p_3,p_4$ are linked through the relation $\frac{1}{p_3}+\frac{1}{p_4}=\frac{1}{2}$. Therefore, the previous inequalities are fulfilled provided that $p_3$ guarantees
    	\begin{align*}
    		\max\left\{0,\tfrac{1}{2}-\tfrac{1}{n}\right\}\leqslant \tfrac{1}{p_3}\leqslant \min\left\{\tfrac{1}{2},\tfrac{s+1}{n}\right\}.
    	\end{align*} By a straightforward computation, we see that this range for $1/p_3$ is not empty provided that $s\geqslant \frac{n}{2}-2$. Analogously, this lower bound for $s$ ensures the existence of the other $\beta_j$ in the proof.
    	%\textcolor{red}{To Wenhui: for $n=3,4$ we have to require $s\geqslant \max\{0,n/2-2\}$ (actually only for $n=3$).}
    \end{remark}
    Similarly, it holds 
    \begin{align*}
    	\ml{I}_2(\sigma)&\lesssim\|\,|D|\psi(\sigma,\cdot)\|_{\dot{H}^s_{p_5}}\|\,|D|^3\psi(\sigma,\cdot)\|_{L^{p_6}}+\|\,|D|\psi(\sigma,\cdot)\|_{L^{q_5}}\|\,|D|^3\psi(\sigma,\cdot)\|_{\dot{H}^s_{q_6}}\\
    	&\lesssim \|\psi(\sigma,\cdot)\|_{L^2}^{2-(\beta_7+\beta_8)}\|\psi(\sigma,\cdot)\|_{\dot{H}^{s+4}}^{\beta_7+\beta_8}+\|\psi(\sigma,\cdot)\|_{L^2}^{2-(\beta_9+\beta_{10})}\|\psi(\sigma,\cdot)\|_{\dot{H}^{s+4}}^{\beta_9+\beta_{10}}\\
    	&\lesssim (1+\sigma)^{-\frac{3n}{4}-\frac{s}{2}}\|\psi\|^2_{X_s(T)},
    \end{align*}
    with $\frac{1}{p_5}+\frac{1}{p_6}=\frac{1}{q_5}+\frac{1}{q_6}=\frac{1}{2}$, where the parameters
    \begin{align*}
    	\beta_7&:=\tfrac{n}{s+4}\left(\tfrac{1}{2}-\tfrac{1}{p_5}+\tfrac{s+1}{n}\right)\in\left[ \tfrac{s+1}{s+4},1 \right],\ \ \beta_8:=\tfrac{n}{s+4}\left(\tfrac{1}{2}-\tfrac{1}{p_6}+\tfrac{3}{n}\right)\in\left[\tfrac{3}{s+4},1\right],\\
    	\beta_9&:=\tfrac{n}{s+4}\left(\tfrac{1}{2}-\tfrac{1}{q_5}+\tfrac{1}{n}\right)\in\left[ \tfrac{1}{s+4},1 \right], \  \ \ \ \beta_{10}:=\tfrac{n}{s+4}\left(\tfrac{1}{2}-\tfrac{1}{q_6}+\tfrac{s+3}{n}\right)\in\left[\tfrac{s+3}{s+4},1\right],
    \end{align*}
    will not bring any additional condition. Analogously to the above discussions, we also can get
    \begin{align*}
    	\ml{I}_3(\sigma)\lesssim (1+\sigma)^{-\frac{3n}{4}-\frac{s}{2}}\|\psi\|^2_{X_s(T)}.
    \end{align*}
    Finally, we estimate the later term
    \begin{align*}
    	\ml{I}_4(\sigma)&\lesssim \|\psi_t(\sigma,\cdot)\|_{\dot{H}^s_{p_7}}\|\,|D|^2\psi_t(\sigma,\cdot)\|_{L^{p_8}}+\|\psi_t(\sigma,\cdot)\|_{L^{\infty}}\|\,|D|^2\psi_t(\sigma,\cdot)\|_{\dot{H}^s},\\
    	&\lesssim \|\psi_t(\sigma,\cdot)\|_{L^2}^{2-(\beta_{11}+\beta_{12})}\|\psi_t(\sigma,\cdot)\|_{\dot{H}^{s+2}}^{\beta_{11}+\beta_{12}}+\ml{I}_5(\sigma)\\
    	&\lesssim (1+\sigma)^{-\frac{3n}{4}-\frac{s}{2}}\|\psi\|^2_{X_s(T)}+\ml{I}_5(\sigma),
    \end{align*}
    with $\frac{1}{p_7}+\frac{1}{p_8}=\frac{1}{2}$, where the parameters
    \begin{align*}
    	\beta_{11}:=\tfrac{n}{s+2}\left(\tfrac{1}{2}-\tfrac{1}{p_7}+\tfrac{s}{n}\right)\in\left[ \tfrac{s}{s+2},1 \right],\ \ \beta_{12}:=\tfrac{n}{s+2}\left(\tfrac{1}{2}-\tfrac{1}{p_8}+\tfrac{2}{n}\right)\in\left[\tfrac{2}{s+2},1\right].
    \end{align*}
    \begin{remark}
    	We just know the estimates for $\psi_t(t,\cdot)$ in the $\dot{H}^{s+2}$ from the evolution space $X_s(T)$. It is better to control the previous norm with $\ml{I}_5(\sigma)$ rather than $\|\psi_t(\sigma,\cdot)\|_{L^{q_7}}\|\psi_t(\sigma,\cdot)\|_{\dot{H}^{s+2}_{q_8}}$ for $\frac{1}{q_7}+\frac{1}{q_8}=\frac{1}{2}$ and $q_7\neq\infty$, because we have to ``improve'' the regularity to estimate the norm $\|\psi_t(\sigma,\cdot)\|_{\dot{H}^{s+2}_{q_8}}$ by the application of the fractional Gagliardo-Nirenberg inequality. However, we cannot control $\|\psi_t(\sigma,\cdot)\|_{\dot{H}^{s+2+s_1}}$ for any $s_1>0$.
    \end{remark}
    \noindent To deal with $\ml{I}_5(\sigma)$, we apply the fractional Sobolev embedding (Proposition \ref{fractionembedd}) to get
    \begin{align*}
    	\ml{I}_5(\sigma)&\lesssim\left(\|\psi_t(\sigma,\cdot)\|_{\dot{H}^{s^*}}+\|\psi_t(\sigma,\cdot)\|_{\dot{H}^s}\right)\|\psi_t(\sigma,\cdot)\|_{\dot{H}^{s+2}}\\
    	&\lesssim\left(\|\psi_t(\sigma,\cdot)\|_{L^2}^{1-\frac{s^*}{s+2}}\|\psi_t(\sigma,\cdot)\|_{\dot{H}^{s+2}}^{\frac{s^*}{s+2}}+\|\psi_t(\sigma,\cdot)\|_{L^2}^{1-\frac{s}{s+2}}\|\psi_t(\sigma,\cdot)\|_{\dot{H}^{s+2}}^{\frac{s}{s+2}} \right)\\
    	&\quad\ \times\|\psi_t(\sigma,\cdot)\|_{\dot{H}^{s+2}}\\
    	&\lesssim\left((1+\sigma)^{-\frac{n}{2}-\frac{s^*+s}{2}}+(1+\sigma)^{-\frac{n}{2}-s}\right)\|\psi\|^2_{X_s(T)}\\
    	&\lesssim (1+\sigma)^{-\frac{n}{2}-\frac{s^*+s}{2}}\|\psi\|^2_{X_s(T)},
    \end{align*}
    where $0<2s^*<n<2s$. It is noteworthy that the choice of $s^*$ can enhance the decay rate in $\ml{I}_5(\sigma)$, and it gives some freedoms to the restriction between $s$ and $n$ later. Let $\epsilon_0>0$ to be a sufficient small constant. We choose $s^*=n/2-2\epsilon_0$ to determine
    \begin{align*}
    	\ml{I}_5(\sigma)\lesssim (1+\sigma)^{-\frac{3n}{4}-\frac{s}{2}+\epsilon_0}\|\psi\|^2_{X_s(T)}.
    \end{align*}
    By such Sobolev embedding with $s^*$, we just lose $(1+\sigma)^{\epsilon_0}$ as decay rate, with sufficiently small $\epsilon_0>0$. Thus, the summary of the previous estimates is
    \begin{align*}
    	\|F(\psi(\sigma,\cdot))\|_{\dot{H}^s}\lesssim (1+\sigma)^{-\frac{3n}{4}-\frac{s}{2}+\epsilon_0}\|\psi\|^2_{X_s(T)}.
    \end{align*}
    %Differently from Part I, we can use the following Parseval's formula that:
    %\begin{align*}
    %	\int_{\mb{R}^n}v_1(x)|D|^{\bar{s}}v_2(x)\mathrm{d}x=\int_{\mb{R}^n}|\xi|^{\bar{s}}\hat{v}_1(\xi)\hat{v}_2(\xi)\mathrm{d}\xi=\int_{\mb{R}^n}v_2(x)|D|^{\bar{s}}v_1(x)\mathrm{d}x
    %\end{align*}
    %for any $\bar{s}>0$ and $v_1,v_2\in H^{\bar{s}}$, rather than H\"older's inequality because of the Riesz potential spaces in the $L^2$ basic to deduce
    %\begin{align*}
    %	\|F(\psi(\sigma,\cdot))\|_{\dot{H}^s}&\lesssim\|\,|\nabla\psi_t(\sigma,\cdot)|^2\|_{\dot{H}^s}+\|\nabla\psi(\sigma,\cdot)\cdot\nabla\Delta\psi(\sigma,\cdot)\|_{\dot{H}^s}+\|\,|\Delta\psi(\sigma,\cdot)|^2\|_{\dot{H}^s}\\
    %	&\quad+\|\psi_t(\sigma,\cdot)\Delta\psi_t(\sigma,\cdot)\|_{\dot{H}^s}\\
    %	&\lesssim\|\psi_t(\sigma,\cdot)\|_{L^2}\|\psi_t(\sigma,\cdot)\|_{\dot{H}^{s+2}}+\|\psi(\sigma,\cdot)\|_{L^2}\|\psi(\sigma,\cdot)\|_{\dot{H}^{s+4}}\\
    %	&\lesssim (1+\sigma)^{-\frac{n}{2}-\frac{s}{2}}\|\psi\|_{X_s(T)}^2,
    %\end{align*}
    %where we used the definition of evolution space. It means
    %\begin{align*}
    %	\|F(\psi(\sigma,\cdot))\|_{\dot{H}^s\cap L^1}\lesssim (1+\sigma)^{-\frac{n}{2}}\|\psi\|_{X_s(T)}^2.
    %\end{align*}
    \subsection{Existence of unique global (in time) solution}
    First of all, according to the derived estimates in Proposition \ref{Prop_Estimate_Cru_Lin}, we claim that
    \begin{align*}
    	\|\psi^{\lin}\|_{X_s(T)}\lesssim\|(\psi_0,\psi_1,\psi_2)\|_{\ml{A}_s}.
    \end{align*}
    In other words, concerning \eqref{Cruc-01}, we just need to estimate $\|\psi^{\non}\|_{X_s(T)}$ in the rest of this subsection.
    
    By using the $(L^2\cap L^1)-L^2$ estimates derived in Proposition \ref{Prop_Estimate_Cru_Lin} for $[0,t]$, we may see
    \begin{align*}
    	\|\partial_t^{\ell}\psi^{\non}(t,\cdot)\|_{L^2}&\lesssim\int_0^t(1+t-\sigma)^{\frac{2-\ell}{2}-\frac{n}{4}}\|F(\psi(\sigma,\cdot))\|_{L^2\cap L^1}\mathrm{d}\sigma,\\
    	&\lesssim\int_0^t(1+t-\sigma)^{\frac{2-\ell}{2}-\frac{n}{4}}(1+\sigma)^{-\frac{n}{2}}\mathrm{d}\sigma\,\|\psi\|_{X_s(T)}^2\\
    	&\lesssim(1+t)^{-\min\{\frac{\ell-2}{2}+\frac{n}{4},\frac{n}{2}\}}\|\psi\|_{X_s(T)}^2\\
    	&=(1+t)^{\frac{2-\ell}{2}-\frac{n}{4}}\|\psi\|_{X_s(T)}^2,
    \end{align*}
    since $\ell<2+n/2$ for all $\ell=0,1,2$, where $n\leqslant 4(s+1)$ and we used 
    \begin{align}\label{control int alpha 1,2}
    	\int_0^t(1+t-\sigma)^{-\alpha_1}(1+\sigma)^{-\alpha_2}\mathrm{d}\sigma\lesssim(1+t)^{-\min\{\alpha_1,\alpha_2\}}\ \ \mbox{if}\ \ \max\{\alpha_1,\alpha_2\}>1,
    \end{align}
    for positive constants $\alpha_1$ and $\alpha_2$. For the proof of \eqref{control int alpha 1,2} we refer to \cite{Aassila-2003}. Applying the $(L^2\cap L^1)-L^2$ estimates derived in Proposition \ref{Prop_Estimate_Cru_Lin} again in $[0,t/2]$ and the $L^2-L^2$ estimates obtained in Proposition \ref{Prop_Estimate_Cru_Lin_2} in $[t/2,t]$, they yield
    \begin{align*}
    	\|\partial_t^{\ell}\psi^{\non}(t,\cdot)\|_{\dot{H}^{s+4-2\ell}}&\lesssim\int_0^{t/2}(1+t-\sigma)^{-\frac{s+2-\ell}{2}-\frac{n}{4}}\|F(\psi(\sigma,\cdot))\|_{\dot{H}^s\cap L^1}\mathrm{d}\sigma\\
    	&\quad+\int_{t/2}^t(1+t-\sigma)^{-\frac{2-\ell}{2}}\|F(\psi(\sigma,\cdot))\|_{\dot{H}^s}\mathrm{d}\sigma\\
    	&\lesssim(1+t)^{-\frac{s+2-\ell}{2}-\frac{n}{4}}\int_0^{t/2}(1+\sigma)^{-\frac{n}{2}}\mathrm{d}\sigma\,\|\psi\|_{X_s(T)}^2\\
    	&\quad+(1+t)^{-\frac{3n}{4}-\frac{s}{2}+\epsilon_0}\int_{t/2}^t(1+t-\sigma)^{-\frac{2-\ell}{2}}\mathrm{d}\sigma\,\|\psi\|_{X_s(T)}^2\\
    	&\lesssim(1+t)^{-\frac{s+2-\ell}{2}-\frac{n}{4}}\|\psi\|_{X_s(T)}^2,
    \end{align*}
    where $n\leqslant 2s+4$, due to the facts that
    \begin{align*}
    	&(1+\sigma)^{-\frac{n}{2}}\in L^1([0,t/2]),\\
    	& (1+t)^{-\frac{3n}{4}-\frac{s}{2}+\epsilon_0}\int_{t/2}^t(1+t-\sigma)^{-\frac{2-\ell}{2}}\mathrm{d}\sigma\lesssim (1+t)^{-\frac{s+2-\ell}{2}-\frac{n}{4}},
    \end{align*}
    for all $\ell=0,1,2$.  All in all, summarizing the above derived estimates, we directly conclude
    \begin{align*}
    	\|\psi^{\non}\|_{X_s(T)}\lesssim\|\psi\|_{X_s(T)}^2,
    \end{align*}
    which says that the desired estimate \eqref{Cruc-01} holds if $s\in[n/2-2,\infty)$ for all $n\geqslant 5$.
    
    Let us sketch the proof of the Lipschitz condition by remarking
    \begin{align*}
    	&\|\,|D|^{s+4-2\ell}\partial_t^{\ell}[ N\psi(t,\cdot)-N\tilde{\psi}(t,\cdot)]\|_{L^2}\\
    	&\qquad=\left\|\,|D|^{s+4-2\ell}\partial_t^{\ell}\int_0^tK_2(t-\sigma,\cdot)\ast_{(x)}\left(F(\psi(\sigma,\cdot))-F(\tilde{\psi}(\sigma,\cdot))\right)\mathrm{d}\sigma\right\|_{L^2}
    \end{align*}
    for $s\in\{2\ell-4\}\cup[0,\infty)$ and $\ell=0,1,2$. By using some estimates in Propositions \ref{Prop_Estimate_Cru_Lin} and \ref{Prop_Estimate_Cru_Lin_2} again, we just need to estimate
    \begin{align*}
    	&\|F(\psi(\sigma,\cdot))-F(\tilde{\psi}(\sigma,\cdot))\|_{L^1},\ \ \|F(\psi(\sigma,\cdot))-F(\tilde{\psi}(\sigma,\cdot))\|_{L^2},\\
    	&\qquad\qquad\qquad\quad\|F(\psi(\sigma,\cdot))-F(\tilde{\psi}(\sigma,\cdot))\|_{\dot{H}^s}.
    \end{align*}
    %Actually, we may estimate the difference of the nonlinear terms by
    %\begin{align*}
    %	|f(\psi)-f(\bar{\psi})|&\lesssim|\partial_t\psi\partial_t^2\psi-\partial_t\bar{\psi}\partial_t^2\bar{\psi}|+|\nabla\psi\cdot\nabla\partial_t\psi-\nabla\bar{\psi}\cdot\nabla\partial_t\bar{\psi}|\\
    %	&\lesssim|\partial_t\psi\partial_t^2(\psi-\bar{\psi})|+|\partial_t(\psi-\bar{\psi})\partial_t^2\bar{\psi}|+|\nabla\psi\cdot\nabla\partial_t(\psi-\bar{\psi})|+|\nabla(\psi-\bar{\psi})\cdot\nabla\partial_t\bar{\psi}|.
    %\end{align*}
    Therefore, by repeating the same steps as those in Subsection \ref{Subsec_Est_Nonlinear_Term}, concerning $m=1,2$, we obtain
    \begin{align*}
    	\|F(\psi(\sigma,\cdot))-F(\tilde{\psi}(\sigma,\cdot))\|_{L^m}\lesssim(1+\sigma)^{-n+\frac{n}{2m}}\|\psi-\tilde{\psi}\|_{X_s(T)}\left(\|\psi\|_{X_s(T)}+\|\tilde{\psi}\|_{X_s(T)}\right).
    \end{align*}
    Furthermore, the next estimates hold:
    \begin{align*}
    	\|F(\psi(\sigma,\cdot))-F(\tilde{\psi}(\sigma,\cdot))\|_{\dot{H}^s}&\lesssim(1+\sigma)^{-\frac{3n}{4}-\frac{s}{2}+\epsilon_0}\|\psi-\tilde{\psi}\|_{X_s(T)}\\
    	&\quad\ \times\left(\|\psi\|_{X_s(T)}+\|\tilde{\psi}\|_{X_s(T)}\right)
    \end{align*}
    for $s\in[n/2-2,\infty)$, $n\geqslant 5$ and with sufficiently small $\epsilon_0>0$. 
    More specifically, in order to get the previous estimates for the difference between the nonlinear terms $F(\psi)-F(\tilde{\psi})$, it is convenient to employ the following integral representations:
    \begin{align*}
    	\nabla \psi \cdot \nabla \Delta \psi - \nabla \tilde{\psi}\cdot \nabla \Delta \tilde{\psi}  & =  \int_0^1   \nabla (\psi -\tilde{\psi}) \cdot \nabla \Delta (\omega \psi +(1-\omega)  \tilde{\psi})  \mathrm{d}\omega \notag  \\ & \quad + \int_0^1 \nabla \Delta (\psi -\tilde{\psi}) \cdot \nabla  (\omega \psi +(1-\omega) \tilde{\psi})  \mathrm{d}\omega,
    \end{align*}
    and
    \begin{align*}
    	|\nabla \psi_t|^2-|\nabla \tilde{\psi}_t|^2 & = 2 \int_0^1  \nabla (\psi_t - \tilde{\psi}_t) \cdot \nabla (\omega \psi_t +(1-\omega)  \tilde{\psi}_t)  \mathrm{d}\omega, \\
    	|\Delta \psi|^2-|\Delta \tilde{\psi}|^2 & = 2 \int_0^1  \Delta (\psi - \tilde{\psi}) \Delta (\omega \psi +(1-\omega)  \tilde{\psi})  \mathrm{d}\omega ,\\
    	\psi_t  \Delta \psi_t - \tilde{\psi}_t \Delta \tilde{\psi} _t & =  \int_0^1  \left(  (\psi_t -\tilde{\psi}_t) \Delta (\omega \psi_t +(1-\omega)  \tilde{\psi}_t)\right.\\
    	&\qquad\quad\ \ \ \left. +\Delta (\psi_t -\tilde{\psi}_t)  (\omega \psi_t +(1-\omega)  \tilde{\psi}_t) \right)  \mathrm{d}\omega.
    \end{align*} 
    Then, employing H\"older's inequality (for the estimates in $L^m$, $m=1,2$), the fractional Gagliardo-Nirenberg inequality, the fractional Leibniz rule and the fractional Sobolev embedding  (for the estimate in $\dot{H}^s$), we may repeat similar steps to those employed in the proof of \eqref{Cruc-01}. In particular, the $\omega$-dependent terms in the above integral terms provide a $\| \omega \psi+(1-\omega)\tilde{\psi}\|_{X_s(T)}$ factor (times suitable $\sigma$-dependents weights), which can be uniformly estimate by $\| \psi\|_{X_s(T)}+ \| \tilde{\psi}\|_{X_s(T)}$ in the $\omega$-integral, while the factors in the integral representations that are independent of $\omega$ provide the $\|  \psi-\tilde{\psi}\|_{X_s(T)}$ term (times suitable $\sigma$-dependents weights). Finally, we emphasize that in the proof of \eqref{Cruc-02} the convergence of the $\sigma$-integrals is ensured requiring exactly the same assumptions on $n$ and $s$ as in the proof of \eqref{Cruc-01}. The proof of Theorem \ref{Thm_GESDS} is complete.
    %Then, following the same steps as those in the proof of \eqref{Cruc-01}, we can conclude \eqref{Cruc-02}. The proof is complete.
    %\color{black}
    %\subsection{Existence and restriction of parameters}\label{Existence_Parameter}
    %\newpage
    
    \section{Concluding remarks: Inviscid case for Blackstock's model}
    Throughout this paper, we concentrated on the Blackstock's model, which can be explained by the approximated equation modeled by small perturbation in the Navier-Stokes-Fourier coupled system \cite{Brunnhuber-2015,Brunnhuber-2016}. To end this paper, we will consider the approximated equation modeled by small perturbation in the Euler-Fourier coupled system by taking vanishing viscous coefficients in \eqref{Linear_Blackstock_Eq}.
    
    By ignoring the viscous effect in the conservations of momentum, i.e. the application of Euler equations instead of Navier-Stokes equations, we naturally consider acoustic waves in the inviscid and irrotational flow. Let us now denote the viscous coefficient by $\bar{\epsilon}:=b\nu$. In this case, it implies $\bar{\epsilon}=0$ and $\delta=(\gamma-1)\kappa$ in linear Blackstock's model \eqref{Linear_Blackstock_Eq}, namely,
    \begin{align}\label{Linear_Inviscid_Blackstock}
    	\begin{cases}
    		(\partial_t-\kappa\Delta)(\phi_{tt}-\Delta\phi-(\gamma-1)\kappa\Delta\phi_t)-\kappa^2(\gamma-1)\Delta^2\phi_t=0,\\
    		\phi(0,x)=\phi_0(x),\ \phi_t(0,x)=\phi_1(x),\ \phi_{tt}(0,x)=\phi_2(x),
    	\end{cases}
    \end{align}
    for $x\in\mb{R}^n$, $t>0$. Clearly, with the help of partial Fourier transform, the characteristic roots satisfy
    \begin{align*}
    	\mu_j^3+\gamma\kappa|\xi|^2\mu_j^2+|\xi|^2\mu_j+\kappa|\xi|^4=0
    \end{align*}
    for $j=1,2,3$. By following the similar approaches to those in Subsection \ref{Sub_sec_asymptotic_behavior}, we gain
    \begin{itemize}
    	\item $\mu_{1/2}=\pm i|\xi|-\frac{(\gamma-1)\kappa}{2}|\xi|^2+\ml{O}(|\xi|^3)$, $\mu_3=-\kappa|\xi|^2+\ml{O}(|\xi|^3)$ for $|\xi|\to0$;
    	\item $\mu_{1/2}=\pm i\sqrt{\frac{1}{\gamma}}|\xi|-\frac{\gamma-1}{2\gamma^2\kappa}+\ml{O}(|\xi|^{-1})$, $\mu_3=-\gamma\kappa|\xi|^2+\ml{O}(|\xi|)$ for $|\xi|\to\infty$;
    	\item $\Re \mu_j<0$ with $j=1,2,3$ for $|\xi|\not\to0$ and $|\xi|\not\to\infty$.
    \end{itemize}
    Here, we observe that the characteristic roots in the inviscid case for $|\xi|\to0$ correspond to those in the viscous case (see Part I of Subsection \ref{Sub_sec_asymptotic_behavior}). However, comparing with Part II of Subsection \ref{Sub_sec_asymptotic_behavior}, the asymptotic behavior of the characteristic roots for $|\xi|\to\infty$ have changed. Precisely, the dominant terms with $-\frac{1}{2}\left((\delta+\kappa)\pm\sqrt{(\delta+\kappa)^2-4\gamma b\nu\kappa}\right)|\xi|^2$ in the viscous case have been changed into $-\frac{\gamma-1}{2\gamma^2\kappa}$ in the inviscid case, which provides an increment in the regularity assumptions for the last two data. Then, the following pointwise estimates are valid:
    \begin{align*}
    	\chi_{\intt}(\xi)|\widehat{\phi}|& \lesssim\chi_{\intt}(\xi)\mathrm{e}^{-c|\xi|^2t}\left(|\widehat{\phi}_0|+\left(1+|\xi|^{-1}|\sin(|\xi|t)|\right)|\widehat{\phi}_1|+|\xi|^{-2}|\widehat{\phi}_2|\right),\\
    	\chi_{\extt}(\xi)|\widehat{\phi}|& \lesssim\chi_{\extt}(\xi)\mathrm{e}^{-ct}\left(|\widehat{\phi}_0|+|\xi|^{-1}|\widehat{\phi}_1|+|\xi|^{-3}|\widehat{\phi}_2|\right),\\
    	\chi_{\bdd}(\xi)|\widehat{\phi}|&\lesssim\chi_{\bdd}(\xi)\mathrm{e}^{-ct}\left(|\widehat{\phi}_0|+|\widehat{\phi}_1|+|\widehat{\phi}_2|\right),
    \end{align*}
    with positive constants $c$.
    
    We are able to derive exactly the same optimal $L^2$ estimate for the solution itself $\phi(t,\cdot)$ as those in Theorem \ref{Thm_Optimal} at least for $n\geqslant 5$. The main reason is that the dominant influence from Fourier's law of heat conduction exerts thermal effect in the sense of damping. Nonetheless, the requirement of regularity for the viscous case and the inviscid case are different. Due to the fact that the regularities of initial data are mainly determined by estimates for large frequencies, we carry out the next analyses for any $s\in[0,\infty)$:
    \begin{itemize}
    	\item concerning viscous Blackstock's model \eqref{Linear_Blackstock_Eq}, we have obtained
    	\begin{align*}
    		\|\chi_{\extt}(D)\psi(t,\cdot)\|_{\dot{H}^s}\lesssim\mathrm{e}^{-ct}\left(\|\psi_0\|_{H^s}+\|\psi_1\|_{H^{\max\{s-2,0\}}}+\|\psi_2\|_{H^{\max\{s-4,0\}}}\right);
    	\end{align*}
    \item concerning inviscid Blackstock's model \eqref{Linear_Inviscid_Blackstock}, we can obtain
    \begin{align*}
    	\|\chi_{\extt}(D)\phi(t,\cdot)\|_{\dot{H}^s}\lesssim\mathrm{e}^{-ct}\left(\|\phi_0\|_{H^s}+\|\phi_1\|_{H^{\max\{s-1,0\}}}+\|\phi_2\|_{H^{\max\{s-3,0\}}}\right).
    \end{align*}
    \end{itemize}
    Summarizing the previous two estimates, one may easily notice the influence of  the vanishing viscosity  that we will lose some regularities for the second and third initial data. In particular, one order of regularity will be lost for $s\geqslant 4$.
    
    Let us now turn to the viscous limit as the viscosity coefficient $\bar{\epsilon}$ tending to zero. Actually, the difference between the solutions of the viscous case \eqref{Linear_Blackstock_Eq} and of the inviscid case \eqref{Linear_Inviscid_Blackstock} that is $\tilde{u}:=\psi-\phi$ fulfills the inhomogeneous Blackstock's model
    \begin{align*}
    	\begin{cases}
    		(\partial_t-\kappa\Delta)(\tilde{u}_{tt}-\Delta \tilde{u}-\delta\Delta \tilde{u}_t)+\kappa(\gamma-1)(b\nu-\kappa)\Delta^2\tilde{u}_t=\bar{\epsilon}(\Delta\phi_{tt}-\kappa\gamma\Delta^2\phi_t),\\
    		\tilde{u}(0,x)=0,\ \tilde{u}_t(0,x)=0,\ \tilde{u}_{tt}(0,x)=0,
    	\end{cases}
    \end{align*}
    for $x\in\mb{R}^n$, $t>0$, if we take consistency assumption that $\psi_j\equiv\phi_j$ for any $j=0,1,2$. The Fourier image of $\tilde{u}$ can be estimated by
    \begin{align*}
    	|\widehat{\tilde{u}}_t|&\leqslant \bar{\epsilon}\, C\Big(\int_0^t\left(|\xi|^4|\widehat{\phi}_t(s,\xi)|^2+|\widehat{\phi}_{tt}(s,\xi)|^2\right)\mathrm{d}s\Big)^{1/2},\\
    	|\widehat{\tilde{u}}|&\leqslant \bar{\epsilon}\, C\big(\int_0^t\left(|\xi|^2\,|\widehat{\phi}_t(s,\xi)|^2+|\xi|^{-2}\,|\widehat{\phi}_{tt}(s,\xi)|^2\right)\mathrm{d}s\Big)^{1/2},
    \end{align*}
    where we used Propositions \ref{Prop_Energy_Fourier_01} and \ref{Prop_Potential_1} with $\hat{f}=\bar{\epsilon}(\kappa\gamma|\xi|^4\hat{\phi}_t+|\xi|^2\hat{\phi}_{tt})$ as well as $\hat{u}_2\equiv0$. Therefore, by using some decay estimates for the time-derivative of solutions, we conjecture 
    \begin{align*}
    	\psi_t\to\phi_t\ \ \mbox{and}\ \ \psi\to\phi\ \ \mbox{in}\ \ L^{\infty}([0,\infty)\times\mb{R}^n)\ \ \mbox{as}\ \ \bar{\epsilon}\downarrow0,
    \end{align*}
    with the rate of convergence $\bar{\epsilon}$ for some dimensions. In other words, as the bulk and kinematic viscosity coefficients tend to zero, the solution to the Blackstock's model for the viscous case \eqref{Linear_Blackstock_Eq} converges to one for the inviscid case \eqref{Linear_Inviscid_Blackstock} with the rate $\frac{4}{3}\mu_{\mathrm{V}}+\mu_{\mathrm{B}}$.
    
    %\color{black}
    \appendix
    \section{Tools from harmonic analysis}\label{Appendix_A}
    In this section, we will introduce interpolation theorem from harmonic analysis, which have been used in Section \ref{Sec_GESDS} to deal with the nonlinear terms.
    
    \begin{prop}\label{fractionalgagliardonirenbergineq} (Fractional Gagliardo-Nirenberg inequality)
    	Let $p,p_0,p_1\in(1,\infty)$ and $\kappa\in[0,s)$ with $s\in(0,\infty)$. Then, for all $f\in L^{p_0}\cap \dot{H}^{s}_{p_1}$ the following inequality holds:
    	\begin{equation*}
    		\|f\|_{\dot{H}^{\kappa}_{p}}\lesssim\|f\|_{L^{p_0}}^{1-\beta}\|f\|^{\beta}_{\dot{H}^{s}_{p_1}},
    	\end{equation*}
    	where 
    	\begin{align*}	\beta=\left(\tfrac{1}{p_0}-\tfrac{1}{p}+\tfrac{\kappa}{n}\right)/\left(\tfrac{1}{p_0}-\tfrac{1}{p_1}+\tfrac{s}{n}\right)\ \ \mbox{and}	\ \ \beta\in\left[\tfrac{\kappa}{s},1\right].
    	\end{align*}
    \end{prop}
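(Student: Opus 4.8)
The plan is to prove Proposition~\ref{fractionalgagliardonirenbergineq} by a homogeneous Littlewood--Paley decomposition combined with Bernstein's inequality and an optimization over the splitting frequency. First I would fix a Littlewood--Paley partition of unity and write $f=\sum_{j\in\mb{Z}}\dot\Delta_j f$, where $\dot\Delta_j$ localizes the Fourier support to the annulus $|\xi|\sim 2^{j}$; since $\kappa<s$ all the quantities below are finite for $f\in L^{p_0}\cap\dot H^{s}_{p_1}$. By the triangle inequality, for any $N\in\mb{Z}$,
\begin{align*}
	\|f\|_{\dot H^{\kappa}_{p}}\leqslant\sum_{j\leqslant N}\big\||D|^{\kappa}\dot\Delta_j f\big\|_{L^{p}}+\sum_{j>N}\big\||D|^{\kappa}\dot\Delta_j f\big\|_{L^{p}},
\end{align*}
and on each annulus one has the equivalence $\||D|^{\kappa}\dot\Delta_j f\|_{L^{p}}\sim 2^{j\kappa}\|\dot\Delta_j f\|_{L^{p}}$ with constants independent of $j$.

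Next I would estimate the two sums separately by Bernstein's inequality. For the low frequencies $j\leqslant N$ one raises integrability from $L^{p_0}$ to $L^{p}$, paying a factor $2^{jn(1/p_0-1/p)}$, and uses the $L^{p_0}$-boundedness of $\dot\Delta_j$, which gives $\||D|^{\kappa}\dot\Delta_j f\|_{L^{p}}\lesssim 2^{ja}\|f\|_{L^{p_0}}$ with $a:=\kappa+n(1/p_0-1/p)$. For the high frequencies $j>N$ one raises integrability from $L^{p_1}$ to $L^{p}$ and trades $s-\kappa$ derivatives, writing $\|\dot\Delta_j f\|_{L^{p_1}}\lesssim 2^{-js}\||D|^{s}\dot\Delta_j f\|_{L^{p_1}}$, which yields $\||D|^{\kappa}\dot\Delta_j f\|_{L^{p}}\lesssim 2^{-jb}\|f\|_{\dot H^{s}_{p_1}}$ with $b:=s-\kappa+n(1/p-1/p_1)$. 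One then checks the algebraic identities $a+b=s+n(1/p_0-1/p_1)$, $a=\beta(a+b)$ and $b=(1-\beta)(a+b)$, so that in the interior regime $\beta\in(0,1)$ (to which the main case reduces) both $a>0$ and $b>0$; summing the two geometric series gives $\|f\|_{\dot H^{\kappa}_{p}}\lesssim 2^{Na}\|f\|_{L^{p_0}}+2^{-Nb}\|f\|_{\dot H^{s}_{p_1}}$, and choosing $N\in\mb{Z}$ with $2^{N(a+b)}\sim\|f\|_{\dot H^{s}_{p_1}}/\|f\|_{L^{p_0}}$ balances the two contributions and produces exactly $\|f\|_{\dot H^{\kappa}_{p}}\lesssim\|f\|_{L^{p_0}}^{b/(a+b)}\|f\|_{\dot H^{s}_{p_1}}^{a/(a+b)}=\|f\|_{L^{p_0}}^{1-\beta}\|f\|_{\dot H^{s}_{p_1}}^{\beta}$.

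The main obstacle is the endpoint $\beta=1$, for which $b=0$, the high-frequency geometric series no longer converges, and the statement degenerates to the homogeneous Sobolev embedding $\dot H^{s}_{p_1}\hookrightarrow\dot H^{\kappa}_{p}$; this case has to be handled instead via the Hardy--Littlewood--Sobolev inequality, i.e.\ the $L^{p_1}\to L^{p}$ boundedness of the Riesz potential $|D|^{-(s-\kappa)}$. The endpoint $\beta=\kappa/s$ with $\kappa=0$ (hence $p=p_0$) is trivial. A secondary, more technical point is the usual care with homogeneous spaces --- convergence of $\sum_j\dot\Delta_j f$ and the interpretation of elements modulo polynomials --- which is dealt with in the standard way and does not affect the inequality. (The sharp characterization of the admissible exponent range, including necessity of $\beta\in[\kappa/s,1]$, is available in the classical literature on fractional Gagliardo--Nirenberg inequalities; here only the sufficiency stated in the proposition is needed.)
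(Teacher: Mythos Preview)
The paper does not prove this proposition at all; immediately after the statement it simply writes ``The proof of the fractional Gagliardo--Nirenberg inequality can be found in \cite{Hajaiej-Molinet-Ozawa-Wang-2011}.'' So there is no argument in the paper to compare against --- the proposition is quoted as a tool from the literature.

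Your Littlewood--Paley sketch is the standard route and is essentially what one finds in the cited source and in related references. One point to tighten: you assert that $\beta\in(0,1)$ alone forces $a>0$ and $b>0$, but from $a=\beta(a+b)$ and $b=(1-\beta)(a+b)$ this follows only if $a+b=s+n(1/p_0-1/p_1)>0$, which is not part of the stated hypotheses; there are admissible parameter choices with $a+b<0$ (hence $a,b<0$) and then the two geometric series diverge and Bernstein is being used in the wrong direction. In those cases the inequality is still true, but one obtains it by first passing through the Sobolev embedding $\dot H^{s}_{p_1}\hookrightarrow L^{q}$ and then interpolating in Lebesgue spaces, rather than by the direct dyadic splitting. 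Apart from this case distinction (and the endpoints you already flagged), your argument is fine.
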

    The proof of the fractional Gagliardo-Nirenberg inequality can be found in \cite{Hajaiej-Molinet-Ozawa-Wang-2011}.
    
    \begin{prop}\label{fractionleibnizrule} (Fractional Leibniz rule)
    	Let $s\in(0,\infty)$, $r\in[1,\infty)$, $p_1,p_2,q_1,q_2\in(1,\infty]$ satisfy the relation
    	\begin{equation*}
    		\tfrac{1}{r}=\tfrac{1}{p_1}+\tfrac{1}{p_2}=\tfrac{1}{q_1}+\tfrac{1}{q_2}.
    	\end{equation*}
    	Then, for all $f\in\dot{H}^{s}_{p_1}\cap L^{q_1}$ and $g\in\dot{H}^{s}_{q_2}\cap L^{q_2}$
    	the following inequality holds:
    	\begin{equation*}
    		\|fg\|_{\dot{H}^{s}_{r}}\lesssim \|f\|_{\dot{H}^{s}_{p_1}}\|g\|_{L^{p_2}}+\|f\|_{L^{q_1}}\|g\|_{\dot{H}^{s}_{q_2}}.
    	\end{equation*}
    \end{prop}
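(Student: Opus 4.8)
The plan is to prove the fractional Leibniz rule by a Littlewood--Paley/paraproduct argument. Fix a homogeneous dyadic partition of unity, let $\Delta_j$ denote the frequency projection onto $\{|\xi|\sim 2^j\}$ and $S_j:=\sum_{k<j}\Delta_k$ the associated low-frequency truncation. For $1<\varrho<\infty$ and $\sigma\in\mb{R}$ I would rely on the square-function characterization
\[
\|h\|_{\dot{H}^{\sigma}_{\varrho}}\approx\Big\|\big(\sum_j 2^{2j\sigma}|\Delta_j h|^2\big)^{1/2}\Big\|_{L^{\varrho}},
\]
on the Fefferman--Stein vector-valued maximal inequality, and on the pointwise bounds $|S_j h|\lesssim Mh$, $|\Delta_j h|\lesssim Mh$, where $M$ is the Hardy--Littlewood maximal operator (bounded on every $L^{\varrho}$ with $1<\varrho\leqslant\infty$). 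The starting point is Bony's decomposition
\[
fg=\sum_j (S_{j-1}f)\,\Delta_j g+\sum_j (\Delta_j f)\,S_{j-1} g+\sum_{|j-k|\leqslant 1}(\Delta_j f)(\Delta_k g)=:\Pi_1+\Pi_2+\Pi_3 .
\]

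I would first treat the two paraproducts $\Pi_1,\Pi_2$. Each summand of $\Pi_1$ has Fourier support in an annulus $\{|\xi|\sim 2^j\}$, so $\Delta_{\ell}\Pi_1$ only involves indices $j$ with $|j-\ell|\leqslant C$; bounding $|S_{j-1}f|\lesssim Mf$ and summing the finitely many surviving terms gives the pointwise inequality $\big(\sum_{\ell}2^{2\ell s}|\Delta_{\ell}\Pi_1|^2\big)^{1/2}\lesssim (Mf)\big(\sum_j 2^{2js}|\Delta_j g|^2\big)^{1/2}$. Taking the $L^r$ norm, applying H\"older with $\tfrac1r=\tfrac1{q_1}+\tfrac1{q_2}$ and the maximal bound $\|Mf\|_{L^{q_1}}\lesssim\|f\|_{L^{q_1}}$ yields $\|\Pi_1\|_{\dot{H}^{s}_{r}}\lesssim\|f\|_{L^{q_1}}\|g\|_{\dot{H}^{s}_{q_2}}$, and symmetrically $\|\Pi_2\|_{\dot{H}^{s}_{r}}\lesssim\|f\|_{\dot{H}^{s}_{p_1}}\|g\|_{L^{p_2}}$. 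Notice that these two contributions reproduce exactly the two terms on the right-hand side, and that they do not use the sign of $s$.

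The main obstacle is the resonant remainder $\Pi_3$, and it is precisely here that the hypothesis $s>0$ is indispensable. Each summand $(\Delta_j f)(\Delta_k g)$ with $|j-k|\leqslant 1$ now has Fourier support only in a ball $\{|\xi|\lesssim 2^j\}$, so $\Delta_{\ell}\Pi_3$ receives contributions from all $j\geqslant\ell-C$. Writing $\tilde{\Delta}_j:=\sum_{|k-j|\leqslant 1}\Delta_k$ and inserting the weight $2^{(\ell-j)s}2^{js}$, I would estimate
\[
2^{\ell s}|\Delta_{\ell}\Pi_3|\lesssim\sum_{j\geqslant\ell-C}2^{(\ell-j)s}\big(2^{js}|\Delta_j f|\big)\,|\tilde{\Delta}_j g| ;
\]
since $s>0$ the factors $2^{(\ell-j)s}$ are summable over $j\geqslant\ell-C$, so a Young (Schur-test) argument in the $\ell^2_{\ell}$-norm, together with $|\tilde{\Delta}_j g|\lesssim Mg$, gives $\big(\sum_{\ell}2^{2\ell s}|\Delta_{\ell}\Pi_3|^2\big)^{1/2}\lesssim (Mg)\big(\sum_j 2^{2js}|\Delta_j f|^2\big)^{1/2}$, whence $\|\Pi_3\|_{\dot{H}^{s}_{r}}\lesssim\|f\|_{\dot{H}^{s}_{p_1}}\|g\|_{L^{p_2}}$ after H\"older with $\tfrac1r=\tfrac1{p_1}+\tfrac1{p_2}$. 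Summing the three contributions closes the estimate for $1<r<\infty$ with finite exponents. The genuinely delicate remaining point is the endpoint $r=1$ (and the values $p_2,q_2=\infty$) admitted in the statement, where both the square-function characterization of $\dot{H}^{s}_{r}$ and the Fefferman--Stein inequality fail; there I would instead invoke the Coifman--Meyer multiplier theorem to bound the bilinear operators $\Pi_i$ directly at the endpoint (or appeal to the refined Kato--Ponce estimates), which is the technically heaviest ingredient and the part I expect to require the most care.
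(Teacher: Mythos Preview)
Your paraproduct argument via Bony's decomposition is the standard route to the Kato--Ponce/fractional Leibniz estimate and is correct in its main thrust: the handling of $\Pi_1,\Pi_2$ via almost-orthogonality and the maximal function, and of the resonant piece $\Pi_3$ via the summability of $2^{(\ell-j)s}$ for $s>0$, are all sound for $1<r<\infty$ with finite H\"older exponents. You also correctly flag the genuine endpoint difficulties at $r=1$ and $p_2,q_1\in\{\infty\}$.

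The paper itself does not give a proof of this proposition at all: it simply records the statement and cites \cite{Grafakos-Oh-2014} for the argument. So there is no in-paper proof to compare against line by line; your sketch is in fact a faithful outline of the method carried out in that reference (and its predecessors, going back to Coifman--Meyer and Kato--Ponce). In that sense your approach is not different from the paper's---you are supplying what the paper outsources. The one caveat is that your closing appeal to Coifman--Meyer for the endpoint $r=1$ is somewhat imprecise: the full range $r\in[1,\infty)$, $p_i,q_i\in(1,\infty]$ in the statement is exactly the contribution of Grafakos--Oh, and their treatment of $r=1$ goes through a Hardy space substitute rather than a direct multiplier argument, so if you were to write this out in full that endpoint would require more than a one-line invocation.
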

    The proof of the last inequality can be found in \cite{Grafakos-Oh-2014}.
    
    \begin{prop}\label{fractionembedd} (Fractional Sobolev embedding) Let $0<2s^*<n<2s$. Then, for any function $f\in\dot{H}^{s^*}\cap\dot{H}^s$ one has
    	\begin{equation*}
    		\|f\|_{L^{\infty}}\lesssim\|f\|_{\dot{H}^{s^*}}+\|f\|_{\dot{H}^s}.
    	\end{equation*}
    \end{prop}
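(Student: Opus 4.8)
The plan is to bound $\|f\|_{L^\infty}$ by $\|\widehat f\|_{L^1}$ via the Fourier inversion formula (equivalently, the Hausdorff--Young inequality with exponents $1\to\infty$), and then to split the frequency integral at $|\xi|=1$, controlling the low-frequency part by $\|f\|_{\dot H^{s^*}}$ and the high-frequency part by $\|f\|_{\dot H^{s}}$. The role of the cut at $|\xi|=1$ is precisely to match the two strict inequalities $2s^*<n$ and $n<2s$ in the hypothesis.

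Concretely, starting from
\[
\|f\|_{L^\infty}\leqslant\|\widehat f\|_{L^1}=\int_{|\xi|\leqslant1}|\widehat f(\xi)|\,\mathrm{d}\xi+\int_{|\xi|>1}|\widehat f(\xi)|\,\mathrm{d}\xi,
\]
I would estimate each term by the Cauchy--Schwarz inequality after inserting the weight $|\xi|^{-s^*}|\xi|^{s^*}$ in the first integral and $|\xi|^{-s}|\xi|^{s}$ in the second:
\[
\int_{|\xi|\leqslant1}|\widehat f(\xi)|\,\mathrm{d}\xi\leqslant\Big(\int_{|\xi|\leqslant1}|\xi|^{-2s^*}\,\mathrm{d}\xi\Big)^{1/2}\|f\|_{\dot H^{s^*}},\qquad \int_{|\xi|>1}|\widehat f(\xi)|\,\mathrm{d}\xi\leqslant\Big(\int_{|\xi|>1}|\xi|^{-2s}\,\mathrm{d}\xi\Big)^{1/2}\|f\|_{\dot H^{s}}.
\]
Passing to polar coordinates, the two remaining integrals are $\int_0^1 r^{n-1-2s^*}\,\mathrm{d}r$ and $\int_1^\infty r^{n-1-2s}\,\mathrm{d}r$, which are finite exactly because $2s^*<n$ and $n<2s$, respectively. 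Adding the two bounds then yields the claimed estimate $\|f\|_{L^\infty}\lesssim\|f\|_{\dot H^{s^*}}+\|f\|_{\dot H^{s}}$.

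The argument has essentially no serious obstacle; the only point requiring care is the bookkeeping just noted, namely that the integrability thresholds for the radial integrals in the low- and high-frequency regions are precisely the left and right inequalities in $0<2s^*<n<2s$, so that both halves converge simultaneously. A minor technical remark is that a priori $\widehat f$ need only be locally square-integrable; but the computation above already shows $\widehat f\in L^1$ under the stated hypotheses, so the inequality is rigorous (one may alternatively first prove it for Schwartz functions and pass to the limit, since the right-hand side dominates $\|\cdot\|_{L^\infty}$). One could also deduce the statement from a Littlewood--Paley decomposition, summing the low-frequency blocks against $\|f\|_{\dot H^{s^*}}$ and the high-frequency blocks against $\|f\|_{\dot H^{s}}$, but the direct Cauchy--Schwarz argument above is shorter and self-contained.
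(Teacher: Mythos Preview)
Your proof is correct and self-contained. The paper does not actually prove this proposition; it merely cites \cite{Dabbicco-Ebert-Lucente-2017} for the result, so there is no in-paper argument to compare against. Your Hausdorff--Young plus frequency-splitting approach is the standard one and matches exactly the integrability thresholds encoded in the hypothesis $0<2s^*<n<2s$.
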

    The proof of this embedding result was given in \cite{Dabbicco-Ebert-Lucente-2017}.
    \section*{Acknowledgment}
    The first author was supported by the China Postdoctoral Science Foundation (Grant No. 2021T140450 and No. 2021M692084). The second author was supported in part by Grant-in-Aid for scientific Research (C) 20K03682 of JSPS. The authors thank Yaguang Wang (Shanghai Jiao Tong University), Xiang Wang (Shanghai Jiao Tong University) for the suggestions in Section \ref{Sec_Initial_Layer}. The authors thank the anonymous referees for carefully reading the paper and giving some useful suggestions.

    \bibliographystyle{alpha}
\bibliography{mybib}

@article {Aassila-2003,
      AUTHOR = {Aassila, M.},
       TITLE = {Decay of solutions of some nonlinear equations},
     JOURNAL = {Port. Math.},
      VOLUME = {60},
        YEAR = {2003},
       PAGES = {389–409},
  }

@article {BarreraVolkmer2020,
      AUTHOR = {Barrera, J. and Volkmer, H.},
       TITLE = {Asymptotic expansion of the {$L^2$}-norm of a solution of the strongly damped wave equation in space dimension 1 and 2},
     JOURNAL = {Asymptot. Anal.},
      VOLUME = {121},
        YEAR = {2021},
       PAGES = {367–399},
  }

@article {BarreraVolkmer2019,
      AUTHOR = {Barrera, J. and Volkmer, H.},
       TITLE = {Asymptotic expansion of the {$L^2$}-norm of a solution of the strongly damped wave equation},
     JOURNAL = {J. Differential Equations},
      VOLUME = {267},
        YEAR = {2019},
       PAGES = {902–937},
  }

@article {Becker-1922,
      AUTHOR = {Becker, R.},
       TITLE = {Sto{\ss}bwelle und detonation},
     JOURNAL = {Z. Phys.},
      VOLUME = {8},
        YEAR = {1922},
       PAGES = {321–362},
  }

@book {Blackstock-1963,
      AUTHOR = {Blackstock, D. T.},
       TITLE = {Approximate equations governing finite-amplitude sound in thermoviscous fluids},
     PUBLISHER = {GD/E report GD-1463-52, General Dynamics Coporation},
        YEAR = {1963},
  }

@phdthesis {Brunnhuber-2015,
      AUTHOR = {Brunnhuber, R.},
       TITLE = {Well-posedness and long-time behavior of solutions for the {Blackstock}-{Crighton} equation},
      school = {Alpen-Adria-Universit\"at Klagenfurt},
        YEAR = {2015},
  }

@article {Brunnhuber-2016,
      AUTHOR = {Brunnhuber, R.},
       TITLE = {Well-posedness and exponential decay of solutions for the {Blackstock}-{Crighton}-{Kuznetsov} equation},
     JOURNAL = {J. Math. Anal. Appl.},
      VOLUME = {433},
        YEAR = {2016},
       PAGES = {1037–1054},
  }

@article {Brunnhuber-Jordan-2016,
      AUTHOR = {Brunnhuber, R. and Jordan, P.},
       TITLE = {On the reduction of {Blackstock's} model of thermoviscous compressible flow via {Becker's} assumption},
     JOURNAL = {Int. J. Non-Linear Mech.},
      VOLUME = {78},
        YEAR = {2016},
       PAGES = {131–132},
  }

@article {Brunnhuber-Kaltenbacher-2014,
      AUTHOR = {Brunnhuber, R. and Kaltenbacher, B.},
       TITLE = {Well-posedness and asymptotic behavior of solutions for the {Blackstock}-{Crighton}-{Westervelt} equation},
     JOURNAL = {Discrete Contin. Dyn. Syst.},
      VOLUME = {34},
        YEAR = {2014},
       PAGES = {4515–4535},
  }

@article {Brunnhuber-Meyer-2016,
      AUTHOR = {Brunnhuber, R. and Meyer, S.},
       TITLE = {Optimal regularity and exponential stability for the {Blackstock}-{Crighton} equation in {$L_p$} spaces with {Dirichlet} and {Neumann} boundary conditions},
     JOURNAL = {J. Evol. Equ.},
      VOLUME = {16},
        YEAR = {2016},
       PAGES = {945–981},
  }

@article {Celik-Kyed-2019,
      AUTHOR = {Celik, A. and Kyed, M.},
       TITLE = {Nonlinear acoustics: {Blackstock}-{Crighton} equations with a periodic forcing term},
     JOURNAL = {J. Math. Fluid Mech.},
      VOLUME = {21},
        YEAR = {2019},
       PAGES = {No. 45, 12 pp},
  }

@article {Chen-Ikehata-2021,
      AUTHOR = {Chen, W. and Ikehata, R.},
       TITLE = {The {Cauchy} problem for the {Moore}-{Gibson}-{Thompson} equation in the dissipative case},
     JOURNAL = {J. Differential Equations},
      VOLUME = {292},
        YEAR = {2021},
       PAGES = {176 - 219},
  }

@article {Coulouvrat-1992,
      AUTHOR = {Coulouvrat, F.},
       TITLE = {On the equations of nonlinear acoustics},
     JOURNAL = {J. de Acoustique},
      VOLUME = {5},
        YEAR = {1992},
       PAGES = {321–359},
  }

@article {Dabbicco-Ebert-Lucente-2017,
      AUTHOR = {D'Abbicco, M. and Ebert, M. R. and  Lucente, S.},
       TITLE = {Self-similar asymptotic profile of the solution to a nonlinear evolution equation with critical dissipation},
     JOURNAL = {Math. Methods Appl. Sci.},
      VOLUME = {40},
        YEAR = {2017},
       PAGES = {6480–6494},
  }

@article {DabbiccoReissig2014,
      AUTHOR = {D'Abbicco, M. and Reissig, M.},
       TITLE = {Semilinear structural damped waves},
     JOURNAL = {Math. Methods Appl. Sci.},
      VOLUME = {37},
        YEAR = {2014},
       PAGES = {1570–1592},
  }

@article {Danchin,
      AUTHOR = {Danchin, R.},
       TITLE = {Global existence in critical spaces for flows of compressible viscous and heat-conductive gases},
     JOURNAL = {Arch. Ration. Mech. Anal.},
      VOLUME = {160},
        YEAR = {2001},
       PAGES = {1–39},
  }

@article {Dachin-Xu,
      AUTHOR = {Danchin, R. and Xu J.},
       TITLE = {Optimal decay estimates in the critical {$L^p$} framework for flows of compressible viscous and heat-conductive gases},
     JOURNAL = {J. Math. Fluid Mech.},
      VOLUME = {20},
        YEAR = {2018},
       PAGES = {1641–1665},
  }

@book {Ebert-Reissig-book,
      AUTHOR = {Ebert, M. R. and Reissig, M.},
       TITLE = {Methods for Partial Differential Equations},
     PUBLISHER = {Birkh\"auser Basel, Germany},
        YEAR = {2018},
  }

@article {Fritz-Nikolic-Wohlmuth-2018,
      AUTHOR = {Fritz, M. and  Nikoli\'c, V. and  Wohlmuth, B.},
       TITLE = {Well-posedness and numerical treatment of the {Blackstock} equation in nonlinear acoustics},
     JOURNAL = {Math. Models Methods Appl. Sci.},
      VOLUME = {28},
        YEAR = {2018},
       PAGES = {2557–2597},
  }

@article {Gambera-Lizama-Prokopczyk-2021,
      AUTHOR = {Gambera, L. R. and  Lizama, C. and  Prokopczyk, A.},
       TITLE = {Well-posedness for the abstract {Blackstock}-{Crighton}-{Westervelt} equation},
     JOURNAL = {J. Evol. Equ.},
      VOLUME = {21},
        YEAR = {2021},
       PAGES = {313–337},
  }

@article {Grafakos-Oh-2014,
      AUTHOR = {Grafakos, L. and  Oh, S.},
       TITLE = {The {Kato}-{Ponce} inequality},
     JOURNAL = {Comm. Partial Differential Equations},
      VOLUME = {39},
        YEAR = {2014},
       PAGES = {1128–1157},
  }

@article {Hajaiej-Molinet-Ozawa-Wang-2011,
      AUTHOR = {Hajaiej, H. and  Molinet, L. and  Ozawa, T. and  Wang, B.},
       TITLE = {Necessary and sufficient conditions for the fractional {Gagliardo}-{Nirenberg} inequalities and applications to {Navier}-{Stokes} and generalized boson equations},
     JOURNAL = {Harmonic analysis and nonlinear partial differential equations},
        YEAR = {2011},
       PAGES = {159–175},
  }

@book {Hamilton-Blackstock-1998,
      AUTHOR = {Hamilton, M. F. and  Blackstock, D. T.},
       TITLE = {Nonlinear Acoustics},
     PUBLISHER = {Academic Press, San Diego},
        YEAR = {1998},
  }

@book {Hayes-1960,
      AUTHOR = {Hayes, W. D.},
       TITLE = {Gas dynamic Discontinuities},
     PUBLISHER = {Princeton University Press, Princeton, NJ},
        YEAR = {1960},
  }

@article {Hoff-Zumbrun,
      AUTHOR = {Hoff, D. and Zumbrun K.},
       TITLE = {Multi-dimensional diffusion waves for the {Navier}-{Stokes} equations of compressible flow},
     JOURNAL = {Indiana Univ. Math. J.},
      VOLUME = {44},
        YEAR = {1995},
       PAGES = {603–676},
  }

@article {Ikehata2004,
      AUTHOR = {Ikehata, R.},
       TITLE = {New decay estimates for linear damped wave equations and its application to nonlinear problem},
     JOURNAL = {Math. Methods Appl. Sci.},
      VOLUME = {27},
        YEAR = {2004},
       PAGES = {865–889},
  }

@article {Ikehata2014,
      AUTHOR = {Ikehata, R.},
       TITLE = {Asymptotic profiles for wave equations with strong damping},
     JOURNAL = {J. Differential Equations},
      VOLUME = {257},
        YEAR = {2014},
       PAGES = {2159–2177},
  }

@article {IkehataNatsume,
      AUTHOR = {Ikehata, R. and Natsume, M.},
       TITLE = {Energy decay estimates for wave equations with a fractional damping},
     JOURNAL = {Differential Integral Equations},
      VOLUME = {25},
        YEAR = {2012},
       PAGES = {939–956},
  }

@article {IkehataOnodera2017,
      AUTHOR = {Ikehata, R. and Onodera, M.},
       TITLE = {Remarks on large time behavior of the {$L^2$}-norm of solutions to strongly damped wave equations},
     JOURNAL = {Differential Integral Equations},
      VOLUME = {30},
        YEAR = {2017},
       PAGES = {505–520},
  }

@article {Ikehata-Soga-2015,
      AUTHOR = {Ikehata, R. and Soga, M.},
       TITLE = {Asymptotic profiles for a strongly damped plate equation with lower order perturbation},
     JOURNAL = {Commun. Pure Appl. Anal.},
      VOLUME = {14},
        YEAR = {2015},
       PAGES = {1759–1780},
  }

@article {IkehataTodorovaYordanov2013,
      AUTHOR = {Ikehata, R. and Todorova, G. and Yordanov, B.},
       TITLE = {Wave equations with strong damping in {Hilbert} spaces},
     JOURNAL = {J. Differential Equations},
      VOLUME = {254},
        YEAR = {2013},
       PAGES = {3352–3368},
  }

@article {Kaltenbacher-Thalhammer-2018,
      AUTHOR = {Kaltenbacher, B. and Thalhammer, M.},
       TITLE = {Fundamental models in nonlinear acoustics part {I}. {Analytical} comparison},
     JOURNAL = {Math. Models Methods Appl. Sci.},
      VOLUME = {28},
        YEAR = {2018},
       PAGES = {2403–2455},
  }

@article {Kaltenbacher-Nikolic,
      AUTHOR = {Kaltenbacher, B. and Nikoli\'c, V.},
       TITLE = {The inviscid limit of third-order linear and nonlinear acoustic equations},
     JOURNAL = {SIAM. J. Appl. Math.},
      VOLUME = {81},
        YEAR = {2021},
       PAGES = {1461–1482},
  }

@article {Kobayashi-Shibata,
      AUTHOR = {Kobayashi, T. and Shibata, Y.},
       TITLE = {Decay estimates of solutions for the equations of motion of compressible viscous and heat-conductive gases in an exterior domain in {$\mathbf{R}^3$}},
     JOURNAL = {Comm. Math. Phys.},
      VOLUME = {200},
        YEAR = {1999},
       PAGES = {621–659},
  }

@article {Kobayashi-Shibata-02,
      AUTHOR = {Kobayashi, T. and Shibata, Y.},
       TITLE = {Remark on the rate of decay of solutions to linearized compressible {Navier}-{Stokes} equations},
     JOURNAL = {Pacific J. Math.},
      VOLUME = {207},
        YEAR = {2002},
       PAGES = {199–234},
  }

@article {Morduchow-Libby-1949,
      AUTHOR = { Morduchow, M. and  Libby, P. A.},
       TITLE = {On a complete solution of the one-dimensional flow equations of a viscous, heat-conducting, compressible gas},
     JOURNAL = {J. Aeronaut. Sci.},
      VOLUME = {16},
        YEAR = {1949},
       PAGES = {674–684 and 704},
  }

@article {Nikoli-Said-Houari,
      AUTHOR = {Nikoli\'c, V. and Said-Houari, B.},
       TITLE = {Mathematical analysis of memory effects and thermal relaxation in nonlinear sound waves on unbounded domains},
     JOURNAL = {J. Differential Equations},
      VOLUME = {273},
        YEAR = {2021},
       PAGES = {172–218},
  }

@book {Pierce,
      AUTHOR = { Pierce, A. D.},
       TITLE = {Acoustics: An Introduction to its Physical Principles and Applications},
     PUBLISHER = {Acoustical Society of America, Woodbury, NY},
        YEAR = {1989},
  }

@article {Racke-Said-Houari,
      AUTHOR = { Racke, R. and  Said-Houari, B.},
       TITLE = {Decay rates and global existence for semilinear dissipative {Timoshenko} systems},
     JOURNAL = {Quart. Appl. Math.},
      VOLUME = {71},
        YEAR = {2013},
       PAGES = {229–266},
  }

@article {Shibata2000,
      AUTHOR = {Shibata, Y.},
       TITLE = {On the rate of decay of solutions to linear viscoelastic equation},
     JOURNAL = {Math. Methods Appl. Sci.},
      VOLUME = {23},
        YEAR = {2000},
       PAGES = {203–226},
  }

@article {Xu-Kawashima,
      AUTHOR = {Xu, J. and Kawashima S.},
       TITLE = {The optimal decay estimates on the framework of {Besov} spaces for generally dissipative systems},
     JOURNAL = {Arch. Ration. Mech. Anal.},
      VOLUME = {218},
        YEAR = {2015},
       PAGES = {275–315},
  }

\end{document}